\theoremstyle{plain}
\newtheorem{proposition}{Proposition}
\newtheorem{theorem}[proposition]{Theorem}
\newtheorem*{conjecture*}{Conjecture}
\newtheorem{definition}[proposition]{Definition}
\newtheorem{corollary}[proposition]{Corollary}
\newtheorem{lemma}[proposition]{Lemma}
\newtheorem{remark}[proposition]{Remark}
\newtheorem{remarks}[proposition]{Remarks}
\newtheorem{example}[proposition]{Example}
\newtheorem{proposition-definition}[proposition]{Proposition/Definition}
\newtheorem*{proposition*}{Proposition}
\newtheorem*{theorem*}{Theorem}
\newtheorem*{maintheorem*}{Main Theorem}
\newtheorem*{maincorollary*}{Main Corollary}
\newtheorem*{corollary*}{Corollary}
\newtheorem*{lemma*}{Lemma}
\newtheorem*{remark*}{Remark}
\newtheorem*{example*}{Example}
\numberwithin{proposition}{section}
\numberwithin{equation}{section}
\numberwithin{figure}{section}
\def\co{\colon\thinspace}
\newcommand{\N}{\mathbb{N}}
\newcommand{\Z}{\mathbb{Z}}
\newcommand{\Q}{\mathbb{Q}}
\newcommand{\R}{\mathbb{R}}
\newcommand{\C}{\mathbb{C}}
\newcommand{\K}{\mathbb{K}}
\begin{document}

\title[TQFT structure on symplectic cohomology]{Topological quantum field theory structure on symplectic cohomology}

\author{Alexander F. Ritter}

\address{Trinity College, Cambridge, CB2 1TQ, England.}

\email{a.f.ritter@dpmms.cam.ac.uk}

\date{version: \today}


\begin{abstract}
We construct the TQFT on symplectic cohomology and wrapped Floer cohomology, possibly twisted by a local system of coefficients, and we prove that Viterbo restriction preserves the TQFT. This yields new applications in symplectic topology relating to the Arnol'd chord conjecture and to exact contact embeddings. We prove that if a Liouville domain $M$ admits an exact embedding into an exact convex symplectic manifold $X$, and the boundary $\partial M$ is displaceable in $X$, then the symplectic cohomology of $M$ vanishes and the chord conjecture holds for any Lagrangianly fillable Legendrian in $\partial M$. The TQFT respects the isomorphism between the symplectic cohomology of a cotangent bundle and the homology of the free loop space, so it recovers the TQFT of string topology.
\end{abstract}
\maketitle
\setcounter{tocdepth}{1}
\tableofcontents
%
\section*{Motivation and outline of the paper}
\label{Subsection Algebraic operations on symplectic cohomology}
%
Symplectic cohomology has become an important tool in symplectic topology, ever since its introduction in Viterbo's foundational paper \cite{Viterbo1}.
For example, it can be used to prove the existence of closed Hamiltonian orbits and Reeb chords, and it gives rise to obstructions to the existence of exact
Lagrangian submanifolds and of exact contact hypersurfaces. It also has theoretical importance: the \emph{wrapped Fukaya category} (a version of the Fukaya category for non-compact symplectic manifolds introduced in \cite{Fukaya-Seidel-Smith2}), is built using the wrapped Floer cohomologies, which are modules over the symplectic cohomology ring.

Symplectic cohomology is an invariant of exact symplectic manifolds
$(M,d\theta)$ whose boundary $\partial M$ is of contact type. Equivalently, after attaching a conical end, symplectic cohomology is an invariant of non-compact exact symplectic manifolds $(\overline{M},d\theta)$ which are \emph{conical at infinity}, meaning that outside of a bounded domain, $\overline{M}$ is symplectomorphic to a conical end 
$(\Sigma \times [1,\infty), d(R \alpha))
$
where $(\Sigma,\alpha)$ is a contact manifold, and $R$ is the coordinate on $[1,\infty)$. 

For example, a disc cotangent bundle $M=DT^*N$
of a closed Riemannian manifold $N$ with the standard symplectic form $\sum dp_j \wedge dq_j=d(\sum p_j dq_j)$ in local coordinates $(q,p)$, then $\overline{M}=T^*N$ is the cotangent bundle and $R=|p|$. 
In this example, Viterbo \cite{Viterbo1} proved that symplectic cohomology recovers the homology of the free loop space $\mathcal{L}N\! =\! C^{\infty}(S^1,N)$,
$$SH^*(T^*N;\Z/2) \cong H_{n-*}(\mathcal{L}N;\Z/2),$$
and Abbondandolo-Schwarz
\cite{Abbondandolo-Schwarz2} proved that $SH^*(T^*N;\Z/2)$ has a product structure, called ``pair-of-pants product", which  under the above isomorphism corresponds to the Chas-Sullivan loop product on $H_{*}(\mathcal{L}N;\Z/2)$ coming from
string topology \cite{Chas-Sullivan}.

In his influential survey on symplectic cohomology  \cite[Sec.(8a)]{Seidel}, Seidel described how to define a pair-of-pants product for any $M$, and more generally how to define TQFT operations
$$\psi_S:
SH^*(M)^{\otimes q} \to SH^*(M)^{\otimes p}  \qquad (p\geq 1,
q\geq 0).$$
The idea is to count maps $u: S \to \overline{M}$ from a Riemann surface $S$ as
in Figure \ref{Figure Introduction}, carrying $p$ negative punctures, $q$ positive punctures, and satisfying a perturbed
Cauchy-Riemann equation of the form $(du-X\otimes \beta)^{0,1}=0$. Here $X$ is a Hamiltonian vector field, and $\beta$ is a $1$-form on $S$ satisfying $d\beta\leq 0$ such that near the punctures the equation turns into Floer's equation $\partial_s u + J (\partial_t u - cX) = 0$ for constant weights $c>0$ depending on the puncture.

\begin{figure}[ht]
\includegraphics[scale=0.55]{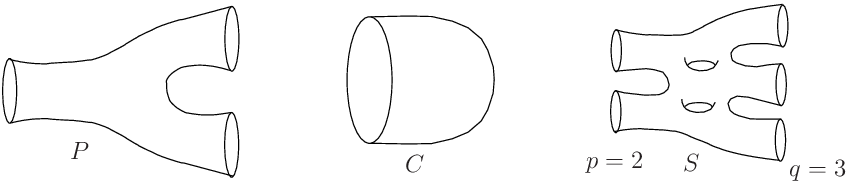}
\caption{Pair of pants $P$; cap $C$; genus $2$ surface $S$ with
$(p,q)\!=\!(2,3)$.} \label{Figure Introduction}
\end{figure}

\begin{figure}[ht]
\input{tqft_sh_comparison2e.pstex_t}
\caption{Summary of the TQFT structure.} \label{Figure TQFT table}
\end{figure}

For closed symplectic manifolds such a TQFT was constructed on Floer
cohomology by Piunikhin-Salamon-Schwarz \cite{PSS}
and in detail by Schwarz \cite{Schwarz}, over fields of characteristic $2$. In this closed setup, one can just require such $u$ to be holomorphic away from the punctures, so $du^{0,1}=0$, and then one interpolates this equation with Floer's equation near the punctures. Since symplectic cohomology is defined as a direct limit of Floer cohomologies for Hamiltonians $H:\overline{M} \to \R$ which are linear in the $R$-coordinate at infinity, one might at first imagine that the construction of the TQFT easily carries over. Unfortunately, this is not the case: this construction would run into the danger that a sequence of curves may escape to infinity. So the use of $\beta$ and the condition $d\beta \leq 0$ are fundamental in preventing this problem, and this somewhat complicates the construction of the TQFT.

For cotangent bundles, Abbondandolo-Schwarz
\cite[Sec.3.2]{Abbondandolo-Schwarz2} construct the product while circumventing the use of $\beta$, by using a clever construction of a pair-of-pants surface $S$ which has explicit time coordinates. For general $M$, the product is discussed by Seidel \cite[Sec.(8a)]{Seidel} and by McLean \cite[Sec.2.3]{McLean}, who proved in \cite[Sec.10]{McLean} how the product behaves under Viterbo restriction and under boundary connected sums. A key ingredient, namely preventing that solutions of $(du-X\otimes \beta)^{0,1}=0$ escape to infinity, is due to Abouzaid-Seidel \cite[Lemma 7.2]{Abouzaid-Seidel}, and we recall this in the appendix Section \ref{Appendix Maximum principle and no escape lemma}.

Applications in symplectic topology typically arise from proving a vanishing result for $SH^*(M)$. For example, Viterbo \cite{Viterbo1} proved that the vanishing of $SH^*(M)$ implies that there is a closed Reeb chord in $\partial M$, which is the \emph{Weinstein Conjecture}. It is for this reason that the most important part of the TQFT
is the unit and the product. Indeed
$SH^*(M)$ vanishes if and only if the unit vanishes, since then
$y=y\cdot 1 = y\cdot 0 =0$ for any $y\in SH^*(M)$. Secondly, if
$W\subset \overline{M}$ is a subdomain with contact type boundary then there is a unital ring homomorphism called  \emph{Viterbo restriction},
$$\varphi:SH^*(M)\to SH^*(W).$$
So if $SH^*(M)=0$ then $1=\varphi(1) = \varphi(0)=0$ in $SH^*(W)$, and so
$SH^*(W)=0$. This circle of ideas is due to
Viterbo, Seidel and McLean \cite{Viterbo1,Seidel,McLean}. We will recall this in \textbf{Sections \ref{Section Viterbo Functoriality}-\ref{Section Vanishing criteria}}.\\[2mm]
\indent We now briefly outline the structure of the paper. Since a lot of the analysis and applications are based on previous work, this will give us an opportunity to properly credit the existing literature. A detailed Introduction will follow in \textbf{Section \ref{Section Introduction}}.

One of the goals of this paper is to put in written form a detailed constrution of the TQFT structure on symplectic cohomology. Since a large part of this material was either known or expected within the circle of specialists in symplectic cohomology, we decided to isolate this part in the first Appendix (\textbf{Section \ref{Section Floer solutions}}). For the convenience of the reader, we summarized this Appendix at the start of \textbf{Section \ref{Section TQFT overview}}.

We will recall the basic definitions and conventions in \textbf{Sections \ref{Section Liouville domains}-\ref{Section Canonical map from ordinary cohomology}}: the definition of exact symplectic manifolds conical at infinity (\textbf{Sec.\ref{Section Liouville domains}}); the construction of symplectic cohomology $SH^*(M)$ and symplectic homology $SH_*(M)$ (\textbf{Sec.\ref{Section Symplectic cohomology}}); the relative analogue of $SH^*$ with Lagrangian boundary conditions, called the \emph{wrapped Floer cohomology} $HW^*(L)$, for Lagrangians $L\subset M$ intersecting $\partial M$ in a Legendrian submanifold (\textbf{Sec.\ref{Section Wrapped Floer cohomology}}); and the maps
$$c^*:H^*(M) \to SH^*(M) \; \textrm{ and }\; c^*:H^*(L) \to HW^*(L)$$
from the ordinary cohomology to the symplectic and wrapped cohomologies (\textbf{Sec.\ref{Section Canonical map from ordinary cohomology}}).

The initial motivation of the paper was to construct algebraic structures on a deformation of symplectic cohomology called \emph{twisted symplectic cohomology} introduced by the author \cite{Ritter}, with the purpose of continuing the study of exact contact hypersurfaces initiated by Cieliebak-Frauenfelder \cite{Cieliebak-Frauenfelder}. 
The twisted symplectic cohomology $SH^*(M)_{\alpha}$ is an invariant of $M$ associated to a class $\alpha \! \in  \! H^1(\mathcal{L}M)$ on the free loop space, and for $\alpha  \! =  \! 0$ it recovers $SH^*(M)$. In fact it is the Novikov cohomology theory
applied to $SH^*$, which involves using twisted coefficients in a bundle of Novikov rings defined in Section \ref{Subsection Novikov bundles of coefficients}.
This twisting yields very concrete applications in symplectic topology, for example in \cite{Ritter} we used it to prove that there are no exact Klein bottles in $T^*S^2$.

There is no TQFT on $SH^*(M)_{\alpha}$ in general because there is no canonical way of viewing surfaces in $M$ (such as pairs of pants) as chains in $\mathcal{L}M$.
Even when a TQFT can be defined on $SH^*(M)_{\alpha}$, it may not possess a unit for the same reasons that the Novikov cohomology of
a manifold usually does not. However, in \textbf{Section \ref{Section Applicatoin 2 Exact contact hypersurf}} we prove that if the form $\alpha \in
H^1(\mathcal{L}M)$ is the transgression of a class $\eta\in
H^2(M)$, then the TQFT structure exists and possesses a unit.
In \textbf{Section \ref{Section Viterbo Functoriality}} we prove that the twisted Viterbo restriction maps 
$\varphi:SH^*(M)_{\eta}\to SH^*(W)_{\eta|_W}$
from \cite{Ritter} are TQFT maps, in particular they are unital ring
homomorphisms. Our interest in the twisted theory stems from the fact that for $\pi_1(N)=1$ we have a vanishing result \cite{Ritter}:
$$SH^*(T^*N;\Z/2)_{\eta} \cong H_{n-*}(\mathcal{L}N;\Z/2)_{\eta} = 0 \, \textrm{ for non-zero
}\eta\in H^2(N).
$$
The applications to exact contact hypersurfaces in \textbf{Section \ref{Section Applicatoin 2 Exact contact hypersurf}} are based on this result.

In \textbf{Section \ref{Section Application Arnol'd Chord conjecture}} we give a family of examples where the \emph{Arnol'd chord conjecture} is satisfied, namely the existence of Reeb chords in $\partial M$ with ends on certain Legendrian submanifolds. This application follows from the module structure of $HW^*(L)$ over $SH^*(M)$ (\textbf{Section \ref{Subsection Wrapped HW is an SH module}}) and it relies on vanishing criteria for these cohomology groups (\textbf{Section \ref{Section Vanishing criteria}}). Some of these examples, but not all, exploit the twisted theory.

The application of the TQFT to prove that $SH^*(M)$ vanishes if $\partial M$ has certain displaceability properties (\textbf{Section \ref{Section Displacement implies vanishing}}) is independent of the twisted theory. Rather, it relies on foundational work due to Cieliebak-Frauenfelder \cite{Cieliebak-Frauenfelder} and Cieliebak-Frauenfelder-Oancea \cite{Cieliebak-Frauenfelder-Oancea}. Namely, \cite{Cieliebak-Frauenfelder} constructs an invariant of $M$ called \emph{Rabinowitz Floer cohomology} $RFH^*(M)$, and proves that it vanishes under certain displaceability properties of $\partial M$; and \cite{Cieliebak-Frauenfelder-Oancea} constructs a long exact sequence relating $SH_*(M)$, $SH^*(M)$ and $RFH^*(M)$. Our result then follows from investigating the role played by the unit in the long exact sequence and then exploiting the vanishing criteria in \textbf{Section \ref{Section Vanishing criteria}}. This argument is slightly trickier when one makes no simplifying assumption on the first Chern class, since then one loses the $\Z$-grading on $SH^*$: in this case we need to exploit the fact that  $c^*:H^*(M) \to SH^*(M)$ respects the product structure, which is interesting in its own right. In fact we prove in \textbf{Section \ref{Section c* maps preserve TQFT}} that the $c^*$ maps are compatible with a TQFT structure on $H^*(M)$. Section \ref{Section c* maps preserve TQFT} relies on generalizing the PSS-maps from \cite{PSS} to the non-compact setting required by $SH^*$ by using the analytical machinery in the foundational work of Salamon-Zehnder \cite{Salamon-Zehnder}, which we will briefly recall.

\textbf{Section \ref{Section Application Loop space homology and string topology}}
 proves that the isomorphism $SH^*(T^*N;\Z/2) \cong H_{n-*}(\mathcal{L}N;\Z/2)$ preserves the TQFT structure and the deformed TQFT structure. We include this Section mainly for theoretical interest. The result is actually a rather formal consequence of the TQFT axioms which follows from the foundational work of Abbondandolo-Schwarz \cite{Abbondandolo-Schwarz2} which proved that the isomorphism preserves the pair-of-pants product. There is one technical aspect worth mentioning: in the work of Abbondandolo-Schwarz, symplectic cohomology is defined as the Floer cohomology of one Hamiltonian of quadratic growth in $R$ using a certain non-contact type almost complex structure $J$, whereas in our work we always use contact type $J$ and we use linear growth Hamiltonians and then take a direct limit of the resulting Floer cohomologies. We reconcile the two approaches for general $\overline{M}$ in the appendix \textbf{Section \ref{Appendix Using non-linear Hamiltonians}}.

Appendix \textbf{Section \ref{Appendix Coherent orientations}} discusses orientation signs for $SH^*(M)$. Orientation signs were constructed for Floer cohomology by Floer-Hofer \cite{Floer-Hofer-Coherent-Orientations} (which immediately generalizes to $SH^*(M)$) but they have not yet been written up for the TQFT structure. However, for the TQFT signs one can mimic the construction of orientation signs used in Symplectic Field Theory, which was carried out by Eliashberg-Givental-Hofer \cite{Eliashberg-Givental-Hofer} and Bourgeois-Mohnke \cite{Bourgeois-Mohnke}.

\textbf{A word about cohomological conventions.} We explain \emph{(1)} why we preferred to use symplectic \emph{cohomology} and \emph{(2)} why we define $SH^*$ as the direct limit of Floer cohomologies (rather than the inverse limit, which for example is the convention of \cite{Cieliebak-Frauenfelder-Oancea}):

\emph{1)} The symplectic cohomology $SH^*$ is much better behaved algebraically because it possesses a unit, which is crucial in applications. While $SH_*$ only has a counit. Moreover, $SH_*$ can be recovered from $SH^*$ by dualization, but not vice-versa. 

\emph{2)} The conventions we use to define Floer \emph{cohomology} actually go back historically to Floer's original papers. But this convention also happens to be consistent with the fact that there is a map $c^*: H^*(M) \to SH^*(M)$ which respects the unit and product structures, all of which naturally respect our grading conventions when $SH^*$ is $\Z$-graded. In this convention, Viterbo's map $SH^*(M) \to SH^*(W)$ can be interpreted as a restriction map generalizing the ordinary restriction map $H^*(M) \to H^*(W)$ induced by the inclusion $W\subset M$ (Viterbo \cite{Viterbo1} originally interpreted it as a \emph{transfer map} $FH^*(W) \to FH^*(M)$ because for disc cotangent bundles $W,M$ it defines a transfer map on the cohomology of the free loop spaces).

We point out however that the other conventions are also very reasonable, for example the conventions of \cite{Cieliebak-Frauenfelder-Oancea} are particularly well-suited for studying cotangent bundles since in their conventions one has $SH_*(T^*N;\Z/2) \cong H_*(\mathcal{L}N;\Z/2)$.\\[1mm]
\noindent \textbf{Acknowledgements:}
I thank Alberto
Abbondandolo, Mohammed Abouzaid, Gabriel Paternain, Matthias
Schwarz, and Ivan Smith for many stimulating conversations. I also thank the anonymous referee for many useful suggestions which improved the exposition.
\section{Introduction}
\label{Section Introduction}
%
%
\subsection{Topological quantum field theory}
\label{Subsection TQFT axioms}
%
%
Formally speaking, a \emph{TQFT} \cite{Atiyah} is a tensor functor from the category
of $2$-dimensional orientable cobordisms between $1$-manifolds to the
category of vector spaces. We make this explicit in our setup: to
a circle we associate the vector space $SH^*(M)_{\eta}$, the
symplectic cohomology of $M$ computed over a field $\K$ of
coefficients (when twistings are present, we keep track of $\eta\in H^2(M)$, and we work over the Novikov field $\Lambda$, which is a $\K$-algebra defined in \ref{Subsection Novikov bundles of coefficients}). To the disjoint union of $p$ circles we associate the
tensor product $SH^*(M)_{\eta}^{\otimes p}$ of $p$ copies of
$SH^*(M)_{\eta}$, and to the empty set we associate the base field
$\K$. On morphisms, the functor is
$$
(\textrm{Punctured Riemann surface } S)\mapsto (\textrm{Operation
} \psi_S: SH^*(M)_{\eta}^{\otimes q} \to SH^*(M)_{\eta}^{\otimes
p}),$$
where $S$ represents a cobordism between $p\geq 1$ and $q\geq 0$
circles (Figure \ref{Figure Introduction}). Functoriality means
that $\psi_Z=\textrm{id}$ for cylinders $Z:p=q=1$, and that
compositions are respected: gluing surfaces along the cylindrical ends near the punctures
yields compositions of $\psi_S$ maps. The QFT is topological
since $\psi_S$ only depends on $p,q$ and the genus of $S$.

\begin{theorem*} $SH^*(M)_{\eta}$ is a $(1\!+\!1)$-dimensional TQFT,
except it does not possess operations for $p=0$ $($so no
counit$)$.
\end{theorem*}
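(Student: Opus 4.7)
The plan is to imitate the closed-string PSS/Schwarz construction but carried out on a Liouville domain, taking care that the relevant moduli spaces remain compact. For each topological type of punctured Riemann surface $S$ representing a cobordism from $q$ to $p$ circles, with $p\geq 1$, I would first fix cylindrical ends $\epsilon_i^{\pm}$ (positive at the $p$ outputs, negative at the $q$ inputs), a sub-closed one-form $\beta$ on $S$ equal to $dt$ on each end, and a domain-dependent pair $(H_z,J_z)$ which restricts to an admissible Hamiltonian of linear slope and a cylindrical almost complex structure on each end. The operation $\psi_S$ would then be defined by counting finite-energy solutions $u\co S\to \widehat{M}$ of the inhomogeneous Cauchy--Riemann equation $(du - X_{H_z}\otimes\beta)^{0,1}_{J_z}=0$ asymptotic to one-periodic orbits at the punctures, weighted (when $\eta\neq 0$) by a cocycle $e^{\langle \alpha,u\rangle}$ built from the transgression $\alpha=\tau(\eta)\in H^1(\mathcal{L}M)$.

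The verification then breaks into five pieces, each largely parallel to the cylindrical case but needing the $\beta$-correction. First, transversality for generic $(H_z,J_z)$. Second, a Fredholm/index computation showing that the expected dimension of the moduli space matches the usual degree of $\psi_S$. Third, a chain-level definition: the differential of $\psi_S$ equals the signed count of broken configurations over the boundary of the one-dimensional moduli spaces, which is exactly the Floer differential relation that makes $\psi_S$ a chain map. Fourth, independence of the auxiliary choices $(\beta,H_z,J_z)$, obtained by the standard parametrised moduli space argument yielding chain homotopies. Fifth, functoriality under gluing: a neck-stretching / gluing analysis identifies $\psi_{S\#S'}$ with $\psi_S\circ\psi_{S'}$ on cohomology, and the cylinder $Z$ gives the identity by the usual continuation-of-the-identity argument. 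The twisting is then compatible with all of these because transgression is designed precisely so that $\langle\alpha,\partial u\rangle=\langle\eta,u\rangle$ for capped cylinders, making the weight $e^{\langle\alpha,u\rangle}$ multiplicative under gluing and constant on continuation homotopies.

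The main obstacle, and the reason the theorem excludes $p=0$, is $C^0$-compactness of the moduli space: one must prevent Floer solutions from escaping to infinity in the conical end of $\widehat{M}$. I would use the integrated maximum principle of Abouzaid--Seidel, applied to the function $r\circ u$ on the preimage of the end. The key computation uses $d\beta\leq 0$ together with the fact that on \emph{positive} cylindrical ends the orientation contributes with a favourable sign, while on \emph{negative} ends it contributes with the wrong sign. Having at least one positive end (i.e.\ $p\geq 1$) lets the positive-end contribution dominate and forces $u$ into a compact subset; with $p=0$ this balance fails, which is why no counit operation can be constructed. This is the single hard analytic step; everything else reduces to bookkeeping of signs, cylindrical-end gluing, and, for the twisting, verifying that the transgression cocycle is additive under concatenation of cylinders and vanishes on constant solutions so that the unit $1\in SH^*(M)_{\eta}$ (image of $\psi_C$ for the cap $C$) is indeed a unit for the product $\psi_P$.
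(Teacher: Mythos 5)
Your overall scheme matches the paper: fix cylindrical ends, a sub-closed one-form $\beta$, count solutions of $(du - X\otimes\beta)^{0,1}=0$, weight by the transgression $\tau\eta[u]=\int_S u^*\eta$, and verify transversality, Fredholm index, chain map property, homotopy invariance, and gluing-yields-composition. The weight additivity/invariance arguments you sketch are essentially what Theorem~\ref{Theorem Novikov weights behave well} establishes, and the reason only transgressions $\alpha=\tau\eta$ are admissible is exactly the one you hint at: for general $\alpha\in H^1(\mathcal{L}M)$ there is no canonical way to evaluate on a Floer solution over a higher-genus $S$, whereas $\int_S u^*\eta$ is intrinsic.

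Two points where your explanation diverges from (and is less precise than) the paper's. First, the exclusion of $p=0$ is not a borderline failure of the compactness estimate; it is an \emph{a priori obstruction to constructing} $\beta$. The paper requires $d\beta\leq 0$ globally and $\beta$ equal to positive multiples of $dt$ on every end. Stokes then forces $0\geq\int_S d\beta=\sum k_b-\sum h_a$, i.e.\ the total output weight must dominate the total input weight. With $p=0$ and all $k_b>0$ this is impossible, so no admissible $\beta$ exists and the moduli space cannot even be set up. Your phrasing in terms of a ``favourable positive-end contribution dominating'' in the maximum principle is the downstream manifestation of the same inequality, but the constraint really lives in the choice of $\beta$, not in the compactness analysis. (Note also that your sign convention for outputs and inputs is reversed relative to the paper's; the paper declares the $p$ output punctures to be the negative ones.) Second, the basic compactness for the TQFT moduli spaces in the Appendix is obtained via a \emph{pointwise} maximum principle (Lemma~\ref{Lemma Maximum principle for Floer solns}), computing $-dd^c\rho$ for $\rho=R\circ u$ directly; the Abouzaid--Seidel \emph{integrated} no-escape lemma (Lemma~\ref{Lemma no escape}) is used in the paper for a different purpose, namely the action-window arguments in Viterbo restriction. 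Either technique works here, but it is worth keeping them distinct, since only the integrated version handles the boundary pieces needed in the Viterbo/wrapped setting.
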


\noindent The $\psi_S$ are not defined for $p=0$ due to a
non-compactness issue. Indeed if $p=0$ were allowed, then the TQFT axioms would imply $SH^*(M)$ is finite dimensional,\footnote{\emph{Proof:} let $a_{\nu}$ be a basis for the $\K$-vector space $SH^*(M;\K)$. Denote $S_{pq}$ the surface of genus $0$ with $p$ negative and $q$ positive punctures. Observe that $S_{20}$ arises from gluing $Q=S_{21}$ onto $C=S_{10}$. This determines a \emph{finite} sum $\psi_{S_{20}}(1) = \sum k_{ij} \, a_i\otimes a_j$ where $k_{ij}\in \K$. Since $S=(Z \sqcup S_{02}) \# (S_{20}\sqcup Z)$ is actually a cylinder, it acts by the identity. So, for any $x\in SH^*$, $x=\psi_{S}(x)=\psi_{Z\sqcup S_{02}}(\sum k_{ij} \, a_i\otimes a_j \otimes x)= \sum (k_{ij}\, \psi_{S_{02}}(a_j,x))\, a_i$. But this means $SH^*$ is spanned by just finitely many vectors $a_i$, so $SH^*$ is finite-dimensional. $\qed$} which is false for simply connected cotangent bundles. This differs from the Floer
cohomology of a \emph{closed} symplectic manifold $M$: $p=0$ is
legitimate, and $FH^*(M)\cong H^*(M)$ is finite dimensional.

The untwisted TQFT is summarized in Figure \ref{Figure TQFT table} (in Section \ref{Section Symplectic cohomology}
we will define $SH_*(M)$ and prove that it is canonically the dual of $SH^*(M)$, but not vice-versa).
\begin{corollary*}
There is a graded-commutative associative unital ring structure on
$SH^*(M)_{\eta}$ with product $\psi_P$ and unit $e=\psi_C(1)$ $($this structure can also be defined
if we replace $\K$ by $\Z)$.
\end{corollary*}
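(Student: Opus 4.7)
The plan is to derive this corollary formally from the TQFT structure of the preceding theorem, using the gluing axiom and the fact that $\psi_S$ depends only on the topological type of the surface $S$. I would define the product as $\psi_P$, where $P$ is the pair of pants (genus $0$, two inputs, one output), and the unit as $e := \psi_C(1) \in SH^*(M)_\eta$, where $C$ is the cap (disc) with one output and no inputs.

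For the \emph{unit axiom}, I would glue the output of $C$ to one of the two input ends of $P$. The resulting surface is a cylinder $Z$ with $p = q = 1$, and by the TQFT axioms $\psi_Z = \mathrm{id}$. The gluing compatibility $\psi_P \circ (\psi_C \otimes \mathrm{id}) = \psi_Z$ then gives $\psi_P(e \otimes a) = a$, and symmetrically $\psi_P(a \otimes e) = a$ once commutativity is known. For \emph{associativity}, both triple products $(a \cdot b) \cdot c$ and $a \cdot (b \cdot c)$ are obtained by gluing two copies of $P$ along a common cylindrical end; in each case the resulting surface is the genus-zero surface with three inputs and one output. Since $\psi_S$ depends only on the topological type of $S$, these two compositions agree. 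For \emph{graded commutativity}, the swap of the two input circles of $P$ is realized by an orientation-preserving self-diffeomorphism of $P$; topological invariance of $\psi_P$ and the Koszul sign convention on the tensor product of graded vector spaces therefore give $\psi_P(a \otimes b) = (-1)^{|a||b|} \psi_P(b \otimes a)$.

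The one point requiring a bit of care is the claim that the product and unit are defined also over $\Z$. The theorem is formulated over a field $\K$ of arbitrary characteristic, but the higher TQFT operations on twisted $SH^*(M)_\eta$ may genuinely require working over a field because of sign/monodromy issues in the local coefficient system. For the \emph{product} $\psi_P$ and the \emph{unit} $e = \psi_C(1)$ alone, however, the moduli problems on $P$ and $C$ admit coherent orientations by the standard Floer-theoretic arguments of Piunikhin--Salamon--Schwarz and Schwarz, so the counts are well-defined integers; the verification would consist of checking that the gluing signs from the Appendix match. The main conceptual obstacle — transversality, compactness, and coherent orientations — is absorbed into the theorem itself, so at the level of the corollary everything reduces to the classification of low-genus punctured surfaces and the formal TQFT axioms.
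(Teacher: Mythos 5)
Your outline for the unit and associativity matches the paper's argument exactly: the unit axiom comes from gluing $C$ into one input of $P$ to obtain a cylinder $Z$ with $\psi_Z=\mathrm{id}$ (Theorem~\ref{Theorem unital ring structure}), and associativity comes from gluing two copies of $P$ and observing that both triple products yield the same genus-zero surface with $(p,q)=(1,3)$, as in \S\ref{Subsection Product}.

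Two points are off, however. First, for graded-commutativity you invoke ``topological invariance of $\psi_P$ and the Koszul sign convention on the tensor product of graded vector spaces,'' but that is precisely what has to be established, not a formal consequence of the TQFT axioms. The swap of the two inputs of $P$ is an orientation-preserving self-diffeomorphism, and it does identify the two moduli spaces set-theoretically, but one must check how the coherent orientations transform under the induced relabeling of the ends. That is exactly the content of Lemma~\ref{Lemma reorder ends}: exchanging two consecutive input labels changes the orientation by $(-1)^{\mathrm{ind}\,D_k\cdot\mathrm{ind}\,D_{k+1}}$, and only after translating these indices into Conley--Zehnder gradings (using the $\Z/2\Z$-grading) does the Koszul sign $(-1)^{|a||b|}$ emerge. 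As written, your argument is circular: it asserts the symmetric-monoidality of the functor, which is the nontrivial orientation statement being proved. Second, your explanation of why $\psi_P$ and $e$ are defined over $\Z$ is not the right one. Coherent orientations exist for \emph{all} the $\psi_S$, not just $P$ and $C$, and there is no special ``sign/monodromy'' obstruction for $p\geq 2$. The actual reason the paper restricts to a field is the K\"unneth theorem, needed in \S\ref{Subsection Operations on SH(H) Definition} to pass from $H_*\bigl(\otimes_a SC^*(h_a H)\bigr)$ to $\otimes_a SH^*(h_a H)$ in the \emph{target}. When $p=1$ (which covers both $P$ and $C$) the target is a single chain complex, no K\"unneth is needed, and one lifts $\psi_S$ directly via $\otimes_b[x_b]\mapsto\psi_S([\otimes_b x_b])$; see \S\ref{Subsection Choice of coefficients}. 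So the conclusion is correct, but your stated mechanism is not.
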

%
\subsection{TQFT structure on ordinary cohomology}
%
A TQFT exists also on $H^*(M)\cong H^*(\overline{M})$
if we use $1$-dimensional oriented cobordisms (we allow cobordisms which are not strictly speaking $1$-manifolds, but which are oriented graphs). The ordinary
cohomology can be identified with the Morse cohomology $MH^*(f)$ of any Morse
function $f: \overline{M}\to \R$ for which $-\nabla f$ points inwards along the conical end of $\overline{M}$. Replace surfaces $S$ by directed graphs
$S'$. Consider for example the Y-shaped
 graph $P'$ in Figure \ref{Figure Graphs}.
Assign a generic Morse function $f_i$ to each edge. The count of
isolated negative gradient flow lines along the graph defines
$$\psi_{P'}:MH^*(f_2)\otimes MH^*(f_3) \to
MH^*(f_1).$$

After identifications with ordinary cohomology, this is the cup
product on $H^*(M)$. Indeed $H^*(M)$ is a $(0+1)$-dimensional
TQFT: to a point associate $H^*(M)$, and to a cobordism between
$p+q$ points represented by a directed graph $S'$ associate
$$\psi_{S'}:H^*(M)^{\otimes q} \to H^*(M)^{\otimes p}.$$
This TQFT is well-known, see for example Betz-Cohen
\cite{Betz-Cohen} and Fukaya \cite{Fukaya}.
%
%
%
\subsection{Wrapped Floer cohomology}
\label{Subsection Intro Wrapped Floer cohomology}
%
%
\emph{Wrapped Floer cohomology} $HW^*(L)$ is an invariant of an
exact Lagrangian $L \subset (M,d\theta)$ having Legendrian
intersection $\partial L = L \cap \partial M$ with $\partial M$. Special cases for $\overline{M}=T^*N$ were
introduced by Abbondandolo-Schwarz \cite{Abbondandolo-Schwarz},
the general definition arises in Fukaya-Seidel-Smith
\cite{Fukaya-Seidel-Smith2}, and a detailed construction is in
Abouzaid-Seidel \cite{Abouzaid-Seidel}. The construction in
\cite{Abouzaid-Seidel} is more complicated than ours because the
authors' aim was to construct an $A_{\infty}$-structure at the
chain level.

The construction of $SH^*(M)$ involves closed Hamiltonian orbits
whereas $HW^*(L)$ involves open Hamiltonian orbits with ends on
$L$. This open-closed string theory analogy is a dictionary to
pass from $SH^*$ to $HW^*$: we now use half the surface obtained
after cutting Figure \ref{Figure Introduction} with a vertical
plane, so $S$ has boundary components. We count maps $u:S\to M$
satisfying the Lagrangian boundary condition $u(\partial S)
\subset \overline{L}$ (where one has extended $L$ to the conical end of $\overline{M}$), converging to open Hamiltonian orbits at the $p+q$
punctures, yielding:
$$
\mathcal{W}_S: HW^*(L)^{\otimes q} \to HW^*(L)^{\otimes p}. \qquad
(p\geq 1, q\geq 0)
$$
One can also define open-closed operations by making $p_{\mathrm{int}}+q_{\mathrm{int}}$ punctures in the interior of $S$. For a disc $S=D$ with $(p,q;p_{\mathrm{int}},q_{\mathrm{int}})=(1,1;0,1)$ this yields a module structure
$$
\mathcal{W}_D: SH^*(M)\otimes HW^*(L) \to HW^*(L).
$$

This paper will introduce a new group: the \emph{twisted wrapped Floer cohomology},  $HW^*(L)_{\eta}$, which depends on a closed two-form $\eta \in H^2(M,L)$. This recovers $HW^*(L)$ when $\eta=0$. Denote $\overline{\eta}$ the image of $\eta$ under $H^2(M,L) \to H^2(M)$.
\begin{theorem*}
$HW^*(L)_{\eta}$ has a TQFT structure, part of which is a unital ring
structure. It is an $SH^*(M)_{\overline{\eta}}$-module via $\mathcal{W}_D$. The Viterbo restriction maps $HW^*(L)_{\eta} \to HW^*(L\cap W)|_{\eta|W}$ preserve the TQFT and module structure.
\end{theorem*}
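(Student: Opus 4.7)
The plan is to mirror, step by step, the closed-string construction that is established for $SH^*(M)_\eta$ in the body and Appendix of the paper, replacing closed surfaces with surfaces-with-boundary and Hamiltonian orbits with Hamiltonian chords having ends on $L$. First, one builds the twisted chain complex $CW^*(L)_\eta$: transgress $\eta\in H^2(M,L)$ to a closed $1$-form on the open path space $\mathcal{P}_L M=\{\gamma\co [0,1]\to M:\gamma(0),\gamma(1)\in L\}$ by choosing a representative $2$-form for $\eta$ that vanishes on $L$, then weight each Floer strip $u$ by $\exp\int u^*\eta$. The usual breaking argument, combined with the cocycle property of the weights under gluing strips, shows $d^2=0$.

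Next, define the TQFT operations $\mathcal{W}_S$ by counting rigid solutions of the perturbed Cauchy--Riemann equation on a punctured surface $S$ with boundary, subject to $u(\partial S)\subset L$ and asymptotic to Hamiltonian chords at the $p+q$ boundary punctures, each weighted by $\exp\int u^*\eta$. For the open-closed disc $\mathcal{W}_D$, allow an additional interior puncture asymptotic to a closed Hamiltonian orbit and weight by $\exp\int u^*\eta$ as well, noting that on the interior puncture the relevant class is $\overline{\eta}\in H^2(M)$. Granted the analytical backbone (see below), verifying the TQFT axioms is then formal: the maximum principle at $\partial M$ (using conicity of $L$ near $\partial M$) and at interior contact hypersurfaces gives compactness of moduli for $p\geq 1$; the weights are uniformly bounded on compact sets, so Gromov compactness is unaffected; associativity of the product $\mathcal{W}_P$, existence of the unit $e=\mathcal{W}_C(1)$ from the half-cap, and the $SH^*(M)_{\overline{\eta}}$-module axioms for $\mathcal{W}_D$ all follow from degenerating one-parameter families of conformal structures on appropriate surfaces exactly as in the closed case, since the cocycle property makes the weights multiply correctly under gluing.

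To show that Viterbo restriction $\varphi\co HW^*(L)_\eta\to HW^*(L\cap W)_{\eta|_W}$ preserves the TQFT and module structure, I would copy the closed-case construction of \cite{Viterbo1,Seidel,McLean,Ritter}: use step-shaped Hamiltonians that are $C^0$-small on $W$ and of large positive slope on $M\setminus W$ so that the chords split into a ``small action'' subcomplex generated by chords in $W$, which by a monotone homotopy argument computes $HW^*(L\cap W)_{\eta|_W}$, with $\varphi$ the quotient. For each operation $\mathcal{W}_S$ (including $\mathcal{W}_D$), interpolate the Hamiltonian and almost complex structure data over $S$ between ``computes $M$'' and ``computes $W$'', and run the maximum principle along the contact-type hypersurface $\partial W$ to confine the relevant solutions to $W$; this gives a chain homotopy between $\varphi\circ\mathcal{W}_S^M$ and $\mathcal{W}_S^W\circ\varphi^{\otimes q}$. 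The twisting poses no extra difficulty here because $\eta|_W$ is literally the pullback of $\eta$ under the inclusion, so the weights behave functorially.

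The main obstacle, as in the closed case, is the \emph{maximum principle with Lagrangian boundary and several positive punctures}: one must rule out escape of Floer solutions across both $\partial M$ and $\partial W$ uniformly in the moduli of conformal structures on $S$, in the presence of distinct Hamiltonian slopes at the positive punctures and the Lagrangian boundary condition. Once this estimate is secured -- by a careful choice of $S$-dependent Hamiltonian data that is linear near infinity with consistent slopes at the ends, combined with conicity of $L$ at $\partial M$ -- the rest of the proof is a cobordism-and-degeneration argument formally identical to the closed one, and the twisting contributes only the book-keeping that $\exp\int u^*\eta$ is well defined, multiplicative under gluing, and compatible with restriction.
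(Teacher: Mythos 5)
Your plan follows the paper's blueprint for the TQFT and module structure on $HW^*(L)_\eta$, and that part of the proposal is essentially right: the weights are locally constant on moduli spaces, the wrapped maximum principle handles boundary components, and compactness and gluing work as in the closed case. However there are two genuine gaps in the restriction-map part, and one notational error that would be a real mistake if taken literally.

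First, the weight should be $t^{\int_S u^*\eta}$ with $t$ the Novikov formal variable, not $\exp\int u^*\eta$. If read literally, $e^{\int u^*\eta}$ lands in $\R_{>0}\subset\K$ and produces a complex \emph{isomorphic} (over $\R$) to the untwisted one; the Novikov-ring valuation is precisely what prevents such a change of basis and makes the twisting nontrivial. If you only meant ``multiplicative weight'' informally, that is fine, but make it precise.

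Second, and more seriously, your sketch of the wrapped Viterbo restriction overlooks the central new difficulty of the open-string case: the Lagrangian action functional has the extra boundary term $f(x(1))-f(x(0))$, where $\theta|_{\overline{L}}=df$. This spoils the action separation argument unless $f$ vanishes on the part of $L$ that carries the relevant generators. The paper handles this by (i) adding the hypothesis that $f:L\to\R$ extends to $f:\overline{M}\to\R$ locally constant on $\partial W\cup\partial M$ (without this, the no-escape estimate $E(u)\le 0$ picks up uncontrolled boundary terms of the form $f(z)-f(z')$ from arcs $\gamma\subset\partial S$ with $u(\gamma)\subset\overline{L}$), and (ii) a Liouville-flow rescaling trick that replaces $L\cap W$ by $\varphi_Z^{\log c}(L\cap W)$, joined to $\overline{L}\setminus W$, so that $f=0$ on the region where $W$-chords live. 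Your proposal does not mention (i) and does not carry out (ii); without them, the crucial claim that chords in $W$ form an action subcomplex is simply false.

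Third, your strategy of ``interpolate the Hamiltonian and almost complex structure data over $S$ between computes $M$ and computes $W$, obtaining a chain homotopy'' is not what is needed and is not well-posed as stated: the two sides of the putative chain homotopy are maps between \emph{different} complexes. What actually works, and what the paper does, is the no-escape argument applied directly to TQFT moduli spaces with ends in $W$: Floer and wrapped solutions with all asymptotics in $W\cup C_W$ cannot exit $W$, so the identification $i^{-1}$ between the small-action $M$-complex and the $W$-complex literally intertwines the operations at the chain level. No homotopy or interpolation is required, and a homotopy argument would anyway face the same $f$-term difficulties you have not addressed. You also do not verify that the surfaces relevant to $\mathcal{W}_D$ (with interior punctures) are confined by the no-escape lemma; this is routine given the paper's Corollary on no escape, but must be said.

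Once these points are repaired — correct Novikov weights, the explicit extension hypothesis on $f$, the rescaling of $L\cap W$, and a direct no-escape-based chain-level identification in place of an interpolation — the rest of your outline matches the paper's argument.
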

\begin{corollary*}
If $SH^*(M)_{\overline{\eta}}\!=\!0$ then $HW^*(L)_{\eta}\!=\!0$.
\end{corollary*}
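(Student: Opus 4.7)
The plan is to mimic the standard Viterbo--Seidel--McLean vanishing argument recalled earlier in the introduction: if the ring $SH^*(M)_{\overline{\eta}}$ vanishes then so does its unit $e$, and any unital module over a zero ring is zero. The preceding theorem already supplies the $SH^*(M)_{\overline{\eta}}$-module structure on $HW^*(L)_{\eta}$ via $\mathcal{W}_D$, so the only real content to verify is that this module structure is unital, i.e.\ that $\mathcal{W}_D(e\otimes x) = x$ for every $x\in HW^*(L)_{\eta}$.

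I would establish this unitality using the TQFT gluing axiom. The unit is by construction $e = \psi_C(1)$, where $C$ is the cap with one positive closed puncture. The operation $\mathcal{W}_D$ is defined on the disc carrying one interior (closed) input puncture together with one boundary input and one boundary output. Gluing $C$ to the interior input of $D$ produces a surface with no interior punctures and exactly one boundary input / one boundary output, i.e.\ a wrapped cylinder, which by the TQFT functoriality axiom induces the identity on $HW^*(L)_{\eta}$. Hence
$$
\mathcal{W}_D(e\otimes x) \;=\; \mathcal{W}_D(\psi_C(1)\otimes x) \;=\; \mathcal{W}_{D\#C}(x) \;=\; x.
$$

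Given this, the corollary is automatic: if $SH^*(M)_{\overline{\eta}}=0$ then $e=0$, so by bilinearity of $\mathcal{W}_D$ we get $x=\mathcal{W}_D(e\otimes x)=\mathcal{W}_D(0\otimes x)=0$ for every $x\in HW^*(L)_{\eta}$, and therefore $HW^*(L)_{\eta}=0$. The one genuinely nontrivial ingredient is the open-closed gluing identity $\mathcal{W}_D\circ(\psi_C\otimes \mathrm{id}) = \mathcal{W}_{D\#C}$ in the twisted setting, and this is where I expect the main obstacle to lie: one must check that the restriction of $\eta\in H^2(M,L)$ at the interior puncture coincides with its image $\overline{\eta}\in H^2(M)$, so that the Novikov coefficient bundle glues consistently across the newly closed interior puncture. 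This compatibility is precisely what is asserted by the relationship between $\eta$ and $\overline{\eta}$ in the theorem, so once the twisted TQFT is in place the deduction is essentially formal.
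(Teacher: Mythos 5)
Your argument follows the paper's own route exactly: the unital $SH^*(M)_{\overline{\eta}}$-module structure on $HW^*(L)_{\eta}$, including the cap-gluing proof that $\mathcal{W}_D(e\otimes\cdot)=\mathrm{id}$ (via reducing the capped disc to a continuation strip), is supplied by Theorems \ref{Theorem wrapped module over SH} and \ref{Theorem wrapped twisted TQFT and module}, and the corollary is then the one-line deduction $y = e\cdot y = 0\cdot y = 0$ given in Theorem \ref{Theorem SH=0 implies HW=0}. Your observation that the twisting compatibility hinges on $\overline{\eta}$ being the image of $\eta$ under $H^2(M,L)\to H^2(M)$ is exactly the point the paper handles in \ref{Subsection Wrapped twisted theory}.
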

%
%
%
%
\subsection{The Arnol'd chord conjecture}
\label{Introduction on Arnold conjecture}
%
This conjecture states that a contact manifold containing a
Legendrian submanifold must contain a Reeb chord with ends on the
Legendrian. We always deal with the Legendrian $\partial L$ of
\ref{Subsection Intro Wrapped Floer cohomology}:
\begin{theorem*} If $SH^*(M)_{\overline{\eta}}=0$ or $HW^*(L)_{\eta}=0$ then the chord
conjecture holds, and for a generic contact form there are at
least $\textrm{rank}\; H^*(L)$ chords.
\end{theorem*}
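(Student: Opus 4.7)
My plan is to reduce both hypotheses to $HW^*(L)_\eta = 0$ and then exploit an action filtration on the wrapped Floer complex. The preceding corollary already yields $SH^*(M)_{\overline\eta} = 0 \Rightarrow HW^*(L)_\eta = 0$, so the entire theorem follows once I derive the chord statements from $HW^*(L)_\eta = 0$.

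For existence I would argue by contradiction. Suppose $\partial L$ admits no Reeb chord. Pick a Hamiltonian $H$ that vanishes on a large interior region and has positive linear slope at infinity, perturbed by a small Morse function on $L$ in the interior; then the $X_H$-chords from $L$ to $L$ are exactly the critical points of that Morse function (there is no chord near $\partial M$ by assumption). Since $\eta \in H^2(M,L)$ pairs trivially with any $2$-chain contained in $L$, and the Floer strips between such constants are reparametrized Morse trajectories on $L$, the twisting weight on each strip is $1$. Hence $HW^*(L)_\eta \cong H^*(L;\K)\otimes\Lambda$, where $\Lambda$ is the Novikov ring used for the twisting; this contains the unit and is non-zero, contradicting $HW^*(L)_\eta = 0$.

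For the quantitative statement, fix a generic contact form so that every Reeb chord of $\partial L$ is non-degenerate, and assume the chord set is finite (else the bound is trivial). Choose $H$ with slope larger than the length of every chord, and with the same small Morse perturbation on $L$ as above; the generators of $CW^*(L)_\eta$ are then the critical points of $H|_L$, of action near zero, together with one generator per Reeb chord, of large action. The action filtration makes these two types of generators into a subcomplex/quotient pair whose long exact sequence, combined with $HW^*(L)_\eta = 0$, forces the cohomology of the chord part to have the same rank as $H^*(L;\K)\otimes\Lambda$. Since each chord contributes a single generator, there are at least $\mathrm{rank}\,H^*(L)$ Reeb chords.

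The main obstacle will be justifying that the action filtration really splits $CW^*(L)_\eta$ into a subcomplex/quotient pair in the twisted setting, and that the computation of the low-action part genuinely returns $H^*(L;\K)\otimes\Lambda$. Both rely on the maximum principle at infinity and on strict monotonicity of action along Floer strips, which are already built into the construction of $HW^*(L)_\eta$; the twisting by $\eta$ only reweights coefficients of the differential and does not alter which pairs of orbits are joined by strips, so the filtration argument transfers verbatim from the untwisted case.
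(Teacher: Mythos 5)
Your proposal follows essentially the same route as the paper. You reduce both hypotheses to $HW^*(L)_\eta = 0$ via the module structure (Theorem \ref{Theorem SH=0 implies HW=0}); for existence you argue that absence of Reeb chords forces $c^*: H^*(L)\otimes\Lambda \to HW^*(L)_\eta$ to be an isomorphism (the paper's first paragraph of the proof of Theorem \ref{Theorem Vanishing of HW implies Arnol'd Chord Conj}); and your ``action filtration subcomplex/quotient pair'' long exact sequence is precisely the $HW^*_+$ long exact sequence of Theorem \ref{Theorem HW+}, which the paper invokes to conclude $HW^*_+(L)_\eta \cong H^{*+1}(L)\otimes\Lambda$ and extract $\mathrm{rank}\,H^*(L)$ collar chords. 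The one loose point worth tightening: the triviality of the twisting weights on interior strips is best justified by representing $\eta \in H^2(M,L)$ by a de Rham form supported away from a neighbourhood of $\overline{L}$ (as in \ref{Subsection Wrapped twisted theory}), rather than by pairing with $2$-chains in $L$, since the low-action Floer strips are near $L$ but not contained in it.
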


This applies for example to subcritical Stein manifolds $M$ since
$SH^*(M)=0$ by Cieliebak \cite{Cieliebak}. For subcritical Stein $M$, the
existence of one Reeb chord for any Legendrian $K\subset
\partial M$ is due to Mohnke \cite{Mohnke}. We will also deduce that:

\begin{corollary*}
If $M$ admits an exact embedding into an exact convex symplectic
manifold $X$ (such as $\overline{M}$), and $\partial M$ is 
displaceable by a compactly supported Hamiltonian flow in
$X$, then the chord conjecture holds for any $\partial L$, and
generically there are $\textrm{rank}\; H^*(L)$ chords.
\end{corollary*}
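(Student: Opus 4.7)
The plan is to reduce the statement to the vanishing of $SH^*(M)$ and then prove that vanishing by a standard Floer-theoretic displacement argument. By the preceding Theorem applied with $\eta=0$, the chord conjecture for $\partial L$ (with at least $\mathrm{rank}\, H^*(L)$ chords generically) follows as soon as either $SH^*(M)=0$ or $HW^*(L)=0$. By the Corollary just stated, $SH^*(M)=0$ implies $HW^*(L)=0$, so it suffices to produce $SH^*(M)=0$ under the hypotheses of the Corollary. This is the content that requires actual work; everything else is a direct quotation from the previous results.

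To show $SH^*(M)=0$, I would argue that the TQFT unit $e = \psi_C(1) \in SH^*(M)$ vanishes, which is enough since $y = y\cdot e = 0$ for all $y$. The input is a compactly supported Hamiltonian $\phi \in \mathrm{Ham}_c(X)$ displacing $\partial M$ from itself. Enlarge the Floer-theoretic picture from $M$ to $X$: pick an admissible Hamiltonian $H$ on $M$ with a long linear ramp on a collar of $\partial M$, extend it to $X$, and consider a parametrised family of cap surfaces on $X$ in which the continuation data is twisted by the flow of a generating Hamiltonian for $\phi$. The $1$-periodic orbits of the deformed Hamiltonian are forced off $\partial M$ by the displacement, while the nontrivial orbits contributing to $e$ concentrate along $\partial M$, so the moduli space through the displacing homotopy delivers a chain homotopy between the cap cocycle representing $e$ and $0$. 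Thus $e = 0$ and hence $SH^*(M)=0$.

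The main obstacle is the geometric analysis: one needs $C^0$-bounds and compactness for the relevant cap moduli spaces on the \emph{non-compact} target $X$. Because $X$ is only exact convex and the homotopy data is supported on a set larger than $M$ itself, the ordinary maximum principle used to confine Floer trajectories inside $M$ must be replaced by a maximum principle adapted to the cylindrical end of $X$, together with a careful tuning of the slopes of the interpolating family so that no Floer cap escapes to infinity or acquires unexpected breakings. Once this is established, the displacement argument is formally the same as in Viterbo \cite{Viterbo1} and McLean \cite{McLean}, now applied to the TQFT unit rather than to an arbitrary cycle; the chord count of the Corollary is then inherited directly from the preceding Theorem.
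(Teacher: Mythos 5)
Your reduction to the claim $SH^*(M)=0$ is correct and mirrors the paper: combining Theorem \ref{Theorem SH=0 implies HW=0} with Theorem \ref{Theorem Vanishing of HW implies Arnol'd Chord Conj} immediately hands you the chord conclusion once $SH^*(M)=0$ is known. The divergence, and the problem, is in how you propose to establish $SH^*(M)=0$.

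You sketch a direct displacement argument: extend an admissible $H$ to $X$, twist the cap moduli space by a compactly supported Hamiltonian isotopy displacing $\partial M$, and hope this produces a nullhomotopy of the cap cocycle representing the unit $e$. This does not go through as stated, and the obstacle is not merely the maximum principle you flag. The chain complexes computing $SH^*(M)$ use Hamiltonians linear at infinity on the completion $\overline{M}$ with slopes growing to infinity; the generators that must be killed are Reeb orbits sitting at arbitrarily large radii $R$ on the collar $[1,\infty)\times\partial M$ of $\overline{M}$. These radii have no counterpart inside $X$ once $M$ is embedded as a compact subdomain: $X$ comes with its own convex end, not a collar modelled on $\partial M$. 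So ``extending $H$ to $X$'' produces a different Floer theory whose periodic orbits do not match those contributing to $SH^*(M)$, and displacing $\partial M=\{R=1\}$ inside $X$ does nothing to the orbits at $R\gg 1$ in $\overline{M}$. The claim that the relevant orbits ``concentrate along $\partial M$'' is the step that fails.

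The paper's proof goes by a quite different route precisely to sidestep this issue. Displaceability of $\partial M$ in $X$ gives $RFH^*(\partial M\subset X)=0$ by Cieliebak--Frauenfelder; Rabinowitz Floer cohomology has, by construction (the Lagrange multiplier), all its generators on $\partial M$, so the displacement of $\partial M$ is exactly the right input, and moreover $RFH^*$ depends only on the filling $M$, so $RFH^*(M)=0$. The new content of the paper is then Theorem \ref{Theorem RFH=0 iff SH=0}, proved via the Cieliebak--Frauenfelder--Oancea long exact sequence together with the TQFT structure (in the $\Z/2\Z$-graded case one uses that $e=c^*(z)$ with $z^n=0$ for degree reasons), giving $SH^*(M)=0$. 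If you want to avoid Rabinowitz Floer theory, you would need to supply a rigorous mechanism connecting the Floer complexes on $\overline{M}$ with the geometry inside $X$, which is essentially what the Lagrange-multiplier functional accomplishes; your proposal does not provide a substitute.
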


\begin{theorem*}
For $L\subset M=DT^*N$, with $N$ closed and simply
connected, such that $H^2(T^*N) \to  H^2(L)$ is not
injective, the chord conjecture holds and generically there are
$\geq \textrm{rank}\; H^*(L)$ chords. It also holds after
attaching subcritical handles to $DT^*N$.
\end{theorem*}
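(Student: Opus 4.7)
The plan is to reduce to the Corollary ``$SH^*(M)_{\overline{\eta}}=0 \Rightarrow HW^*(L)_\eta=0$'' stated above, and then quote the chord-conjecture theorem. Since $N$ is simply connected, the zero-section identification gives $H^2(T^*N) \cong H^2(N) \cong H^2(M)$ with $M=DT^*N$. The non-injectivity hypothesis then produces a nonzero class $\overline{\eta}\in H^2(M)$ restricting to zero in $H^2(L)$, and exactness of $H^2(M,L) \to H^2(M) \to H^2(L)$ supplies a lift $\eta \in H^2(M,L)$, which is the datum needed to form $HW^*(L)_\eta$.

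Next I would invoke the isomorphism from \cite{Ritter} recalled in the introduction,
$$SH^*(M)_{\overline{\eta}} \;\cong\; H_{n-*}(\mathcal{L}N)_{\overline{\eta}} \;=\; 0,$$
valid for simply connected $N$ and nonzero $\overline{\eta}\in H^2(N)$. The Corollary cited above then yields $HW^*(L)_\eta=0$, and the preceding chord-conjecture theorem delivers the existence of a Reeb chord with ends on $\partial L$ together with the generic lower bound $\textrm{rank}\; H^*(L)$.

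For the subcritical-handle statement, write $M' = DT^*N \cup h_1 \cup \cdots \cup h_r$ with each $h_i$ of index $\leq n-1$, and position $L$ to lie in $DT^*N \subset M'$ (generically the handles, whose attaching spheres are isotropic of dimension $\leq n-2$, avoid $L$). I would first extend $\overline{\eta}$ to a class $\overline{\eta}' \in H^2(M')$; the long exact sequence of $(M',DT^*N)$ makes this extension available because the relative cohomology sits in the handle indices. I would then invoke a twisted version of Cieliebak's subcritical invariance \cite{Cieliebak}: the Viterbo restriction $SH^*(M')_{\overline{\eta}'} \to SH^*(M)_{\overline{\eta}}$, which is a unital ring map by the main theorem of this paper, is an isomorphism under subcritical handle attachment. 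Vanishing of the target forces vanishing of the source, and the argument of Steps~1--2 applies with $M'$ in place of $M$.

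The main obstacle is the twisted Cieliebak invariance used in the subcritical case. The classical argument uses a neck-stretching around the isotropic skeleta of the subcritical handles, so one must check that the local coefficient system determined by $\overline{\eta}'$ is compatible with the degeneration, i.e.\ that the twisted Floer counts match on either side of the neck. This should follow by combining the Viterbo--TQFT compatibility established in this paper with Cieliebak's SFT-style estimates, but the bookkeeping of twisting coefficients through the stretching is the delicate technical point. Modulo this, both statements are essentially formal consequences of results already appearing in the excerpt.
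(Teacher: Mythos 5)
Your main argument (steps 1--2) matches the paper's proof of Theorem~\ref{Theorem Arnold Conj application 1} exactly: the non-injectivity hypothesis yields $\eta\in H^2(T^*N,L)$ with $\overline{\eta}\neq 0$, the vanishing $SH^*(T^*N)_{\overline{\eta}}=0$ comes from Corollary~\ref{Corollary Full loopspace homology vanishes}, Theorem~\ref{Theorem SH=0 implies HW=0} gives $HW^*(L)_\eta=0$, and Theorem~\ref{Theorem Vanishing of HW implies Arnol'd Chord Conj} finishes.

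For the subcritical-handle addendum, however, your worry is aimed at the wrong target. You frame the ``delicate technical point'' as neck-stretching around the isotropic skeleta of the handles and the bookkeeping of twisting coefficients through the degeneration. But the invariance result of Cieliebak \cite{Cieliebak} that the paper invokes is \emph{not} an SFT/neck-stretching argument: it is an action-filtration argument showing that the subcomplex generated by the new Reeb orbits created by the subcritical handle is acyclic. Since the twisted differential only modifies the untwisted one by weights $t^{\tau\eta[u]}$, and these weights do not interfere with action filtrations (every Floer trajectory still strictly decreases action), the argument carries over to any local system verbatim. This is precisely what the paper's one-line remark ``$SH^*$ does not change for action reasons, so it also holds for twisted $SH^*$'' means; there is no SFT transversality or compactness issue to resolve. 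In short, the compatibility you flag as the main obstacle is automatic, and raising it suggests you were imagining a different (more elaborate) proof of the underlying invariance.

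One genuine loose end in your write-up: your claim that ``the long exact sequence of $(M',DT^*N)$ makes this extension available because the relative cohomology sits in the handle indices'' is too quick. If any handle has index $3$, then $H^3(M',DT^*N)\neq 0$ and the obstruction to extending a class from $H^2(DT^*N)$ to $H^2(M')$ need not vanish for degree reasons. The cleaner route (consistent with the paper's phrasing) is to start from a nonzero class on $M'$ itself whose restriction to $DT^*N$ stays nonzero, then apply the twisted Cieliebak isomorphism to transport vanishing from $T^*N$ to $M'$, rather than trying to extend $\overline{\eta}$ outward across the handles.
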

An ALE space $\overline{M}$ is a simply connected
hyperk\"{a}hler $4-$manifold which at infinity looks like $\C^2 / G$ for
a finite subgroup $G\subset SL(2,\C)$. Such $\overline{M}$ arise by attaching a conical end to the plumbing $M$ of copies of $DT^*S^2$ according to ADE Dynkin diagrams \cite{Ritter2}.
\begin{theorem*}
For any ALE space the chord conjecture holds for any $\partial L$
and generically there are at least $\textrm{rank}\; H^*(L)$ Reeb
chords.
\end{theorem*}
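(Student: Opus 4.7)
The plan is to deduce the theorem from the preceding chord conjecture result by producing a nonzero class $\eta \in H^2(\overline{M})$ for which the twisted symplectic cohomology $SH^*(\overline{M})_\eta$ vanishes. Once such an $\eta$ is available, the chord conjecture theorem stated just above simultaneously delivers existence of a Reeb chord on any Legendrian $\partial L$ and the generic lower bound $\geq \mathrm{rank}\; H^*(L)$, so no further argument is needed beyond the vanishing.

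To exhibit $\eta$, I would use the description from \cite{Ritter2}: the ALE space $\overline{M}$ is the symplectic completion of a Liouville domain $M$ built by plumbing copies of $DT^*S^2$ according to the ADE Dynkin diagram of $G \subset SL(2,\C)$. The zero-section spheres $S_1, \ldots, S_k$ are exact Lagrangian $2$-spheres whose Poincar\'e duals generate $H^2(\overline{M})$, and any nonzero integer combination $\eta = \sum c_i\, \mathrm{PD}(S_i)$ is a candidate. The required vanishing $SH^*(\overline{M})_\eta = 0$ is then extracted from the calculation in \cite{Ritter2}: the starting point is $SH^*(T^*S^2)_{\eta_0} \cong H_{2-*}(\mathcal{L}S^2)_{\eta_0} = 0$ for any nonzero area class $\eta_0 \in H^2(S^2)$, and this is propagated to the full ALE space by an argument showing that, for a generic choice of weights $c_i$, the twisted PSS image of the unit in $SH^*(\overline{M})_\eta$ is zero. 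Since by the unit-vanishing criterion recalled earlier $SH^*_\eta$ vanishes if and only if its unit does, this suffices; then the chord conjecture theorem applies verbatim.

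The main obstacle is the vanishing step itself. Viterbo restriction only pushes vanishing of $SH^*$ from a large domain down to a subdomain, not the other way round, so the cotangent-bundle vanishing for the individual factors $T^*S^2$ cannot be combined formally to conclude vanishing for the plumbing, let alone for $\overline{M}$. Instead one must exploit the ALE geometry directly: positivity of intersection between pseudoholomorphic curves and the exceptional sphere configuration forces the twisting by $\sum c_i\, \mathrm{PD}(S_i)$ to activate nontrivial Novikov weights on the Floer differential, killing the unit. This is precisely the calculation performed in \cite{Ritter2}, and in a self-contained treatment it would constitute the main technical step of the proof, with the rest of the argument being the formal application of the chord conjecture theorem above.
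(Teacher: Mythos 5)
Your overall strategy is right: cite \cite{Ritter2} for $SH^*(\overline{M})_{\overline\eta}=0$ with $\overline\eta\neq 0\in H^2(M)$, then feed this into the machinery built around the Arnol'd chord conjecture. However, there is a gap in how you invoke the chord conjecture theorem, and it is precisely the gap whose resolution makes the ALE case clean.

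The theorem you want to cite (Theorem \ref{Theorem Vanishing of HW implies Arnol'd Chord Conj}) only states the conclusion under the hypothesis $HW^*(L)_{\eta}=0$ for a class $\eta\in H^2(M,L;\R)$, not for $\eta\in H^2(M)$. To pass from the vanishing $SH^*(M)_{\overline\eta}=0$ with $\overline\eta\in H^2(M)$ to $HW^*(L)_{\eta}=0$, one uses Theorem \ref{Theorem SH=0 implies HW=0}, which requires a lift $\eta\in H^2(M,L)$ whose image under $H^2(M,L)\to H^2(M)$ equals $\overline\eta$. Such a lift need not exist for a general Lagrangian $L$: it exists precisely when $\overline\eta$ dies in $H^2(L)$, by the long exact sequence $H^2(M,L)\to H^2(M)\to H^2(L)$. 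Your proposal simply asserts that "the chord conjecture theorem stated just above simultaneously delivers" the conclusion without checking this lifting condition, so as written the argument does not justify the claim that the chord conjecture holds for \emph{any} $\partial L$.

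The observation that closes this gap (and is the point of the paper's one-line proof) is that in the ALE setting $\dim M=4$, so the exact Lagrangian $L$ is a $2$-manifold with nonempty boundary, whence $H^2(L)=0$. Therefore $H^2(M,L)\to H^2(M)$ is surjective and every $\overline\eta$ lifts; this is what makes the conclusion uniform in $L$. Once you record this, the rest of your proposal is correct and matches the paper: the vanishing is quoted from \cite{Ritter2}, and then the proof of Theorem \ref{Theorem Arnold Conj application 1} applies verbatim. (Your speculative paragraph about how the vanishing in \cite{Ritter2} is established is not needed for this theorem, which treats that vanishing as a black box, and should not be presented as part of the proof.)
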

%
%
\subsection{Obstructions to exact contact embeddings}
%
%
An embedding $j:\Sigma^{2n-1}\hookrightarrow (M^{2n},d\theta)$ is an
\emph{exact contact embedding} if there is a contact form
$\alpha$ on $\Sigma$ with $\alpha-j^*\theta=\textrm{exact}$. For
example, if $L\subset M$ is a closed exact Lagrangian, then a Weinstein
neighbourhood $W\cong DT^*L$ yields an exact contact hypersurface
$ST^*L\cong \partial W \hookrightarrow M$. Using the deformed TQFT we prove the following theorem: (stronger results are discussed in \ref{Subsection Symplectic cohomology obstructions simply
connected case}, \ref{Subsection Symplectic cohomology
obstructions})

\begin{theorem*}
Let $L,N$ be closed simply connected $n$-manifolds, $n\geq 4$. For
any exact contact embedding $ST^*L \hookrightarrow T^*N$, the
following hold
\begin{enumerate}
\item $H^2(N) \to H^2(L)$ is injective;

\item $\pi_2(L) \to \pi_2(N)$ has finite cokernel;

\item if $H^2(N)\neq 0$ then $H_*(L) \cong H_*(W)$ for the filling
$W$ of $ST^*L\subset T^*N$.
\end{enumerate}
\end{theorem*}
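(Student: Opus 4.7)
The three parts all hinge on the vanishing principle from \cite{Ritter}: for simply connected $N$ and nonzero $\eta\in H^2(N;\K)$, $SH^*(T^*N)_\eta\cong H_{n-*}(\mathcal{L}N)_\eta=0$. Since the main theorem of this paper makes Viterbo restriction $SH^*(T^*N)_\eta\to SH^*(W)_{\eta|_W}$ a unital ring map, the unit survives only if both sides vanish; thus $SH^*(W)_{\eta|_W}=0$ for every such $\eta$. This is the common engine behind (1), (2), (3).

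For (1) I argue by contradiction. If $0\neq\eta\in\ker(H^2(N;\K)\to H^2(L;\K))$, the above gives $SH^*(W)_{\eta|_W}=0$. To contradict this I produce an exact Lagrangian disc $L'\subset W$ filling a cotangent-fibre Legendrian $\Lambda=\partial T^*_xL\subset ST^*L=\partial W$: the Weinstein collar of $\partial W$ in $W$ contains a half-disc piece of $T^*_xL$, which I extend to a full exact Lagrangian disc using $n\geq 4$ for general position. Since $L'$ is contractible, $\eta|_W$ lifts to $H^2(W,L')$ and the Corollary in the excerpt forces $HW^*(L')_{\eta|_W}=0$. On the other hand, Viterbo restriction for wrapped Floer cohomology (TQFT-preserving by the same main theorem), combined with the Abbondandolo--Schwarz--Abouzaid isomorphism $HW^*(\text{fibre})\cong H_{n-*}(\Omega L)$, identifies $HW^*(L')_{\eta|_W}$ with $H_{n-*}(\Omega L)_{\eta|_L}$; the assumption $\eta|_L=0$ makes the twisting trivial, so this is nonzero for closed simply connected $L$. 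Contradiction. Part (2) is then formal: for simply connected $L,N$ with finitely generated $H_*$, Hurewicz and Universal Coefficients turn injectivity of $H^2(N;\Q)\to H^2(L;\Q)$ into surjectivity of $\pi_2(L)\otimes\Q\to\pi_2(N)\otimes\Q$, so the integer cokernel of $\pi_2(L)\to\pi_2(N)$ is torsion, hence finite.

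For (3), pick $0\neq\eta\in H^2(N;\K)$; by (1), $\eta|_L\neq 0$, so \cite{Ritter} also gives $SH^*(DT^*L)_{\eta|_L}=0$. Injecting both vanishings into the twisted Viterbo long exact sequence of each filling yields
\[
SH^{*-1}_+(W)_{\eta|_W}\cong H^*(W,\partial W)_{\eta|_W}, \qquad SH^{*-1}_+(DT^*L)_{\eta|_L}\cong H^{*-n}(L)_{\eta|_L},
\]
the right-hand side of the second using Thom. Because $SH^*_+$ is generated by Reeb orbits on the common boundary $ST^*L=\partial W=\partial(DT^*L)$, and $n\geq 4$ supplies the index positivity ensuring the Reeb-filtered spectral sequence depends only on the contact data (via the Abbondandolo--Schwarz correspondence), the two left-hand sides match, giving $H^*(W,\partial W)_{\eta|_W}\cong H^{*-n}(L)_{\eta|_L}$. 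Poincar\'e--Lefschetz duality on $W$ rewrites this as $H_{2n-*}(W)_{\eta|_W}\cong H^{*-n}(L)_{\eta|_L}$, and comparing ranks across a generic range of twistings (including the untwisted limit) promotes the twisted isomorphism to the untwisted homological equivalence $H_*(W)\cong H_*(L)$.

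The hard part is twofold: first, the construction of $L'\subset W$ in (1) from the germ in the Weinstein collar (extending to a genuine exact disc inside a general filling requires $n\geq 4$ and careful symplectic surgery); second, in (3) the contact-boundary-only dependence of $SH^*_+$, needed to cross-compare $W$ with $DT^*L$ and so pass from a twisted rank equality to the desired untwisted homological isomorphism of $W$ and $L$.
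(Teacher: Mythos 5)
Your part (1) takes a fundamentally different and, unfortunately, broken route. You try to produce an exact Lagrangian disc $L'\subset W$ filling the Legendrian $\partial T^*_xL\subset\partial W$ by ``general position using $n\geq 4$.'' General position cannot produce exact Lagrangians: being Lagrangian is a \emph{closed} (and exactness a global cohomological) condition, not an open transversality condition, so there is no generic-perturbation argument that extends a Legendrian germ near $\partial W$ to an exact Lagrangian filling inside an arbitrary Liouville filling $W$. Indeed, whether a given Legendrian admits an exact Lagrangian filling in a given filling of the contact manifold is a hard and often open question.

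Even granting such an $L'$, the identification $HW^*(L'\subset W)_{\eta|_W}\cong H_{n-*}(\Omega L)_{\eta|_L}$ does not follow from anything established: wrapped Viterbo restriction (Theorem~\ref{Theorem wrapped Viterbo}) applies only to nested Liouville subdomains, but $W$ and $DT^*L$ are two \emph{different} fillings of the same boundary $ST^*L$, neither contained in the other. There is no analogue of the closed-string ``independence of the filling'' (Corollary~\ref{Corollary Independence of filling}) for wrapped Floer cohomology that you could invoke here, since $HW^*$ depends on the filling in an essential way through the Lagrangian $L'$ itself.

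The paper's actual proof of (1) stays in the closed-string theory: if $\eta\neq 0$ dies in $H^2(L)\cong H^2(\partial W)$, then $SH^*(W)_{\eta|W}=0$ (by restriction from $T^*N$), hence $SH^*_+(W)_{\eta|W}\cong H^{*+1}(W)\otimes\Lambda$ has finite rank; but by independence of the filling (stretching-the-neck, Corollary~\ref{Corollary Independence of filling}, which is where $n\geq 4$ is used to rule out holomorphic planes) $SH^*_+(W)_{\eta|W}$ equals $BSH^*(ST^*L)_{\eta|\partial W}\cong H_{n-*}(\mathcal{L}L,L)\otimes\Lambda$, of infinite rank by Sullivan. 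You do correctly identify this mechanism in your part (3), so the closed-string route is available to you — you simply need to apply it in (1) as well rather than the open-string detour.

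Two smaller issues: in (3) the long exact sequence of Theorem~\ref{Theorem LES for SH+} gives $SH^*_+(W)_{\eta|W}\cong H^{*+1}(W)\otimes\Lambda$ when $SH^*(W)_{\eta|W}=0$, not $H^*(W,\partial W)$; and no ``generic range of twistings'' argument is needed at the end, since the twisting is by a transgression, which is trivial on constant loops, so $H^*(W)_{\tau\eta}\cong H^*(W)\otimes\Lambda$ already untwists and universal coefficients gives $H_*(W)\cong H_*(L)$ directly. Part (2) as a formal consequence of (1) via Hurewicz and duality is fine.
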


\begin{conjecture*}
For simply connected $L,N$ of dimension $\geq 4$, all exact contact $ST^*L \hookrightarrow T^*N$
always arise as the boundary of a Weinstein neighbourhood of an
exact Lagrangian $L\hookrightarrow T^*N$.
\end{conjecture*}
%
\subsection{Displaceability of contact hypersurfaces}
%
By exploiting recent literature on the Rabinowitz-Floer cohomology $RFH^*(M)$ \cite{Cieliebak-Frauenfelder,Cieliebak-Frauenfelder-Oancea}, and after proving that $RFH^*(M)=0$ if and only if $SH^*(M)=0$, we will deduce:
\begin{corollary*}
If $\partial M$ is displaceable by a compactly supported Hamiltonian flow in $\overline{M}$ then $SH^*(M)\!=\!0$, so there
are no closed exact Lagrangians in $M$. This also holds if $M$ exactly embeds into an exact convex symplectic manifold $X$, and $\partial M\!\subset\! X$ is displaceable.
\end{corollary*}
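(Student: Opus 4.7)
\medskip
\noindent\textbf{Proof proposal.}
The plan is to combine the cited Rabinowitz--Floer vanishing results with the Viterbo restriction/unit arguments already set up in the introduction. First I would invoke the Cieliebak--Frauenfelder(--Oancea) theorem: whenever $\partial M$ is displaceable in the symplectization of $M$, a standard Hamiltonian $H$ generating a displacing isotopy can be used to construct a chain homotopy between the identity on the Rabinowitz--Floer complex and zero, so $RFH^*(M)=0$. Then I would apply the equivalence $RFH^*(M)=0 \iff SH^*(M)=0$ asserted earlier in the excerpt; this immediately gives the vanishing of $SH^*(M)$, equivalently the vanishing of the unit $e=\psi_C(1)\in SH^0(M)$.

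To obtain the obstruction to closed exact Lagrangians, I would use the standard Weinstein neighbourhood theorem: a closed exact Lagrangian $L\subset M$ has a tubular neighbourhood symplectomorphic to a disc cotangent bundle $W\cong DT^*L$, which is a nice Liouville subdomain of $M$. The introduction records that the Viterbo restriction map $\varphi:SH^*(M)\to SH^*(W)$ is a unital ring homomorphism, so $SH^*(M)=0$ forces $1_{SH^*(W)}=\varphi(1)=0$ and hence $SH^*(DT^*L)=0$. But by the Abbondandolo--Schwarz isomorphism $SH^*(DT^*L)\cong H_{n-*}(\mathcal{L}L)$, which is nonzero because it contains the homology of the constant loops, in particular the fundamental class of $L$. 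This contradiction rules out the existence of $L$.

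For the second half of the corollary, suppose $M$ embeds exactly into an exact convex symplectic $X$ and $\partial M$ is displaceable in $X$. Here I would either apply the Cieliebak--Frauenfelder displacement argument directly in $X$ (since a compactly supported Hamiltonian on $X$ displacing $\partial M$ pulls back to a displacing isotopy in a large enough piece of the symplectization of $M$, once one truncates using the convex structure of $X$), or alternatively one argues that $SH^*(X)=0$ by displaceability and then uses Viterbo restriction $SH^*(X)\to SH^*(M)$. Either way one concludes $SH^*(M)=0$, and the Lagrangian conclusion follows verbatim as above.

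The main obstacle I expect is the technical point of extending the symplectization-based Rabinowitz--Floer vanishing to the ambient setting where $\partial M$ is only displaceable inside $X$ rather than inside the symplectization of $M$: one needs to ensure that the displacing Hamiltonian can be made compatible with the completion used to define $RFH^*$ (controlling energy and keeping Floer trajectories inside a region where maximum principles apply). Once this compatibility is set up, the rest of the argument is formal from the TQFT/Viterbo package.
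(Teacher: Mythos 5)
Your argument for the case $X=\overline{M}$ matches the paper's route: Cieliebak--Frauenfelder's displaceability theorem kills $RFH^*(M)$, Theorem~\ref{Theorem RFH=0 iff SH=0} then gives $SH^*(M)=0$, and Viterbo restriction to a Weinstein neighbourhood $W\cong DT^*L$ together with $SH^*(W)\cong H_{n-*}(\mathcal{L}L)\neq 0$ rules out closed exact Lagrangians. That part is fine.

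For the ambient case, however, neither of your suggested alternatives goes through as stated, and the gap is real. A compactly supported Hamiltonian isotopy of $X$ does not ``pull back'' to a displacing isotopy inside $\overline{M}$: the completion $\overline{M}$ is glued abstractly from $M$ and a cone over $\partial M$, and even where a piece of that cone sits inside $X$ via the Liouville flow, the displacing flow of $X$ will in general carry $\partial M$ out of that piece, so you obtain no displacement \emph{within} $\overline{M}$. Your second alternative, ``$SH^*(X)=0$ and restrict,'' is not available either: $SH^*(X)$ is undefined for a non-compact exact convex $X$, and if you truncate to a Liouville subdomain $X_k\supset M$ you have no control over $\partial X_k$, hence no displaceability and no vanishing of $SH^*(X_k)$ to feed into Viterbo restriction.

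The missing ingredient is the observation used explicitly in the proof of Theorem~\ref{Theorem displaceable implies SH is zero}: the Rabinowitz--Floer cohomology $RFH^*(\partial M\subset X)$ depends \emph{only on the filling $M$} of $\partial M$, not on the ambient $X$. Displaceability in $X$ gives $RFH^*(\partial M\subset X)=0$ by Cieliebak--Frauenfelder, filling-independence identifies this with $RFH^*(M)$, and then Theorem~\ref{Theorem RFH=0 iff SH=0} (via the long exact sequence of Lemma~\ref{Lemma CFO RFH lemma}) transfers the vanishing to $SH^*(M)$. Once this step is inserted, the rest of your argument applies verbatim.
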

%
%
\subsection{String topology}
%
The \emph{Pontryagin product} on the homology of the space $\Omega N$ of based loops in $N$
is defined by concatenating loops to form figure-8 loops (see for example \cite{Abbondandolo-Schwarz2}). The analogue on the homology of the space
$\mathcal{L}N$ of free loops in $N$ is called
\emph{Chas-Sullivan loop product} \cite{Chas-Sullivan}: given two
families of loops, one forms the family of all possible figure-8
loops obtained when the two base-points happen to coincide.

Abbondandolo-Schwarz \cite{Abbondandolo-Schwarz2} proved that the
products on $HW^*(T_q^*N\subset T^*N;\Z/2)$ and $SH^*(T^*N;\Z/2)$ agree with
the Pontryagin and Chas-Sullivan products on $H_{n-*}(\Omega N;\Z/2)$
and $H_{n-*}(\mathcal{L}N;\Z/2)$ via the respective isomorphisms. We
extend the result to:

\begin{theorem*}
$SH^*(T^*\!N;\Z/2)_{\eta} \!\cong\! H_{n-*}(\mathcal{L}N;\Z/2)_{\eta}$,
$HW^*(T_q^*\!N\!\!\subset\! T^*\!N;\Z/2)_{\eta}\! \cong\! H_{n-*}(\Omega N;\Z/2)_{\eta}$ respect the
TQFT structures, and the units are $[N]$ and $[\textrm{base-point}]$ respectively.
\end{theorem*}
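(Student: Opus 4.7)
The plan is to upgrade the Abbondandolo-Schwarz chain-level isomorphism $\Phi$ to a morphism of TQFT structures by an interpolation of moduli spaces, and then identify the units by uniqueness. The twisted Abbondandolo-Schwarz isomorphism $\Phi \co SH^*(T^*N)_{\eta} \to H_{n-*}(\mathcal{L}N)_{\eta}$ is already constructed chain-level in \cite{Ritter} by counting half-cylinders matched to negative gradient half-trajectories of the action functional on $\mathcal{L}N$. In \cite{Abbondandolo-Schwarz2} the untwisted $\Phi$ is shown to intertwine $\psi_P$ with the Chas-Sullivan loop product; since $\eta \in H^2(T^*N) \cong H^2(N)$ transgresses to a class on $\mathcal{L}N$ that pulls back consistently under the evaluation maps used in the construction, the statement extends verbatim to the twisted setting, with only Novikov-coefficient bookkeeping to add.

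For an arbitrary TQFT surface $S$ with $q$ inputs and $p$ outputs, I would introduce a one-parameter family $\mathcal{M}^R(S)$, $R \in [0,\infty]$, of mixed moduli spaces whose elements are tuples $(u,\gamma_1,\ldots,\gamma_q)$: here $u \co S \to T^*N$ satisfies the perturbed Cauchy-Riemann equation, the $p$ positive ends are asymptotic to Hamiltonian orbits, and each of the $q$ negative ends is truncated at cylindrical length $R$ with its boundary loop matched to $\gamma_j(0)$, where $\gamma_j \co (-\infty,0] \to \mathcal{L}N$ is a negative gradient half-trajectory converging at $-\infty$ to a critical loop. At $R=\infty$ the cylindrical necks break off and the count reduces to $\Phi^{\otimes p} \circ \psi_S$; at $R=0$ the Floer part is absorbed into the loop-space flow and the count becomes the Morse-theoretic string-topology operation applied to $\Phi^{\otimes q}$ of the inputs. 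The usual cobordism argument on $\bigcup_R \mathcal{M}^R(S)$ supplies a chain homotopy between these two endpoints, proving that $\Phi$ intertwines $\psi_S$ with the corresponding string-topology operation.

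The identification of units is now formal. Since $\Phi$ is a ring isomorphism intertwining $\psi_P$ with the loop product, $\Phi(e)=\Phi(\psi_C(1))$ must be a two-sided unit for the Chas-Sullivan product; uniqueness forces $\Phi(e) = [N] \in H_n(\mathcal{L}N)$. The wrapped case is identical: the isomorphism $HW^*(T_q^*N)_{\eta} \cong H_{n-*}(\Omega N)_{\eta}$ of \cite{Ritter} is upgraded by the analogous mixed moduli spaces with Lagrangian boundary condition $u(\partial S)\subset T_q^*N$ and $\mathcal{L}N$ replaced by $\Omega N$, and uniqueness of units for the Pontrjagin product gives $\Phi_{HW}(e_{HW}) = [\textrm{base-point}] \in H_0(\Omega N)$.

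The main obstacle is the analytic setup of $\mathcal{M}^R(S)$ for arbitrary $S$: transversality of the evaluation map $\mathcal{M}(S) \to (\mathcal{L}N)^q$ against the unstable manifolds of the action functional, Morse-Bott analysis at the critical manifold of constant loops (essential for the unit cap $C$), and uniform compactness as $R$ varies. No conceptually new ideas beyond \cite{Abbondandolo-Schwarz2} are required; what must be carefully verified in the twisted case is that the transgressed Novikov system on $\mathcal{L}N$, pulled back by the evaluation maps, agrees with the $\eta$-twisted system on the Floer moduli space, which is precisely what makes the chain homotopy $\eta$-equivariant.
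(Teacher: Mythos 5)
Your proposal is correct and uses the same underlying cobordism idea (a one-parameter interpolation of mixed Floer/Morse moduli spaces), but it parts ways with the paper in two places. First, for arbitrary $S$ the paper does not set up the interpolating family $\mathcal{M}^R(S)$ directly: it decomposes $S$ into the generating set $\{C,Z,P,Q\}$ of Corollary~\ref{Corollary decomposing S}, treats the pair of pants $P$ by citing the Abbondandolo--Schwarz interpolation (Lemma~\ref{Lemma Abbondandolo-Schwarz product theorem}, whose free length parameter $\ell$ plays exactly the role of your $R$), handles $Q$ by the same techniques, and then lets the gluing Theorem~\ref{Theorem Gluing for compositions} together with its Morse analogue do the rest. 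This converts the analytic burden you flag --- transversality, compactness, and Morse--Bott behaviour for the mixed moduli spaces over each $S$ --- into the already-established compositional structure of the two TQFTs. Second, for the unit the paper does not invoke uniqueness: it computes $\varphi^{-1}(e)$ explicitly at the small Hamiltonian $H^\delta$, finding that the only isolated cap solution is the constant at $x_0=\min H^\delta$, which under the Legendre transform matches the maximum of $L$, hence $[N]$, and so $c_*[N]$. Your uniqueness argument is a valid shortcut, but it quietly imports the external string-topology fact that $[N]$ is a two-sided unit for the Chas--Sullivan product. One detail you should not gloss over: the paper's identity is $\varphi^{\otimes p}\circ\psi_{\overline{S'}}=\psi_S\circ\varphi^{\otimes q}$, involving the \emph{orientation-reversed} graph $\overline{S'}$ (so it is $\overline{P'}=Q'$, not $P'$, that carries the loop product on $MH_*(\mathcal{L}N)$), because $\varphi$ sends loop-space \emph{homology} to symplectic \emph{cohomology} and therefore swaps inputs and outputs. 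Your sketch of the $R=0$ limit does not make this reversal explicit, and you should verify that matching truncated cylindrical ends of $u$ to half-flowlines $\gamma_j$ really produces the \emph{reversed} graph on the Morse side; otherwise the two sides of the claimed intertwining would have mismatched arities. Your closing remark about the twisted Novikov weights is exactly what the paper verifies, via Theorem~\ref{Theorem Novikov weights behave well} for the TQFT moduli spaces and \cite{Ritter} for the $\mathcal{M}^+_0(\gamma,x)$ moduli spaces.
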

%
%
%
%
%
%
%
%
\section{Liouville domains}
\label{Section Liouville domains}
%
\subsection{Reeb periods, the contact type condition, Hamiltonians}
\label{Subsection Liouville domains Definition}
%
%
%
%
A \emph{Liouville domain} $(M^{2n},\omega=d\theta)$ is a compact exact symplectic manifold with boundary, such that the Liouville vector field $Z$ defined by $\omega(Z,\cdot) =\theta$ points strictly outwards along $\partial M$. The form $\alpha=\theta|_{\partial M}$ is a contact form on $\partial M$, and it defines the Reeb vector field $\mathcal{R}$ on $\partial M$ by the conditions
$$
\alpha(\mathcal{R}) = 1,\; d\alpha(\mathcal{R},\cdot)=0.
$$
The closed orbits of $\mathcal{R}$ on $\partial M$ are called
\emph{Reeb orbits}, and the (non-zero) periods of the orbits are called \emph{Reeb periods}. For a generic choice of $\alpha$ (e.g. a
generic choice of $\theta$ subject to $d\theta=\omega$) the
Reeb periods form a countable closed subset of
$[0,\infty)$, which we will assume.

The symplectization $\overline{M}=M \cup_{\partial M} [0,\infty)\times \partial M$ of $M$ is obtained by gluing a conical \emph{collar} onto $\partial M$.
The flow of $Z$ for small time $r\leq 0$ parametrizes a neighbourhood $(-\varepsilon,0]\times \partial M$ of $\partial M$, so all the data naturally extends to $\overline{M}$ by $Z=\partial_r$, $\theta=e^r \alpha$, $\omega=d\theta$. 
The flow of $Z$ for time $r\in [-\infty,\infty)$ starting from $\partial M$ defines
the coordinate $R=e^r \in [0,\infty)$ on $\overline{M}$ with $\partial M = \{ R=1 \}$. We
will use $R$ instead of $r$ from now on, so $$\overline{M}=M\cup
[1,\infty)\times \partial M.$$

Let $J$ be an $\omega$-compatible almost complex structure on
$\overline{M}$. Let $g=\omega(\cdot,J\cdot)$ denote the
$J$-invariant metric on $\overline{M}$. We always assume that $J$ is of
\emph{contact type} for large $R$:
$$
J^*\theta = dR \qquad (\textrm{equivalently: }J Z = \mathcal{R}).
$$
Since symplectic cohomology is invariant under deformations of such $J$, the particular choice of $J$ will not matter, because the space of such $J$ is contractible \cite[Prop.2.51]{McDuff-Salamon-IntroToSymplTop}.

For $H \in C^{\infty}(\overline{M},\R)$, define the
\emph{Hamiltonian vector field} $X=X_H$ by
$ \omega(\cdot,X) = dH.$
We call \emph{$1$-orbits of $H$} the $1$-periodic Hamiltonian orbits $x:S^1 \to \overline{M}$, so
$\dot{x}(t) = X(x(t))$.

We always assume $H$ is $C^2$-small and Morse inside $M$. So the
only $1$-orbits inside $M$ are constants: the critical
points of $H$. We assume that for large $R$, $H=h(R)$ depends only on $R$. So $X= h'(R) \mathcal{R}$, so the $1$-orbits $x$ of
$X$ on the collar have constant $R=R(x)$ and they correspond to
the Reeb orbits $y$ of period $T=h'(R)$ via $y(t)=x(t/T)$. We
assume that for large $R$, $h$ becomes linear in $R$ with slope not equal to a Reeb period: hence in the region at infinity where $h$ is linear there are no $1$-orbits.
%
\subsection{Action functional}\label{Subsection Action functional}
%
Let $\mathcal{L}\overline{M} = C^{\infty}(S^1,\overline{M})$ be
the space of free loops in $\overline{M}$. Let ${\mathbb{A}}_H$ denote the
$H$-perturbed action functional for $x\in
\mathcal{L}\overline{M}$,
$$
\textstyle {\mathbb{A}}_H(x) = - \int x^*\theta + \int_0^1 H(x(t)) \, dt.
$$
The differential of ${\mathbb{A}}_H$ at $x\in \mathcal{L}\overline{M}$ is
$ \textstyle d{\mathbb{A}}_H \cdot \xi = - \int_0^1 \omega(\xi, \dot{x} - X)
\, dt
$
in the direction of $\xi \in T_x\mathcal{L}\overline{M} =
C^{\infty}(S^1,x^*T\overline{M})$.
So the critical points of ${\mathbb{A}}_H$ are the $1$-orbits
of $H$. 

For a $1$-orbit $x$ on the collar in $\{
R\}\times \partial M$, the action is ${\mathbb{A}}_H(x)=\mathbb{A}_h(R)=-R h'(R)+h(R)$.
%
%
%
\subsection{Floer trajectories}\label{Subsection Floers Equation}
With respect to the $L^2$-metric $\int_0^1 g(\cdot,\cdot) \, dt$
the gradient is $\nabla {\mathbb{A}}_H = J(\dot{x} - X)$. For $u:\R \to
\mathcal{L}\overline{M}$, or equivalently $u: \R \times S^1 \to
\overline{M}$, the negative $L^2$-gradient flow equation $\partial_s u =
-\nabla {\mathbb{A}}_H(u)$ in the coordinates $(s,t) \in \R \times S^1$ is
$$
\partial_s u + J(\partial_t u - X) = 0 \quad \textrm{(Floer's equation)}.
$$
Let $\widehat{\mathcal{M}}(x_{-},x_{+})$ denote the solutions $u$
converging to $1$-orbits
$x_{\pm}$ of $H$ at the ends $s\to \pm \infty$. Then
$\mathcal{M}(x_{-},x_{+})=\widehat{\mathcal{M}}(x_{-},x_{+})/\R$
denotes the moduli space of \emph{Floer trajectories}, where we
identify $u(\cdot,\cdot)\sim
u(\cdot+\textrm{constant},\cdot)$ (the $\R$-reparametrization freedom).
%
\subsection{Energy}\label{Subsection Energy for Floer traj}
The energy $E(u)=\int |\partial_s u|^2 \, ds \wedge dt$ of a Floer trajectory $u\in \mathcal{M}(x_{-},x_{+})$
satisfies
$$
\begin{array}{lll}
E(u) 
& = & \int d\theta(\partial_s u,
\partial_t u - X)\, ds \wedge dt \\
& = & \int u^*d\theta-dH(\partial_s u)\, ds\wedge dt\\
& = & \int_0^1 (\theta(\dot{x}_+)-\theta(\dot{x}_-) - H(x_+) \, +
 H(x_-) ) \, dt\\
& = & {\mathbb{A}}_H(x_-)-{\mathbb{A}}_H(x_+).
\end{array}
$$
Thus we have an a priori energy estimate (in terms only of the
ends $x_{\pm}$ not of $u$).
%
%
\subsection{Transversality and
compactness}\label{Subsection Transversality and Compactness}
%
Standard Floer theory methods (see Salamon \cite{Salamon}, Floer-Hofer-Salamon \cite[Thm 5.1]{Floer-Hofer-Salamon}) show that for a
generic time-dependent perturbation $(H_t,J_t)$ of $(H,J)$ the
$1$-orbits are non-degenerate and the moduli spaces
$\mathcal{M}(x_{-},x_{+})$ are smooth manifolds. Write $\mathcal{M}_k(x_{-},x_{+})$
for the $k$-dimensional part of $\mathcal{M}(x_{-},x_{+})$.

\textbf{Convention.} We write $(H,J)$ even though one
actually uses a perturbed $(H_t,J_t)$.

\emph{Technical Remarks. A $1$-orbit of $H_t$ is \emph{non-degenerate} if $1$ is not an eigenvalue of the linearization of $\varphi_{H_t}^1$ (the time-$1$ flow of $X_{H_t}$). If the $1$-orbits are non-degenerate then they are isolated and $\mathcal{M}(x_{-},x_{+})$ is the zero set of a Fredholm map. For time-independent $H$ the $1$-orbits are non-degenerate iff $H$ is Morse and the $1$-orbits are critical points of $H$. So given $H$ or $H_t$, one typically needs to make a time-dependent perturbation to ensure non-degeneracy. \\ \indent
Suppose $H_t$ satisfies this non-degeneracy. Then, by \cite[Thm 5.1]{Floer-Hofer-Salamon}, we can ensure that all $\mathcal{M}(x_-,x_+)$ are smooth after either a generic time-dependent perturbation $H_t$ of $H$, or a generic time-dependent perturbation $J_t$ of $J$ (or perturbing both). We do not need to perturb $(H,J)$ in the region $R\gg 0 $ where $H$ is linear since there are no $1$-orbits in this region (recall Section
\ref{Subsection Liouville domains Definition}) and since no Floer trajectories enter this region by the following Lemma.}

\begin{lemma}\label{Lemma Maximum principle} Solutions of $\,\partial_s u + J(\partial_t u - X)=0$
converging to $x_{\pm}$ at the ends are entirely contained in the
region $R\leq \max \{R(x_{\pm}),R_0\}$ when $J$ is of contact type for $R\geq R_0$.
\end{lemma}
Lemma \ref{Lemma Maximum principle} is a consequence of a maximum principle (Lemma \ref{Lemma Maximum principle for Floer solns}). It ensures that all 
$u\in \mathcal{M}(x_-,x_+)$ stay in a compact region of $\overline{M}$.
So we reduce to checking whether the compactness proofs that
hold for closed symplectic manifolds (e.g. Salamon \cite{Salamon})
are applicable. Indeed, we have the two sufficient requirements:
an a priori energy estimate and a reason to exclude the
bubbling-off of $J$-holomorphic spheres (there are no non-constant
$J$-holomorphic spheres by Stokes' theorem since $\omega=d\theta$
is exact).
Thus the $\mathcal{M}(x_{-},x_{+})$ have
natural compactifications, whose boundaries are described by
broken Floer trajectories (see Figure \ref{Figure Breaking}). In
particular $\mathcal{M}_0(x_{-},x_{+})$ is already compact, so it
is a finite set of points called \emph{isolated solutions}.
%
%
%
%
%
%
\section{Symplectic cohomology and symplectic homology}
\label{Section Symplectic cohomology}
%
\subsection{Symplectic chain complex}\label{Subsection
Symplectic chain complex}
%
%
Pick a base field $\K$. Let $SC^*(H)$ denote the $\K$-vector space
generated by the $1$-orbits of $H$,
$$
SC^*(H) =\bigoplus \left\{ \K\, x : x \in
\mathcal{L}\overline{M},\; \dot{x}(t) = X(x(t)) \right\}.
$$
The differential $d$ on $SC^*(H)$ is the $\K$-linear map, which on a generator $y$ is defined by counting incoming isolated
Floer trajectories,
$$
d y = \sum_{u\in \mathcal{M}_0(x,y)} \epsilon_u\, x
$$
where $\epsilon_u\in \{ \pm 1\}$ are orientation signs (see
Section \ref{Appendix Coherent orientations}). By convention, the constant solution $u(s,t)=y(t) \in
\mathcal{M}_0(y,y)$ is not counted. A standard argument \cite{Salamon} shows that $d\circ d =0$ (see \ref{Subsection Using orientation signs to prove dd=0}), so we can define $SH^*(H)=H^*(SC^*(H);d)$.

When
gradings are defined (\ref{Subsection Maslov index and
Conley-Zehnder index}), $\textrm{dim}\,
\mathcal{M}(x,y)=|x|-|y|-1$, so $|x|=|y|+1$ in the above.
%
\subsection{Continuation maps}
\label{Subsection Floer continuation solutions} \label{Subsection
Continuation Maps}
%
Let $(H_{\pm},J_{\pm})$ be two choices of data for which we have defined $SH^*$ (the data may depend on $t\in S^1$).
Let $(H_z,J_z)_{z=(s,t)\in \R\times S^1}$ be an interpolation of the data such that for large $|s|$, $H_z=H_{\pm}$ and
$J_z=J_{\pm}$. Assume $J_z$ is of contact type for $R\gg 0$.

The moduli space
$\mathcal{M}^{H_s}(x_-,x_+)$ of \emph{Floer continuation
solutions} are the $v:\R \times S^1 \to
\overline{M}$ solving
$
\partial_s v + J_z(\partial_t v - X_{H_s}) = 0,
$
converging to 1-orbits $x_{\pm}$ of $H_{\pm}$ as $s\to \pm
\infty$. We make no identifications of solutions (there is no $\R$-reparametrization freedom since $J_z$ depends on $s$).

The
maximum principle (Lemma \ref{Lemma Maximum principle for Floer
solns}) holds for such $v$ if $(H_z,J_z)$ is a \emph{monotone homotopy}:
$$
\boxed{\textrm{for large } R \textrm{ we assume: }\;\; H_z = h_z(R),\;
\partial_s h'_z \leq 0,\; J_z^*d\theta=dR \;(\textrm{contact
type condition}).}
$$
In particular, the slopes $m_{\pm}$ of the $H_{\pm}$ at infinity must satisfy $m_+\leq m_-$.

As usual, we often conceal the $t$-dependence from the notation, so we talk about a homotopy $(H_s,J_s)$ from $(H_-,J_-)$ to $(H_+,J_+)$, instead of writing $(H_z,J_z)$. 

Say the boxed assumption holds for $R\geq R_0$, then all $v\in
\mathcal{M}^{H_s}(x_-,x_+)$ are contained in the compact set $C$
defined by $R\leq \max(R(x_{\pm}),R_0)$. Suppose that $H_s,J_s$
are $s$-dependent only for $a\leq s \leq b$, then we get an a
priori energy estimate for all $v\in \mathcal{M}^{H_s}(x_-,x_+)$,
$$
\begin{array}{lll}
E(v) & = &  \int |\partial_s v|_{g_s}^2 \, ds \wedge dt \\ & = &
{\mathbb{A}}_{H_-}(x_{-}) -
{\mathbb{A}}_{H_+}(x_{+}) + \int (\partial_s H_s)(v) \, ds \wedge dt \\
&  \leq &  {\mathbb{A}}_{H_-}(x_{-}) - {\mathbb{A}}_{H_+}(x_{+}) + (b-a)\cdot \sup_{m\in
C} |\partial_s H_s(m)|.
\end{array}
$$

For generic $(H_z,J_z)$, the $\mathcal{M}^{H_s}(x_-,x_+)$ are
smooth manifolds with compactifications by broken solutions (Floer trajectories for $H_{\pm}$ may break off at the respective ends of a continuation solution, see Figure \ref{Figure Breaking}). More precisely, as in the Technical Remarks in \ref{Subsection Transversality and Compactness}, given $(H_z,J_z)$ for which $(H_{\pm},J_{\pm})$ satisfy non-degeneracy, it suffices to make a $C^{2}$-small $z$-dependent perturbation of $H_z$ or $J_z$ (or both) away from the ends to ensure that $\mathcal{M}^{H_s}(x_-,x_+)$ is smooth.

For monotone $H_s$, counting incoming isolated Floer continuation solutions
defines a \emph{continuation map} $\varphi: SC^*(H_{+}) \! \to \!
SC^*(H_{-})$. On generators
$$
\varphi(x_+) = \sum_{v\in \mathcal{M}_0^{H_s}(x_-,x_+)}
\epsilon_v\, x_-
$$
where $\epsilon_v\in \{ \pm 1\}$ are orientation signs (see
Section \ref{Appendix Coherent orientations}). Then extend
$\varphi$ linearly. A standard argument \cite{Salamon} shows that
$\varphi$ is a chain map (see \ref{Subsection Orientation signs Floer chain maps}).\\
When gradings are defined (\ref{Subsection Maslov index and
Conley-Zehnder index}),
$\textrm{dim}\, \mathcal{M}^{H_s}(x_-,x_+) = |x_-|-|x_+|$, so
 $|x_-|=|x_+|$ in the above.

\begin{lemma}[See for example \cite{Ritter}]\label{Lemma Chain Homotopy}
\strut\begin{enumerate}
\item \label{Item Lemma Chain Homotopy does not matter which hpy} Changing the monotone homotopy $(H_s,J_s)$ changes $\varphi$ by a chain homotopy. So on cohomology $\varphi: SH^*(H_+) \to SH^*(H_-)$ is independent of the choice of $(H_s,J_s)$.
\item \label{Item Lemma Chain Homotopy composing conts is cont} $\varphi$ equals any composite $SH^*(H_{+}) \to SH^*(K) \to
SH^*(H_{-})$ induced by continuation maps for monotone
homotopies from $H_{-}$ to $K$ and from $K$ to $H_{+}$.
\item \label{Item Lemma Chain Homotopy constant H is identity} The constant homotopy $H_s=H$ induces the identity on $SH^*(H)$.
\item \label{Item Lemma Chain Homotopy same slopes imply iso} For $H_{\pm}$ linear at infinity of the same slope,
$\varphi$ is an isomorphism on cohomology.
\end{enumerate}
\end{lemma}
%
%
%
\subsection{Hamiltonians $H^m$ of slope $m$ at
infinity}\label{Subsection Hamiltonians Linear At Infty}
%
Let $H^m$ be any Hamiltonian equal to $mR+\textrm{constant}$ for
large $R$ (for generic $m>0$ so that $m$ is not a Reeb period). By Lemma \ref{Lemma Chain Homotopy},
$SH^*(H^m)$ only depends on $m$ and continuations
$SH^*(H^m)\to SH^*(H^{m'})$ exist for $m\leq m'$. 

On cohomology $SH^*(H^m)$ only depends on the slope $m$ by Lemma \ref{Lemma Chain Homotopy}(\ref{Item Lemma Chain Homotopy same slopes imply iso}), so if one wanted one could always take $H^m=mH$ for a fixed Hamiltonian $H: \overline{M}\to \R$ of slope $1$.
%
\subsection{Symplectic cohomology}
\label{Subsection Symplectic cohomology}
%
%
Define $SH^*(M)$ by taking the direct limit over the above continuation maps between the $SH^*(H)$ groups for Hamiltonians linear at infinity, 
$$
SH^*(M) = SH^*(\overline{M})= \varinjlim SH^*(H).
$$
So $SH^*(M)\cong
\displaystyle\varinjlim SH^*(H^{m_k})$ for any $H^{m_k}$ as in \ref{Subsection Hamiltonians Linear At Infty} with slopes $m_k\to \infty$ as $k\to \infty$.

In Theorem \ref{Theorem characterization of continuation maps} we prove that the continuation maps $SH^*(H^m) \to SH^*(H^{m'})$ are the pair-of-pants product by a special element $e_{H^{\ell}}\in SH^0(H^{\ell})$ for $\ell=m'-m\geq 0$.

In Appendix \ref{Appendix Using non-linear Hamiltonians} we
prove that $SH^*(M)$ can be defined as $SH^*(Q)$ for one Hamiltonian
$Q:\overline{M}\to \R$ growing faster than linearly at infinity. This makes many arguments
unnecessarily complicated (see \ref{Subsection contact type J make
transversality fail}) so for now we assume our Hamiltonians to be
linear at infinity.
%
%
\subsection{Invariance under symplectomorphisms of contact type} \label{Section Invariance under contactomorphisms}
%
For a proof of the invariance of $SH^*(M)$ we refer to Viterbo \cite{Viterbo1}, Cieliebak \cite{Cieliebak}, Seidel \cite{Seidel}. The following formulation is written in detail in \cite[Theorem 8]{Ritter2}, which is based on Seidel's exposition \cite[Sec.(3e)]{Seidel}.

Let $M,N$ be Liouville domains. A symplectomorphism $\varphi:
\overline{M} \to \overline{N}$ is of \emph{contact type} if
$$\varphi^*\theta_N = \theta_M + d(\textrm{compactly
supported function}).$$

It follows that at infinity $ \varphi(e^r,y) = (e^{r-f(y)},
\psi(y))$, where $y\in \partial M$, for a smooth $f: \partial M \to \R$
and a
contact isomorphism $\psi: \partial M \to
\partial N$ with $\psi^*\alpha_N = e^f \alpha_M$.

Under such a map $\varphi: \overline{M} \to \overline{N}$, the
Floer solutions on $\overline{N}$ for $(H,d\theta_N,J_N)$
correspond precisely to the Floer solutions on $\overline{M}$ for
$(\varphi^*H,d\theta_M,\varphi^*J_N)$. However, for $H$ on
$\overline{N}$ linear at infinity, $\varphi^*H(e^r,y) =
h(e^{r-f(y)})$ is not linear at infinity.

So $SH^*(N)$ is isomorphic via $\varphi^*$ to
$SH_f^*(M)=\varinjlim SH^*(H_f)$ calculated for Hamiltonians of
the form $H_f=h(R_f)$, with $R_f=e^{r-f(y)}$, using almost complex
structures $J$ satisfying $J^*d\theta=dR_f$. It still turns out
that $SH^*(M)\cong SH^*_f(M)$. This is proved by a continuation
argument by homotopying $f$ to zero and proving a maximum
principle for $R_f\circ u$ for such homotopies (e.g. see
\cite[Lemma 7]{Ritter2}).

\begin{lemma}\label{Lemma Invariance under Contactomorphs}
A contact type $\varphi: \overline{M} \cong \overline{N}$ induces
$\varphi_*:SH^*(M)\cong SH^*(N)$.
\end{lemma}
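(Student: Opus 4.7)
The overall plan is to factor the desired isomorphism through an auxiliary group $SH^*_f(M)$ computed with Hamiltonians and almost complex structures adapted to the shifted radial coordinate $R_f = e^{r-f(y)}$, and to show that both $SH^*(N) \cong SH^*_f(M)$ and $SH^*_f(M) \cong SH^*(M)$ are isomorphisms induced by Floer-theoretic constructions. This mimics the strategy already referenced in \ref{Section Invariance under contactomorphisms}, which names the relevant intermediate object.

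First I would verify the bijection $SH^*(N) \cong SH^*_f(M)$ via $\varphi^*$. Because $\varphi$ is a symplectomorphism, Floer's equation is natural under $\varphi$: a map $u \co \R \times S^1 \to \overline{N}$ solves $\partial_s u + J_N(\partial_t u - X_H) = 0$ if and only if $\varphi^{-1} \circ u$ solves the corresponding equation on $\overline{M}$ with Hamiltonian $\varphi^* H$ and almost complex structure $\varphi^* J_N$. The $1$-periodic orbits, energies (up to the $d(\textrm{compact support})$ term which does not affect $A_H$ after integration around $S^1$), moduli spaces, orientations, and continuation solutions all correspond, so the chain complexes are canonically identified. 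Under this identification, a Hamiltonian $H = h(R)$ on $\overline{N}$ linear at infinity pulls back to $\varphi^* H = h(R_f)$ on $\overline{M}$, and the contact type condition $J_N^* \theta_N = dR$ pulls back to $(\varphi^* J_N)^* \theta_M = dR_f + d(\textrm{compact})$; this is exactly the setup defining $SH^*_f(M)$, and taking direct limits in the slope at infinity gives $\varphi^* \co SH^*(N) \cong SH^*_f(M)$.

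Next I would prove $SH^*_f(M) \cong SH^*(M)$ by a continuation argument along the family $f_\tau = (1-\tau) f$, $\tau \in [0,1]$. This interpolates between the shifted and the standard setup without changing the $1$-periodic orbit set on any given compact region, provided we choose generic slopes at each stage. The chain-level continuation maps in both directions are defined by counting solutions of a parametrized Floer equation with $(H_s, J_s)$ an $s$-monotone homotopy adapted to $R_{f_\tau}$; the usual chain-homotopy argument of Lemma \ref{Lemma Chain Homotopy} then shows that the resulting maps on cohomology are mutually inverse. Together with the first step this yields $\varphi_* \co SH^*(M) \cong SH^*(N)$.

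The main obstacle is the maximum principle required to keep the moduli spaces compact along the interpolation. For fixed $f$, solutions stay bounded in the coordinate $R_f$ (not in $R$), and the bound depends on the homotopy; the argument is analogous to Lemma \ref{Lemma Maximum principle for Floer solns} but applied to $R_f \circ u$, and requires the analogue of the monotonicity condition $\partial_s h'_s \leq 0$ in the coordinate $R_{f_\tau}$, together with the contact type condition $J_s^* \theta = dR_{f_\tau}$ for large $R_{f_\tau}$. This is exactly the technical point handled in \cite[Lemma 7]{Ritter2}, which I would cite to close the argument; once the maximum principle is in hand, the a priori energy estimate of \ref{Subsection Continuation Maps} and standard transversality give well-defined continuation maps, and the rest is formal.
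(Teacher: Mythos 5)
Your proposal follows essentially the same route as the paper: the paper's discussion in \ref{Section Invariance under contactomorphisms} preceding the Lemma is precisely this sketch (pull back the Floer data via $\varphi^*$ to identify $SH^*(N)\cong SH^*_f(M)$, then homotope $f$ to $0$ using a continuation argument with the maximum principle for $R_f\circ u$, citing \cite[Lemma 7]{Ritter2}). Your version adds correct detail on why the action functionals agree on closed loops and on how the contact-type condition transforms under $\varphi^*$, but the decomposition through the intermediate group $SH^*_f(M)$ and the two isomorphism steps are identical to the paper's.
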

%
%
%
%
\subsection{Grading of symplectic cohomology}
\label{Subsection Maslov index and Conley-Zehnder index}
%
%
We refer to Robbin-Salamon \cite{Robbin-Salamon} and Salamon \cite{Salamon} for a
detailed exposition on the Maslov index and gradings in Floer theory.
We now explain that if $c_1(M)=c_1(TM,J)$ is zero, then $SH^*(H)$ is $\Z$-graded. In \ref{Subsection Choice of trivializations} we discuss dimension counts for Floer moduli spaces and we discuss the case $c_1(M)|_{\pi_2 M}=0$.

Since $c_1(M)=0$, we can pick a trivialization of the canonical
bundle $\mathcal{K}=\Lambda^{n,0}T^*M$ (\emph{Explanation: we can view $T^*M$ as a complex vector bundle of rank $n=(\dim_{\R} M)/2$ since we chose an almost complex structure $J$ on $M$. Then $\mathcal{K}$ is a complex line bundle with $c_1(\mathcal{K})=-c_1(M)$. If $H^1(M)=0$, then maps $M \to U(1)$ are contractible, so there is only one homotopy class of trivializations for $\mathcal{K}$, and so the above choice of trivialization will not matter.}).

Over any $1$-orbit $x$ for $H$, trivialize $x^*TM$ so that it induces an
isomorphic trivialization of $\mathcal{K}$. Let $\phi_t$ denote
the linearization $D\varphi^t_{X_H}(x(0))$ of the time $t$
flow for $X_H$ written in the trivializing frame for $x^*TM$.
Let $\textrm{sign}(t)$ denote the signature of the quadratic form
$
\omega(\cdot,\partial_t \phi_t\cdot):
\textrm{ker}(\phi_t-\textrm{id}) \to \R,
$
assuming we perturbed $\phi_t$ relative endpoints to make the
quadratic form non-degenerate and to make
$\textrm{ker}(\phi_t-\textrm{id})=0$ except at finitely many $t$.

The \emph{Maslov index} $\mu(x)$ of $x$ is
$
\mu(x) = \frac{1}{2}\, \textrm{sign}(0) + \sum_{0<t<1}
\textrm{sign}(t) + \frac{1}{2}\, \textrm{sign}(1).
$

The Maslov index is invariant under homotopy relative endpoints,
and it is additive with respect to concatenations. If $\phi_t$ is
a loop of unitary transformations, then its Maslov index is the
winding number of the determinant, $\det \phi_t: \mathcal{K} \to
\mathcal{K}$.

For example $\phi_t = e^{2\pi i t} \in U(1)$ for
$t\in [0,1]$ has Maslov index $1$.

So when $c_1(M)=0$, we can define the following 
$\Z$-grading $|\cdot|$ on generators of $SC^*(H)$:
\\[1mm]
\begin{tabular*}{\textwidth}{l@{\extracolsep{\fill}}cr@{\extracolsep{0pt}}} 
\strut & 
$
|x| = \frac{\textrm{dim}(M)}{2} - \mu(x).
$
 & \strut 
\end{tabular*}
\\[1mm]
We call this the \emph{Conley-Zehnder index} of $x$ (there are various conventions -- we explain in Remark \ref{Remark CZ convention} why we chose this convention). As mentioned in \ref{Subsection
Continuation Maps}, the continuation maps preserve this $\Z$-grading, so also $SH^*(M)$ is $\Z$-graded.

\indent When $c_1(M)\neq 0$ this still defines a $\Z/2$ grading since
the Conley-Zehnder indices change by an even integer in
$2 c_1(M)(\pi_2(M))$ when we change the trivialization over a Hamiltonian
orbit. In particular, Koszul signs $(-1)^{|x|}$ are well-defined (we will need this in
\ref{Subsection Using orientation signs to prove that TQFT maps
are chain maps}).
\begin{remark}\label{Remark CZ convention} Our convention ensures that $|x|$ equals the Morse
index of $x$ when $x$ is a critical point of a $C^2$-small Morse
$H:\overline{M}\to \R$. This ensures that: the unit of $SH^*(M)$ lies in
degree $0$; the product is additive on degrees; and the canonical map $c^*:H^*(M) \to SH^*(M)$ defined in Section \ref{Section Canonical map from ordinary cohomology} is degree-preserving. This differs
from Schwarz's convention \cite{Schwarz},
$\|x\|=n-\textrm{ind}_{\textrm{Morse}}(x)$, which explains why the
index of Theorem \ref{Theorem index of Fredholm operator} is $-\sum \|x_a\|+\sum \|y_b\|+n\cdot \chi(S)$ in
\cite{Schwarz}, with
$\chi(S)=2-2g-p-q$. Our convention actually agrees with Salamon's grading $\mu_{H}$ \cite[Exercise 2.8]{Salamon}, since we use an opposite sign convention for $H$ in the action functional.
\end{remark}
%
%
\subsection{Symplectic homology}
\label{Section symplectic homology}
Using the notation from \ref{Subsection Symplectic chain
complex} and \ref{Subsection Continuation Maps}, define:\\[1mm]
$
\strut\qquad\textstyle SC_*(H)=\prod \left\{ \K x : x \in \mathcal{L}
\overline{M},\; \dot{x}(t) = X(x(t)) \right\}
$
\\[1mm]
$
\strut\qquad\displaystyle \delta: SC_*(H) \to SC_{*-1}(H),\;\; \delta x =
\!\!\!\!\sum_{u\in \mathcal{M}_0(x,y)}\!\!\!\! \epsilon_u\, y
\quad\textrm{(compare \ref{Subsection Symplectic chain complex})}
$
\\[1mm]
$
\strut\qquad\displaystyle \varphi_*: SC_*(H_-) \to SC_*(H_+),\;\;
\varphi_*(x_-) =\!\!\!\!\!\!\!\! \sum_{v\in
\mathcal{M}_0^{H_s}(x_-,x_+)}\!\!\!\!\!\!\!\! \epsilon_v \, x_+
\quad\textrm{(compare \ref{Subsection Continuation Maps})}
$
\\[1mm]
$
\strut\qquad\displaystyle SH_*(H) = H_*(SC_*(H);\delta) \quad\textrm{(compare \ref{Subsection Symplectic chain complex})}.
$
\\[1mm]
$
\strut\qquad\displaystyle SH_*(M) = \varprojlim SH_*(H) \quad\textrm{(taking the inverse limit over the }\varphi_*\textrm{, compare \ref{Subsection Symplectic cohomology})}.
$
\\[1mm]
Notice we count solutions flowing \emph{out} of a generator not
\emph{into}. This reverses all maps, so the symplectic homology is
the \emph{inverse} limit over continuation maps.

\emph{Technical Remark. $SC^*(H)$ and $SC_*(H)$ are canonically identifiable as graded vector spaces, since our choice of $H$ ensures that there are only finitely many generators, so there is no actual difference between $\bigoplus$ and $\prod$. Of course the real difference comes from the fact that the differentials and continuation maps are reversed. If we had used a Hamiltonian of quadratic growth (see Section \ref{Appendix Using non-linear Hamiltonians}) then it is crucial to use $\prod$ since $\delta x$ may involve infinitely many generators (unlike $dx$ for $SC^*(H)$ which is a finite sum for energy/action reasons).
}
%
%
%
%
\subsection{Symplectic homology is the dual of symplectic cohomology}
\label{Subsection Symplectic homology is the dual of symplectic cohomology}
%
Observe that 
\\[0.5mm]
\begin{tabular*}{\textwidth}{l@{\extracolsep{\fill}}cr@{\extracolsep{0pt}}} 
\strut & 
$
SC_*(H)= \textrm{Hom}(SC^*(H),\K)
$
 & \strut 
\end{tabular*}
\\[0.5mm]
canonically and
that the differentials $\delta,d$ are dual to each other, indeed
on generators:
$\textstyle \delta x = \sum A_{x,y}\, y$ and $dy = \sum A_{x,y}\,
x $
where $A_{x,y}\! =\! \# \mathcal{M}_0(x,y)$ (count the elements
$u$ with signs $\epsilon_u$). Thus $\delta,d$ are represented by
the matrices $(A_{y,x}),(A_{x,y})$, which are transpose to each
other. Thus $SC_*(H)$ is canonically the dual of $SC^*(H)$, and so
by universal coefficients $SH_*(H)$ is the dual of $SH^*(H)$ and
comes with a canonical isomorphism
\\[0.5mm]
\begin{tabular*}{\textwidth}{l@{\extracolsep{\fill}}cr@{\extracolsep{0pt}}} 
\strut & 
$
SH_*(H) \to \textrm{Hom}(SH^*(H),\K).
$
 & \strut 
\end{tabular*}
\\[0.5mm]
Similarly, $\varphi_*:SC_*(H_-) \to SC_*(H_+)$ is dual to the
$\varphi^*=\varphi:SC^*(H_+) \to SC^*(H_-)$ from \ref{Subsection Continuation
Maps}.
\begin{theorem}\label{Theorem duality theorem between two SHs}
Symplectic homology is canonically dual to symplectic cohomology:
$$
SH_*(M) \cong SH^*(M)^{\vee}.
$$
\end{theorem}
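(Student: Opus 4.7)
The plan is to reduce everything to two facts already assembled in the excerpt: (i) at the level of a fixed Hamiltonian, $SC_*(H)$ is canonically the linear dual of $SC^*(H)$, with dual differentials; (ii) the homology continuation $\varphi_*$ is dual to the cohomology continuation $\varphi^*$. Once these compatibilities are in hand, the theorem follows from the general categorical principle that the contravariant functor $\mathrm{Hom}_\K(-,\K)$ converts direct limits into inverse limits.

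First, for each admissible $H$ (linear at infinity, generic slope), the identification $SC_*(H)=\mathrm{Hom}_\K(SC^*(H),\K)$ is tautological: both are generated by the same finite-per-action-window set of $1$-periodic orbits, and the matrices of $d$ and $\delta$ are mutual transposes since both count the same moduli spaces $\mathcal{M}_0(x,y)$ with the same orientation signs $\epsilon_u$. Working over a field $\K$, the universal coefficient theorem then gives a canonical isomorphism
\begin{equation*}
D_H\co SH_*(H)\xrightarrow{\ \cong\ } \mathrm{Hom}_\K(SH^*(H),\K)=SH^*(H)^{\vee}.
\end{equation*}
Next, for a monotone homotopy $(H_s,J_s)$ from $H_-$ to $H_+$, the maps $\varphi_*\co SC_*(H_-)\to SC_*(H_+)$ and $\varphi^*\co SC^*(H_+)\to SC^*(H_-)$ are defined by counting the same moduli spaces $\mathcal{M}_0^{H_s}(x_-,x_+)$ with the same signs, hence their matrices are transposes. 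So under the identifications $D_{H_\pm}$ the induced maps on (co)homology fit into a commutative square
\begin{equation*}
\xymatrix@C=40pt{
SH_*(H_-)\ar[r]^{\varphi_*}\ar[d]_{D_{H_-}} & SH_*(H_+)\ar[d]^{D_{H_+}}\\
SH^*(H_-)^{\vee} & SH^*(H_+)^{\vee}\ar[l]_{(\varphi^*)^{\vee}}
}
\end{equation*}
so the $D_H$ assemble into a morphism between the inverse system $\{SH_*(H)\}$ and the inverse system $\{SH^*(H)^{\vee}\}$ obtained by dualising the direct system that computes $SH^*(M)$.

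Finally, pass to the limit. Since $SH^*(M)=\varinjlim SH^*(H)$ and the functor $\mathrm{Hom}_\K(-,\K)$ is contravariant right-adjoint-like (it sends colimits of $\K$-vector spaces to limits), one has
\begin{equation*}
SH^*(M)^{\vee}=\bigl(\varinjlim SH^*(H)\bigr)^{\vee}\cong \varprojlim SH^*(H)^{\vee}\cong \varprojlim SH_*(H)=SH_*(M),
\end{equation*}
the middle arrow being the limit of the $D_H$, which is an isomorphism of pro-objects because each $D_H$ is. Composing yields the canonical isomorphism $SH_*(M)\cong SH^*(M)^{\vee}$.

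The routine parts are the sign check in (i)--(ii); the only place demanding care is verifying that the orientation conventions in Appendix \ref{Appendix Coherent orientations} make $\delta$ and $d$ (and $\varphi_*$ and $\varphi^*$) literal transposes rather than transposes up to a global sign, since a sign discrepancy would still give an isomorphism but break the naturality needed to commute with continuation. I expect this bookkeeping to be the only real obstacle; everything else is formal.
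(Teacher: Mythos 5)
Your proposal is correct and follows essentially the same route as the paper: identify $SC_*(H)$ with $\mathrm{Hom}_\K(SC^*(H),\K)$, check that $\delta,d$ and $\varphi_*,\varphi^*$ are mutual transposes, and then apply $\mathrm{Hom}(\varinjlim M_i,\K)\cong\varprojlim\mathrm{Hom}(M_i,\K)$ to pass to the limit. Your closing worry about a global sign discrepancy is already moot in this setup: the paper defines $\delta$ and $\varphi_*$ on generators using the very same counts $\epsilon_u,\epsilon_v$ over the very same moduli spaces that define $d$ and $\varphi^*$, so the matrices are literal transposes by construction rather than by an orientation check.
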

\begin{proof} Using the above canonical
isomorphisms, we get the commutative diagram
$$
\xymatrix@R=12pt@C=12pt{ SH_*(H_+) \ar@{->}[r]^-{\sim}
\ar@{<-}[d]^-{\varphi_*} &
\textrm{Hom}(SH^*(H_+),\K) \ar@{<-}[d]^-{(\varphi^*)^{\vee}} \\
SH_*(H_-) \ar@{->}[r]^-{\sim}  & \textrm{Hom}(SH^*(H_-),\K) }
$$
By category theory, $\textrm{Hom}(\varinjlim M_i,N) \cong
\varprojlim \textrm{Hom}(M_i,N)$, for any module $N$ and any
directed system of modules $M_i$. Using $SH^*(H), \K$ in place of
$M_i, N$, we deduce
\\[0.5mm]
\begin{tabular*}{\textwidth}{l@{\extracolsep{\fill}}cr@{\extracolsep{0pt}}} 
\strut & 
$
\textstyle SH^*(M)^{\vee}\equiv \textrm{Hom}(SH^*(M),\K) \cong
\varprojlim \textrm{Hom}(SH^*(H),\K) \cong \varprojlim SH_*(H) =
SH_*(M)
$
 & \strut 
\end{tabular*}
\\[0.5mm]
using the commutative diagram to obtain the third identification.
\end{proof}
\begin{corollary}
$SH^*(M)=0$ if and only if $SH_*(M)=0$.
\end{corollary}
%
\section{Wrapped Floer cohomology}
\label{Section Wrapped Floer cohomology}
%
\subsection{Lagrangians inside Liouville domains}
\label{Subsection Lagrangians inside Liouville domains}

Observe Figure \ref{Figure Viterbo restriction picture} (ignore $W$). Let $(M^{2n},d\theta)$ be a Liouville domain. Let $L^n\subset M$
be an exact Lagrangian submanifold with Legendrian boundary
$\partial L=L \cap \partial M$ such that this intersection is
transverse. \emph{Exact Lagrangian} means the pull-back $\theta|_L
= \textrm{exact}$. \emph{Legendrian} means $T\partial L\subset
\ker \alpha$, or equivalently $\theta|_{\partial L}=0$.

We strengthen the last condition: the pull-back $\theta|_L = df$
vanishes near $\partial L$. This stronger condition can always be
achieved for the new data obtained after deforming $L$ by a
Hamiltonian isotopy of $M$ relative to $\partial M$ (see
\cite[Lemma 4.1]{Abouzaid-Seidel}).

This condition ensures that near $\partial M$, $L$ has the form
$\textrm{(interval)}\times
\partial L$ in the coordinates $(0,\infty)\times \partial M \subset
\overline{M}$ of \ref{Subsection Liouville domains Definition}. We
extend $L$ to the non-compact exact Lagrangian
$$\overline{L} = L \cup ([1,\infty) \times
\partial L)\subset \overline{M},$$
with $\theta|_{\overline{L}}=df$, and $f$ locally constant on
$\overline{L}\setminus L$ since $\theta|_{\overline{L}\setminus
L}=0$.

\begin{example}\label{Example conormal bdle}
The fibre $\overline{L}=T^*_q N$ in $\overline{M}=T^*N$, where $M=DT^*N$. More generally, the
conormal bundle $\overline{L}\!=\!\mathcal{N}^*\!K\!=\!\{(q,p)\!:\! q\!\in\! K, p|_{T_q
K}\!=\!0 \} \!\subset T^*N$ of a proper submanifold $K\subset N$.
\end{example}

\subsection{Hamiltonian and Reeb chords}
\label{Subsection Hamiltonian and Reeb chords}

A \emph{Hamiltonian chord} of $H:\overline{M}\to \R$ is a map
$$x:[0,1]\to
\overline{M}, \,\textrm{with } \dot{x}(t)=X(x(t)) \textrm{ and
ends } x(0),x(1)\in \overline{L},$$
where we recall that $X=X_H$ is defined by $\omega(\cdot,X)=dH$. A \emph{Reeb chord} of period
$T$ is a map
$$y:[0,T] \to
\partial M , \,\textrm{with } \dot{y}(t)=\mathcal{R}(y(t)) \textrm{ and
ends } y(0),y(1)\in \partial L.$$
Let $\varphi^t_H$ denote the time $t$ flow of $X$. The
Hamiltonian chords correspond to the intersections
$\varphi^1_H(L)\cap L$. We choose $H$ such that on $R\leq 1$, $H$
is $C^2$-small and Morse, and on $R\geq 1$, $H=h(R)$, $h'>0$, and
$h'=m$ for $R\gg 0$. It follows that:

\begin{lemma}\label{Lemma Reeb chords that can arise}
The Hamiltonian chords $x$ in the
collar have constant $H(x)=h(R)$ and correspond to Reeb chords $y(t)=x(t/T)$ of period $T\leq m$ where $T=h'(R)$. $\qed$
\end{lemma}

\begin{lemma}\label{Lemma Reeb chords that can arise 2}
For generic $m,H,\overline{L}$, there are only finitely many
Hamiltonian chords.
\end{lemma}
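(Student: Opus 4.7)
The plan is to split Hamiltonian chords by region and count each type separately. Inside $M$ the Hamiltonian $H$ is $C^2$-small and Morse, while on the collar $H = h(R)$ so that $X_H = h'(R)\mathcal{R}$ is tangent to level sets of $R$. Combined with the cylindrical form $\overline{L} \cap (\text{collar}) = (\text{interval}) \times \partial L$ coming from the strengthened exactness $\theta|_L = df$ with $f$ locally constant near $\partial L$, this yields a clean dichotomy: any chord entering the collar has constant $R$-coordinate with both endpoints on $\{R\}\times\partial L$, so every chord is either entirely inside $M$ or entirely on the collar.

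For the \emph{interior chords}: the $C^2$-smallness of $H$ makes $\varphi^1_H$ uniformly $C^1$-close to the identity, so the set $\varphi^1_H(L)\cap L$ lies in a Weinstein neighborhood $U\cong DT^*L$ of $L$. In $U$, identifying $L$ with the zero section, the time-$1$ map of a sufficiently small Hamiltonian on $T^*L$ is generated by a smooth function $F$ on $L$ which is $C^2$-close to $H|_L$, and chords with both endpoints on $L$ correspond bijectively to critical points of $F$. A generic $C^2$-small perturbation of $H$ makes $F$ Morse, so by compactness of $L$ there are finitely many interior chords. For the \emph{collar chords}: Lemma \ref{Lemma Reeb chords that can arise} identifies them with Reeb chords $y:[0,T]\to \partial M$ with endpoints on $\partial L$ and period $T = h'(R)\leq m$. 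A Sard--Smale transversality argument on the universal moduli space $\{(\alpha, y, T): \dot y = \mathcal{R}_\alpha,\ y(0), y(T)\in\partial L\}$, parametrised by a Banach neighborhood of contact forms obtained by perturbing $\theta$ within its cohomology class (equivalently, by a Hamiltonian isotopy of $\overline{L}$ relative to $\partial M$), shows that for generic data every Reeb chord is non-degenerate and isolated; the associated period set is then countable and closed in $[0,\infty)$, exactly as for the closed Reeb orbit periods assumed generic in \S\ref{Subsection Liouville domains Definition}. For a generic slope $m$ (avoiding this period set), only finitely many such chords have $T \leq m$.

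The main obstacle is the Sard--Smale step: one must verify that perturbations within the allowed class of data are rich enough to make \emph{all} Reeb chords with $T\leq m$ simultaneously non-degenerate. The argument is parallel to the standard closed-orbit case, but the Legendrian boundary conditions must be handled with care when linearising the Reeb flow along a candidate chord and showing surjectivity of the universal linearised operator. Granted this, assembling the finite interior and collar counts proves the lemma.
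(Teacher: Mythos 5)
Your decomposition into interior chords and collar chords is a reasonable way to organize the argument, and the interior count via a Weinstein neighbourhood and a Morse function $F$ on $L$ is fine. But the collar count, which is the heart of the lemma, goes off in a direction the paper does not take, and contains a genuine error that breaks the argument.

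You claim that perturbing $\theta$ within its cohomology class (hence perturbing the contact form $\alpha = \theta|_{\partial M}$ and with it the Reeb vector field) is ``equivalent'' to a Hamiltonian isotopy of $\overline{L}$ relative to $\partial M$. These are not the same class of perturbations. A Hamiltonian isotopy of $\overline{L}$ moves the Lagrangian, hence the Legendrian boundary condition $\partial L$, but leaves the Reeb dynamics on $\partial M$ completely untouched; a perturbation of $\alpha$ changes the Reeb vector field but need not move $\partial L$ at all. The paper's proof uses exclusively the first class: one applies a compactly supported Hamiltonian isotopy $\psi$ to $\overline{L}$ (equivalently, replaces $H$ by $H\circ\psi^{-1}$), and then cites \cite[Lemma 8.1]{Abouzaid-Seidel} for the statement that a generic such perturbation makes $\varphi^1_H(\overline{L})$ transverse to $\overline{L}$, i.e.\ all Hamiltonian chords non-degenerate. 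This keeps $\theta$, $\alpha$, and all of the ambient symplectic/contact data fixed, which is important: perturbing $\alpha$ would change the Reeb orbit spectrum and force you to re-establish the standing genericity assumption from \S\ref{Subsection Liouville domains Definition}, and would invalidate the correspondence in Lemma \ref{Lemma Reeb chords that can arise} for the unperturbed data.

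Your Sard--Smale set-up on the universal moduli space of Reeb chords parametrised by contact forms is exactly the step you flag as the ``main obstacle,'' and indeed you leave the required surjectivity unverified. The paper sidesteps this entirely: the transversality is transversality of two Lagrangians $\varphi^1_H(\overline{L})$ and $\overline{L}$ in $\overline{M}$, which is an elementary perturbation statement (Abouzaid--Seidel Lemma 8.1), not a transversality statement about the Reeb flow on the contact boundary. The only role Sard's lemma plays in the paper's proof is the much softer statement that a generic slope $m$ is not the period of any Reeb chord; once all chords are non-degenerate and have period $\leq m$ (so live in a compact region), finiteness is immediate. I'd suggest abandoning the contact-form-perturbation route and replacing it by the Lagrangian-perturbation argument.
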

\begin{proof}
Unlike the $SH^*(H)$ construction, we will not need to make
time-dependent perturbations of $H$, but we need to allow
perturbations $\psi(L)$ of $L$ by a compactly supported
Hamiltonian flow $\psi$ (one can alternatively view this as
keeping $L$ fixed but perturbing $H$ to $H\circ \psi^{-1}$, at the
cost of losing Lemma \ref{Lemma Reeb chords that can arise}). For generic $m>0$ there are no Reeb chords of
period $m$ (by a Sard's Lemma argument). After a small generic
(time-independent) compactly supported Hamiltonian perturbation of
$H,\overline{L}$ one can ensure the Hamiltonian chords are
non-degenerate, that is $\varphi_H^1(\overline{L})$ is transverse
to $\overline{L}$ (\cite[Lemma 8.1]{Abouzaid-Seidel}). So the
Hamiltonian chords are isolated.
\end{proof}
%
\subsection{Action functional}
\label{Subsection Action functional in wrapped case}
Consider the space of smooth paths with ends in $\overline{L}$:
\\[0.6mm]
\begin{tabular*}{\textwidth}{l@{\extracolsep{\fill}}cr@{\extracolsep{0pt}}} 
\strut & 
$
\Omega(\overline{M},\overline{L})=\{ x\in
\C^{\infty}([0,1],\overline{M}): x(0),x(1)\in \overline{L} \}.
$
 & \strut 
\end{tabular*}
\\[0.5mm]
Define ${\mathbb{A}}_H:\Omega(\overline{M},\overline{L}) \to \R$ analogously
to \ref{Subsection Action functional},
\\[0.4mm]
\begin{tabular*}{\textwidth}{l@{\extracolsep{\fill}}cr@{\extracolsep{0pt}}} 
\strut & 
$
\textstyle {\mathbb{A}}_H(x) = f(x(1))-f(x(0))- \int x^*\theta + \int_0^1 H(x(t)) \, dt.
$
 & \strut 
\end{tabular*}
\\[0.6mm]
The motivation for the first three terms is that they would arise
from $-\int u^*\omega$ by Stokes' theorem if $u$ were a disc with
boundary in $\overline{L}\,\cup\, \mathrm{image}(x)$. This ensures
that
$ \textstyle d{\mathbb{A}}_H \cdot \xi = - \int_0^1 \omega(\xi, \dot{x} -
X) \, dt, $
where $\xi \in T_x\Omega(\overline{M},\overline{L}) = \{\xi\in
C^{\infty}([0,1],x^*T\overline{M}): \xi(0), \xi(1)\in
T\overline{L}\}$.

Therefore the critical points of ${\mathbb{A}}_H$ are the Hamiltonian chords.

For Hamiltonian chords $x$ on the collar, ${\mathbb{A}}_H(x)=f(x(1))-f(x(0))-R h'(R)+h(R)$.

%
\subsection{Wrapped trajectories}
\label{Subsection Wrapped Floer trajectories}

Pick $J,g$ as in \ref{Subsection Liouville domains Definition}.
The solutions $u:\R \to \Omega(\overline{M},\overline{L})$ of
$\partial_s u \!=\! -\nabla {\mathbb{A}}_H$ are the solutions $u: \R \times
[0,1] \!\to\! \overline{M}$ of
\\[0.5mm]
\begin{tabular*}{\textwidth}{l@{\extracolsep{\fill}}cr@{\extracolsep{0pt}}} 
\strut & 
$
\partial_s u + J(\partial_t u - X) \!=\! 0,
$
 & \strut 
\end{tabular*}
\\[0.5mm]
with Lagrangian boundary conditions $u(\cdot,0), u(\cdot,1) \in
\overline{L}$. Let $\widehat{\mathcal{W}}(x_{-},x_{+})$ denote the
solutions converging to $x_{\pm}$ at the ends $s\to \pm \infty$.
Let
$\mathcal{W}(x_{-},x_{+})=\widehat{\mathcal{W}}(x_{-},x_{+})/\R$
denote the moduli space of \emph{wrapped trajectories},
identifying $u(\cdot,\cdot)\sim u(\cdot+\textrm{constant},\cdot)$.

\begin{remark} \label{Remark Wrapped FH is same as Lagr FH}
These moduli spaces define the
Lagrangian Floer cohomology
$HF^*(\varphi_H^1(\overline{L}),\overline{L})$. Indeed wrapped
trajectories $u(s,t)$ correspond precisely to pseudo-holomorphic
strips with boundaries in $\varphi_H^1(\overline{L})$ and
$\overline{L}$, that is solutions  of
$\partial_s v + \tilde{J}_t\partial_t v=0,
$
converging to $v(\pm\infty,\cdot) = x_{\pm} \in
\varphi_H^1(\overline{L}) \cap \overline{L}$, where $\tilde J_t =
d\varphi_H^{1-t}\circ J \circ d\varphi_H^{t-1}$. \textbf{Proof}.
Let $v(s,t)=\varphi_H^{1-t}(u(s,t))$. \qed
\end{remark}
%
\subsection{Energy}\label{Subsection Energy for wrapped traj}
The energy of $u\in \mathcal{W}(x_{-},x_{+})$ is defined as
$E(u)=\int |\partial_s u|^2 \, ds \wedge dt$. The same calculation
as in \ref{Subsection Energy for Floer traj}, using
$\theta|_{\overline{L}} = df$, yields
$
E(u) ={\mathbb{A}}_H(x_-)-{\mathbb{A}}_H(x_+).
$
%

\subsection{Transversality and compactness}
\label{Subsection Wrapped Maximum principle}
Just as in the Floer case, a generic time-dependent perturbation
of $J$ ensures that the $\mathcal{W}(x_-,x_+)$ are smooth manifolds. By Lemma \ref{Lemma Maximum principle for wrapped solns} we get:

\begin{lemma}\label{Lemma Max principle for wrapped traj} All $u\in \mathcal{W}(x_-,x_+)$
lie in $R\leq \max(R(x_{\pm}),R_0)$ for $J$ of contact type on $R\geq R_0$.
\end{lemma}

The energy estimate in \ref{Subsection Energy for wrapped
traj} implies that the $\mathcal{W}(x_-,x_+)$ have compactifications by
broken wrapped trajectories. Indeed the
analysis for $\mathcal{W}(x_-,x_+)$ reduces to the known case of
Lagrangian Floer cohomology by Remark \ref{Remark Wrapped FH is same
as Lagr FH}.
\\
\emph{Technical Remark. In general, in addition to breaking, limits of families of pseudo-holomorphic strips (in Remark \ref{Remark Wrapped FH is same as Lagr FH}) may also carry sphere bubbles and disc bubbles which bound $\overline{L}$ or $\varphi_H^1(\overline{L})$. However, since in our setup $\omega$ is exact and $\overline{L}$ is exact, these bubbling phenomena are ruled out by Stokes' theorem. The bubbling-off analysis in the Lagrangian case is due to Floer \cite{Floer}.
}
%
\subsection{Wrapped Floer cohomology}
\label{Subsection Wrapped Floer complex}

Pick a base field $\K$. The wrapped Floer complex $CW^*(L;H)$ is
the $\K$-vector space generated by the Hamiltonian chords of $H$,
$$
CW^*(L;H) =\bigoplus \left\{ \K\, x : x \in
\Omega(\overline{M},\overline{L}),\; \dot{x}(t) = X(x(t))
\right\},
$$
whose differential $d$ on generators counts isolated wrapped
trajectories,
$$
d y = \sum_{u\in \mathcal{W}_0(x,y)} \epsilon_u\, x,
$$
where $\epsilon_u\in \{ \pm 1\}$ are orientation signs (see
Remark \ref{Remark orienation signs for wrapped}). The constant solution $u(s,t)=y(t)\in
\mathcal{W}_0(y,y)$ is not counted. That $d^2=0$ is a standard
consequence of \ref{Subsection Wrapped Maximum principle}. We call
$HW^*(L;H)=H^*(CW^*(L;H);d)$ the wrapped Floer cohomology for $H$.

\begin{remark}\label{Remark orienation signs for wrapped} For fields $\K$ of characteristic $2$, the orientation signs are not necessary.
In general, if $L$ is spin (the Stiefel-Whitney class $w_2(L)=0 \in
H^2(L;\Z/2)$) then \cite[Sec.(12b)]{Seidel-book} orientation signs can be defined.
If the relative Chern class $2 c_1(M,L)=0 \in
H^2(M,L;\Z)$ then a grading on Hamiltonian chords can be defined and
$\dim \mathcal{W}(x_-,x_+)=|x_-|-|x_+|-1$. For a detailed discussion and further references we refer the reader to Abouzaid-Seidel \cite[Sec.9]{Abouzaid-Seidel}.
%
\end{remark}

\begin{remark}\label{Remark Wrapped FH=HW 2}
Via Remark \ref{Remark Wrapped FH is same as Lagr FH},
$HW^*(L;H)\cong HF^*(\varphi^1_H(\overline{L}),\overline{L})$.
Due to the non-compactness of $\overline{M}$, this heavily
depends on $H$: as the slope $m\to \infty$,
$\varphi^1_H(\overline{L})$ wraps more and more times around
$\overline{L}$ giving rise to more chain generators
(namely intersections $\varphi^1_H(\overline{L})\cap \overline{L}$).
\end{remark}

As in \ref{Subsection Continuation Maps}, a monotone homotopy
$H_s$ defines a wrapped continuation $\varphi: HW^*(H_+;L) \to
HW^*(H_-;L)$ by counting wrapped continuation solutions. These are
solutions $v:\R \times [0,1]\to \overline{M}$ of $\partial_s v +
J_s(\partial_t v - X_{H_s})=0$ with Lagrangian boundary conditions
$v(\cdot,0), v(\cdot,1)\in \overline{L}$, converging to
Hamiltonian chords on the ends $s\to \pm \infty$. The a priori
energy estimate for wrapped continuation solutions is the same as
in \ref{Subsection Continuation Maps} and the maximum principle
holds by Lemma \ref{Lemma Maximum principle for wrapped solns}. This implies the
smoothness and compactifiability of the moduli spaces and the
analogue of Lemma \ref{Lemma Chain Homotopy}. Thus $HW^*(L;H)$
only depends on the slope $m$ of $H$ at infinity, and as in \ref{Subsection Hamiltonians Linear At Infty} for $m\leq m'$ there is a wrapped continuation $HW^*(L;H^m) \to HW^*(L;H^{m'})$. 

\textbf{Definition.} Define the \emph{wrapped Floer cohomology} by
$$
HW^*(L) = \varinjlim HW^*(L;H),
$$
taking the direct limit over wrapped continuation maps
between Hamiltonians linear at infinity (\ref{Subsection Hamiltonians Linear At Infty}). So $HW^*(L)\cong
\displaystyle\varinjlim HW^*(L;H^{m_k})$ for any sequence of slopes $m_k\to
\infty$ as $k\to \infty$.

In Theorem \ref{Theorem characterization of continuation maps in Lagrangian case} we prove that the continuation maps $HW^*(L;H^m) \to HW^*(L;H^{m'})$ are the open pair-of-pants product by a special element $e_{H^{\ell}}\in HW^0(L;H^{\ell})$ for $\ell=m'-m\geq 0$.

The $HW^*(L)$ are invariant under symplectomorphisms of contact type, arguing as in \ref{Section Invariance under contactomorphisms}.
The $HW^*(L;H)$ are invariant under compactly supported Hamiltonian isotopies of $\overline{L}$ (this follows from Remark \ref{Remark Wrapped FH=HW 2} and invariance of
Lagrangian Floer cohomology). So $HW^*(L;H)$ is invariant under compactly supported isotopies of $\overline{L}$ through exact Lagrangians (since these are automatically Hamiltonian isotopies by the Weinstein neighbourhood theorem and the fact that graphs of $1$-forms $\mu$ on $L$ are exact in $T^*L$ precisely when $\mu$ is exact). So $HW^*(L)$ is also invariant under compactly supported isotopies of $\overline{L}$ through exact Lagrangians.
%
%
\section{Canonical $c^*$ maps from ordinary cohomology}
\label{Section Canonical map from ordinary cohomology}
%
%
\label{Subsection The maps c from ordinary cohomology}
\label{Subsection wrapped Canonical map from ordinary cohomology}
%
%
For $\delta>0$ smaller than all Reeb periods, consider
$H^{\delta}$ as in \ref{Subsection Hamiltonians Linear At Infty}.
If $H^{\delta}$ is time-independent, Morse and $C^2$-small on $M$ then
$SC^*(H^{\delta})$ reduces to the Morse complex for $H^{\delta}$,
generated by $\textnormal{Crit}(H^{\delta})$ and whose
differential counts $-\nabla H^{\delta}$ trajectories (for a proof, see
\ref{Subsection Floer trajectories converging to broken Morse
trajectories}). The Morse cohomology is isomorphic to the
ordinary cohomology, so $SH^*(H^{\delta})\cong H^*(M;\K)$. Since 
$H^{\delta}$ is part of the direct limit construction, we automatically obtain a map
$$c^*:H^*(M;\K) \to SH^*(M) \quad (\textrm{and dually } c_*: SH_*(M)\to H_*(M;\K) ).$$
We often write $H^*(M)$ instead of $H^*(M;\K)$.
The analogue for the wrapped case is
$$
c^*:H^*(L;\K) \to HW^*(L).
$$
This arises from
$c^*:HW^*(L;H^{\delta}) \to HW^*(L)$, where $H^{\delta}$ is as above so that all
intersections $\varphi^1_{H^{\delta}}(\overline{L})\cap
\overline{L}$ lie inside $M$. By Lemma \ref{Lemma Max principle for wrapped traj}, all
wrapped trajectories lie in $M$, so we reduce to a compact setup with
$\omega=d\theta$. By Remark \ref{Remark Wrapped FH=HW 2},
$HW^*(L;H^{\delta})\cong HF^*(\varphi^1_{H^{\delta}}(L),L)$, and
by Floer \cite{Floer} this is
isomorphic to the Morse cohomology $MH^*(L;H^{\delta})$. The idea is:
$\varphi^1_{H^{\delta}}(L)$ lies in a Weinstein neighbourhood
$W\cong DT^*L$ of $L$ so it is the $\textrm{graph}(d\ell) \subset
DT^*L$ of a function $\ell\in C^{\infty}(L)$. The intersections
$\varphi^1_{H^{\delta}}(L)\cap L$ correspond to
$\textrm{Crit}(\ell)$, and by an implicit function theorem argument,
if $H^{\delta}$ is $C^2$-small in $M$, the evaluation $u\mapsto
u(s,0)$ sets up a bijection between pseudo-holomorphic strips in
$M$ with boundary in $L$ and $-\nabla \ell$ trajectories in $L$.
%
%
%
\section{TQFT structure on $SH^*(M)$, $H^*(M)$, $HW^*(L)$}
\label{Section TQFT overview}
%
%
\subsection{Summary of the TQFT on $\mathbf{SH^*(M)}$}\label{Subsection TQFT on SH summary}
In the first Appendix (Section \ref{Section Floer solutions}) we carry out the detailed construction of the TQFT structure on $SH^*(M)$, which we summarize here. Suppose we are given:
\begin{enumerate}
 \item\label{TQFTitem1}\label{TQFTitem2} a Riemann surface $(S,j)$ with $p+q$ punctures, with fixed complex structure $j$;
 \item\label{TQFTitem3} \emph{ends}: a cylindrical parametrization $s+it$ near each puncture, with $j\partial_s = \partial_t$;
 \item\label{TQFTitem1b} $p\geq 1$ of the punctures are \emph{negative} (so $s\to -\infty$), they are indexed by $a=1,\ldots, p$; 
 \item\label{TQFTitem1c} $q\geq 0$ of the punctures are \emph{positive} (so $s\to +\infty$), they are indexed by $b=1,\ldots,q$; 
 \item\label{TQFTitem4} \emph{weights}: constants $A_a,B_b>0$ satisfying $\sum A_a - \sum B_b\geq 0$;
 \item\label{TQFTitem5} a $1$-form $\beta$ on $S$ with $d\beta \leq 0$, and on the ends $\beta=A_a\,dt$, $\beta=B_b\,dt$ for large $|s|$.
\end{enumerate}

\noindent \emph{Remarks. Negative/positive parametrizations are modeled on $(-\infty,0]\times S^1$ and $[0,\infty)\times S^1$. In (\ref{TQFTitem5}), $d\beta\leq 0$ means $d\beta(v,jv)\leq 0$ for all $v\in TS$. By Stokes, $\sum A_a - \sum B_b = -\int_S d\beta \geq 0$. This forces $p\geq 1$ and (\ref{TQFTitem4}). Subject to this inequality, such $\beta$ exist (Lemma \ref{Lemma beta exists}).
}

\begin{figure}[ht]
\input{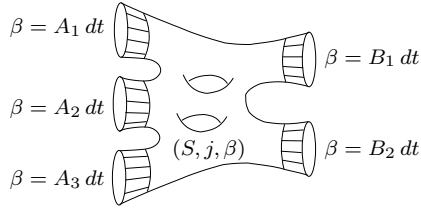}
\caption{A Riemann surface $(S,j)$ of genus $g\!=\!2$, with $p\!=\!3$
negative, $q\!=\!2$ positive punctures, and cylindrical
parametrizations near the punctures.}\label{Figure Riemann surface}
\end{figure}

Fix a Hamiltonian $H:\overline{M}\to \R$ linear at infinity (\ref{Subsection Hamiltonians Linear At Infty}) with $H\geq 0$ (required in \ref{Subsection Energy}), this defines $X=X_H$ (\ref{Subsection Liouville domains Definition}). Fix an almost complex structure $J$ on $M$ of contact type at infinity.

The moduli space $\mathcal{M}(x_a; y_b;S,\beta)$ of \emph{Floer
solutions} consists of smooth maps $u:S\to \overline{M}$ such that 
$du-X\otimes \beta$ is $(j,J)$-holomorphic, and $u$ converges on the ends to
$1$-orbits $x_a,y_b$ of $A_a H$, $B_b H$ which we call the \emph{asymptotics}. After a small generic $S$-dependent perturbation $J_{z}$ of $J$, $\mathcal{M}(x_a; y_b;S,\beta)$ is a smooth manifold. One can ensure that on the ends $J_z$ does not depend on $z\!=\!s+it\!\in\! S$ for $|s|\gg 0$. Just as for Floer continuations (\ref{Subsection Floer continuation solutions}), a maximum principle and an a priori energy estimate $E(u) = \sum \mathbb{A}_{A_a H}(x_a) - \sum \mathbb{A}_{B_b H}(y_b)$ hold, so the $\mathcal{M}(x_a; y_b;S,\beta)$ have compactifications by broken Floer solutions: Floer trajectories for $A_a H,B_bH$ can break off at the respective ends (Figure \ref{Figure compactness for TQFT}). When gradings are defined (\ref{Subsection Maslov index and
Conley-Zehnder index}), 
$$\textstyle \textrm{dim}\,
\mathcal{M}(x_a;y_b;S,\beta) = \sum |x_a| - \sum |y_b|
+2n(1-g-p).$$

Define
$\psi_S: \otimes_{b=1}^q SC^*(B_b H) \to \otimes_{a=1}^p SC^*(A_a
H)$
on generators by counting isolated Floer solutions
$$
\psi_S(y_1 \otimes\cdots \otimes y_q) = \sum_{u\in
\mathcal{M}_0(x_a;y_b;S,\beta)} \epsilon_u
\; x_1\otimes \cdots \otimes  x_p,
$$
where $\epsilon_u \in \{ \pm 1 \}$ are orientation signs (Section
\ref{Appendix Coherent orientations}). Then extend $\psi_S$
linearly. 

\emph{Remark. We use cohomological conventions: the operation $\psi_S$ receives inputs $y_b$ at the positive punctures of $S$, and emits outputs $x_a$ at the negative punctures. So the operation goes ``from right to left'' if we draw the surface $S$ as in Figure \ref{Figure Riemann surface}.}

The $\psi_S$ are chain maps. On cohomology,
$ \psi_S: \otimes_{b=1}^q SH^*(B_b H) \to \otimes_{a=1}^p SH^*(A_a
H) $
is independent of the choices $(\beta,j,J)$ relative to the ends. Taking direct limits:
$$\psi_S: SH^*(M)^{\otimes q}
 \to SH^*(M)^{\otimes p} \qquad (p\geq 1, q\geq 0).$$
So $SH^*(M)$ has a unit $\psi_C(1)$, product $\psi_P$, coproduct $\psi_Q$, but no counit since $p\geq 1$.

For $SH_*$ all arrows are reversed: $\psi^S: \bigotimes_{a=1}^p SC_*(A_a H) \to \bigotimes_{b=1}^q
SC_*(B_b H)$, on generators $\psi^S(x_1 \otimes \cdots \otimes x_p) \!=
\!\sum
\epsilon_u y_1 \otimes \cdots \otimes y_q$ summing over $u\in \mathcal{M}_0(x_a;y_b;S,\beta)$. Take inverse limits:
$$
\psi^S\!:\! SH_*(M)^{\otimes p} \!\to\! SH_*(M)^{\otimes q} \qquad (p\geq 1, q\geq 0).$$
So $SH_*(M)$ has a counit $\psi^C$, product $\psi^Q$,
coproduct $\psi^P$, but no unit since $p\geq 1$.
%
\subsection{The product}\label{Subsection Product}\label{Section Ring structure}
%
The pair of pants surface $P$ (Figure \ref{Figure Introduction})
defines the product
\\[1mm]
\begin{tabular*}{\textwidth}{l@{\extracolsep{\fill}}cr@{\extracolsep{0pt}}} 
\strut & 
$
\psi_P: SH^i(M)\otimes SH^j(M) \to SH^{i+j}(M),\; x \cdot y =
\psi_P(x,y),
$
 & \strut 
\end{tabular*}
\\[1mm]
which is graded-commutative and associative. 

Commutativity follows 
from the construction of the orientation signs in Theorem \ref{Theorem reorder ends}
(this only uses the $\Z/2\Z$-grading on $SH^*$, \ref{Subsection Using
orientation signs to prove that TQFT maps are chain maps}). At a deeper level, this commutativity is really a consequence of the fact that there is no preferred order for three points on a sphere (unlike three points on the boundary of a disc -- which is why the product on $HW^*(L)$ in Theorem \ref{Theorem wrapped TQFT} is typically not graded-commutative).

Associativity is proved by gluing: glue onto one positive end of
$P$ the negative end of another copy of $P$. This yields a surface $S'$ with $p=1$, $q=3$, independent of the positive end
we chose. So $\psi_P(\psi_P(x,y),z)=\psi_{S'}(x,y,z)=\psi_P(x,\psi_P(y,z))$ for $x,y,z\in SH^*(M)$.
%
\subsection{The unit}
\label{Subsection Definition of the unit}
%
Let $C=\C$ with $p=1$, $q=0$. The end is parametrized by
$(-\infty,0]\times S^1$ via $s+it \mapsto e^{ - 2\pi(s+it) }$. On
this end, $\beta=f(s)dt$ with $f'(s)\leq 0$, with $f(s)=1$ for
$s\leq -2$ and $f(s)=0$ for $s\leq -1$. Extend by $\beta=0$ away
from the end. Thus $\psi_C: \K \to SH^*(H)$.

\begin{figure}[ht]
\includegraphics[scale=0.65]{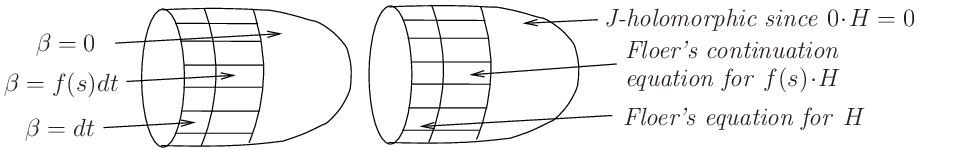}
\caption{A cap $C$, and its interpretation as a continuation cylinder.}\label{Figure Unit}
\end{figure}

\noindent \textbf{Definition.}
Let $e_H\!=\!\psi_{C}(1)\!\in\! SH^0(H)$. Define
$e\! =\! \varinjlim e_H \!\in\! SH^0(M).$
\begin{theorem}\label{Theorem unital ring structure}
$e$ is the unit for the multiplication on $SH^*(M)$.
\end{theorem}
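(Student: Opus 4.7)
The plan is to prove the left unital identity $e \cdot y = y$ for every $y \in SH^*(M)$; the right identity $y \cdot e = y$ then follows from the graded commutativity of $\psi_P$ established in \ref{Subsection Product}. The approach is to glue the cap $C$ defining $e_H$ onto one of the positive ends of the pair of pants $P$ and then invoke the gluing, homotopy-invariance, and direct-limit formalism already built.

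First I would work at a finite stage. Choose $P$ with positive-end weights $k_1 = 1$ and $k_2$, and negative-end weight $h_1$; the Stokes obstruction $\int_P d\beta \leq 0$ of \ref{Subsection Model Riemann Surface} forces $h_1 \geq 1 + k_2$, and any such choice admits a $\beta$ with $d\beta \leq 0$. The cap $C$ of \ref{Subsection Definition of the unit} has end weight $1 = k_1$, so the one-forms $\beta$ on $P$ and $C$ agree on the gluing end, and the glued surface $P \#_\lambda C$ inherits a global one-form with $d\beta \leq 0$. Theorem \ref{Theorem Gluing for compositions} then yields, for $\lambda \gg 0$,
$$\psi_{P \#_\lambda C} = \psi_P \circ (\psi_C \otimes \mathrm{id}) : SC^*(k_2 H) \to SC^*(h_1 H).$$
Topologically $P \#_\lambda C$ has genus $0$ with $p = q = 1$, i.e.\ it is a cylinder with negative-end weight $h_1$ and positive-end weight $k_2$. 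By Theorem \ref{Theorem Homotopy of surfaces} combined with Example \ref{Example Continuation cylinder} and Lemma \ref{Lemma Chain Homotopy}, on cohomology $\psi_{P \#_\lambda C}$ coincides with the continuation map from $SH^*(k_2 H)$ to $SH^*(h_1 H)$.

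Finally, Theorem \ref{Theorem Operations are compatible with limit} says that continuation maps become the identity in the direct limit over $H$. Applying this to a representative $y \in SH^*(k_2 H)$ of an arbitrary class in $SH^*(M)$ gives $\psi_P(e \otimes y) = y$ in $SH^*(M)$, completing the proof. The only mildly delicate point is geometric rather than algebraic: one must assemble the one-forms across the gluing so that $d\beta \leq 0$ globally with the prescribed end weights, but this is already accommodated by the Stokes inequality $h_1 \geq 1 + k_2$ and the freedom in choosing $\beta$ recorded in \ref{Subsection Model Riemann Surface}.
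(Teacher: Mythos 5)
Your proposal is correct and takes essentially the same approach as the paper: glue the cap $C$ onto one positive end of $P$ to obtain a continuation cylinder, invoke the gluing theorem to identify $\psi_{P\#_\lambda C}$ with $\psi_P\circ(\psi_C\otimes\mathrm{id})$, recognize the result as a continuation map, and pass to the direct limit where continuation maps become the identity. You simply spell out the weight bookkeeping ($k_1=1$, $h_1\geq 1+k_2$) and the final appeal to Theorem \ref{Theorem Operations are compatible with limit} that the paper's one-line proof leaves implicit.
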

\begin{proof}
By the gluing illustrated in the picture below, $\psi_P(e,\cdot) = \psi_{P\# C}(\cdot) = \psi_Z(\cdot)=\textrm{id}$.\\ \emph{Remark. For ``gluing = compositions'' results, see Theorems \ref{Theorem Homotopy of surfaces}, \ref{Theorem Gluing for compositions}, \ref{Theorem Operations are compatible with limit}. Before taking direct limits, the above is the continuation map $SH^*(H) \stackrel{e_H\otimes \cdot}{\longrightarrow} SH^*(H)^{\otimes 2} \stackrel{\psi_P}{\longrightarrow} SH^*(2H)$.}\\ \strut\hspace{20ex}
\includegraphics[scale=0.8]{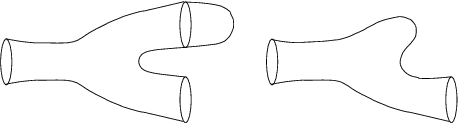} \end{proof}
\begin{lemma}\label{Lemma unit is a count of continuation
solutions} $e_H$ is a count of the isolated finite energy Floer
continuation solutions $u:\R\times S^1 \to \overline{M}$ for the
homotopy $f(s)H$ from $H$ to $0$.
\end{lemma}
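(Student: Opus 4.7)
The plan is to identify the cap $C$ with a compactification of the cylinder $\R \times S^1$ at $s = +\infty$, and then to use removal of singularities to biject solutions between the two pictures.

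Under the cylindrical parametrization $z = e^{-2\pi(s+it)}$ of the end of $C = \C$, the puncture at $s = -\infty$ corresponds to $z = \infty$ and the interior point $z = 0$ corresponds to the ``added'' $s = +\infty$ end. Since $\beta = f(s)\, dt$ is identically $0$ near $z = 0$, we may view $C$ as the cylinder $\R \times S^1$ with its $s = +\infty$ end collapsed to a point. Under this identification the equation $(du - X\otimes \beta)^{0,1} = 0$ on $C$ reads, in cylinder coordinates, as Floer's continuation equation $\partial_s u + J(\partial_t u - f(s)X) = 0$ for the homotopy $H_s = f(s)H$; this homotopy is monotone in the sense of \ref{Subsection Floer continuation solutions} since $\partial_s h_s' = f'(s)\, h'(R) \leq 0$ because $f' \leq 0$ and $h' \geq 0$.

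Next I would construct a bijection between $\mathcal{M}_0(x; C, \beta)$ and the isolated finite-energy continuation solutions $\tilde u \co \R \times S^1 \to \overline{M}$ asymptotic to $x$ at $s = -\infty$. In one direction, given a Floer solution $u \co C \to \overline{M}$, restrict to $C \setminus \{0\}$ and reparametrize by $(s,t)$ to obtain $\tilde u$; as $s \to +\infty$ the loops $\tilde u(s,\cdot)$ shrink to the constant loop at $u(0)$. In the other direction, given a finite-energy $\tilde u$, the maximum principle (Lemma \ref{Lemma Maximum principle}, valid for monotone homotopies) confines $\tilde u$ to a compact region of $\overline{M}$; for $s$ sufficiently large $f(s) = 0$, so $\tilde u$ is $J$-holomorphic on $[A,\infty)\times S^1$, and Gromov's removal of singularities extends $\tilde u$ smoothly across $z = 0$ to a Floer solution on $C$. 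Energy and Fredholm index are preserved under this correspondence (the constant asymptote at $z = 0$ makes trivial contribution to the dimension formula of Lemma \ref{Lemma index of Fredholm operator}), and orientations match by the coherent orientation conventions of Appendix \ref{Appendix Coherent orientations}, so $e_H = \psi_C(1)$ equals the signed count of continuation solutions on the cylinder side.

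The main delicacy is the removable singularity step: one must verify that the a priori energy bound from \ref{Subsection Energy} for continuation cylinders, together with the $C^0$-bound from the maximum principle, places $\tilde u|_{[A,\infty)\times S^1}$ within the hypotheses of Gromov's theorem; smoothness of the extension at $z = 0$ is then automatic from the usual elliptic regularity for $J$-holomorphic curves, since on a neighbourhood of $z = 0$ the equation reduces simply to $\overline{\partial} u = 0$.
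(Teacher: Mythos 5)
Your proposal is correct and follows the same route as the paper's own proof: on the punctured plane $\C\setminus\{0\}\cong\R\times S^1$ the equation becomes the continuation equation for $f(s)H$, and since $\beta\equiv 0$ near $z=0$ the map is $J$-holomorphic there, so removal of singularities identifies finite-energy continuation cylinders with Floer solutions on $C=\C$. The only respect in which you go further than the paper is spelling out the monotonicity of the homotopy and the preservation of index and orientations, but these are routine and do not represent a different argument.
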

\begin{proof}
As $X_H\otimes \beta \! = \!
X_{f(s)H}\otimes dt$, $(du-X\otimes \beta)^{0,1} \! = \! 0$ becomes
$
\partial_s u + J(\partial_t u - X_{f(s)H})  \! = \!  0$. Near $s \! = \! \infty$ such $u$ are $J$-holomorphic, $\partial_s u +
J\partial_t u \! = \! 0$. The finite energy condition implies that $u$
converges to an orbit of $X_H$ at $s \! = \! -\infty$ by Theorem \ref{Theorem
Smoothness and exp convergence} and that the singularity at
$s \! = \! \infty$ is removable. So $u$ extends to a $J$-holomorphic map
over the puncture at $s \! = \! \infty$
(see \cite[Sec.4.2]{McDuff-Salamon}) recovering the solution $\C \to
\overline{M}$ counted by $\psi_C$.
\end{proof}

\begin{lemma}\label{Lemma unit is sum of minima}
For $H\!=\!H^{\delta}$ as in Section \ref{Subsection The maps c from ordinary cohomology}, $e_H = $\,sum of the local minima of $H$.

More generally this holds for any $H\geq 0$ as described at the start of Section \ref{Section SH+} (namely, such that the non-constant $1$-orbits of $H$ have negative action and lie on the collar of $\overline{M}$).
\end{lemma}
\begin{proof}
Pick $J$ so that $g\!=\!\omega(\cdot,J\cdot)$ is Morse-Smale for $H$.
Rescale $H$ by a small constant so that all Floer continuation
solutions for $f(s)H$ are time-independent (this is possible by Claim 5 in
\ref{Subsection Floer trajectories converging to broken Morse
trajectories}: the Floer solutions for $C$ stay inside $M$ by the maximum principle, and we have an a priori energy estimate). Define $\sigma(s) = -\int_s^0 f(\tilde{s})\, d\tilde{s}$, so $\sigma'(s)=f(s)$. Suppose $v$ is a $-\nabla H$ flowline: $v'(s)=-\nabla H$. Define $u(s) = v(\sigma(s))$ so $u'(s) = -f(s) \nabla H = -\nabla (f(s)H)$. Thus $-\nabla (f(s)H)$ flowlines out of $x\in \mathrm{Crit}(H)$ contain a subset parametrized by $W^u(x;H)$, so these are never isolated unless $x$ is a local minimum, in which case the $-\nabla (f(s)H)$ flowline is necessarily constant.
The second claim follows because the Floer solutions counted by $\psi_C: \K \to SH^*(H)$ have energy $E(u) = \mathbb{A}_H(x) \geq 0$ by \ref{Subsection Energy}.
\end{proof}

\begin{theorem}\label{Theorem unit is image of 1}
$e=\varinjlim e_H$ is the image of $1$ under $c^*:H^*(M) \to
SH^*(M)$, and $e_H = c^*_H(1)$ where $c_H^*:H^*(M)\cong SH^*(H^{\delta}) \to SH^*(H)$ is the continuation map.
\end{theorem}
\begin{proof}
By Lemma \ref{Lemma unit is sum of minima}, $e_{H^{\delta}}=c_0^*(1)$ via $c_0^*\!:\!H^*(M) \!\cong\! MH^*(H^{\delta})
\!\equiv\! SH^*(H^{\delta})$. Via the continuation
$\varphi\!:\!SH^*(H^{\delta}) \! \to \!  SH^*(H)$, $e_H\!=\!\varphi(
e_{H^{\delta}}) \!=\! \varphi (c_0^*1) \!=\! c_H^*(1)$ (by Theorem
\ref{Theorem Operations are compatible with limit}). Now take direct limits. The claim also follows by Theorem \ref{Theorem ring structure on ordinary
cohomology}.
\end{proof}

\noindent \emph{Remark. By Section \ref{Section SH+}, one can always construct a Hamiltonian $H=H^{\ell}$ of slope $\ell$ so that $c^*_{H}:MH^*(H^{\delta})\equiv SH^*(H^{\delta}) \to SH^*(H)$ is the inclusion of a subcomplex.
}
%
%
%
%
%
%
\subsection{A minimal set of generating surfaces under gluing}
\label{Section POP decomposition}
\label{Subsection minimal set of gen surfaces}
Write $S_{pq}$ for the surface of genus zero with $p$ negative and $q$ positive punctures, and write $S_{pqg}$ when it has genus $g$. For example, the surfaces $C=S_{10}$, $Z=S_{11}$, $P=S_{12}$, $Q=S_{21}$ are shown in Figure \ref{Figure TQFT table}, and $S_{20}=Q\# C$, $S_{101}=P\# S_{20}$, $S_{111}=P\# Q$ are shown in Figure \ref{Figure POP subdivision}.
\begin{figure}[ht]
\input{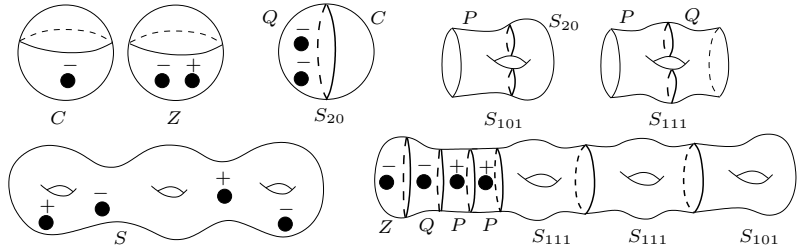}
\caption{Dark dots are punctures with the assigned
sign. A dark circle shows how we construct the surface by gluing two ends in that region.}\label{Figure POP subdivision}
\end{figure}
\begin{theorem}\label{Theorem decomposing S}
Every operation $\psi_S: SH^*(M)^{\otimes q} \to SH^*(M)^{\otimes
p}$ is a composition of operations, chosen among the four basic
ones: the unit $\psi_C: \K \to SH^*(M)$, the identity $\psi_Z=id$,
the product $\psi_P$ and the coproduct $\psi_Q: SH^*(M)\to
SH^*(M)^{\otimes 2}$.
\end{theorem}
\begin{proof} We already showed that $C,Z,P,Q$ generate $S_{20},S_{101},S_{111}$.
As an illustration, consider $S=S_{223}$ in Figure \ref{Figure POP subdivision}: $S$ is isotopic to the last picture, so $S=Z\# Q \# P \# P \# S_{111}\# S_{111} \# S_{101}$. For general $S$, we first isotope all punctures into a small disc $D\subset S$; then the complement $S\setminus D$ has the form $S_{111}\# \cdots \# S_{111}\# S_{101}$ (with genus$(S)-1$ copies of $S_{111}$). Since $S= D \# (S\setminus D)$, we reduce to generating genus zero surfaces such as $D$ (viewing $\partial D$ as a positive puncture -- for example, in the illustration $D=S_{23}$).
By ordering the punctures from left to right as in the last picture in Figure \ref{Figure POP subdivision} (negative punctures on the left) we observe that for $q\geq 1$, $S_{pq}=Z\# Q \# \cdots \# Q \# P \# \cdots \# P$ (with $p-1$ copies of $Q$ and $q-1$ copies of $P$) where we always only glue on the first puncture.
Similarly, for $p\geq 2$, $S_{p0}=Z\# Q \# \cdots \# Q\# S_{20}$ (with $p-2$ copies of $Q$) always gluing only on the first puncture.
\end{proof}
Remark. \emph{This is essentially the statement that $(1+1)$-TQFT's are Frobenius algebras (see Atiyah \cite{Atiyah}), the only subtlety is that we must avoid using surfaces with $p=0$.\\
For $p\geq 1,q\geq 1$ the proof also shows that only $Z,P,Q$ are needed to generate (for genus $\geq 1$ move the last $+$ puncture into the $S_{101}$ region to turn it into $S_{111}\# Z$).
}
%
%
\subsection{The TQFT on $\mathbf{SH^*(M)}$ is compatible with the filtration by $\mathbf{H_1(M)}$}
\label{Subsection TQFT is compatible with filtrations}
%
We can filter $SC^*(H)=\bigoplus_{\gamma} SC_{\gamma}^*(H)$ by the homology classes
$\gamma\in H_1(\overline{M})$ of the generators. The Floer
differential preserves the filtration, and so do Floer operations
on a cylinder and a cap. For the product, $\psi_S: SH_{\gamma_1}^*(M)\otimes SH_{\gamma_2}^*(M) \to
SH_{\gamma_1+\gamma_2}^*(M)$, and for the coproduct, $\psi_Q: SH_{\gamma}^*(M) \to \bigoplus SH_{\gamma_1}^*(M) \otimes
SH_{\gamma_2}^*(M)$ summing over $\gamma_1+\gamma_2=\gamma\in H_1(\overline{M})$.
For general $S$, decompose $S$ as in
\ref{Subsection minimal set of gen surfaces}. Thus:
\\[0.5mm]
\begin{tabular*}{\textwidth}{l@{\extracolsep{\fill}}cr@{\extracolsep{0pt}}} 
\strut & 
$
\psi_S: \otimes_b SH^*_{\gamma_b}(M)\to \otimes_a
SH^*_{\gamma_a}(M) \;\textrm{ is zero unless }\;{\textstyle \sum
\gamma_a = \sum \gamma_b \in H_1(\overline{M})}.
$
 & \strut 
\end{tabular*}
\\[0.5mm]
We can also filter $SH^*(M)=\bigoplus_{\gamma}SH_{\gamma}^*(M)$ by the free homotopy classes
$\gamma\in [S^1,M]$ of the generators. The TQFT operations for genus zero surfaces are compatible with the filtration (the equation above holds after
replacing $\sum$ by concatenation of free loops).

Let $SH^*_0(M)$ denote the summand corresponding to the contractible loops. Considering only contractible loops determines a TQFT with operations $\psi_S:SH^*_0(M)^{\otimes q} \to
SH^*_0(M)^{\otimes p}$ $(p\geq 1,q\geq 0)$. Also $c^*:H^*(M)\to SH^*_0(M)\subset SH^*(M)$ naturally lands in $SH^*_0(M)$. 
%
%
\subsection{Unital ring structure on $\mathbf{SH^*(M)}$ over $\mathbf{\Z}$}
\label{Subsection Choice of coefficients}

Suppose we worked with a (commutative) ring $\K$ instead of a field $\K$ (in the twisted case in Section \ref{Section Twisted symplectic cohomology} this implies $\Lambda$ is a ring, instead of a field). 
We used the assumption that $\K$ was a field in the proof of Theorem \ref{Theorem Homotopy of surfaces} by using the K\"unneth theorem. For a ring $\K$ we can nevertheless define as in \ref{Subsection Operations on SH(H) Definition} the operations
\\[0.5mm]
\begin{tabular*}{\textwidth}{l@{\extracolsep{\fill}}cr@{\extracolsep{0pt}}} 
\strut & 
$
\psi_S: SH^*(\otimes_a B_b H) \to H^*(\otimes_a SC^*(B_b H)) \to H^*(\otimes_a SC^*(A_a H)) 
$
 & \strut 
\end{tabular*}
\\[0.5mm]
where the first map is explicitly $\otimes_b [x_b] \mapsto
[\otimes_b x_b]$. Over a field, this first map is K\"unneth's isomorphism, but over principal ideal domains this map is typically only injective so we cannot in general define a reverse map $H^*(\otimes_a SC^*(A_a H)) \to \otimes_a SH^*(A_a H)$.

For surfaces $S$ with $p=1$ we have $H^*(\otimes_a SC^*(A_a H)) = SH^*(A_1 H)$, so we do get all $\psi_S:\otimes_b SH^*(\otimes_a B_b H) \to SH^*(A_1H)$. This part of the TQFT includes the unital ring structure. 
%
%
%
\subsection{Associated graphs}
\label{Subsection Morse-graph associated to S}
%
Given a decomposition as in \ref{Section POP decomposition}, associate a directed graph $S'$
to the decomposition by replacing $C,Z,P,Q$ by the corresponding
graphs $C',Z',P',Q'$ (Figure \ref{Figure Graphs}) and gluing them
according to the decomposition of $S$.
\begin{figure}[ht]
\includegraphics[scale=0.7]{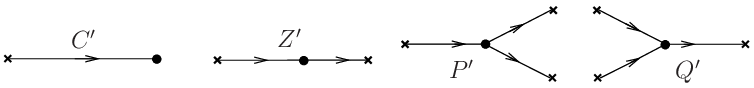}
\caption{Graphs $C',Z',P',Q'$. Dark dots are ``internal''
vertices. Crosses are ``external'' vertices, namely punctures where the edge is infinite.}\label{Figure Graphs}
\end{figure}
%
\subsection{TQFT on $\mathbf{H^*(M)}$ via Morse operations}
\label{Subsection Morse-graph operations}
%
%
Consider an oriented graph $S'$ associated to $S$ in \ref{Subsection Morse-graph
associated to S}. The oriented edges of $S'$ are
parametrized by $(-\infty,0]$, $[0,\infty)$ or $[0,\ell]$ $($for finite $\ell\geq 0)$. We call them respectively \emph{negative ends}
$e_a$, \emph{positive ends} $e_b$ and \emph{internal edges} $e_c$
of length $\ell$. For each edge $e_i$, pick a Morse function $f_i:\overline{M} \to \R$ with $-\nabla f_i$ is inward-pointing along the collar (this condition ensures that 
the Morse cohomology is isomorphic to $H^*(M)$).

For critical points $x_a\!\in\! \textrm{Crit}(f_a)$, $y_b\!\in\!
\textrm{Crit}(f_b)$ define the moduli space of \emph{Morse solutions},
$\mathcal{M}(x_a;y_b;S',f_i),$
consisting of the continuous maps $u\!:\!S'\!\to\! \overline{M}$ which are
$-\nabla f_i$ gradient flows along $e_i$ and which converge to
$x_a,y_b$ at infinity on the edges $e_a,e_b$. The value of $u$ at
vertices where edges meet corresponds to points lying in the
intersection of stable/unstable manifolds of the $f_i$.
The smoothness of $\mathcal{M}(x_a;y_b;S',f_i)$ is guaranteed
by making these intersections transverse by choosing the $f_i$
generically.

After identifications with ordinary cohomology, counting isolated Morse solutions defines
\\[1mm]
\begin{tabular*}{\textwidth}{l@{\extracolsep{\fill}}cr@{\extracolsep{0pt}}} 
\strut & 
$
\psi_{S'}:H^*(M)^{\otimes q} \to H^*(M)^{\otimes p} \quad (p\geq 1,q\geq 0).
$
 & \strut 
\end{tabular*}
\\[1mm]
This map does not depend on the choice of $S'$ associated to $S$,
or the choices $\ell_i$, $f_a$, $f_b$ by continuation and gluing
arguments analogous to the $SH^*(M)$ case. One could also define the operations for $p=0$ using appropriate oriented graphs, so one actually gets the full TQFT.

These TQFT operations are well-known for closed manifolds (Betz-Cohen \cite{Betz-Cohen},
Fukaya \cite{Fukaya}). The cup product on $H^*(M)$ corresponds to the Morse
product operation $\psi_{P'}$ (Figure \ref{Figure Graphs}).

For $C'$ we are counting isolated flowlines $v:(-\infty,0]\to \overline{M}$ with $v'(s)=-\nabla f$, but these are of course never isolated unless $v(-\infty)$ is a local minimum of $f$. So $\psi_{C'}(1)$ is the sum of local minima of $f$, which is the element $1\in H^0(M)$  (compare this to Lemma \ref{Lemma unit is sum of minima} -- indeed the same argument would hold if we defined $\psi_{C'}$ as a time-independent analogue of the construction in \ref{Subsection Definition of the unit}: so using a homotopy $f_s$ from $f$ to $0$ on the negative end of $C'$ and counting isolated finite-energy $-\nabla f_s$ flowlines along $C'$, where the energy is $E(v)=\int |\partial_s v|^2\, ds$).
%
%
\subsection{$\mathbf{c^*: H^*(M) \to SH^*(M)}$ respects the TQFT and $\mathbf{SH^*(M)}$ is an $\mathbf{H^*(M)}$-module}
\label{Subsection H(M) module structure of SH(M)}

By \ref{Subsection Morse-graph operations}, $H^*(M)$ and dually $H_*(M)$ have a TQFT structure via Morse theory analogous to the TQFT structures on $SH^*(M)$ and $SH_*(M)$.
We postpone to Section \ref{Section c* maps preserve TQFT} the following result.

\begin{theorem}\label{Theorem ring structure on ordinary cohomology}
$c^*\!:\!H^*(M) \!\to\! SH^*(M)$ and $c_*\!:\!SH_*(M) \!\to\! H_*(M)$ are TQFT maps. So they are unital ring maps using cup product on $H^*(M)$ and intersection product on $H_*(M)$.
\end{theorem}
Recall $H^{m}$ denotes a Hamiltonian linear at infinity of generic slope $m$ (see \ref{Subsection Hamiltonians Linear At Infty}). The proof of Theorem \ref{Theorem ring structure on ordinary cohomology} in fact shows that $c^*_{H^{\delta}}:H^*(M)\cong SH^*(H^{\delta})$ is a TQFT map, for small $\delta>0$.
So we obtain an $H^*(M)$-module structure on $SH^*(H)$ as follows:
\\[1mm]
\begin{tabular*}{\textwidth}{l@{\extracolsep{\fill}}cr@{\extracolsep{0pt}}} 
\strut & 
$
H^*(M)\otimes SH^*(H) \to SH^*(H), \; \alpha \otimes x \mapsto \psi_P(c^*_{H^{\delta}}(\alpha),x).
$
 & \strut 
\end{tabular*}
\\[0.5mm]
\emph{Remark. This product actually arises as $SH^*(H^{\delta}) \otimes SH^*(H^{m}) \to SH^*(H^{\delta+m})$, but for small $\delta$ no Reeb periods lie in $[m,\delta + m]$ so a monotone homotopy in the region where $H^m$ is linear forces the continuation $SC^*(H^m)\to SC^*(H^{\delta+m})$ to be the identity (no new $1$-orbits appear and, by Lemma \ref{Lemma Maximum principle for Floer solns}, isolated continuation solutions lie in the region where the homotopy is $s$-independent, so by $\R$-symmetry they cannot be isolated unless they are constant).}
%
%

Taking the direct limit, we get a module structure $H^*(M)\otimes SH^*(M) \to SH^*(M)$:
\begin{corollary}
 $SH^*(M)$ is an $H^*(M)$-module via $\alpha \otimes x \mapsto \psi_P(c^*(\alpha),x).$
\end{corollary}

\begin{theorem}\label{Theorem characterization of continuation maps}
The continuation map $\psi_Z:SH^*(H^m) \to SH^*(H^{m'})$ equals the product by the element $e_{H^{\ell}}\in SH^0(H^{\ell})$ for $\ell = m'-m\geq 0$ (defined in Section \ref{Subsection Definition of the unit} and Theorem \ref{Theorem unit is image of 1}).
\end{theorem}
\begin{proof}
Let $e_{H^{\ell}}=\psi_C(1)$ where $\psi_C: \K \to SH^*(H^{\ell})$ (see \ref{Subsection Definition of the unit} for details). Now consider the picture in the proof of Theorem \ref{Theorem unital ring structure} using weights $(m';\ell,m)$ for $P$. Since gluing corresponds to compositions (Theorems \ref{Theorem Gluing for compositions} and \ref{Theorem Homotopy of surfaces}) we deduce:
$\psi_P(e_{H^{\ell}},\cdot)=\psi_{P\#(C\sqcup Z)}(\cdot)=\psi_{Z}(\cdot)$. For cylinders, $\psi_Z$ is the continuation map (Example \ref{Example Continuation cylinder}, Theorem \ref{Theorem Homotopy of surfaces}).
 \end{proof}
%
\subsection{The coproduct}
\label{Subsection Description of the coproduct}
%
Denote $\psi_{Q'}: H^*(M) \to H^*(M)\otimes H^*(M)$ the classical coproduct (see \ref{Subsection Morse-graph operations}). Let $f_2,f_3$ be the Morse functions we use on the negative ends of $Q'$, and use an $f$ on the positive end which has a unique local minimum. By definition $\psi_{Q'}(1)$ is a count of intersection numbers $W^u(x;f_2)\cdot W^u(y;f_3)$ for $x,y$ of complementary Morse indices (for generic $f$, the intersection point at the internal vertex of $Q'$ will always have a unique $-\nabla f$ flowline to $\mathrm{min}\,f$). Since intersection numbers are homotopy invariants, we can in fact write
\\[0.5mm]
\begin{tabular*}{\textwidth}{l@{\extracolsep{\fill}}cr@{\extracolsep{0pt}}} 
\strut & 
$
\psi_{Q'}(1) = \sum \left(  [W^u(x;f)] \cdot [W^u(y;f)] \right) \, x\otimes y \in MH^{2n}(f).
$
 & \strut 
\end{tabular*}
\begin{example}\label{Example coproduct}
 For $M=T^*N$, the coproduct $\psi_{Q'}$ is determined by the Euler characteristic: $\psi_{Q'}(1) = ([N]\cdot [N])\, \mathrm{vol}_N \otimes \mathrm{vol}_N = \pm \chi(N) \,(\mathrm{vol}_N)^{\otimes 2}$  (working over $\Z/2$ if $N$ is not oriented).
\end{example}
\begin{theorem}\label{Theorem description of coproduct}
 The image of $\psi_Q: SH^*(M) \to SH^*(M)^{\otimes 2}$ lies in the image of $(c^*)^{\otimes 2}: H^*(M)^{\otimes 2}\to  SH^*(M)^{\otimes 2}$. Moreover, $\psi_Q$ is determined by $\psi_{Q'}(1)$ and the $H^*(M)$-module structure of $SH^*(M)$ defined in \ref{Subsection H(M) module structure of SH(M)}.
\end{theorem}
\begin{proof}
The fact that $\psi_Q$ lands in $c^*(H^*(M))\otimes SH^*(M)$ follows from the factorization
\\[0.2mm]
\begin{tabular*}{\textwidth}{l@{\extracolsep{\fill}}cr@{\extracolsep{0pt}}} 
\strut & 
$
\psi_{Q} = \psi_{(Z \sqcup Z) \# Q} \co 
SH^*(H) \stackrel{\psi_Q}{\longrightarrow} SH^*(\delta H) \otimes SH^*(H) 
\stackrel{\varphi \otimes \mathrm{id}}{\longrightarrow} SH^*(H) \otimes SH^*(H)
$
 & \strut 
\end{tabular*}
\\[1mm]
where $\varphi: SH^*(\delta H) \to SH^*(H)$ is the continuation, using small enough $\delta>0$ so that $SH^*(\delta H) \cong H^*(M)$. A similar factorization shows $\mathrm{im}(\psi_Q) \subset SH^*(M)\otimes c^*(H^*(M))$.

We can also factorize $\psi_Q: SH^*(H) \to SH^*(2H)^{\otimes 2}$ as 
\\[0.2mm]
$
\psi_{Q} = \psi_{(Z \sqcup P) \# ((Q\# C) \sqcup Z)}(1\otimes \cdot) \co 
SH^*(H) \stackrel{\psi_{Q\# C}(1) \otimes \mathrm{id}}{\longrightarrow} SH^*(\delta H)^{\otimes 2} \otimes SH^*(H) 
\stackrel{\varphi \otimes \psi_P}{\longrightarrow}  SH^*(2H)^{\otimes 2}
$
\\[1mm]
where $\varphi:SH^*(\delta H) \to SH^*(2H)$ is the continuation.

For small $\delta>0$, $\psi_{Q \# C}(1)\in SH^*(\delta H)^{\otimes 2}$ equals $(c^*_{\delta H})^{\otimes 2}(\psi_{Q'}(1))$ (since $c^*_{\delta H}$ is compatible with TQFT by the proof of Theorem \ref{Theorem ring structure on ordinary cohomology}).
 Say $(c^*_{\delta H})^{\otimes 2}(\psi_{Q'}(1))=\sum k_{ij}\, x_i \otimes x_j \in SH^*(\delta H)^{\otimes 2}$, then by the factorization: $\psi_{Q}(y) = \sum k_{ij} \, \varphi(x_i) \otimes \psi_P(x_j, y)$. Finally, in the direct limit, $\psi_P(x_j,y)\in SH^*(M)$ is the $H^*(M)$-module structure of $SH^*(M)$ since $x_j\in \textrm{im}(c^*)$.
\end{proof}

Observe that we proved that if $\psi_{Q'}(1)=\sum k_{ij}\, x_i\otimes x_j$, then $\mathrm{im}(\psi_Q)\subset \mathrm{span}\{c^*(x_i)\} \otimes \mathrm{span}\{c^*(x_j)\}$. In particular, 
if $\psi_{Q'}(1)=0$ then $\psi_Q\equiv 0$.

If the surface $S$ has non-zero genus or has $p\geq 2$, then by Theorem \ref{Theorem decomposing S} the decomposition of $S$ into generators $C,Z,P,Q$ contains at least one $Q$. So Theorem \ref{Theorem description of coproduct} implies:

\begin{corollary}
 If $\psi_{Q'}(1)=0$, then all TQFT operations on $SH^*(M)$ for surfaces of non-zero genus and for surfaces with $p\geq 2$ will vanish. $\qed$
\end{corollary}

\begin{example}
 For $\overline{M}=T^*N$, the only possible non-zero contribution to $\psi_Q$ is a map $SH^0(M) \to \K \, (c^*\mathrm{vol}_N)^{\otimes 2}$ for degree reasons, and the Corollary applies whenever $\chi(N)=0$.
\end{example}
%
%
\subsection{Open model surfaces $S$}
\label{Subsection open model surfaces S}
%
%
Let $(S,j)$ be a Riemann surface isomorphic to the closed unit
disc with $p+q$ boundary points removed. These punctures are
marked in the oriented order as $1,\ldots, p, 1, \ldots q$, the
first $p\geq 1$ are called \emph{negative}, the other $q\geq 0$ \emph{positive}. Fix parametrizations as
\emph{strip-like ends} $(-\infty,0]\times [0,1]$ and
$[0,\infty)\times [0,1]$ near negative and positive
punctures respectively, such that $j$ becomes
standard: $j\partial_s =
\partial_t$.

\begin{figure}[ht]
\input{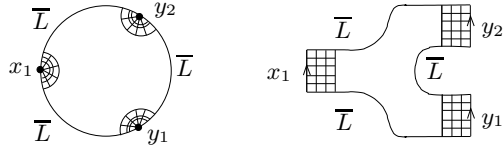}
\caption{Two ways of viewing the open-string pair-of-pants $P$, $(p,q)=(1,2)$, showing the strip-like end parametrizations, the Lagrangian boundary condition and the asymptotics $x_1,y_1,y_2$. The dark circles are punctures.} \label{Figure
wrapped tqft}
\end{figure}

Fix a one-form $\beta$ on $S$ satisfying: $d\beta\leq 0$; on
each strip-like end $\beta$ is a positive constant multiple of
$dt$, these constants $A_a, B_b>0$ are called \emph{weights} and
are generic (not periods of Reeb chords); $\beta$ is exact near $\partial S$; and the
pullback $\beta|_{\partial S}=0$
 (so $\beta|_{\partial
S}({T\partial S})=0$).

By Stokes' theorem, $\sum A_a - \sum B_b \geq -\int_S d\beta \geq 0$.
Arguing as in Lemma \ref{Lemma beta exists}, one shows that this condition on the weights $A_a,B_b$ is the only obstruction to constructing such a form $\beta$, and one can also ensure that $d\beta=0$ everywhere except on a chosen small disc in the interior of $S$. At the cohomology level, the particular choice of $\beta$ will not matter.

Given two such sets of data for $S,S'$, then as in \ref{Subsection
Gluing surfaces} we can glue a positive end of $S$ with a negative
end of $S'$ provided they carry the same weight. The resulting
surface $S\# S'$ carries glued data $j\#j', \beta\#\beta'$
satisfying the above conditions.

More generally, we also allow $S$ to be a disjoint union of
surfaces as above, and we also allow surfaces $S$ which are the
result of gluing any such surfaces along several ends (provided
$p\geq 1$ after gluing), so the surface can have certain holes.

%
\subsection{TQFT structure on $\mathbf{HW^*(L)}$}
\label{Subsection Wrapped TQFT structure}
%
%
Fix a Hamiltonian $H:\overline{M}\to \R$ as in the definition of $HW^*(L;H)$ with $H\geq 0$ and
(possibly non-generic) slope $1$ at infinity. Write $X=X_H$.

In analogy with \ref{Subsection Floer solutions S surface},
define the moduli space of \emph{wrapped solutions}
$\mathcal{W}(x_a;y_b;S,\beta)$
consisting of smooth $u:S \to \overline{M}$ with $u(\partial
S) \!\subset \!\overline{L}$, solving
$ (du - X\otimes \beta)^{0,1}=0, $
converging on the ends to the Hamiltonian chords $x_a,y_b$ for
$A_a H, B_b H$.

The maximum principle holds for these moduli spaces by Lemma \ref{Lemma Maximum principle for wrapped solns}, and the a priori energy estimate $E(u) \leq \textstyle \sum_{a=1}^p {\mathbb{A}}_{A_a H}(x_a) - \sum_{b=1}^q {\mathbb{A}}_{B_b H}(y_b)$ holds by Section \ref{Subsection Energy Appendix Wrapped}. We now make a technical remark about transversality, which proves that for generic $J$ the wrapped moduli spaces are smooth manifolds.

\emph{Technical Remark. For transversality, proceed as in \ref{Subsection
Smoothness of Moduli Spaces}, so \ref{Appendix Genericity of
J} can be applied, except we need to discuss trivial solutions
$du-X\otimes \beta=0$. Let $\gamma$ be a boundary path in
$\partial S$ connecting two consecutive punctures $z,z'$ with
weights $c,c'$. Since the pull-back of $\beta$ to $\partial S$
vanishes and $\beta$ is exact near $\partial S$, we can define a
complex coordinate $s+it$ near $\gamma$ such that: $\partial_s =
\dot{\gamma}$ and $\beta=\ell(t)\,dt$ along $\gamma$, for some
function $\ell(t)$ interpolating $c,c'$. Since $du-X\otimes \beta=0$,
$du(\dot{\gamma})=0$ so $u$ is constant along $\gamma$. Thus
$u(z),u(z')$ are two (possibly equal) Hamiltonian chords such that the
initial point of one is the end point of the other. This gives
rise to a triple intersection in $\overline{L}\cap
\varphi^c(\overline{L})\cap \varphi^{c+c'}(\overline{L})$. As in
\ref{Subsection Hamiltonian and Reeb chords}, after perturbing
$H,\overline{L}$, one can \cite[Lemma 8.2]{Abouzaid-Seidel} rule
out these intersections provided $\dim M \geq 4$, which we tacitly
assume (for $\dim M = 2$ there is a work-around by passing to $M \times DT^*S^2$ \cite[Sec.5b]{Abouzaid-Seidel}).}

By the maximum principle and the energy estimate, $\mathcal{W}(x_a;y_b;S,\beta)$ is
compact up to breaking at the ends (as mentioned in \ref{Subsection Wrapped Maximum principle}, no sphere or disc bubbling can occur since $\overline{M},\overline{L}$ are exact). On the ends, the equation turns into Floer's equation, so the breaking analysis is
the same as for strips. The construction of the TQFT is now analogous to the
$SH^*(M)$ case by counting wrapped solutions. The analogue of Theorem
\ref{Theorem Operations are compatible with limit} is:
\begin{theorem}\label{Theorem wrapped TQFT}
There are TQFT maps $\mathcal{W}_S: \otimes_b HW^*(L;B_b H) \to
\otimes_a HW^*(L;A_a H)$ which in the direct limit become TQFT
operations
$\mathcal{W}_S: HW^*(L)^{\otimes q} \to HW^*(L)^{\otimes p}$, 
$(p\geq 1, q\geq 0)$,
which do not depend on the choices of $\beta,j,J,H$. $\qed$
\end{theorem}

This includes the unital ring structure: $\mathcal{W}_P:
HW^*(L)^{\otimes 2} \to HW^*(L)$ with unit $\mathcal{W}_C:\K \to
HW^*(L)$ where $P$ is a disc with $1+2$ punctures and $C$ is a
disc with $1+0$ punctures. As in \ref{Subsection Product}, the
product is associative. It is not graded-commutative in general (see \ref{Subsection Product}).

\subsection{$\mathbf{c^*: H^*(L) \to HW^*(L)}$ respects the TQFT and $\mathbf{HW^*(L)}$ is an $\mathbf{H^*(L)}$-module}
\label{Subsection H(L) module structure of HW(L)}

We postpone to Section \ref{Subsection c* respects TQFT Lagrangian case} the following result.
\begin{theorem}\label{Theorem c* map for HWL preserve TQFT}
The maps $c^*:H^*(L) \to HW^*(L)$ from Section \ref{Section Canonical map from ordinary cohomology} preserve the TQFT. 
\end{theorem}

Proceeding as in Section \ref{Subsection H(M) module structure of SH(M)}, we deduce the following two results:

\begin{corollary}
 $HW^*(L)$ is an $H^*(L)$-module via 
\\[0.5mm]
\begin{tabular*}{\textwidth}{l@{\extracolsep{\fill}}cr@{\extracolsep{0pt}}} 
\strut & 
$
H^*(L)\otimes HW^*(L) \to HW^*(L), \quad \alpha \otimes x \mapsto \mathcal{W}_P(c^*(\alpha),x).
$
 & \strut 
\end{tabular*}
\end{corollary}

\begin{theorem}\label{Theorem characterization of continuation maps in Lagrangian case}
The continuation map $\mathcal{W}_Z=\mathcal{W}_P(e_{H^{\ell}},\cdot):HW^*(L;H^m) \to HW^*(L;H^{m'})$ equals the product by the element $e_{H^{\ell}}=\mathcal{W}_C(1)\in HW^0(L;H^{\ell})$ where $\ell = m'-m\geq 0$.
\end{theorem}

Just as in Section \ref{Subsection H(M) module structure of SH(M)} and Theorem \ref{Theorem unit is image of 1}, the elements $e_{H^{\ell}}\in HW^0(L;H^{\ell})$ are the image of $1$ under the continuation map $c^*_{H^{\ell}}: H^*(L)\cong HW^*(L;H^{\delta}) \to HW^*(L;H^{\ell})$ (recall 
Section \ref{Section Canonical map from ordinary cohomology}), and by Section \ref{Section SH+} one can construct $H^{\ell}$ so that $c^*_{H^{\ell}}$ is the inclusion of a subcomplex.
%
%
\subsection{$\mathbf{HW^*(L)}$ is a module over $\mathbf{SH^*(M)}$}
\label{Subsection Wrapped HW is an SH module}
%
Pick a surface $S$ as in \ref{Subsection open model surfaces S}.
Introduce $p_{\mathrm{int}}$ negative and $q_{\mathrm{int}}$ positive punctures in the interior of $S$.
Fix cylindrical parametrizations $(-\infty,0]\times S^1$ and
$[0,\infty)\times S^1$ respectively near those punctures, with
$j\partial_s=\partial_t$. Let $\beta$ be a $1$-form on the
resulting surface $S$ with: $d\beta \leq 0$, $\beta$ is a positive
multiple of $dt$ near each puncture, $\beta$ is exact near
$\partial S$, the pull-back $\beta|_{\partial S}=0$.
\begin{figure}[ht]
\input{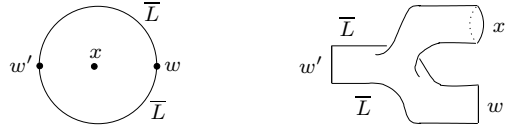}
\caption{The surface $D$ with punctures $(w';w;\,;x)$ viewed in two
ways.} \label{Figure wrapped tqft 2}
\end{figure}
Consider the moduli space of solutions $u:S\to \overline{M}$ of
$(du-X\otimes\beta)^{0,1}=0$, with $u(\partial S)\subset
\overline{L}$, converging at boundary punctures to Hamiltonian
chords and at interior punctures to $1$-periodic Hamiltonian
orbits. The maximum principle and energy estimate still hold, so
transversality and compactness up to breaking on the ends are
proved as before. The count of these moduli spaces define
operations 
$$
SH^*(M)^{\otimes q_{\mathrm{int}}} \otimes HW^*(L)^{\otimes q} \to
SH^*(M)^{\otimes p_{\mathrm{int}}} \otimes HW^*(L)^{\otimes p} \quad (p+p_{\mathrm{int}}\geq 1, q+q_{\mathrm{int}}\geq 0).
$$

Consider a punctured disc $D$ with $(p,q;p_{\mathrm{int}},q_{\mathrm{int}}) = (1,1;0,1)$. 
At the chain level a solution as in Figure \ref{Figure wrapped tqft 2} contributes $\pm w'$ to
$\mathcal{W}_D(x\otimes w)$ where
$$
\mathcal{W}_D: SH^*(M) \otimes HW^*(L) \to HW^*(L).
$$
We deduce the following result, which was known by specialists in symplectic cohomology:

\begin{theorem}\label{Theorem wrapped module over SH}
$\mathcal{W}_D$ defines a module structure of $HW^*(L)$ over
$SH^*(M)$.
\end{theorem}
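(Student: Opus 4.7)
The plan is to verify the two module axioms—unitality and compatibility with the ring multiplication—by the gluing principle that underlies the TQFT constructions already established in the Appendix. First, one must observe that the discussion preceding the theorem produces $\mathcal{W}_D$ as a chain-level operation: the surface $D$ is a disc with two boundary punctures and one interior puncture, equipped with a $1$-form $\beta$ with $d\beta\leq 0$, $\beta|_{\partial D}=0$, and $\beta$ a positive multiple of $dt$ on each of the (strip-like or cylindrical) ends. The maximum principle of \ref{Subsection Wrapped TQFT structure}(1) and the energy estimate of \ref{Subsection Wrapped TQFT structure}(2) hold verbatim since the arguments only use that $\beta|_{\partial S}=0$ near the boundary and $d\beta\leq 0$ everywhere; transversality is achieved by the argument of \ref{Appendix Genericity of J} applied in the mixed open/closed setting. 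Thus $\mathcal{W}_D$ is a well-defined chain map, independent of $(\beta,j,J)$ on cohomology, and compatible with continuation maps under direct limits by the wrapped analogue of Theorem \ref{Theorem Operations are compatible with limit}.

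Unitality $\mathcal{W}_D(e\otimes w)=w$ follows by gluing the cap $C$ onto the interior puncture of $D$. The glued surface $D\#_\lambda C$ has no remaining interior punctures and one negative and one positive boundary puncture, and (with the glued $1$-form) lies in the same connected component of admissible data as the strip $Z$ that induces the identity on $HW^*(L;H)$. By the wrapped version of Theorem \ref{Theorem Gluing for compositions}, at the chain level $\mathcal{W}_{D\#C}=\mathcal{W}_D\circ(\psi_C\otimes\mathrm{id})$, and by the wrapped version of Theorem \ref{Theorem Homotopy of surfaces} this equals $\mathcal{W}_Z=\mathrm{id}$ modulo a chain homotopy. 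Passing to the direct limit with the compatibility given by (the wrapped analogue of) Theorem \ref{Theorem Operations are compatible with limit} yields $\mathcal{W}_D(e\otimes w)=w$ in $HW^*(L)$.

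Compatibility with the product, $\mathcal{W}_D(\psi_P(x_1\otimes x_2)\otimes w)=\mathcal{W}_D(x_1\otimes \mathcal{W}_D(x_2\otimes w))$, is again a gluing/homotopy comparison. On the left, glue the pair of pants $P$ (which defines the product on $SH^*(M)$ by \ref{Subsection Product}) onto the unique interior puncture of $D$ to obtain a disc $S_L$ with two interior positive punctures and one negative and one positive boundary puncture. On the right, take two copies $D_1,D_2$ of $D$ and glue the negative boundary puncture of $D_1$ to the positive boundary puncture of $D_2$, producing a disc $S_R$ with the same combinatorial type as $S_L$. The space of admissible data $(\beta,j,J)$ on such a disc (with the prescribed asymptotic weights) is contractible after inserting continuation cylinders as in Remark \ref{Remark Gluing for compositions} to match weights on the ends, so $S_L$ and $S_R$ are connected by a path of regular data. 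The homotopy invariance part of Theorem \ref{Theorem Homotopy of surfaces} (wrapped version) then gives $\mathcal{W}_{S_L}=\mathcal{W}_{S_R}$ on cohomology, and Theorem \ref{Theorem Gluing for compositions} identifies these operations with the two composites above.

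The main obstacle is purely bookkeeping: ensuring that the slopes $h_a,k_b$ on the glued ends agree so that $\mathcal{W}_{S_L}$, $\mathcal{W}_{S_R}$, $\mathcal{W}_D\circ(\psi_P\otimes\mathrm{id})$, and $\mathcal{W}_D\circ(\mathrm{id}\otimes\mathcal{W}_D)$ can all be compared at a common finite stage before taking the direct limit, and that orientation signs behave as in \ref{Subsection Using orientation signs to prove that TQFT maps compose correctly}. This is the same bookkeeping used to prove associativity of the pair-of-pants product in \ref{Subsection Product}, so no new analytic input is needed.
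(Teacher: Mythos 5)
Your proposal is correct and takes essentially the same approach as the paper: unitality by gluing a cap onto the interior puncture and then homotoping to a continuation strip (which becomes the identity in the direct limit), and compatibility with the product by gluing the two relevant configurations ($D\#P$ versus $D\#D$), observing they have the same combinatorial type, and invoking homotopy invariance of the data $(\beta,j,J)$. The paper phrases the unitality step slightly more carefully as "chain homotopic to a continuation map" rather than to the identity strip at a finite stage, but your subsequent appeal to the direct limit resolves this.
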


\begin{proof}
By construction $\mathcal{W}_D$ is linear in $SH^*(M)$ and
$HW^*(L)$. Next we check that $\mathcal{W}_D(e,\cdot)=\textrm{id}$
for the unit $e\in SH^*(M)$. Given $H$ and weights, consider
Figure \ref{Figure wrapped module}: capping off the closed orbit
in Figure \ref{Figure wrapped tqft 2} yields a strip, which we can
homotope to the standard continuation strip $(\R\times
[0,1],\beta=f(s) \,dt,j\partial_s=\partial_t)$ where $f'(s)\leq 0$
and $f$ interpolates the weights. So $\mathcal{W}_D(e_H,\cdot)$ is
chain homotopic to a continuation map. So, in cohomology, taking
direct limits, $\mathcal{W}_D(e,\cdot)$ is the identity, as
required.

\begin{figure}[ht]
\input{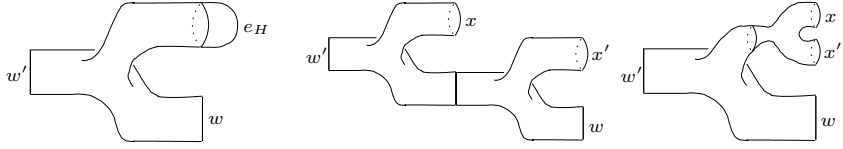}
\caption{Gluing a cap onto Figure \ref{Figure wrapped tqft 2}.
Gluing 2 copies of Figure \ref{Figure wrapped tqft 2}, and finally
the configuration obtained after an isotopy.}\label{Figure wrapped
module}
\end{figure}

Finally we need $\mathcal{W}_D$ to be compatible with the product
on $SH^*(M)$, namely $\mathcal{W}_D \circ (\textrm{id} \otimes
\mathcal{W}_D)=\mathcal{W}_D\circ (\psi_P \otimes \textrm{id}) $.
It suffices to check that for given $H$ and weights, at the chain
level $\mathcal{W}_D(x,\mathcal{W}_D(x',w))$ is chain homotopic to
$\mathcal{W}_D(\psi_P(x,x'),w)$.

Consider the last two configurations in Figure \ref{Figure wrapped
module}: one is the gluing of two copies of Figure \ref{Figure
wrapped tqft 2} corresponding to
$\mathcal{W}_D(x,\mathcal{W}_D(x',w))$, the other is obtained by
an isotopy of this one and it is the gluing corresponding to
$\mathcal{W}_D(\psi_P(x,x'),w)$.
In the last figure, we first pull-back the form $\beta$ via the
isotopy, then we deform it (by linear interpolation) to make it
equal to the $\beta$ obtained from the gluing of the punctured strip
with the pair-of-pants.

Now run the same argument as in the proof of Theorem \ref{Theorem
Gluing for compositions}: combining a stretching argument with the
a priori energy estimate shows that each of the two configurations
arises precisely from gluing broken solutions. Since the two
configurations are homotopic,
$\mathcal{W}_D(x,\mathcal{W}_D(x',w))$ and
$\mathcal{W}_D(\psi_P(x,x'),w)$ are chain homotopic.
\end{proof}
%
\section{Twisted theory}
\label{Section Twisted symplectic cohomology}
%
%
%
\subsection{Novikov bundles}
\label{Subsection Novikov bundles of coefficients}

The Novikov field $\Lambda$ in the
formal variable $t$ is the $\K$-algebra
\\[1mm]
\begin{tabular*}{\textwidth}{l@{\extracolsep{\fill}}cr@{\extracolsep{0pt}}} 
\strut & 
$
\Lambda =\{\, {\textstyle\sum_{j=0}^{\infty}} \, k_{j}\, t^{a_j} \textrm{ for any } \,
k_{j} \in \K, \, a_j \in \R, \textrm{ with } \lim_{j \to \infty}
a_j = \infty \}.
$
 & \strut 
\end{tabular*}
\\[1mm]
\indent Recall
$\mathcal{L} \overline{M}=C^{\infty}(S^1,\overline{M})$ is the free loop space.
Let $\alpha\in C^1_{\textrm{sing}}(\mathcal{L}\overline{M};\R)$ be an $\R$-valued singular cocycle representing a
class $a \in H^1(\mathcal{L} \overline{M}; \R)$. The Novikov bundle
$\underline{\Lambda}_{\alpha}$ is a local system of coefficients
on $\mathcal{L} \overline{M}$: over a loop $x\in \mathcal{L} \overline{M}$ its fibre is a copy
$\Lambda_{x}$ of $\Lambda$, and the parallel translation over
a path $u$ in $\mathcal{L} \overline{M}$ from $x$ to $y$ is
the multiplication isomorphism
\\[1mm]
\begin{tabular*}{\textwidth}{l@{\extracolsep{\fill}}cr@{\extracolsep{0pt}}} 
\strut & 
$
t^{\alpha[u]} \co  \Lambda_{y}
\to \Lambda_{x},
$
 & \strut 
\end{tabular*}
\\[0.5mm]
where $\alpha[\cdot] \co C_1^{\textrm{sing}}(\mathcal{L} \overline{M};\R) \to \R$ denotes the
evaluation of $\alpha$ on singular one-chains.

Up to isomorphism, this local system only depends on the class $a$, so by
abuse of notation we write $\underline{\Lambda}_{a}$ and $a[u]$. \emph{(Proof. changing the representative $\alpha$ to $\alpha+df$, for $f\in C^0_{\textrm{sing}}(\mathcal{L}\overline{M};\R)$, corresponds to an isomorphism of the local systems given by $\lambda_{x} \mapsto t^{-f({x})}\lambda_{x}$ on the $\Lambda_{x}$ fibres.)}

%
%
%
%
%
\subsection{Transgressions}\label{Subsection Transgression}
Let $ev: \mathcal{L} \overline{M} \times S^1 \to \overline{M}$ be the
evaluation map. Define
$$\xymatrix{
\tau=\pi\circ ev^*: H^2(\overline{M};\R)\ar[r]^-{ev^*} & H^2(\mathcal{L} \overline{M}
\times S^1;\R)\ar[r]^-{\pi} & H^1(\mathcal{L} \overline{M};\R) },
$$
where $\pi$ is projection to the K\"{u}nneth summand. This is an
isomorphism when $\overline{M}$ is simply connected. Explicitly, pick a closed de
Rham $2$-form $\eta$ to represent the given $H^2(\overline{M};\R)$ class, then for any path $u\subset\mathcal{L}\overline{M}$ define $\tau \eta\in C_{\textrm{sing}}^1(\mathcal{L}\overline{M};\R)$ by 
$$ \textstyle \tau\eta[u] = \int \eta(\partial_s u, \partial_t u) \,
ds\wedge dt.
$$
\emph{Remark. The path $u:[0,1]\to \mathcal{L}\overline{M}$ defines a cylinder $u:[0,1]\times S^1 \to \overline{M}$, and we can homotope $u$ relative to the ends to make it smooth -- the above integral is then just $\int u^*\eta$. By Stokes' theorem, this does not depend on the choice of homotopy, since $\eta$ is a closed form.%
%
%
}

Observe that $\tau \eta$ vanishes on time-independent paths in
$\mathcal{L} \overline{M}$, so $c^*\underline{\Lambda}_{\tau\eta}$ is trivial
when we pull-back by the inclusion of constant loops $c: \overline{M} \to
\mathcal{L}\overline{M}$.

More generally, $\tau \eta$ evaluated on a Floer solution $u:
S\to \overline{M}$ is defined by
$$
\textstyle \tau\eta[u] = \int_S u^*\eta.
$$
This integral is finite despite the non-compactness of $S$ because
of the exponential convergence of $u$ to the asymptotics (Theorem
\ref{Theorem Smoothness and exp convergence}). Indeed, on an end,
$|\partial_s u|\leq c e^{-\delta|s|}$, $\partial_s u +
J(\partial_t u - CX) = 0$ and $|\eta(u)|\leq C'$ for some
constants $c,\delta,C,C'$. Thus,
$$\textstyle\int_{\textrm{end}} \eta(\partial_s u,
\partial_t u) \, ds\wedge dt \leq C'\cdot \int_{\textrm{end}}
|\partial_s u| \cdot |\partial_t u| \leq C' \cdot
\int_{\textrm{end}} ce^{-\delta |s|}(ce^{-\delta
|s|}+C|X|)<\infty.
$$
%
%
\subsection{Twisted symplectic (co)homology}
\label{Subsection Twisted symplectic chain complex}
\label{Subsection Twisted symplectic cohomology}
Now introduce weights corresponding to $\alpha\in H^1(\mathcal{L}
M;\R) \cong H^1(\mathcal{L} \overline{M};\R)$ in the construction
of \ref{Subsection Symplectic chain complex} (for details, see
\cite{Ritter}):
$$\begin{array}{l}
\displaystyle SC^*(H)_{\alpha}=SC^*(H;\underline{\Lambda}_{\alpha}) =\bigoplus
\left\{ \Lambda x : x \in \mathcal{L} \overline{M},\; \dot{x}(t) =
X_H(x(t)) \right\}
\\[2mm]
\displaystyle d y = \!\!\!\!\sum_{u\in \mathcal{M}_0(x,y)}\!\!\!\!
\epsilon_u\, t^{\alpha[u]}\, x \quad \textrm{(see \ref{Subsection
Symplectic chain complex})}
\\[5mm]
\displaystyle
SH^*(H)_{\alpha}=SH^*(H;\underline{\Lambda}_{\alpha}) =
H^*(SC^*(H;\underline{\Lambda}_{\alpha});d)
\\[2mm]
\displaystyle\varphi: SC^*(H_{+})_{\alpha} \to
SC^*(H_{-})_{\alpha},\; \varphi(x_+)=
\!\!\!\!\sum_{v \in \mathcal{M}^{H_s}_0(x_-,x_+)}\!\!\!\!
\epsilon_v\, t^{\alpha[v]} \, x_- \quad\textrm{(see
\ref{Subsection Continuation Maps})}.
\end{array}
$$
\emph{Remark. These maps are compatible with the isomorphism of local systems described in \ref{Subsection Novikov bundles of coefficients}, since changing the representative $\alpha$ to $\alpha+df$ corresponds to an isomorphism $SC^*(H;\underline{\Lambda}_{\alpha}) \to SC^*(H;\underline{\Lambda}_{\alpha+df})$ given by $x\mapsto t^{-f(x)}x$ on generators.
}

For $(M,d\theta;\alpha)$, define
$
SH^*(M)_{\alpha}=SH^*(M;\underline{\Lambda}_{\alpha}) = \varinjlim
SH^*(H)_{\alpha},
$
taking the direct limit over the twisted continuation maps $\varphi$ above (compare \ref{Subsection Symplectic
cohomology}).

By inserting the local system of coefficients
$c^*\underline{\Lambda}_{\alpha}$ in $MH^*(H^{\delta})$ in Section
\ref{Subsection The maps c from ordinary cohomology} we obtain
$c^*:
H^*(M;c^*\underline{\Lambda}_{\alpha}) \to
SH^*(M;\underline{\Lambda}_{\alpha}).$
For
$\alpha=\tau\eta$, $c^*\underline{\Lambda}_{\alpha}$ is trivial
(\ref{Subsection Transgression}), so
$$c^*:
H^*(M)\otimes \Lambda \to
SH^*(M;\underline{\Lambda}_{\tau\eta}).$$
We sometimes abbreviate $H^*(M;\underline{\Lambda}_{\alpha})$ by $H^*(M)_{\alpha}$, and $H^*(M)\otimes \Lambda$ by $H^*(M)$.

The construction of $SH_*(M)_{\alpha}$ is dual to the above so, in the notation of \ref{Section symplectic homology}, we define $\delta x = \sum \epsilon_u t^{-\alpha[u]} y$ and $\varphi_*(x_-)=\sum \epsilon_v t^{-\alpha[v]} x_+$. The $c_*$ map is $c_*:SH_*(M)_{\alpha} \to H_*(M;c^*\underline{\Lambda}_{\alpha})$.

The dualization result in 
\ref{Subsection Symplectic homology is the dual of symplectic cohomology} now reads: $\mathrm{Hom}_{\Lambda}(SH^*(M)_{\alpha},\Lambda)\cong SH_*(M)_{-\alpha}$.
%
%
\subsection{Twisted TQFT}
\label{Subsection Twisted operations}
%

From now on, use $\alpha=\tau\eta$ (\ref{Subsection
Transgression}). In \ref{Appendix TQFT for other
twistings} we mention other options. Abbreviate
$SC^*(H)_{\eta}=SC^*(H;\underline{\Lambda}_{\tau\eta})$,
$SH^*(H)_{\eta}=SH^*(H;\underline{\Lambda}_{\tau\eta})$. Define
$$
\begin{array}{l}
\displaystyle \psi_S: \bigotimes_{b=1}^q SC^*(B_b H)_{\eta} \to
\bigotimes_{a=1}^p SC^*(A_a H)_{\eta}
\\[4mm]
\displaystyle \psi_S(y_1 \otimes \cdots \otimes y_q) =
\!\!\!\!\sum_{u\in \mathcal{M}_0(x_a;y_b;S,\beta)}\!\!\!\!
\epsilon_u \; t^{\tau\eta[u]} \; x_1\otimes \cdots \otimes x_p.\qquad (\textrm{compare } \ref{Subsection Operations on SH(H) Definition})
\end{array}
$$
\emph{Remark (compare \ref{Subsection Twisted symplectic chain complex}). These maps are compatible with the isomorphism of local systems described in \ref{Subsection Novikov bundles of coefficients}. Indeed, changing the representative $\eta$ to $\eta+d\mu$ for $\mu\in
C^1_{deRham}(\overline{M};\R)$ determines the change of basis isomorphism
 $SC^*(H;\underline{\Lambda}_{\tau\eta}) \to SC^*(H;\underline{\Lambda}_{\tau(\eta+d\mu)})$ given by $x \mapsto t^{-\int_{S^1}\! x^*\mu}x$ on generators. If $u\in \mathcal{M}(x_a;y_b;S,\beta)$ contributes $t^{\int_S\! u^*\eta} x_1\otimes \cdots \otimes x_p$ to $\psi_{S}(y_1\otimes \cdots \otimes y_q)$ when twisting by $\eta$, then by Stokes' theorem it will contribute $t^{\int_S\! u^*\eta}t^{-\sum \int_{S^1}\! x_a^*\mu}t^{\sum \int_{S^1}\! y_b^*\mu} x_1\otimes \cdots \otimes x_p$ when twisting by $\eta+d\mu$. So the twisted TQFT structure respects the change of basis.
}

\begin{theorem}\label{Theorem Novikov weights behave well}\strut
\begin{enumerate}
\item The weights $\tau\eta [u] = \int_S u^*\eta$ are locally
constant on $\mathcal{M}(x_a;y_b;S,\beta)$.

\item The weights $\tau\eta [u]$ are additive under the breaking
of Floer solutions: if $u_{\lambda}$ converge to a broken solution
$u\#v$, then $\tau \eta[u_{\lambda}] =\tau \eta[u]+\tau
\eta[v]$.

\item $\tau\eta[\cdot]$ is constant on components of the
compactification of $\mathcal{M}(x_a;y_b;S,\beta)$.
\end{enumerate}
\end{theorem}
\begin{proof}
Suppose $w(\cdot,\cdot,\lambda) = u_{\lambda}(\cdot,\cdot)$ is a
smooth path of solutions in $\mathcal{M}(x_a;y_b;S,\beta)$,
$\lambda\in [0,1]$. Now $d\eta=0$, since $\eta$ is a closed form,
so (1) follows by Stokes:
$$
0 = \int_{S\times [0,1]} w^*d \eta = \int_{S} u_{1}^*\eta -
\int_{S} u_{0}^*\eta = \tau\eta[u_1]-\tau\eta[u_0].
$$

To prove (2), consider a smooth family
$w(\cdot,\cdot,\lambda)=u_{\lambda}\in
\mathcal{M}(x_a;y_b;S,\beta)$ which is parametrized by $\lambda
\in [0,1)$, such that $u_{\lambda} \to u\# v$ as $\lambda \to 1$.

Fix
$\epsilon>0$. Pick a large $c>0$ so that the restrictions
$u',v'$ of $u,v$ to the complement of the neighbourhoods
$(-\infty,-c)\times S^1$, $(c,\infty)\times S^1$ of all the ends
of $u,v$ satisfy
$$|\tau\eta[u]+\tau\eta[v]-\tau\eta[u']-\tau\eta[v']|<\epsilon.$$
This is possible by the calculation of \ref{Subsection
Transgression}, which shows that the integrals evaluated near the
ends are arbitrarily small because $u,v$ converge exponentially
fast.

Denote by $u_{\lambda}'$ the analogous restriction of
$u_{\lambda}$. The domain of $u_{\lambda}'$ is compact so, by
Lemma \ref{Lemma Compactness and Gluing for TQFT}, $u_{\lambda}'$
converges $C^2$-uniformly to $u'$. Moreover there exist
$s_{\lambda}\in \R$ such that
$v_{\lambda}'(s,t)=u_{\lambda}(s+s_{\lambda},t)|_{s\in[-c,c]}$
converges $C^2$-uniformly to $v'$. So for large $c$, and $\lambda$ close to 1,
$$|\tau\eta[u']+\tau\eta[v']-\tau\eta[u_{\lambda}'] -
\tau\eta[v_{\lambda}']|<\epsilon.$$

Since $\epsilon>0$ is arbitrary, to conclude the proof of (2) we
show that for $c\gg 0$ and $\lambda \approx 1$,
$$
|\tau\eta[u_{\lambda}'] +
\tau\eta[v_{\lambda}']-\tau\eta[u_{\lambda}]|<\epsilon.
$$

By Theorem \ref{Theorem Smoothness and exp convergence}, near the ends
$|\partial_s u| < C e^{-\delta|s|}$ and $|\partial_s v| < C
e^{-\delta|s|}$ for some $\delta, C$. For large $c$, these hold at
$|s|=c$, which are the boundaries of the domains for $u',v'$.

Thus, on an end of $u$, say $K=(-\infty,-c]\times S^1$, apply
Stokes' theorem to $K\times [\lambda,1]$ as in (1) to deduce that
$|\int_K u_{\lambda}^*\eta-\int_K u^*\eta|=|\int_{\mu \in
[\lambda,1], s=-c} u_{\mu}^*\eta|$. This quantity is arbitrarily
small for $c\gg 0,\lambda \approx 1$ by the calculation of \ref{Subsection
Transgression}, since $|\partial_s u_{\mu}|$ on $K$ is bounded by
$C e^{-\delta c}$ for $\mu\approx 1$. A similar argument holds
for the other ends of $u$ and $v$.

Finally consider what happens near the breaking. On
the end where $u_{\lambda}$ breaks, define
$w_{\lambda}(s,t)=u_{\lambda}(s,t)|_{s\in [c,
-c+s_{\lambda}]}$. At the boundaries $s=c$, $s=-c+s_{\lambda}$ the
$w_{\lambda}$ converge $C^2$-uniformly to $u(c,t)$, $v(-c,t)$
respectively, so we can assume $\partial_s w_{\lambda}$ is
bounded by a constant multiple of $e^{-\delta c}$ for
$\lambda\approx 1$. Apply Stokes to $\cup_{\mu\in [\lambda,1]}
(c,-c+s_{\mu})\times S^1 \times \{\mu\}$ as in (1) to deduce:
$$\left|\int w_{\lambda}^*\eta - \int_{s\geq c} u^*\eta -
\int_{s\leq -c} v^*\eta\right| = \left|\int_{\mu \in
[\lambda,1],s=c} u_{\mu}^*\eta - \int_{\mu \in
[\lambda,1],s=-c+s_{\mu}} u_{\mu}^*\eta \right|$$
which again is bounded by a constant multiple of $e^{-\delta c}$,
and so is arbitrarily small for large $c$. Thus (2) follows, and
(3) follows from (1) and (2).\end{proof}
\begin{theorem}\label{Theorem Twisted operations
construction} The twisted TQFT map $\psi_S$ is a chain map, so it
induces a map
$\psi_S:\otimes_b SH^*(B_b H)_{\eta} \to \otimes_a SH^*(A_a H)_{\eta}$
independent of the choice of data $(\beta,j,J)$ $($Theorem
\ref{Theorem Homotopy of surfaces}$)$. For cylinders $Z$ the $\psi_Z$ are
twisted continuation maps $($Example \ref{Example Continuation cylinder}$)$. Gluing surfaces corresponds to
compositions $($Theorem \ref{Theorem Gluing for compositions}$)$.
The $\psi_S$ maps are compatible with the twisted
continuation maps $($Theorem \ref{Theorem Operations are compatible
with limit}$)$. In the direct limit, we get operations independent of $H$
$$\psi_S:SH^*(M)_{\eta}^{\otimes q}\to
SH^*(M)_{\eta}^{\otimes p} \qquad (p\geq 1, q\geq 0).
$$
\end{theorem}
\begin{proof}
The claims involve the same moduli spaces used to prove the
untwisted results. Theorem \ref{Theorem
Novikov weights behave well} implies that we count the moduli
spaces with correct weights: broken solutions on the
boundaries $\partial \mathcal{M}$ of a component
$\mathcal{M}$ of a moduli space are counted with equal weights.
\end{proof}

\begin{theorem}\label{Theorem unital ring structure twisted}
$SH^*(M)_{\eta}$ carries a twisted TQFT
structure, in particular it is a unital ring.
The map $c^*:H^*(M)\otimes \Lambda \to SH^*(M)_{\eta}$ of
\ref{Subsection Twisted symplectic cohomology} respects the TQFT,
using Morse operations on $H^*(M)$. In particular the unit is $e=c^*(1)\in SH^0(M)_{\eta}$.
\end{theorem}
\begin{proof}
The proof is analogous to that of Theorems \ref{Theorem unital ring
structure} and \ref{Theorem ring structure on ordinary cohomology}, which is proved via Lemma \ref{Lemma preserve TQFT
structures}. We only need to insert appropriate weights. In the notation of Definition \ref{Definition phi and psi maps},
define the twisted versions of the maps $\phi$, $\psi$ on
generators as follows:
$$\begin{array}{lll}
\displaystyle \phi: SC^*(H^{\delta})_{\eta} \to MC^*(f)\otimes
\Lambda & \quad & \displaystyle \psi: MC^*(f)\otimes \Lambda \to
SC^*(H^{\delta})_{\eta}\\[1mm]
\displaystyle \phi(y)=\!\!\!\!\!\! \sum_{\begin{smallmatrix}v\# u
\in \mathcal{M}_0^{\phi}(x,y) \\ \textrm{image}(v\# u)\subset
M\end{smallmatrix}} \!\!\!\!\!\!\epsilon_{v\# u}\,
t^{\tau\eta[c\circ v]+\tau\eta[u]}\, x & \quad &
\displaystyle \psi(x)=\!\!\!\!\!\!\sum_{u\# v \in
\mathcal{M}_0^{\psi}(y,x)}\!\!\!\!\!\! \epsilon_{u\# v}\,
t^{\tau\eta[u]+\tau\eta[c\circ v]}\, y
\end{array}
$$

The weights for Morse solutions $v$ are redundant:
$t^{\tau\eta[c\circ v]}=1$ since $c\circ v$ only depends on one
variable (see \ref{Subsection Transgression} on why
$c^*\underline{\Lambda}_{\tau\eta}$ is trivial). However,
inserting these trivial weights makes it clear that the proof of
Theorem \ref{Theorem Novikov weights behave well} can be applied
to the above moduli spaces: the weights are locally constant on
the compactifications of $\mathcal{M}^{\phi}(x,y)$, $\mathcal{M}^{\psi}(y,x)$.
\end{proof}
\begin{theorem}\label{Lemma Invariance under Contactomorphs in twisted case}
A $\varphi: \overline{M} \cong \overline{N}$ of
contact type induces
$\varphi_*:SH^*(M)_{\alpha}\cong SH^*(N)_{\varphi_*\alpha}.$
For $\alpha=\tau\eta$, $\varphi_*$ respects the twisted TQFT
structure:
$\varphi_*^{\otimes p} \circ \psi_{S,M} = \psi_{S,N} \circ
\varphi_*^{\otimes q}.$
\end{theorem}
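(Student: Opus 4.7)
The plan is to mimic the proof of the untwisted Lemma \ref{Lemma Invariance under Contactomorphs}, carefully tracking how Novikov weights transform under $\varphi$, and then to deduce the TQFT compatibility by a direct moduli-space bijection.

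First I would construct $\varphi_*$ at the chain level. Since $\varphi\co\overline{M}\to\overline{N}$ is a diffeomorphism, it induces a diffeomorphism $\mathcal{L}\varphi\co\mathcal{L}\overline{M}\to\mathcal{L}\overline{N}$; for any closed $1$-cocycle representing $\alpha$, the pushforward $\varphi_*\alpha:=(\mathcal{L}\varphi^{-1})^*\alpha$ is a closed $1$-cocycle on $\mathcal{L}\overline{N}$ and, by functoriality of the pullback integral, $(\varphi_*\alpha)[\mathcal{L}\varphi\circ u]=\alpha[u]$ for every smooth path $u$ in $\mathcal{L}\overline{M}$. Pick $H$ linear at infinity on $\overline{N}$. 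As in the untwisted case, the Floer data $(H,d\theta_N,J_N)$ on $\overline{N}$ pulls back to $(\varphi^*H,d\theta_M,\varphi^*J_N)$ on $\overline{M}$, and $u\mapsto \varphi\circ u$ is a bijection on $1$-periodic orbits and on Floer moduli spaces. Defining $\varphi_*$ on generators by $x\mapsto \varphi\circ x$ with trivial weight $t^0$, the weight identity above makes $\varphi_*\co SC^*(\varphi^*H;M)_{\alpha}\to SC^*(H;N)_{\varphi_*\alpha}$ into an isomorphism of chain complexes.

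Next I would match $SC^*(\varphi^*H;M)_{\alpha}$ with the standard $SC^*(M)_{\alpha}$. Since $\varphi^*H$ is not linear at infinity, one introduces the twisted $SH^*_f(M)_{\alpha}=\varinjlim SH^*(H_f)_{\alpha}$ computed with Hamiltonians of the form $h(R_f)$, $R_f=e^{r-f(y)}$, and $J$ satisfying $J^*d\theta=dR_f$, exactly as in \ref{Section Invariance under contactomorphisms}. One then deforms $f_s$ to zero and runs the continuation argument; the maximum principle for $R_{f_s}\circ u$ used in \cite[Lemma 7]{Ritter2} applies verbatim. That $\varphi_*$ is a chain map at each step, and that the continuation maps in the $f_s$-homotopy are chain maps, reduces to checking that the weights $\alpha[\cdot]$ are locally constant on the relevant moduli spaces and additive under breaking: this is the literal analogue of Theorem \ref{Theorem Novikov weights behave well}, proved by Stokes applied to $w^*d\alpha=0$ together with the exponential convergence estimate of \ref{Subsection Transgression}. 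Passing to cohomology and taking direct limits yields the desired isomorphism $\varphi_*\co SH^*(M)_{\alpha}\cong SH^*(N)_{\varphi_*\alpha}$.

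For the TQFT compatibility when $\alpha=\tau\eta$, observe first that transgression commutes with $\varphi$: $\varphi_*(\tau\eta)=\tau(\varphi_*\eta)$, since both sides evaluated on a path $u$ in $\mathcal{L}\overline{M}$ equal $\int \eta(\partial_s u,\partial_t u)\,ds\wedge dt$. Now fix an operation surface $(S,j,\beta)$. Under the bijection $u\mapsto\varphi\circ u$, moduli spaces $\mathcal{M}(x_a;y_b;S,\beta)$ on $\overline{N}$ for $(H,J_N)$ correspond to moduli spaces on $\overline{M}$ for $(\varphi^*H,\varphi^*J_N)$, and the weights agree: $\tau\eta[u]=\int_S u^*\eta=\int_S(\varphi\circ u)^*\varphi_*\eta=\tau(\varphi_*\eta)[\varphi\circ u]$. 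Hence on the $SH^*_f$-models, $\varphi_*^{\otimes p}\circ \psi_{S,M}=\psi_{S,N}\circ \varphi_*^{\otimes q}$ holds as an identity of maps counting identical weighted moduli spaces. Finally I would check that the $f_s$-deformation of the previous paragraph intertwines the TQFT operations with the standard ones on $SH^*(M)_{\tau\eta}$; this is a parametrized version of the arguments already used in Theorem \ref{Theorem Twisted operations construction}, constructed by interpolating the Hamiltonian data in the $s$-direction on each cylindrical end of $S$, with the weight formula again invariant by Stokes and exponential decay.

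The main obstacle is not the pure chain-level bijection but the homotopy argument that replaces the non-linear $\varphi^*H$ by a linear Hamiltonian. One must build moduli spaces of TQFT-type solutions $u\co S\to\overline{M}$ whose ends use varying $f_s$-shifted coordinates, establish the maximum principle for $R_{f_s}\circ u$ simultaneously with the $R_{h_a H}$, $R_{k_b H}$ bounds needed for the TQFT (paralleling Lemma \ref{Lemma Maximum principle for Floer solns} and its TQFT extensions in the Appendix), and verify that the resulting parametrized moduli spaces carry the usual weight local-constancy property, so that the chain-level identity survives to the direct limit and yields $\varphi_*^{\otimes p}\circ\psi_{S,M}=\psi_{S,N}\circ\varphi_*^{\otimes q}$ on $SH^*(M)_{\tau\eta}$.
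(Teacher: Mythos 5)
Your proposal is correct and follows essentially the same route as the paper: the paper's proof simply says to insert the weights $t^{\alpha[v]}$ into the continuation maps used in the untwisted Lemma~\ref{Lemma Invariance under Contactomorphs} and Theorem~\ref{Theorem Invariance of TQFT structures} (i.e.\ pull back Floer data to $\overline{M}$, identify $SH^*(M)_{\alpha}$ with the $SH^*_f(M)_{\alpha}$ model by a continuation argument deforming $f$ to $0$, concatenate TQFT operations with those continuation cylinders), and then invoke Theorem~\ref{Theorem Novikov weights behave well} for local constancy and additivity of the weights. Your paragraphs are an expansion of exactly these steps, including the observation that transgression commutes with pushforward and that the weight identity survives the $f_s$-homotopy by the same Stokes/exponential-decay argument.
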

\begin{proof}
In Lemma \ref{Lemma Invariance under Contactomorphs} and Theorem
\ref{Theorem Invariance of TQFT structures} insert weights
$t^{\alpha[v]}$ in the definition of the continuation maps
defining $\varphi_*:SH^*(M)\cong SH^*(N)$, and apply Theorem
\ref{Theorem Novikov weights behave well}.
\end{proof}
%
%
%
%
%
%
\subsection{TQFT structure for other twistings}
\label{Appendix TQFT for other twistings}
\label{Subsection Not all twistings work}
%
%
%
 For general $\alpha \in
H^1(\mathcal{L}M)$, it does not seem possible to define the TQFT
structure on $SH^*(M)_{\alpha}$. Naively one might divide $S$ into
cylinders, the restrictions of $u$ yield $1$-chains in $\mathcal{L}M$, then evaluate
$\alpha$ and add up. This depends heavily on choices and
will not yield an invariant TQFT. We will now show that it is possible to define TQFT operations in two special cases.
\emph{We restrict ourselves, in this discussion, to only the TQFT defined by surfaces $S$ of genus zero.}\footnote{\emph{Technical Remark. For simply connected $M$, in (1) we can also allow surfaces of non-zero genus. This is because for a loop $\gamma$ in $S$ wrapping around a hole of $S$, we can fill the image $u(\gamma)$
in the target space $\overline{M}$ of the map $u: S \to \overline{M}$ by a disc $D$. So in the construction of Theorem \ref{Theorem twisting by widetilde eta}, one can still form a sphere by capping off $u(S)$ at the ends and adding and subtracting such $D$'s.
}}
\begin{enumerate}
 \item  In \ref{Subsection Twisting by $2$-forms from the universal cover} we show how to twist by
$\alpha=\tau\widetilde{\eta}$ for $\widetilde{\eta}\in
H^2(\widetilde{M})$ taken from the universal cover
$\widetilde{M}$. If we restrict to contractible loops, we obtain a TQFT with unit;

\item In \ref{Subsection twisting by 1forms from the base} we twist by
pull-backs $\alpha=ev^*\mu$, for $\mu\in H^1(M)$, where $ev: \mathcal{L}M \to M$, $x \mapsto x(0)$. Then we obtain a TQFT with no unit.
\end{enumerate}

If we restrict to contractible loops, all $\alpha \in H^1(\mathcal{L}_0 M)$ arise from combining (1) and (2) (Remark \ref{Remark get all twistings if simply connected}). For simply connected $M$, all twistings in $H^1(\mathcal{L}M)$ can be achieved using (1).
%
%
%
\subsection{Twisting by closed $2$-forms from the universal cover}
\label{Subsection Twisting by $2$-forms from the universal cover}
%
%
Let $\widetilde{M}$ be the universal cover of $M$. Then the transgression $\tau:
H^2(\widetilde{M}) \to H^1(\mathcal{L} \widetilde{M})$ is an
isomorphism, and
$$
H^1(\mathcal{L} \widetilde{M}) \cong \textrm{Hom}(H_1(\mathcal{L}
\widetilde{M}),\Z) \cong \textrm{Hom}(\pi_1(\mathcal{L}
\widetilde{M}),\Z) \cong \textrm{Hom}(\pi_2(M),\Z),
$$
(via evaluation of $\tau\widetilde{\eta}$ on $1$-chains). For the
component $\mathcal{L}_0 M$ of contractible loops:
$$H^1(\mathcal{L}_0 M)\cong \textrm{Hom}(\pi_1(\mathcal{L}_0 M),\Z) \cong
\textrm{Hom}(\pi_1 M \ltimes \pi_2 M,\Z) \cong \textrm{Hom}(H_1 M
\times \pi_2 M,\Z).$$
Via these, $H^1(\mathcal{L} \widetilde{M}) \subset
H^1(\mathcal{L}_0 M)$, so we can define
$SH^*(M)_{\widetilde{\eta}}=SH^*(M;\underline{\Lambda}_{\tau\widetilde{\eta}}),$
which comes with $c^*:H^*(M)\otimes \Lambda \to
SH^*_0(M)_{\widetilde{\eta}}\subset SH^*(M)_{\widetilde{\eta}}$ (see \ref{Subsection TQFT is compatible with filtrations} for the $SH^*_0$ notation). 

\emph{Remark.
We will use integral singular $1$-cocycles $\alpha$ below, but one could also
use real $1$-cocycles in $\tau H^2(\widetilde{M};\R)\subset
H^1(\mathcal{L}_0 M;\R)$ via $H^1(\mathcal{L} \widetilde{M};\R) \cong
\mathrm{Hom}(\pi_2 M,\R)$.
}

\begin{theorem}\label{Theorem twisting by widetilde eta}
Restrict to $\mathcal{L}_0 M$ (see \ref{Subsection TQFT is
compatible with filtrations}) and consider only the TQFT defined by genus $0$ surfaces. Then $SH^*_0(M)_{\widetilde{\eta}}$ is a
TQFT with unit $\psi_C(1)\in SH^0_0(M)_{\widetilde{\eta}}$ and
$c^*$ respects the TQFT. The analogues of Theorems \ref{Theorem
Viterbo Functoriality}, \ref{Theorem Vanishing criteria},
\ref{Theorem Abb Schw is a TQFT iso} hold.
\end{theorem}
\begin{proof}
Let $\alpha$ denote a singular $1$-cocycle representing the image
of $\tau\widetilde{\eta}$ in $H^1(\mathcal{L}_0 M)$. Pick as base
point for $\mathcal{L}_0 M$ the constant loop $m$ at a point in
$M$.

Let $u:S\to \overline{M}$ be a Floer solution whose ends $z_i$ are
contractible Hamiltonian orbits. Pick maps $\C \to \overline{M}$,
whose images are caps $C_i$ which contract $z_i$ down to $m$. An
appropriate gluing $\sigma=(\cup C_i) \# u(S)$ defines a sphere
$\sigma: S^2 \to \overline{M}$. Now $\alpha$ corresponds to a
homomorphism $\pi_2(M) \to \Z$ under the identification
$H^1(\mathcal{L}_0 M) \cong \textrm{Hom}(\pi_1 M \ltimes \pi_2
M,\Z)$ ($\alpha$ vanishes on $\pi_1(M)$). Thus $\alpha[\sigma]$ is
defined (using that $\mathcal{L}_0 M \simeq \mathcal{L}_0 \overline{M}$ are homotopy equivalent).

Parametrizing $\C$ by polar coordinates, the caps $C_i$ can be
viewed as paths in $\mathcal{L}_0 M$ from $z_i$ to $m$. So $C_i$
is a singular $1$-chain in $\mathcal{L}_0 M$ and $\alpha[C_i]$ is
defined. Let
$$
\widetilde{\eta}[u]=\alpha[\sigma]-\sum \pm \alpha[C_i],
$$
where $\pm$ is the sign of the end $z_i$ of $u$. 

We claim $\widetilde{\eta}[u]$ is independent of the choices of caps. 
Suppose we change a negative cap $C$ to $C'$, so the sphere
$\sigma$ changes to a new sphere $\sigma'$. The gluing $\sigma''=(-C')\# C$ is a sphere in $M$
but also a sum of two $1$-chains in $\mathcal{L}_0 M$, and under
our identifications,
$$
\alpha[\sigma''] = -\alpha[C'] + \alpha[C].
$$
In $\pi_2(M)$, the equality $\sigma''+\sigma = \sigma'$ holds, so
evaluating $\alpha$ we deduce
$$
-\alpha[C'] + \alpha[C] + \alpha[\sigma] = \alpha[\sigma'].
$$
So $\alpha[\sigma]+\alpha[C]$ does not change when we replace $\sigma,C$ with $\sigma',C'$. Similarly $\alpha[\sigma]-\alpha[C]$ does not change when we replace positive caps $C$. So
$\widetilde{\eta}[u]$ only depends on $u$, not on the caps.

The construction of the TQFT for
$SH^*_0(M)_{\widetilde{\eta}}$ is now routine using weights $t^{\widetilde{\eta}[u]}$.

Just as in \ref{Subsection Twisted operations}, we also need to show that when we change the representative cocycle $\alpha$ to $\alpha + df$, with $f\in C^0_{\textrm{sing}}(\mathcal{L}_0 M)$, the TQFT will change in a way compatible with an isomorphism of the $SH^*_0$ groups induced by an isomorphism of the corresponding local systems on $\mathcal{L}_0 M$.
 On a positive cap $C$, viewed as a path from $y$ to $m$ we have by definition: $df[C]=f(m) - f(y)$. So in the above definition, $\widetilde{\eta}[u]$ changes by %
$$\begin{array}{lll}
df[\sigma]-\sum \pm df[C_i] &=& 0-\sum \pm (f(m)-f(z_i))\\
&=& \sum f(y_b) - \sum f(x_a) + (p-q)f(m)                                                                                                                                                                                                                                                                                                                                                                                                                                               \end{array}
$$
where $x_a,y_b$ are respectively the $p$ negative and the $q$ positive asymptotics of $u$. So the TQFT changes compatibly with the isomorphism $\Lambda_x \mapsto \Lambda_x$, $\lambda_x \mapsto t^{-f(x)} t^{f(m)} \lambda_x$ of the local systems, which induces $SH_0^*(M;\underline{\Lambda}_{\alpha}) \cong SH_0^*(M;\underline{\Lambda}_{\alpha+df})$ given by $x \mapsto t^{-f(x)} t^{f(m)} x$ on generators
\end{proof}
\subsection{Twisting by closed $1$-forms from the base.}
\label{Subsection twisting by 1forms from the base}
Let $ev: \mathcal{L}M \to M, x \mapsto x(0)$ be the evaluation at zero, and fix a closed $1$-form $\mu$ on $M$.
Let $u: S\to \overline{M}$ be a Floer solution.

For cylinders
$S$, $ev^*\mu[u] = \int_{\R} ev(u)^*\mu$ evaluates $\mu$ on the $1$-chain in $M$ given by $s\mapsto u(s,0)$. 

For a cap $S$, $ev^*\mu[u]$ cannot be
defined consistently \emph{(if we puncture the cap and parametrize the punctured cap as a cylinder, then the value $ev^*\mu[u]$ would depend on the choice of puncture. In the approach that we will explain below, we would need to embed a suitable graph in the cap, but no choice of graph guarantees that caps glue compatibly with the TQFT structure.)}

For other $S$, let $S'$ be an associated graph (\ref{Subsection Morse-graph
associated to S}). Pick any continuous map $S'\to S$ (not necessarily an embedding) so that the ends of $S'$ (``external vertices") converge to $t  =  0  \in 
S^1$ in the asymptotic circles in $S$. So, to clarify, for $u\in \mathcal{M}(x_a; y_b;S,\beta)$, the external vertices map to $x_a(0),y_b(0)$ via $u$.
 
Associate to each edge $e_i$ a constant $k_i\in \R$. These define a closed $1$-form $\mu_i = k_i \mu$ for each edge $e_i$. Denote by $\mu_a=k_a \mu$, $\mu_b=k_b \mu$ the forms defined in this way on those edges which connect to the external vertices. We now fix these constants $k_a\geq 0,k_b\geq 0$, but we allow the other constants $k_i$ to be arbitrary subject only to the
\emph{zero-sum condition}: at any internal vertex $v$ of the graph we require that the
signed sum of the constants vanishes,
$$
\sum_{\begin{smallmatrix} \textrm{edges $e_i$} \\ \textrm{coming
into $v$}\end{smallmatrix}} k_i - \sum_{\begin{smallmatrix}
\textrm{edges $e_i$} \\ \textrm{leaving $v$}\end{smallmatrix}}
k_i =0.
$$
Integrating $\mu_i$ along the image under $u$ of the edge $e_i$, and adding up, defines $$
\mu[u] = \sum_{e_i} \mu_i[u|_{e_i}].$$ This construction also defines $\mu[u]$ when we deal with Morse
solutions $u: S' \to \overline{M}$ (see \ref{Subsection Morse-graph operations}).  

\noindent {\bf Example.}\! $P'$ (Figure \ref{Figure Graphs}): using $2\mu$ on the
incoming edge, and $\mu$ on each outgoing edge.

\emph{Remark. The zero-sum condition implies the steady-state condition $\textstyle\sum k_a = \sum k_b,$ and conversely the steady-state condition guarantees that there is a choice of $k_i$'s as above.}

Counting
Floer solutions with weight $t^{\mu[u]}$ defines
$$
\psi_{S}: \bigotimes_{b=1}^q SH^*(M)_{ev^*\mu_b} \to
\bigotimes_{a=1}^p SH^*(M)_{ev^*\mu_a} \qquad (p\geq 1, q\geq 0, (p,q)\neq (1,0)).
$$
These $\psi_S$ are well-defined but depend on $S'$ and the choice of map $S'\to S$.

\noindent \textbf{Example.} Take $S=Z$ and $S'=Z'$ as in \ref{Subsection Morse-graph associated to S}. If we map $S'$ inside the line $t=0$ in $Z$ we get $\mu[u]=ev^*\mu[u]$, but if we map $S'$ onto a curve in $Z$ which winds once around the $S^1$ factor then $\mu[u]=ev^*\mu[u] + \mu[\gamma]$ where $\gamma\in H_1(M)$ is the $H_1$-class of the asymptotics of $u$.

So to obtain TQFT operations which glue consistently (assuming that the forms $\mu_i$ agree on
the gluing ends) we need to make choices as follows. We now assume $S$ has genus zero and $p\geq 1, q\geq 0, (p,q)\neq (1,0)$. Such $S$ are generated by $Z,P,Q,S_{20}$ (see \ref{Subsection minimal set of gen surfaces}). Cut the $Z,P,Q,S_{20}$ in half by planes (this produces open model surfaces as in \ref{Subsection open model surfaces S}). We can make this cut so that there are halves $Z_{\mathrm{up}}\subset Z, P_{\mathrm{up}}\subset P,Q_{\mathrm{up}}\subset Q, (S_{20})_{\mathrm{up}}\subset S_{20}$ whose asymptotics consist of the arcs in $S^1$ parametrized by $-1/4\leq t \leq 1/4$. We embed the $Z',P',Q',S_{20}'$ of \ref{Subsection Morse-graph associated to S} into $Z_{\mathrm{up}}, P_{\mathrm{up}},Q_{\mathrm{up}},(S_{20})_{\mathrm{up}}$ (where $S_{20}'$ is the graph with two negative ends meeting at one internal vertex), with the external vertices mapping to $t=0$ in the asymptotic arcs.
 Since under gluing we make the time coordinates agree on the glued ends, any gluing $S$ of copies of $Z,P,Q,S_{20}$ will automatically produce a choice of half-surface $S_{\mathrm{up}}\subset S$ and a graph $S'\subset S_{\mathrm{up}}$.

\begin{lemma} The $\mu[u]$ constructed for $S'\subset S_{\mathrm{up}}$ only depend on the choices of constants $k_a,k_b$ at the ends (and on the connected component of $u\in \mathcal{M}(x_a; y_b;S,\beta)$).
\end{lemma}
\begin{proof}
%
%
Observe that, if we ignored weights, such an $S'$ will be homotopic in $S_{\mathrm{up}}$ to the \emph{standard graph} consisting of $p$ negative ends and $q$ positive ends which meet in one common internal vertex. This relies on the fact that $S$ has genus zero, so $S'$ is contractible.  

Consider first a homotopy of $S'$ inside $S_{\mathrm{up}}$ which does not change the combinatorial type of the graph. Each edge $e_i$ will sweep out via $u$ a $2$-simplex in $\overline{M}$, and these $2$-simplices are joined along the $1$-simplices swept out by the internal vertices (the external vertices are fixed). Evaluating $d\mu_i =0$ on these
$2$-simplices shows that $\mu[u]$ before and after the homotopy has changed by an amount equal to the sum of the $\mu_i$ evaluated on those $1$-simplices. Taking into account the orientations, this amount vanishes by the zero-sum condition. 

Now we show how to change the combinatorial type of the graph. Adding or removing internal vertices which touch only two edges will not affect $\mu[u]$, assuming the zero-sum condition holds. Next, suppose we add to $S'$ an oriented triangle whose edges are assigned the same form $-k\mu$, with at least one vertex being part of the vertices of $S'$. Since the triangle is contractible after mapping it into $S_{\mathrm{up}}$ (since $S$ has genus $0$), and since 
$d\mu=0$, this triangle does not affect $\mu[u]$. Secondly, we can remove two equally oriented edges which carry opposite weights, since these give cancelling contributions to $\mu[u]$.

 These two moves allow us to slide an edge along another: given edges $e'\!:\!v_1\!\to\! v_2$, $e\!:\!v_2\! \to\! v_3$ of weight $k',k$, adding such a triangle with vertices $v_1,v_2,v_3$ allows us to replace $e',e$ by edges: $v_1 \!\to\! v_2$, $v_1 \!\to\! v_3$ of weight $k'-k,k$.

These moves allow us to turn $S'$ into a standard graph. But all standard graphs are homotopic through a homotopy which does not affect the combinatorial type of the graph, so $\mu[u]$ is independent of choices.
\end{proof}

Abbreviate $SH_{\mu}=SH^*(M)_{\mathrm{ev}^*\mu}$.
Arguing as in Theorem \ref{Theorem Operations are compatible with
limit}, we obtain operations $\psi_S: \otimes_b SH_{\mu_b} \to \otimes_a SH_{\mu_a}$ for genus zero surfaces $S$ with $p\geq 1,q\geq 0,(p,q)\neq (1,0)$ which compose correctly. Changing $\mu$ to $\mu+df$ corresponds to an isomorphism of the local systems $x \mapsto t^{-f(x)} x$, just as in \ref{Subsection Twisted operations}.

One could identify $SH_{c\mu} \cong SH_{\mu}$ via the isomorphism $t\mapsto t^{1/c}$ of local systems, for any $c>0$, so that appropriately rescaled operations give a TQFT on $SH^*(M)_{ev^*\mu}$ without unit.

A more natural approach is to take $NSH^*(M)= \bigoplus_{\mu} SH_{\mu}$ summing over all $\mu\in H^1(M)$. One can also twist by $\eta\in H^2(M)$: $NSH^*(M)_{\eta}= \bigoplus_{\mu} SH_{\mu,\eta}$ where $SH_{\mu,\eta}=SH^*(M)_{\mathrm{ev}^*\mu + \tau\eta}$. So $NSH^*(M)=NSH^*(M)_0$. 
For example, the product is $SH_{\mu_1,\eta}\otimes SH_{\mu_2,\eta}\to SH_{\mu_1+\mu_2,\eta}$, the canonical map is $c^*: NH^*(M)=\bigoplus_{\mu} H^*(M)_{\mu} \to NSH^*(M)_{\eta}$ (the Novikov cohomology $H^*(M)_{\mu}$ is the ordinary cohomology with coefficients in the local system defined by $\mu$).
\begin{theorem}
$NSH^*(M)_{\eta}$ is a TQFT (using genus zero $S$, $p\geq 1$) with unit $\psi_C(1)\in SH_{0,\eta}$.
\end{theorem}
\begin{remark}
\label{Remark get all twistings if simply connected}

Restricting to contractible loops, $\alpha \! = \! ev^*\mu+\tau\widetilde{\eta}$
yields all $\alpha \in H^1(\mathcal{L}_0 M)$ since $\pi_1(\mathcal{L}_0
M)  \! \cong \! \pi_1(M)   \ltimes   \pi_2(M)$ so
$H^1(\mathcal{L}_0 M)  \! \cong \!  ev^*H^1(M) \oplus
\mathrm{Hom}(\pi_2(M),\Z)$. So
$\oplus_{\alpha} SH^*_0(M)_{\alpha} =  \oplus_{\widetilde{\eta}}
NSH^*_0(M)_{\widetilde{\eta}}$ is a TQFT with unit (for genus zero, $p\geq 1$), $c^*:NH^*(M) \! \to \!
\oplus_{\alpha} SH^*_0(M)_{\alpha}$.
\end{remark}
%
\subsection{Twisted wrapped theory}
\label{Subsection Wrapped twisted theory}
%
Let $\alpha \in H^1(\Omega(M,L))\cong
H^1(\Omega(\overline{M},\overline{L}))$ (\ref{Subsection Action
functional in wrapped case}). As in \ref{Subsection Novikov
bundles of coefficients}, $\alpha$ defines a local system of
coefficients $\underline{\Omega}_{\alpha}$ on
$\Omega(\overline{M},\overline{L})$. Define $CW^*(L;H)$ over the
Novikov ring $\Lambda$ and insert weights $t^{\alpha[u]}$ in the
definitions of the differential and the continuation maps. This
yields $\Lambda$-modules $HW^*(L;H;\underline{\Omega}_{\alpha})$
and their direct limit defines $HW^*(L)_{\alpha}=HW^*(L;\underline{\Omega}_{\alpha})$.

These come with a canonical map
$$
c^*:H^*(L;c^*\underline{\Omega}_{\alpha}) \to
HW^*(L;\underline{\Omega}_{\alpha}),
$$
where $c:\Omega(\overline{L},\overline{L}) \to
\Omega(\overline{M},\overline{L})$ is the inclusion of paths lying
in $\overline{L}$ (observe that $\Omega(\overline{L},\overline{L})$ has the homotopy type of $L$).

Let $\eta\in H^2(\overline{M},\overline{L};\R)\cong H^2(M,L;\R)
\cong H^2(M/L;\R)$. One can represent $\eta$ by a closed de Rham
two-form which is supported away from $\overline{L}$ (see
\cite[Sec.11.1]{Guillemin-Sternberg}). Evaluating by $\int_C
\sigma^*\eta$ on smooth one-chains $\sigma:C \to \overline{M}$
defines a transgresssion cycle $\tau\eta\in
H^1(\Omega(\overline{M},\overline{L}))$, which can also be
obtained via: (using real coefficients)
$$
\tau: H^2(M/L) \stackrel{ev^*}{\to} H^2(\Omega(M/L)\times S^1)
\stackrel{\textrm{project}}{\longrightarrow} H^1(\Omega(M/L))
\cong \textrm{Hom}(\pi_2(M/L),\R).
$$
This is an isomorphism when $\pi_1(M/L)=1$ (e.g. if $\pi_1(M)=1$).
For
 simply connected $L,M$, $\tau$ is the Hurewicz
isomorphism $H^2(M,L;\R) \cong \textrm{Hom}(\pi_2(M,L),\R)$.

For a wrapped solution $u:S \to \overline{M}$ define
$\textstyle
\tau\eta[u] = \int_S u^*\eta.
$
These weights are locally constant on the moduli spaces of wrapped
solutions since $\tau\eta[u]$ remains constant if we homotope $u$
relative to the ends while keeping $u(\partial S)\subset
\overline{L}$. Inserting weights in \ref{Subsection Wrapped TQFT structure}-\ref{Subsection Wrapped HW is an SH module}, we deduce:

\begin{theorem}\label{Theorem wrapped twisted TQFT and module}
$HW^*(L;\underline{\Omega}_{\tau\eta})$ has a TQFT structure and $HW^*(L;\underline{\Omega}_{\tau\eta})$ is a module over $SH^*(M;\underline{\Lambda}_{\tau\overline{\eta}})$, where
$\overline{\eta} \in H^2(M;\R)$
is the image of $\eta$ via $H^2(M,L;\R)\to H^2(M;\R)$. $\qed$
\end{theorem}
The twisted version of the map $c^*$ from Section \ref{Section Canonical map from ordinary cohomology} is
$c^*:H^*(L)\otimes \Lambda \to HW^*(L;\underline{\Omega}_{\tau\eta})$, and this is a TQFT map by Theorem \ref{Theorem c* map for HWL preserve TQFT} (after inserting Novikov weights as in Theorem \ref{Theorem unital ring structure twisted}).
%
%
%
\section{The action filtration, and the $SH^*_+$, $HW^*_+$ groups}
\label{Section SH+}
%
%
Let $H: \overline{M}\to \R$ be linear at infinity (\ref{Subsection Hamiltonians Linear At Infty}). By \ref{Subsection Liouville domains Definition} we can ensure that the only $1$-orbits of $H$ inside $M$ are critical points of $H$ and, picking $|H|<\delta$ small on $M$, these have action close to zero: ${\mathbb{A}}_H\in (-\delta,\delta)$. We can%
\footnote{\emph{Proof. Recall the action formula $\mathbb{A}_h(R)=-Rh'(R)+h(R)$ of \ref{Subsection Action functional}. If $h$ is convex then $\partial_R \mathbb{A}_h=-Rh''(R)\leq 0$ so the action decreases with $R$. Pick $h$ convex for $R\geq R_0$. For $R\in (1,R_0)$ pick $h'\leq T_0/2$ where $T_0$ is the smallest Reeb period (so there are no $1$-orbits since they have $h'\geq T_0$), with $h(R_0)=2\delta$ and $h'(R_0)=T_0/2$.  Then $1$-orbits on the collar have action less than $\mathbb{A}_h(R_0) = -R_0T_0/2 + 2\delta$. We can pick $R_0\gg 0$. $\qed$}} 
ensure $H=h(R)$ for $R\geq 1$ with $h'$ growing so slowly that the $1$-orbits of $H$ on the collar have arbitrarily negative action, in particular ${\mathbb{A}}_H<-\delta$.

Write $SC^*(A,\infty)_{\alpha}$ for the chain subcomplex of $SC^*(H)_{\alpha}$ generated only by those generators with
action in the interval $(A,\infty)$ (for generic $A \in \R \cup \{\pm
\infty\}$, so not Reeb periods, and where we keep track of the twisting by $\alpha\in H^1(\mathcal{L}M)$ when twistings are present). Notice that this is a subcomplex because the differential counts incoming Floer trajectories, and the action decreases along trajectories.
Define the quotient complex
\\[0.5mm]
\begin{tabular*}{\textwidth}{l@{\extracolsep{\fill}}cr@{\extracolsep{0pt}}} 
\strut & $SC^*(A,B)_{\alpha} = SC^*(A,\infty)_{\alpha}/SC^*(B,\infty)_{\alpha}.$ & \strut 
\end{tabular*}
\\[0.5mm]
Increasing $A$ gives
a subcomplex:
$SC^*(A',B)_{\alpha}  \hookrightarrow
 SC^*(A,B)_{\alpha}$ for $A  <  A'  <  B.$\\
Decreasing $B$ gives quotient maps: 
$SC^*(A,B)_{\alpha}  \to   SC^*(A,B')_{\alpha}$ for $A<B'<B.$

\noindent\textbf{Example.} \emph{$SC^*(-\delta,\infty) \equiv SC^*(-\delta,\delta)$ is a subcomplex of $SC^*(H)$ and it computes precisely the cohomology $SH^*(H^{\delta})$ from Section \ref{Section Canonical map from ordinary cohomology}, which one can identify with $H^*(M)$.}\\
\noindent \textbf{Example.} \emph{Define $SC^*_+(H)_{\alpha}=SC^*(-\infty,-\delta)_{\alpha}$. Define $SH^*_+(M)_{\alpha}$ to be the direct limit of the cohomology groups $SH^*_+(H)_{\alpha}$ as we increase the slope of $H$ at infinity. In the untwisted setup, the $SH^*_+(M)$ groups are well-known \cite{Viterbo1,Bourgeois-Oancea}.}

These two examples fit naturally into a short exact sequence
\\[0.5mm]
\begin{tabular*}{\textwidth}{l@{\extracolsep{\fill}}cr@{\extracolsep{0pt}}} 
\strut & $0 \to SC^*(-\delta,\delta)_{\alpha} \stackrel{c^*}{\to}
SC^*(-\infty,\delta)_{\alpha} \to SC^*(-\infty,-\delta)_{\alpha} \to 0.$ & \strut 
\end{tabular*}
\\[0.5mm]
(More generally, for  $s < m < \ell$, one gets 
$
0 \to SC^*(m,\ell)_{\alpha} \hookrightarrow SC^*(s,\ell)_{\alpha} \to
SC^*(s,m)_{\alpha} \to 0.
$)
Taking the associated long exact sequence in cohomology, and taking direct limits, yields:
\begin{lemma}\label{Lemma LES for SH+}
There is a long exact sequence induced by action-restriction maps
\\[0.5mm]
\begin{tabular*}{\textwidth}{l@{\extracolsep{\fill}}cr@{\extracolsep{0pt}}} 
\strut & 
$\cdots \to H^*(M)_{\alpha} \stackrel{c^*}{\to} SH^*(M)_{\alpha}
\to SH^*_+(M)_{\alpha} \to H^{*+1}(M)_{\alpha} \to\cdots \qquad \qed$
 & \strut 
\end{tabular*}
\end{lemma}
\noindent \textbf{Example.} $\overline{M}=T^*N$: we assume $\K$ has characteristic $2$ (we explain why in Section \ref{Section Application Loop space homology and string topology}). By Abbondandolo-Schwarz \cite{Abbondandolo-Schwarz} (and the twisted analogue \cite[Thm 3]{Ritter}) the isomorphism $SH^*(T^*N)_{\alpha} \cong
H_{n-*}(\mathcal{L}N)_{\alpha}$ respects certain action
filtrations (see Section \ref{Section Application Loop space homology and string topology}). In this case:
\begin{corollary}\label{Corollary LES for SH+ of T*M}
For $\overline{M}=T^*N$ the long exact sequence is that of the pair
$(\mathcal{L}N,N)$, viewing $c:N \hookrightarrow \mathcal{L}N$ as the subspace of constant loops,
$$
\xymatrix@R=10pt@C=16pt{ \cdots \ar@{->}[r] & H^*(T^*N)_{\alpha}
\ar@{->}[d]^{\wr} \ar@{->}[r]^{c^*} & SH^*(T^*N)_{\alpha}
\ar@{->}[d]^{\wr} \ar@{->}[r] & SH_+^*(T^*N)_{\alpha}
\ar@{->}[d]^{\wr} \ar@{->}[r] & \cdots\qquad
\\
 \cdots \ar@{->}[r] & H_{n-*}(N)_{\alpha} \ar@{->}[r]^{c_{n-*}} &
 H_{n-*}(\mathcal{L}N)_{\alpha}
 \ar@{->}[r] & H_{n-*}(\mathcal{L}N,N)_{\alpha}
\ar@{->}[r] & \cdots \qed}
$$
\end{corollary}
 Now consider the wrapped analogue of the construction. The action contains the additional term $f(x(1))-f(x(0))$ (see \ref{Subsection Action functional in wrapped case}). This will not affect the ${\mathbb{A}}_H<-\delta$ estimate on the collar, since we can make the $-Rh'(R)+h(R)$ contribution arbitrarily small, whereas $f$ is locally constant on $\overline{L}\setminus L$ and hence bounded. So we can define the subcomplex $CW^*(A,\infty)_{\alpha}\subset CW^*(L;H)_{\alpha}$ for $\alpha \in H^1(\Omega(M,L))$; the quotient complex $CW^*(A,B)_{\alpha}$; and $HW^*_+(L) = HW^*(-\infty,-\delta)_{\alpha}$.
\begin{lemma}\label{Lemma HW+}
There is a long exact sequence for wrapped Floer cohomology,
\\[0.5mm]
\begin{tabular*}{\textwidth}{l@{\extracolsep{\fill}}cr@{\extracolsep{0pt}}} 
\strut & 
$
\cdots \to H^*(L)_{\alpha} \stackrel{c^*}{\to} HW^*(L)_{\alpha}
\to HW^*_+(L)_{\alpha} \to H^{*+1}(L)_{\alpha} \to\cdots \quad \qed
$
 & \strut 
\end{tabular*}
\end{lemma}
%
%

%
\section{Viterbo functoriality}
\label{Section Viterbo Functoriality}
%
%
\subsection{Liouville subdomains}
\label{Subsection Liouville subdomains}
Observe Figure \ref{Figure Viterbo restriction picture} (ignore $\overline{L}$). A \emph{Liouville subdomain} $i:(W,\theta_W) \hookrightarrow
(\overline{M},\theta)$ is the embedding of a Liouville domain
$(W,\theta_W)$ of the same dimension as $M$ such that 
$i^*\theta =
e^\rho\theta_W+\textrm{exact form}$, for some $\rho\in \R$.
\begin{example}\label{Example Weinstein neighbourhood}
Suppose $L\subset T^*N$ is a closed exact Lagrangian submanifold
$(\theta|_{L}=\textrm{exact}$, and $\mathrm{dim}\, L=\mathrm{dim}\, N)$. This induces a Weinstein embedding
$i:DT^*L \hookrightarrow T^*N$ of a small disc cotangent bundle
of $L$ onto a neighbourhood of $L$, and $i$ is a Liouville
subdomain.
\end{example}
\begin{lemma}\label{Lemma can assume Liouville subdomains are
included} For the purposes of symplectic cohomology, one can
always assume a Liouville embedding is an inclusion $W\subset \overline{M}$,
using the same $\theta, Z$ for $W$ as for $\overline{M}$.
\end{lemma}
\begin{proof}
We can always assume $\rho=1$ by redefining $\theta_W$ to be
$e^{\rho}\theta_W$. This does not affect the symplectic cohomology
theory for $\overline{W}$ (it rescales all
Hamiltonians by $e^{\rho}$).

Identify $W$ and $i(W)$. We are given $\theta_W = \theta - df$,
with $f:W\to \R$. Extend $f$ smoothly to $\overline{M}$, so that
it vanishes outside of a compact neighbourhood of $W$. Then extend
$\theta_W$, $Z_W$ to all of $\overline{M}$ by $\theta_W = \theta -
df$ and $\omega(Z_W,\cdot)=\theta_W$.
Since $d\theta_W = d\theta$ everywhere and $\theta_W = \theta$ for large $R$ on
the collar of $\overline{M}$, the Hamiltonian vector fields agree
and we can use the same almost complex structure. So the Floer
theory of $\overline{M}$ is not affected if we replace $\theta$ by $\theta_W$.

\emph{Remark. If $W$ is not entirely contained in $M$, then the change in $\theta$ does affect the definition of the coordinate $R$ and thus of the class of linear Hamiltonians. But in that case, the identity $(\overline{M},\theta) \to (\overline{M},\theta_W)$ is a symplectomorphism of contact type so $SH^*$ is invariant by \ref{Section Invariance under contactomorphisms}.
}
\end{proof}
\begin{remark}
Liouville embeddings $i\co\!\! (W,\theta) \hookrightarrow
(\overline{M},\theta)$ can be extended to
$i\co\! \! (\overline{W},\theta) \hookrightarrow
(\overline{M},\theta)$: define $i$ on the collar to be the
Liouville flow of $Z_W$ in $\overline{M}$ $($so $[1,\infty) \times
\partial W \ni (R=e^r,y) \mapsto \varphi^r_{Z_W}(y) \in \overline{M})$. Since
$Z_W$ is outward pointing along $\partial W$, this flow will not
reintersect $W$.
\end{remark}

By Lemma \ref{Lemma can assume Liouville subdomains are included},
we always assume $W\subset \overline{M}$. Let $C_W= [1,1+\varepsilon) \times
\partial W$ be a collar of $\partial W\subset \overline{M}$.
Choose $J$ on $\overline{M}$ of contact type near $R=e^r =
1+\varepsilon$ (using the $R$ coordinate of the $C_W$ collar). Choose $H:\overline{M}\to \R$ with $H=mR$ near
$R=1+\varepsilon$ and $H\geq 0$ on $\overline{M}\setminus (W\cup
C_W)$.
\begin{corollary}\label{Corollary No escape}
Any Floer solution with asymptotics in $W \cup C_W$ cannot escape
$W$.
\end{corollary}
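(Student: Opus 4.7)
The plan is to reduce the claim directly to the No Escape Lemma (Lemma \ref{Lemma no escape}) applied to the complement of $W \cup C_W$. Let $V$ be the closure of $\overline{M} \setminus (W \cup C_W)$, viewed as a symplectic manifold with boundary $\partial V = \{R = 1+\varepsilon\}$, where $R$ is the Liouville coordinate measured from $\partial W$. Along $\partial V$ the Liouville field $Z_W$ points in the direction of increasing $R$, hence it points into $V$, so $\partial V$ is of negative contact type with respect to $V$. The standing hypotheses of \ref{Subsection Liouville subdomains} exactly match the input of Lemma \ref{Lemma no escape}: $J$ is of contact type near $\partial V$, $H=mR$ near $\partial V$, and $H\geq 0$ on $V$.

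Suppose for contradiction that some Floer solution $u\co S \to \overline{M}$ with asymptotes in $W \cup C_W$ reaches a point where $R\circ u > 1+\varepsilon$. Since the asymptotic orbits lie in $\{R \leq 1+\varepsilon\}$, Sard's theorem applied to $R\circ u$ yields a regular value $c$ with $1+\varepsilon \leq c < \max_S(R\circ u)$; replacing $\varepsilon$ by $c-1$ (or working with $V_c = \{R \geq c\}$ in place of $V$) we may assume $1+\varepsilon$ itself is regular. Then $S' := u^{-1}(V)$ is a compact smooth Riemann surface with boundary, satisfying $u(\partial S')\subset \partial V$, and the asymptotic condition guarantees $S'$ stays away from the cylindrical ends of $S$.

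Now $u|_{S'}\co S'\to V$ verifies every hypothesis of Lemma \ref{Lemma no escape}: it solves $(du-X\otimes \beta)^{0,1}=0$, sends $\partial S'$ into $\partial V$, and $V$ together with the chosen $(H,J,\beta)$ fits the lemma's framework. The lemma therefore forces $u(S')\subset \partial V$. This contradicts the existence of a point where $R\circ u > 1+\varepsilon$, establishing that $u$ must stay in $W\cup C_W$.

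The only genuine subtlety is the regular-value step needed to make $S'$ a smooth surface with boundary in $\partial V$; once that is in hand, the conclusion is immediate from Lemma \ref{Lemma no escape}. In the wrapped version one also has to handle $u(\partial S)\subset \overline{L}$: the boundary of $S'$ then splits into a piece in $u^{-1}(\partial V)$ and a piece in $\partial S \cap u^{-1}(V)$, so one needs $\overline{L}$ to be compatible with the collar of $W$ (for example, $\overline{L}\cap V\subset \partial V$), which is the standard setup in which Viterbo restriction is formulated.
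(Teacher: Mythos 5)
Your proof is essentially the paper's own argument: both take $V$ to be the region beyond a generic level set near $R=1+\varepsilon$, cut $S$ along its preimage to get a compact surface with boundary in $\partial V$, and feed that into the no escape Lemma \ref{Lemma no escape}. The paper simply phrases the Sard step as ``choose $R$ close to $1+\varepsilon$ generically'' and applies the lemma componentwise, whereas you argue by contradiction from a maximum of $R\circ u$; these are the same idea. One small point worth making explicit: you must take the regular value $c$ \emph{close} to $1+\varepsilon$, not merely $\geq 1+\varepsilon$, since the standing hypotheses ($J$ of contact type, $H=mR$) are only imposed near $R=1+\varepsilon$; Sard supplies such a $c$, and your parenthetical ``working with $V_c$'' handles it.
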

\begin{proof}
Let $V=\overline{M}\setminus (W \cup [1,R) \times \partial W)$,
with $R$ close to $1+\varepsilon$ chosen generically so that
$\partial V = \{ R \} \times \partial W$ intersects the Floer solution $v$
transversely. Apply Lemma \ref{Lemma no escape} to the restriction
$u$ of $v$ to the preimage of each component of $\mathrm{im}\,v\cap V$. Hence
$\mathrm{im}\, v\subset W\cup C_W$.
\end{proof}
%
\subsection{Viterbo Functoriality}
\label{Subsection Viterbo Functoriality}
%
%
For Liouville subdomains $W\subset \overline{M}$, Viterbo \cite{Viterbo1}
constructed a restriction map $SH^*(M)\to SH^*(W)$ and McLean
\cite[Lemma 10.2]{McLean} proved that it is a ring homomorphism. We now
prove a stronger statement:
\begin{theorem}\label{Theorem Viterbo Functoriality}
Let $ i:(W,\theta_W) \hookrightarrow (\overline{M},\theta) $ be a Liouville
subdomain. Then there exists a $\mathrm{restriction\; map}$,
$SH^*(i)_{\eta}:SH^*(M)_{\eta} \to SH^*(W)_{i^*\eta},$
which is a TQFT map which fits into a commutative diagram which respects TQFT
structures:
$$
\xymatrix@R=12pt{SH^*(W) \ar@{<-}[r]^-{SH^*(i)} \ar@{<-}[d]_-{c^*}
& SH^*(M) \ar@{<-}[d]^-{c^*} & & SH^*(W)_{i^*\eta}
\ar@{<-}[r]^-{SH^*(i)_{\eta}} \ar@{<-}[d]^-{c^*} &
SH^*(M)_{\eta} \ar@{<-}[d]_-{c^*} \\
H^*(W) \ar@{<-}[r]^-{i^*} & H^*(M) & & H^*(W)\otimes \Lambda
\ar@{<-}[r]^-{i^*} & H^*(M)\otimes \Lambda}
$$
In particular, all maps are unital ring homomorphisms.
\end{theorem}
\begin{proof}
\begin{figure}[ht]
\input{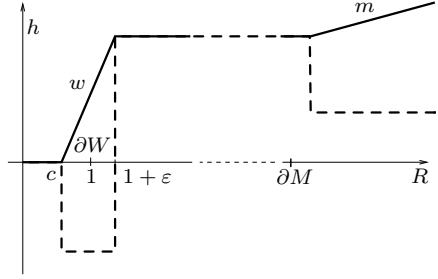}
\caption{A step-shaped Hamiltonian with slopes $w, m$.
The dashed line is the action function $\mathbb{A}(R)=-Rh'(R)+h(R)$. See also Figure \ref{Figure Viterbo restriction picture}}\label{Figure
Viterbo functoriality}
\end{figure}

We may assume $W\subset \overline{M}$ and $\theta_W=\theta|_W$ by
Lemma \ref{Lemma can assume Liouville subdomains are included}.

Pick a (smoothing of the) step-shaped Hamiltonian $H\geq 0$ in
Figure \ref{Figure Viterbo functoriality}. More precisely, the
Liouville flow parametrizes a tubular neighbourhood
$(0,1+\varepsilon) \times \partial W$ of $\partial W \subset
\overline{M}$ (compare \ref{Subsection Liouville domains Definition}). This determines an $R$-coordinate on that neighbourhood with $\partial W=\{ R=1 \}$ (which in general is not related to the $R$-coordinate that we use on the collar of $\overline{M}$). Using these $R$-coordinates on the two
collars, define $H$ to be linear of slope $m$ for $R\gg 0$ on the collar of
$\overline{M}$ and linear of slope $w\gg m$ on
$(c,1+\varepsilon) \times \partial W$. Make $H$ constant elsewhere (rounding off the corners in the graph). Here $w,m$ are
generic (so not Reeb periods of $\partial W$, $\partial M$). In the regions where the graph is flat we make a time-independent positive Morse perturbation of $H$ so the $1$-orbits there are critical points of $H$ and their action is the value of $H\geq 0$.

Even if $W$ is not contained in $M$, we can fix a large enough $R_0$ so that the region $R\geq R_0$ on the collar of $\overline{M}$ where we make $H$ have slope $m$ is away from $W$.

We now show how to separate the action values of the $1$-orbits lying in $W$ and those outside $W$ (we mimic the discussion of \cite[Sec.4.3]{Ritter} which is based on Viterbo's original argument \cite{Viterbo1}). By construction, the only non-constant $1$-orbits of $H$ arise near the regions where the graph of Figure \ref{Figure Viterbo functoriality} has corners. By \ref{Subsection Liouville domains Definition}, the action of these orbits is $\mathbb{A}(R)=-Rh'(R)+h(R)$ (in the appropriate $R$-coordinate), which is the value where the tangent line to the graph of $H$ intersects the vertical axis.

So $w\gg m$ ensures that the smallest values of $\mathbb{A}(R)$ outside $W$ arise near $R=1+\varepsilon$, since the $1$-orbits outside the corner $R= 1+\varepsilon$ will have large positive action if we make $w$ large.

Fix $0<\delta_w<1$ such that no Reeb periods are in $[w-\delta_w,w+\delta_w]$ (the Reeb periods are a discrete subset of $[0,\infty)$ by \ref{Subsection Liouville domains Definition}). By \ref{Subsection Liouville domains Definition}, the slopes of $h$ at $1$-orbits are Reeb periods, so 
the $1$-orbits near $R=1+\varepsilon$ have $\mathbb{A}(R)\geq -(1+\varepsilon)(w-\delta_w)+h(1+\varepsilon)\geq (1+\varepsilon)\delta_w-wc$, using $h(1+\varepsilon) = w(1+\varepsilon-c)$. So, for $c\ll 1/w$, \textbf{the $\mathbf{1}$-orbits outside $\mathbf{W}$ have action in $\mathbf{(\delta_w,\infty)}$}.

The $1$-orbits in $W$ near $R=c$ have action in $[-wc,0]$ and the critical points of $H$ in $W$ have arbitrarily small positive action. So given $w$, for $c\ll 1/w$  \textbf{the $\mathbf{1}$-orbits in $\mathbf{W}$ have action in $\mathbf{(-\delta_w/G_c,\delta_w/G_c)}$} for a large number $G_c\geq 1$, such that $G_c \to \infty$ as we let $c\to 0$.

Let $H'$ be another step-shaped Hamiltonian for $w'\geq w,m'\geq m$ with $w'\gg m'$ so that
the $1$-orbits outside the corner $R= 1+\varepsilon$ have large action (action $\geq 1$ suffices). We can modify $H$ by decreasing $c$ so that $\delta_w/G_c < \delta_{w'}$. We then modify $H'$ by decreasing $c'$ so that: 

(1) $c'<c$, so $H'>H$, so there is a monotone homotopy $H_s$ from $H'$ to $H$ with $\partial_s H_s\leq 0$; 

(2) $\delta_{w'}/G_{c'}<\delta_w/G_c<\delta_{w'}$.

\noindent \emph{Sub-claim: there is a commutative diagram}
\\[0.5mm]
\begin{tabular*}{\textwidth}{l@{\extracolsep{\fill}}cr@{\extracolsep{0pt}}} 
\strut & 
$
\xymatrix@R=10pt{ SC^*(W,i^*H')_{i^*\eta}
\ar@{<-}[r]^-{\;i^{-1}}_-{\cong} \ar@{<-}[d]_-{\mathrm{cont.}} & SC^*(M,H';{\mathbb{A}}_{H'}<\delta_{w}/G_c)_{\eta}
\ar@{<-}[rr]^-{\mathrm{action-restrict}} \ar@{<-}[d]^-{\mathrm{cont.}} &&
SC^*(M,H')_{\eta}  \ar@{<-}[d]^-{\mathrm{cont.}} \\
SC^*(W,i^*H)_{i^*\eta} \ar@{<-}[r]^-{\;i^{-1}}_-{\cong} &
SC^*(M,H;{\mathbb{A}}_H<\delta_w/G_c)_{\eta} \ar@{<-}[rr]^-{\mathrm{action-restrict}} && SC^*(M,H)_{\eta}}
$
 & \strut 
\end{tabular*}
\\[0.5mm]
\emph{where: the vertical maps are the (twisted) continuation maps defined by $H_s$; and the horizontal maps in the second square are action-restriction quotient maps (see Section \ref{Section SH+}).}

We now prove the \emph{Sub-claim}. The second square commutes because the action decreases along Floer continuation solutions for $H_s$ (using $\partial_s H_s\leq 0$ and 
the energy estimate in \ref{Subsection Floer continuation solutions}). In the first square, by the action constraint the only $1$-orbits being considered lie in $W$. By Corollary \ref{Corollary No escape}, Floer trajectories with ends in $W$ do not escape $W$. So the $i^{-1}$ maps are well-defined
identifications of chain complexes. The first square commutes because, by Corollary \ref{Corollary No escape} and Remark \ref{Remark no escape Lemma comments}(\ref{Item no escape for z dependent H}), also Floer continuation solutions with ends in $W$ do not escape $W$.

We now prove that the above diagram respects the TQFT structure on cohomology, in the sense of Theorem \ref{Theorem Operations are compatible with
limit} (for $A_a=A_a'$, $B_b=B_b'$). For the first square, this follows because the Floer solutions in $\overline{M}$ with asymptotics in $W$ and the Floer solutions in $\overline{W}$ with asymptotics in $W$ must both lie in $W$ by Corollary \ref{Corollary No escape}, and so they coincide via $i^{-1}$. For the second square, we first need to consider the behaviour of the action-restriction map under TQFT operations.

Consider $\psi_S: \otimes_b SC^*(B_b H)\to \otimes_a SC^*(A_a H)$ on $\overline{M}$. By Theorem \ref{Theorem Operations are compatible with limit}, on cohomology $\psi_S$ does not change if we increase $w$ and decrease $c$: it only depends on the slope $m$ at infinity. 
By the above estimates, for $w\gg m, c\ll 1/w$ we can ensure that: $G_c> p+q-1$; the $1$-orbits of $A_a H,B_b H$ in $W$ have action in $(-\delta_w/G_c,\delta_w/G_c)$; the $1$-orbits of $A_a H,B_b H$ outside $W$ have action $> \delta_w>\delta_w/G_c$. Therefore, in particular, all $1$-orbits of $B_bH$ have $\mathbb{A}_{B_b H}>-\delta_w/G_c$.

For action-restrictions to commute with $\psi_S$, we need to ensure that no Floer solution $u:S \to \overline{M}$ which contributes a non-zero multiple of $\otimes_a x_a$ to $\psi_S(\otimes_b y_b)$ has some $\mathbb{A}_{B_kH}(y_k)> \delta_w$ but all $\mathbb{A}_{A_a H}(x_a)<\delta_w/G_c$ (this would be a problem since the action-restriction of $\otimes_b y_b$ is zero, but the action-restriction of $\otimes_a x_a$ is non-zero). Suppose by contradiction that this was the case. Then by the energy estimate in \ref{Subsection Energy Appendix Wrapped} and using $G_c>p+q-1$ and $\mathbb{A}_{B_b H}(y_b)>-\delta_w/G_c$,
$$
E(u) = \sum_a \mathbb{A}_{A_aH}(x_a) - \sum_b \mathbb{A}_{B_bH}(y_b) <  p\frac{\delta_w}{G_c} + (q-1)\frac{\delta_w}{G_c}  - \delta_w  < 0
$$
contradicting $E(u)\geq 0$. So the only contributions to the TQFT operation $\psi_S$ which survive under action-restriction involve Floer solutions $u$ all of whose asymptotics lie in $W$. A similar argument holds for $H'$ in place of $H$. This, combined with Theorem \ref{Theorem Operations are compatible with
limit}, implies that the second square respects the TQFT operation $\psi_S$ on cohomology. 
This proves the \emph{Sub-claim}. 

Call $i_{H}:SH^*(W,i^*H)_{i^*\eta} \leftarrow SH^*(M,H)_{\eta}$ the composite on cohomology of the horizontal maps in the \emph{Sub-claim}. By the \emph{Sub-claim}, the direct limit of the $i_{H'}$ as we let $w'\gg m' \to \infty$ is defined. This limit defines the map $SH^*(i)_{\eta}$ of the Theorem. By construction, $SH^*(i)_{\eta}$ fits into a commutative diagram with $i_H$. To obtain the diagram in the Theorem, define $H$ by taking $w\ll 1$ and then $m\ll w$, so that no Reeb periods for $\partial W,\partial M$ lie in $[0,w],[0,m]$. So the only $1$-orbits of $H$ are critical points and, by Section \ref{Subsection The maps c from ordinary cohomology} and Theorem \ref{Theorem unital ring structure twisted}, $i_H$ can be identified (in a TQFT-preserving way) with the restriction on Morse cohomology $MH^*(W,i^*H)\otimes \Lambda \leftarrow MH^*(M,H)\otimes \Lambda$, which indeed is the pull-back on ordinary cohomology.
%
%
%
%
\end{proof}
\begin{theorem}\label{Theorem Properties of restriction map}
\strut\begin{enumerate}

\item The restriction map $SH^*(i)_{\eta}: SH^*(M)_{\eta}\to
SH^*(W)_{i^*\eta}$ is invariant under isotopies of $i:
W\hookrightarrow \overline{M}$ among embeddings satisfying
$i^*\theta_M-e^{\textrm{constant}}\theta_W=\textrm{exact}$.

\item Functoriality: given nested Liouville subdomains $W'
\hookrightarrow W \hookrightarrow \overline{M}$, then
$SH^*(M)_{\eta}\! \to SH^*(W)_{i^*\eta}\! \to SH^*(W')_{i'^*\eta}$
equals $SH^*(M)_{\eta}\!\to SH^*(W')_{i'^*\eta}$.

\item $SH^*(i)_{\eta}=\textrm{id}$ for the following maps: $i=\textrm{id}:M \subset \overline{M}$; the Liouville flow $i=\varphi_Z^{\tau}: M \hookrightarrow \overline{M}$ for time $\tau\in \R$; and the inclusions $i:M\cup [1,R_0]
\times
\partial M\hookrightarrow \overline{M}$.
\end{enumerate}
\end{theorem}
\begin{proof} (1) Consider a family $i_{\lambda}$,
$0\leq \lambda \leq 1$, of such embeddings. By the proof of Lemma \ref{Lemma
can assume Liouville subdomains are included} (and the \emph{Remark} contained therein), this reduces to considering a fixed $W\subset \mathrm{int}(M)$ with Liouville form
$\theta_{\lambda}=\theta_0 + df_{\lambda}$ on $\overline{M}$, such that $f_{\lambda}$ is supported in a neighbourhood
of $W$ in $\mathrm{int}(M)$ and $\theta_{\lambda}|_{W_{\varepsilon}} = \theta_W$ where $W_{\varepsilon}=W\cup (1,1+\varepsilon)\times \partial W$ (we extend $\theta_W$ to the collar just as for $\overline{W}$). Notice that the $R$-coordinate on the collar of $\overline{M}$ will be independent of $\lambda$.

Choose the step-shaped Hamiltonian $H$ with slopes $w,m$ so that its constant step includes the bounded region outside $W_{\varepsilon}$ where $\theta_{\lambda} \neq \theta_0$ (in the two regions where we make $H$ linear the two $R$-coordinates do not depend on $\lambda$ since $\theta_{\lambda}=\theta_0$ there). Now consider the computation of the actions in the proof of Theorem \ref{Theorem Viterbo Functoriality}: near the regions where $H$ has slopes $w,m$ the form $\theta_{\lambda}$ equals $\theta_W,\theta_0$ respectively, and the action estimates in the region where $H$ is constant do not involve $\theta_{\lambda}$ (the generators there, after a small perturbation, are critical points of $H$, and these have actions equal to the value of $H$). As explained in the proof of Theorem \ref{Theorem Viterbo Functoriality}, the Floer solutions joining orbits in $W$ stay entirely in $W$, so the Floer theory there only depends on the form $\theta_W$. Finally, the restriction map for each $\lambda$ can be constructed by using action-restrictions for such Hamiltonians $H$, so we have shown that the restriction maps do not depend on whether we use $\theta_0$ or $\theta_1$ on $\overline{M}$.

(2) Using an appropriate step-shaped Hamiltonian with two steps
instead of one, one can separate the action values of the
generators in $W'$, $W\setminus W'$ and $\overline{M} \setminus
W$. So claim (2) follows because the composite of action-restriction maps is an action-restriction map.

(3) For $i\!=\!\textrm{id}$, the step-shaped Hamiltonian induces the continuation $SH^*(H^m)\!
\to \! SH^*(H^w)$ in the notation of \ref{Subsection Hamiltonians Linear
At Infty}. So the direct limit as $w\gg m\to \infty$ is the
identity. For $i=\varphi_Z^{\tau}:M  \hookrightarrow \overline{M}$, $i^*\theta_M=e^{\tau}\theta_M$, and since $i$ is isotopic to the identity, $SH^*(i)_{\eta}=\mathrm{id}$ by (1). The inclusions $i$ in (3) are isotopic to the identity via $(\varphi_Z^{\tau})_{\tau\in [0,\log R_0]}$, so $SH^*(i)_{\eta}=\mathrm{id}$ by (1).
\end{proof}
%
%
\subsection{Wrapped solutions with asymptotics in $W$ do not escape $W$}
\label{Subsection No escape for wrapped solutions}
%
Observe Figure \ref{Figure Viterbo restriction picture}. Let $W \hookrightarrow M$ be a Liouville subdomain, and $L$ an
exact Lagrangian with transverse Legendrian intersections with
$\partial W, \partial M$ (by Lemma \ref{Lemma can assume Liouville
subdomains are included}, we can assume $W\subset M$,
$\theta_W=\theta|_W$). So explicitly:
$\theta|_L=df$ for $f:L \to \R$, and $df=0$ on $L\cap(\partial W \cup \partial M)$.

\begin{figure}[ht]
\input{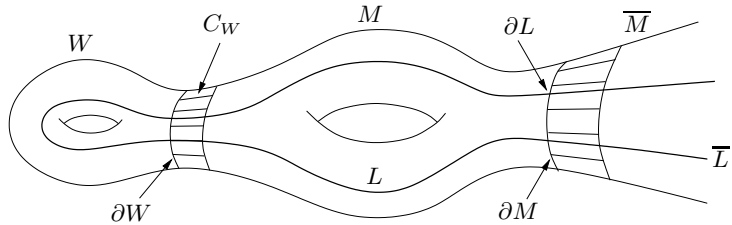}
\caption{A Liouville subdomain $W\subset M$, and a Lagrangian $\overline{L}\subset \overline{M}$. The indicated collars are where conical ends can be attached to form $\overline{W},\overline{M}$.} \label{Figure Viterbo restriction picture}
\end{figure}

In preparation for the construction of the wrapped Viterbo restriction map in \ref{Subsection Wrapped Viterbo restriction}, we make an additional assumption (the necessity of which is explained in \cite[Ex.4.2]{Abouzaid-Seidel}):\\[1mm]
\indent
\boxed{\emph{One can extend $f:L\to \R$ to $f:M\to
\R$ so that $f$ is locally constant on
$\partial W \cup \partial M$.}}\\[1mm]
The condition we really want is that $f|_L=0$ near $\partial W \cup \partial M$ in order to apply Lemma \ref{Lemma no escape with Lag bdry condns}. The somewhat weaker assumption above actually guarantees this condition can be achieved after deforming $L$ by a Hamiltonian isotopy of $M$ relative to $\partial W \cup
\partial M$ (\cite[Lemma 4.1]{Abouzaid-Seidel}). In particular, $L$ then has the form $(\textrm{interval})\times (L\cap
\partial W)$ near $\partial W$ and similarly near $\partial M$, and we can extend $f$ to $\overline{M}$ by defining it to be locally constant on the collar of $\overline{M}$. 

By the no escape Lemma \ref{Lemma no escape with Lag bdry condns}, Corollary \ref{Corollary No escape} generalizes to:

\begin{corollary} \label{Corollary No escape with Lagr bdry condns}
Any wrapped Floer solution with asymptotics in $W \cup C_W$ cannot escape
$W$.
\end{corollary}
%
%
\subsection{Wrapped Viterbo restriction}
\label{Subsection Wrapped Viterbo restriction}
%
%
Continuing with the notation and assumptions introduced above Corollary \ref{Corollary No escape with Lagr bdry condns}, we now prove the wrapped analogue of Theorem \ref{Theorem Viterbo Functoriality}. The construction of the Viterbo restriction for wrapped cohomology is due to Abouzaid-Seidel \cite{Abouzaid-Seidel}: they prove that it respects the $A_{\infty}$-operations on a chain-level telescope model for $HW^*(L)$.

\begin{theorem}\label{Theorem wrapped Viterbo}\label{Theorem wrapped twisted Viterbo}
There is a restriction map
$HW^*(L\subset M) \to HW^*(L\cap W \subset W),$
commuting with $c^*$, and compatible with the TQFT and the
$SH^*$-module structure. 

More generally there is a twisted restriction
$HW^*(L;\underline{\Omega}_{\alpha}) \to HW^*(L\cap W;\underline{\Omega}_{\alpha|W})$, which for $\alpha=\tau\eta$ respects the structures of Theorem \ref{Theorem wrapped twisted TQFT and module}.

\end{theorem}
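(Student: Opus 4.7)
The plan is to mimic the proof of Theorem \ref{Theorem Viterbo Functoriality}, replacing closed Floer theory throughout by its wrapped analogue. By Lemma \ref{Lemma can assume Liouville subdomains are included} we may assume $W\subset \overline{M}$ with $\theta_W=\theta|_W$. The extension hypothesis on $f$, combined with a Hamiltonian isotopy as in \ref{Subsection Lagrangians inside Liouville domains}, allows us to arrange that $\overline{L}$ is cylindrical under the Liouville flow in a neighbourhood of both $\partial W$ and $\partial M$; then $\overline{L\cap W}$ is an admissible Lagrangian in $\overline{W}$, the primitives of $\theta$ and $\theta_W$ on $\overline{L}$ agree, and $f$ is locally constant along each boundary component of the arc that $\overline{L}$ makes inside the collar of $\partial W$.

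The first step is a wrapped analogue of Lemma \ref{Lemma no escape}. Let $V$ be as in \ref{Subsection No escape lemma} with $\partial V$ of negative contact type, and suppose $u\co S\to V$ solves $(du-X\otimes\beta)^{0,1}=0$ with $\partial S=\partial_{\mathrm{h}}S\sqcup \partial_{\mathrm{v}}S$, where $u(\partial_{\mathrm{h}}S)\subset \partial V$ and $u(\partial_{\mathrm{v}}S)\subset \overline{L}$, and the latter maps into the region where $\theta|_{\overline L}=df$ with $f$ locally constant. The energy computation of Lemma \ref{Lemma no escape} carries over: the Stokes boundary integral $\int_{\partial S}\bigl(u^*\theta-u^*H\,\beta\bigr)$ splits into two contributions. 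On $\partial_{\mathrm{h}}S$ one proceeds exactly as before, obtaining $\int -dR(du)j\le 0$. On $\partial_{\mathrm{v}}S$ one has $u^*\theta=u^*df=d(f\circ u)$, which is exact along $\partial_{\mathrm{v}}S$; since $f$ is locally constant at the corners where $\partial_{\mathrm{v}}S$ meets $\partial_{\mathrm{h}}S$, the contribution collapses to boundary values that cancel or vanish, and $H\beta$ vanishes on $\overline{L}\cap\partial V$ by the same local-constancy. Hence $E(u)\le 0$, so $u$ maps into $\partial V$, giving the desired no-escape statement.

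The second step is to run the argument of Theorem \ref{Theorem Viterbo Functoriality} verbatim with wrapped chain complexes: pick a diagonal step-shaped Hamiltonian $H$ as in Figure \ref{Figure Viterbo functoriality}, compatible with both $\partial W$ and $\partial M$. Hamiltonian chords split by action into a low-action piece contained in $W\cup C_W$ and a high-action piece outside. The wrapped no-escape lemma shows that wrapped trajectories, wrapped continuation solutions, and more generally wrapped solutions $u\co S\to\overline{M}$ over any TQFT Riemann surface $S$, with low-action asymptotics, remain in $W\cup C_W$. Hence restriction $i^{-1}$ identifies the action-truncated complex $CW^*(M,H;A_H<\delta)_{\alpha}$ with $CW^*(W,i^*H)_{i^*\alpha}$ as chain complexes, and intertwines all TQFT operations as well as the $SH^*$-module action $\mathcal{W}_D$. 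Passing to cohomology and to the direct limit over slopes $a\gg b\to\infty$ yields the restriction map $HW^*(L)_{\alpha}\to HW^*(L\cap W)_{\alpha|W}$, commuting with the TQFT, the $SH^*(M)_{\overline\eta}$-action via Theorem \ref{Theorem Viterbo Functoriality}, and $c^*$ (the latter by taking $H=H^\delta$ at the bottom of the direct system, as in the proof of Theorem \ref{Theorem Viterbo Functoriality}).

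The twisting is handled just as in Corollary \ref{Theorem unital ring structure twisted} and Theorem \ref{Theorem wrapped twisted TQFT and module}: insert weights $t^{\tau\eta[u]}=t^{\int_S u^*\eta}$, which are locally constant on each moduli space of wrapped solutions and additive under breaking (Theorem \ref{Theorem Novikov weights behave well} transfers to the wrapped setting since its proof only used Stokes and exponential decay). The weights on each side of the identification $i^{-1}$ agree because $i^*\eta$ pulls back correctly and no $u$ escapes $W\cup C_W$. The main obstacle, and the only place where a new idea is required beyond Theorem \ref{Theorem Viterbo Functoriality}, is the wrapped no-escape lemma; specifically, controlling the Lagrangian boundary contribution to the energy integral is precisely what forces the hypothesis that $f$ extend locally constantly on $\partial W\cup\partial M$.
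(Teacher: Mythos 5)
Your outline captures the skeleton of the paper's argument (wrapped analogue of the no-escape lemma, action-restriction via a step-shaped Hamiltonian, identification of the low-action complex with the wrapped complex of $L\cap W\subset W$, insertion of Novikov weights). Your wrapped no-escape argument is essentially the paper's, with one small inaccuracy: the $u^*H\,\beta$ contribution along the Lagrangian boundary vanishes because $\beta|_{\partial S}=0$ by the setup of \ref{Subsection open model surfaces S}, not because of local constancy of $f$; the local-constancy/vanishing of $f$ near $\partial W\cup\partial M$ is what kills the residual $f(z)-f(z')$ terms, as you say.

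The genuine gap is in your second step, where you assert that ``Hamiltonian chords split by action into a low-action piece contained in $W\cup C_W$ and a high-action piece outside'' as if this were automatic from the closed-string case. It is not. The action of a Hamiltonian chord is $A_H(x)=f(x(1))-f(x(0))-Rh'(R)+h(R)$, and the chords on the inner corner of the step-shaped Hamiltonian sit at $R_W=c\ll a^{-1}$, i.e. deep inside $W$, not near $\partial W$. The extension hypothesis only gives $f|_L=0$ near $\partial W\cup\partial M$; it says nothing about $f$ near $R_W=c$, so the extra $f$-terms can a priori destroy the action filtration that makes the restriction map well-defined. This is precisely the point the paper spends the second half of its proof on, and you have declared it to be automatic. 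The paper's fix is a Liouville-flow trick: push $L\cap W$ by the time-$(\log c)$ Liouville flow to produce a new exact Lagrangian $P=\varphi(L\cap W)\cup(\textrm{cylinder})\cup(\overline{L}\setminus W)$ whose intersection with $\partial W$ lies where $f$ is known to vanish; for $(P,H,W,M,\theta)$ the action estimates of the closed case hold verbatim, giving a restriction $HW^*(P;H)\to HW^*(W\cap P;H|_W)$. One then transfers back to $\overline{L}$ via a compactly supported Hamiltonian isotopy $P\simeq\overline{L}$ and the Liouville rescaling $\varphi^*:HW^*(W\cap P;H|_W;\theta)\cong HW^*(W\cap L;H^a;\theta)$. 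Without this step (or an equivalent substitute), the ``diagonal-step Hamiltonian separates actions'' claim does not hold in the wrapped setting, so your proof as written does not go through. Your remark that the no-escape lemma is ``the only place where a new idea is required'' is therefore wrong: the action-separation argument is the second such place.
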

\begin{proof}
Observe that if we can separate the action values as in the proof of
Theorem \ref{Theorem Viterbo Functoriality}, then the proof is identical after replacing the groups $SC^*(L;H)$, $SC^*(W;H)$ by $CW^*(L;H)$, $CW^*(L\cap W;H)$, and using the no escape Corollary \ref{Corollary No escape with Lagr bdry condns} instead of Corollary \ref{Corollary No escape}.

We now show how to separate the action values. In the regions where $f=0$, the action of Hamiltonian chords is $-Rh'(R)+h(R)$ 
(see \ref{Subsection Action functional in wrapped case}). Thus, in those regions, the same estimates that we made in the proof of Theorem \ref{Theorem Viterbo Functoriality} for the step-shaped Hamiltonian $H$ of slopes $w,m$ will also hold in the wrapped setup. The non-constant Hamiltonian chords lie in the regions where we smoothen the corners of the graph of $H$ drawn in Figure \ref{Figure Viterbo functoriality}: near $\partial M,\partial W$ we indeed have $f=0$, but the region near $R=c$ inside $W$ is problematic since it is far away from $\partial W$ as we let $c\to 0$, so $f=0$ may fail there.

Let $\varphi=\varphi_Z^{\log c}$ be the Liouville flow for time $\log
c<0$. Push $L\cap W$ into $\varphi(L\cap W)$. Since $L$ has the
form $(\textrm{interval})\times (L\cap
\partial W)$ near $\partial W$, the flow rescales the interval by
$c$, so we can smoothly join $\varphi(L\cap W)$ to
$\overline{L}\setminus W$ by some $(\textrm{interval})\times
(L\cap
\partial W)$. Call $P$ the resulting Lagrangian. The part of $P$ contained in the region $c\leq R \leq 1+\varepsilon$ of $W\cup C_W$ is conical: $[c,1+\varepsilon]\times (L \cap \partial W)$. So $\theta|_P=0$ on that part, and so globally $\theta|_P=df_P$ for a function $f_P:\overline{M}\to \R$ which is zero also in the problematic region near $R=c$. Thus for $(P, H, W, M,\theta)$ the action values can be separated. So we have a restriction
$$
HW^*(P;H) \stackrel{\textrm{restrict}}{\longrightarrow}
HW^*(P;H,{\mathbb{A}}_H<\delta_w/G_c) \cong HW^*(W\cap P;H|_W).
$$
We claim that this naturally induces a restriction $HW^*(L;H^m)\to
HW^*(W\cap L;H^w)$ for some $H^m,H^w$ with slopes $m,w$ at
infinity on $\overline{M},\overline{W}$. This claim implies the
Theorem after taking direct limits $w\gg m\to \infty$.

Since the above construction gives a family of exact Lagrangians $P_{\lambda}$ interpolating $P_0=\overline{L}$ and $P_1=P$, there is a Hamiltonian isotopy $\psi$ of $\overline{M}$ supported in $W$ which maps $\overline{L}$ onto
$P$. This induces a natural isomorphism $\psi^*:HW^*(P;H)\cong HW^*(L;H^m)$
by pulling back the Floer data, which determines the Hamiltonian $H^m=\psi^*H$ with slope $m$ at infinity.

Secondly, the above Liouville map $\varphi$ also induces a natural pull-back isomorphism
$$
\varphi^*:HW^*(W\cap P;H|_W;\theta) \cong HW^*(W\cap
L;\varphi^*(H|_W);c\,\theta),
$$
where $\varphi^*(H|_W)=H|_W\circ\varphi$ has slope $wc$ at
infinity on $\overline{W}$. Rescaling $c\,\theta$ and
$\varphi^*(H|_W)$ by $1/c$ preserves the Hamiltonian vector field
and thus the whole Floer theory:
$$
HW^*(W\cap L;\varphi^*(H|_W);c\,\theta) \equiv HW^*(W\cap
L;H^w;\theta),
$$
where $H^w=\varphi^*(H|_W)/c$ has slope $w$ at infinity, as
required.

Both these isomorphisms are compatible with the algebraic structures (the $c^*$-map, the TQFT, and the $SH^*$-module structure) since analogous isomorphisms can be defined for the algebraic structures again simply by pulling-back the relevant Floer data by global maps (in the first case, a Hamiltonian isotopy $\psi$, and in the second case, the Liouville map $\varphi$). We briefly illustrate how this works in the case of the $c^*$-maps, but we omit a discussion of the other algebraic structures for sake of brevity.

Recall $c^*:H^*(L) \to HW^*(L)$ arises at the Floer level as a direct limit of continuation maps $c^*:HW^*(L;H^{\delta}) \to HW^*(L;H)$ which increase the slope of $H$ at infinity (see Section \ref{Section Canonical map from ordinary cohomology}). Let $H$ be the step-shaped Hamiltonian from above (with slopes $w\gg m \gg 1$) and let $H^{\delta}$ be a step-shaped Hamiltonian with slopes $\delta_1,\delta_2$ where $1\gg \delta_1 \gg \delta_2>0$. These conditions are chosen so as to ensure that restriction maps can be constructed for $H$ and for $H^{\delta}$, and that the slopes $\delta_1,\delta_2$ are appropriate for defining the $c^*$ maps for $L\cap W$ and $L$. Let $H_s$ be a monotone homotopy interpolating $H$ with $H^{\delta}$ with $\partial_s H_s\leq 0$. By our previous discussion, we obtain the following commutative diagram:
$$
\xymatrix@R=12pt@C=16pt{HW^*(W\cap L;H^w) \ar@{<-}^-{\varphi^*}_{\cong}[r] \ar@{<-}_-{c^*}[d] &
HW^*(W\cap P;H|_W) \ar@{<-}_-{\mathrm{cont.}}[d]  \ar@{<-}^-{\textrm{restr.}}[r] &
HW^*(P;H) \ar@{<-}_-{\mathrm{cont.}}[d]  \ar@{<-}^-{(\psi^*)^{-1}}_{\cong}[r]  &
HW^*(L;H^m) \ar@{<-}^-{c^*}[d]\\
HW^*(W\cap L;H^{\delta_1}) \ar@{<-}^-{\varphi^*}_{\cong}[r] &
HW^*(W\cap P;H^{\delta}|_W) \ar@{<-}^-{\textrm{restr.}}[r] &
HW^*(P;H^{\delta}) \ar@{<-}^-{(\psi^*)^{-1}}_{\cong}[r] &
HW^*(L;H^{\delta_2})}
$$
where: the first square arises from pulling back via $\varphi$ the data $H^{\delta}|_W, H|_W, H_s|_W,J|_W$; the second square arises from action-restrictions since for the Lagrangian $P$ we have action estimates analogous to those in the proof of Theorem 
\ref{Theorem Viterbo Functoriality}; and the last square arises from pulling back via $\psi$ the data $H^{\delta},H,H_s,J$.
\end{proof}
%
%
%
%
%
%
%
\section{Vanishing Criteria}
\label{Section Vanishing criteria}
\label{Subsection Vanishing of symplectic cohomology}

%
%
This Section relies on Theorem \ref{Theorem unital ring structure twisted} and Section \ref{Subsection Definition of the unit}. We recall that by construction the map $c^*$ factors via $c_H^*:H^*(M) \to SH^*(H)_{\eta}$ as
$c^*:H^*(M) \to SH^*(H)_{\eta} \to SH^*(M)_{\eta}$
for any Hamiltonian $H$ linear at infinity. Denote $1\mapsto e_H \mapsto e_M$ the image of the unit via $c^*$. 

The results in this Section hold also in the untwisted setup (simply ignore $\eta$).
\begin{theorem}\label{Theorem Vanishing criteria} \strut
\begin{enumerate}
 \item $SH^*(M)_{\eta}=0$ if and only if the unit $e_M=0\in SH^0(M)_{\eta}.$
 \item For a Liouville subdomain
$i:(W,\theta_W)\! \hookrightarrow \! (\overline{M},\theta)$,
if $SH^*(M)_{\eta} \! = \! 0$ then $SH^*(W)_{i^*\eta} \! = \! 0.$
\end{enumerate}
\end{theorem}
\begin{proof}
1) If $e_M=0$ then for all $y\in SH^*(M)_{\eta}$,
$y = e_M\cdot y = 0 \cdot y = 0, $
so $SH^*(M)_{\eta}=0$.\\
2) Suppose $SH^*(M)_{\eta}=0$, so $e_M=0$. The restriction map $\varphi:
SH^*(M)_{\eta} \to SH^*(W)_{i^*\eta}$ respects the TQFT by Theorem \ref{Theorem Viterbo Functoriality}. So it sends unit to unit. Thus $e_W=\varphi(e_M)=\varphi(0)=0$. But by (1) applied to $W$, $e_W=0$ forces $SH^*(W)_{i^*\eta} = 0$.
\end{proof}

\begin{theorem}\label{Theorem vanishing at finite stage}
The following conditions are equivalent
\begin{enumerate}
 \item $SH^*(M)_{\eta}=0$;
 \item $e_H=0$ for some Hamiltonian $H=H^{\ell}$ linear of slope $\ell>0$ at infinity;
 \item $c^*_H:H^*(M) \to SH^*(H)_{\eta}$ vanishes for some $H$;
 \item $c^*:H^*(M)\to SH^*(M)_{\eta}$ vanishes;
 \item for some $\ell\!>\!0$, the continuation $SH^*(H^m)_{\eta} \!\to\! SH^*(H^{\ell + m})_{\eta}$ is zero for all slopes $m$.
\end{enumerate}
\end{theorem}
\begin{proof}
By definition, $e_M=\varinjlim e_H=0\Leftrightarrow$ some $e_H=0$. So (1)$\Leftrightarrow$(2) by
Theorem \ref{Theorem Vanishing criteria}. 

If $e_H=0$ then  $0=\psi_P(e_H, y) = c^*_H y \in SH^*(H)_{\eta}$ for all $y\in H^*(M)$ using the product $\psi_P: SH^*(H)_{\eta} \otimes H^*(M) \to SH^*(H)_{\eta}$. So (2)$\Rightarrow$(3). For (3)$\Rightarrow$(2) recall that $e_H=c^*_H(1)$.

The factorization $c^*\!:\!H^*(M)\!\to\! SH^*(H^m)_{\eta}\! \to\! SH^*(H^{m'})_{\eta}\! \to\! SH^*(M)_{\eta}$, for slopes $0<m\leq m'$, shows that the rank of the image $c^*_H(H^*(M))\subset SH^*(H)_{\eta}$ decreases as the slope of $H$ increases. So the rank must stabilize for large enough slopes. So (3)$\Leftrightarrow$(4).

Finally (5)$\Rightarrow$(3) by taking $m$ to be very small, and (2)$\Rightarrow$(5) by Theorem \ref{Theorem characterization of continuation maps}.
 \end{proof}
\begin{theorem}\label{Theorem Vanishing criteria for SH_*}
The following are equivalent
\begin{enumerate}
 \item $SH_*(M)_{\eta}=0$;
 \item the counit $\psi^C:SH_*(M)_{\eta} \to \K$ vanishes;
 \item $c_*^H:SH_*(H)_{\eta}\to H_*(M)$ vanishes for some $H$;
 \item $c_*:SH_*(M)_{\eta}\to H_*(M)$ vanishes.
\end{enumerate}
\end{theorem}
\begin{proof} 
 This can be proved either directly as for Theorem \ref{Theorem vanishing at finite stage}, or by dualizing the statement of Theorem \ref{Theorem vanishing at finite stage} using the dualization results of Section \ref{Subsection Symplectic homology is the dual of symplectic cohomology}.
\end{proof}

\begin{theorem}\label{Theorem Vanishing of homology iff
cohomology}  $SH^*(M)_{\eta}=0$ if and only if $SH_*(M)_{-\eta}=0$.
\end{theorem}
\begin{proof}
This follows by Theorems \ref{Theorem vanishing at finite stage} and \ref{Theorem Vanishing criteria for SH_*} since $c^*,c_*$ are dual to each other (\ref{Subsection Symplectic homology is the dual of symplectic cohomology}).
\end{proof}
\begin{theorem}\label{Theorem wrapped vanishing criterion}
The following are equivalent
\begin{enumerate}
 \item $HW^*(L)_{\eta}=0$;
 \item the unit $\mathcal{W}_C(1)=c^*(1)\in HW^0(L)_{\eta}$ vanishes;
 \item $c^*_{H^{\ell}}(1) = 0 \in HW^0(L;H^{\ell})_{\eta}$ for some slope $\ell>0$;
 \item $c^*_H :H^*(L) \to HW^*(L;H)_{\eta}$ vanishes for some $H$;
 \item $c^*:H^*(L) \to HW^*(L)_{\eta}$ vanishes.
 \item for some $\ell\!>\!0$, the continuation $HW^*(L;H^m)_{\eta} \!\to\! HW^*(L;H^{\ell + m})_{\eta}$ is zero for all $m$.
\end{enumerate}
\end{theorem}
\begin{proof}
 This follows by mimicking the proof of Theorem \ref{Theorem vanishing at finite stage} in the wrapped setup.
\end{proof}
\begin{theorem}\label{Theorem SH=0 implies HW=0}\strut
\begin{enumerate}
 \item If $SH^*(M) =  0$ then $HW^*(L)  =  0$;
 \item If $SH^*(M)_{\overline{\eta}} = 0$ then $HW^*(L)_{\eta}  =  0$ (in the notation of Theorem \ref{Theorem
wrapped twisted TQFT and module}).
\end{enumerate}
\end{theorem}
\begin{proof}
This follows from the fact that $HW^*$ is a module over $SH^*$ (Theorems \ref{Theorem wrapped module over SH} and \ref{Theorem wrapped twisted TQFT and module}): if $e=0$ in $SH^*$, then $y=e\cdot y = 0\cdot y =0$ for all $y\in HW^*$.
\end{proof}
%
%
%
%
%
\section{Arnol'd chord conjecture}
\label{Section Application Arnol'd Chord conjecture}
%
%
The chord conjecture states that a contact manifold containing a Legendrian
submanifold has a Reeb chord with ends on the Legendrian. This was originally stated for Legendrian knots in the standard $3$-sphere by Arnol'd \cite{Arnold}, and we recommend the work of Cieliebak \cite{Cieliebak} for a modern Floer-theoretic     approach to the general statement. Our setup \ref{Subsection Lagrangians inside Liouville domains}
involves the Legendrian $\partial L = L\cap \partial M$ inside the
contact manifold $\partial M \subset \overline M$.
\begin{theorem}\label{Theorem Vanishing of HW implies
Arnol'd Chord Conj} If $HW^*(L)=0$ then the Arnol'd chord
conjecture holds for $\partial L \subset
\partial M$ $($existence of a Reeb chord$)$.
For a generic contact form $\alpha$ there are at least
$\textrm{rank}\; H^*(L)$ chords. The same holds if
$HW^*(L)_{\eta}=0$ for $\eta\in H^2(M,L;\R)$.
\end{theorem}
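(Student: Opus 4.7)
The plan is to feed the vanishing hypothesis into the long exact sequence of Theorem~\ref{Theorem HW+} and then read off both existence and a lower bound for the number of chords from the resulting identification of $HW^{*}_+(L)$ with $H^{*+1}(L)$. Recall that $HW^{*}_+(L)$ is defined as a direct limit of the subcomplexes $CW^{*}_+(L;H^{m})$ generated by those Hamiltonian chords whose action lies below $-\delta$, which by the action computation $A_H(x)=f(x(1))-f(x(0))-Rh'(R)+h(R)$ of \ref{Subsection Action functional in wrapped case} are exactly the chords sitting on the collar. By Lemma~\ref{Lemma Reeb chords that can arise}, those collar chords correspond bijectively to Reeb chords of $\partial L\subset\partial M$ with period $\leq m$, where $m$ is the slope of $H^{m}$ at infinity.

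For existence, suppose for contradiction that no Reeb chord exists. Then for every $m$ the complex $CW^{*}_+(L;H^{m})$ is zero, hence $HW^{*}_+(L)=0$. The long exact sequence of Theorem~\ref{Theorem HW+} together with the hypothesis $HW^{*}(L)=0$ forces the connecting homomorphism to give an isomorphism $HW^{*}_+(L)\cong H^{*+1}(L)$, and in particular $HW^{1}_+(L)\cong H^{0}(L)\neq 0$ since $L$ is non-empty. This contradiction yields at least one Reeb chord.

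For the quantitative statement, assume the contact form $\alpha=\theta|_{\partial M}$ is generic, so every Reeb chord is non-degenerate and hence isolated. If there are infinitely many chords there is nothing to prove, so assume there are finitely many, say with periods $T_1,\ldots,T_N$. Then for any slope $m>\max_i T_i$ the generators of $CW^{*}_+(L;H^{m})$ are in bijection with the full set of Reeb chords, and the continuation maps $CW^{*}_+(L;H^{m})\to CW^{*}_+(L;H^{m'})$ for $m'\geq m$ are identifications of chain complexes, so
$$HW^{*}_+(L)\;=\;HW^{*}_+(L;H^{m}).$$
From the isomorphism $HW^{*}_+(L)\cong H^{*+1}(L)$ and the elementary bound $\dim H^{*}(C,d)\leq \dim C$ applied to $C=CW^{*}_+(L;H^{m})$, we conclude
$$N\;=\;\dim CW^{*}_+(L;H^{m})\;\geq\;\dim HW^{*}_+(L;H^{m})\;=\;\operatorname{rank} H^{*}(L),$$
as claimed.

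The twisted case is identical after replacing $HW^{*}(L)$ by $HW^{*}(L)_{\eta}$ and $H^{*}(L)$ by $H^{*}(L)\otimes\Lambda$: Theorem~\ref{Theorem HW+} supplies the twisted long exact sequence, and since $c^{*}\underline{\Omega}_{\tau\eta}$ is trivial over $L$ (as in Theorem~\ref{Theorem wrapped twisted TQFT and module}), we still identify $HW^{*}_+(L)_{\eta}\cong H^{*+1}(L)\otimes\Lambda$, of the same $\Lambda$-rank. The main subtle point is the one already addressed above, namely that the direct limit defining $HW^{*}_+(L)$ could in principle lose generators; this is resolved by observing that in the finite-chord regime the limit stabilizes at any sufficiently large slope $m$.
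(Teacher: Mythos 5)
Your argument is correct and follows the same overall route as the paper: both proofs run the hypothesis $HW^*(L)=0$ through the long exact sequence of Theorem~\ref{Theorem HW+} to obtain $HW^*_+(L)\cong H^{*+1}(L)$, and then read off a lower bound on the number of collar chords via Lemma~\ref{Lemma Reeb chords that can arise}. Two small remarks. First, for existence the paper argues more directly: with no Reeb chords the chain complex $CW^*(L;H^m)$ has the same generators for every $m$ (only the interior ones), and the maximum principle gives $c^*\!:H^*(L)\cong HW^*(L;H^\delta)\cong HW^*(L)\neq 0$; your version via $HW^*_+(L)=0$ and the LES is equivalent but one step more indirect. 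Second, your claim that the continuation maps $CW^*_+(L;H^m)\to CW^*_+(L;H^{m'})$ are \emph{identifications of chain complexes} is a bit too strong — the differentials genuinely depend on $m$, and what one actually needs (stabilization of the direct limit once $m$ exceeds all chord periods) requires a short argument. You can sidestep this entirely: since $HW^*_+(L)=\varinjlim HW^*_+(L;H^m)$ has rank $k=\operatorname{rank} H^*(L)$, pick $k$ linearly independent classes in the limit, lift them to a common $HW^*_+(L;H^m)$, and observe they remain independent there; hence $\dim CW^*_+(L;H^m)\geq k$, giving the bound without any stabilization claim. (Also a typo: $HW^{-1}_+(L)\cong H^0(L)$, not $HW^1_+$.)
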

\begin{proof}
Suppose there are no Reeb chords. Then we do not need to perturb
$\alpha$ to avoid transversality issues caused by degenerate Reeb
chords. By Lemma \ref{Lemma Reeb chords that can arise} there are no Hamiltonian chords on the collar of $\overline{M}$, and by the maximum principle in \ref{Subsection Maximum principle for the Lagrangian setting} all wrapped trajectories lie in $M$ where we can ensure $H|_M=H^{\delta}|_M$ for any Hamiltonian $H$ linear at infinity. So, by Section \ref{Section Canonical map from ordinary cohomology},
$c^*:H^*(L) \cong HW^*(L;H^{\delta}) \cong HW^*(L)$, so $HW^*(L)\neq 0$.

Now suppose $HW^*(L)=0$, $\alpha$ generic. By Lemma \ref{Lemma
HW+} if $HW^*(L)=0$
then $HW^*_+(L)\cong H^{*+1}(L)$, so there are at least
$\textrm{rank}\; H^*(L)$ distinct Hamiltonian chords on the collar
(see the \emph{Technical Remarks} in \ref{Subsection Wrapped TQFT structure}), and the same holds for
Reeb chords by Lemma \ref{Lemma Reeb chords that can arise}. The
twisted case is analogous, since $HW^*(L)_{\eta}=0$ implies
$HW^*_+(L)_{\eta}\cong H^{*+1}(L)\otimes \Lambda$.
\end{proof}

\emph{Remark. The proof is similar to Viterbo's applications \cite{Viterbo1} to
the \emph{Weinstein conjecture} $($existence of a closed Reeb orbit$)$:
if there are no closed Reeb orbits then $c^*\!:\!H^*(M)\!\to\!
SH^*(M)$ is an isomorphism. So the Weinstein conjecture holds if
$SH^*(M)=0$ or $SH^*(M)_{\eta}=0$.}\\
\indent \emph{Remark. In general $\textrm{rank}\; H^*(L) \geq \frac{1}{2}\textrm{rank}\; H^*(\partial L)$ (use the long exact sequence for the pair $(L,\partial L)$, and $\textrm{rank}\; H^*(L)=\textrm{rank}\;H^*(L,\partial L)$ by Poincar\'{e} duality and universal coefficients).}
\begin{example} Let $M$ be a subcritical Stein
manifold. Then Theorem \ref{Theorem Vanishing of HW implies
Arnol'd Chord Conj} applies since $SH^*(M)=0$ by Cieliebak
\cite{Cieliebak} so $HW^*(L)=0$ by Theorem \ref{Theorem SH=0
implies HW=0}.

Recent work of Bourgeois-Ekholm-Eliashberg \cite[Remark 6.2]{BEE} can tackle the chord conjecture for Legendrian spheres in the boundary of Stein domains with vanishing $SH^*$.
\end{example}

Let $N$ be a simply connected closed manifold, and $L\subset
DT^*N$ an exact Lagrangian with transverse Legendrian intersection
$\partial L = L \cap ST^*N$.

\begin{theorem}\label{Theorem Arnold Conj application 1}
If $H^2(T^*N)\to H^2(L)$ is not injective, then the chord conjecture holds and generically there are at least
$\textrm{rank}\; H^*(L)$ Reeb chords.
\end{theorem}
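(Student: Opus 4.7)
\smallskip

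The plan is to exhibit a non-trivial twist that kills the twisted symplectic cohomology of $T^*N$, transport this vanishing to the twisted wrapped theory of $L$, and then apply the chord existence theorem already proved.

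\medskip

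\noindent\textbf{Step 1: producing the twist.} Since $T^*N$ deformation retracts onto $N$ and $M=DT^*N$ deformation retracts onto $T^*N$, we have canonical identifications $H^2(M)\cong H^2(T^*N)\cong H^2(N)$. By hypothesis the restriction $H^2(T^*N)\to H^2(L)$ has nontrivial kernel, so pick a nonzero class $\overline{\eta}\in H^2(M)$ with $\overline{\eta}|_L=0$. The long exact sequence
\[
H^1(L)\to H^2(M,L)\to H^2(M)\to H^2(L)
\]
then produces a lift $\eta\in H^2(M,L;\R)$ of $\overline{\eta}$. This $\eta$ is the data needed to define the twisted wrapped group $HW^*(L)_{\eta}$ of \ref{Subsection Wrapped twisted theory}, and its image in $H^2(M)$ is our fixed nonzero $\overline{\eta}$.

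\medskip

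\noindent\textbf{Step 2: vanishing of the twisted symplectic cohomology.} By the result of \cite{Ritter} quoted in the introduction, for the simply connected closed manifold $N$ and the nonzero class $\overline{\eta}\in H^2(N)\cong H^2(T^*N)$ one has
\[
SH^*(T^*N)_{\overline{\eta}}\;\cong\;H_{n-*}(\mathcal{L}N)_{\overline{\eta}}\;=\;0.
\]
(The vanishing on the right is the Novikov-type statement from \cite{Ritter} once $\overline{\eta}\neq 0$; simple connectivity of $N$ ensures the transgression $\tau\co H^2(N)\to H^1(\mathcal{L}N)$ is an isomorphism, so no information is lost.)

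\medskip

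\noindent\textbf{Step 3: vanishing of $HW^*(L)_{\eta}$ and the chord count.} Theorem \ref{Theorem SH=0 implies HW=0} states precisely that $SH^*(M)_{\overline{\eta}}=0$ forces $HW^*(L)_{\eta}=0$, via the module structure $\mathcal{W}_D$: the unit $1\in HW^*(L)_{\eta}$ satisfies $1=e_M\cdot 1 = 0\cdot 1 =0$, hence the whole ring vanishes. Finally, Theorem \ref{Theorem Vanishing of HW implies Arnol'd Chord Conj} applies to the twisted vanishing $HW^*(L)_{\eta}=0$: the absence of Reeb chords would give $HW^*(L)_\eta\cong H^*(L)\otimes\Lambda\neq 0$, a contradiction, so the chord conjecture holds; and for generic contact form the long exact sequence of Theorem \ref{Theorem HW+} in the twisted form gives an isomorphism $HW^*_+(L)_\eta\cong H^{*+1}(L)\otimes\Lambda$, yielding at least $\mathrm{rank}\,H^*(L)$ Hamiltonian chords, which by Lemma \ref{Lemma Reeb chords that can arise} correspond to at least $\mathrm{rank}\,H^*(L)$ Reeb chords.

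\medskip

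\noindent\textbf{Main obstacle.} The argument is almost a formal chase through the machinery already built up in the paper, so the only substantive point is the existence of a lift $\overline{\eta}\mapsto \eta\in H^2(M,L)$: this is where the hypothesis of non-injectivity is used in its precise form, and without it one could not define the twisted wrapped group at all. Everything else is bookkeeping invoking Theorems \ref{Theorem SH=0 implies HW=0}, \ref{Theorem wrapped twisted TQFT and module}, \ref{Theorem HW+}, \ref{Theorem Vanishing of HW implies Arnol'd Chord Conj} and the cotangent bundle isomorphism from \cite{Ritter}.
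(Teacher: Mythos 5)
Your proposal is correct and follows essentially the same route as the paper's proof: pick $\overline{\eta}\neq 0$ in the kernel of $H^2(T^*N)\to H^2(L)$, lift it to $\eta\in H^2(T^*N,L)$, use the Novikov vanishing $SH^*(T^*N)_{\overline{\eta}}=0$ of Corollary~\ref{Corollary Full loopspace homology vanishes} together with Theorem~\ref{Theorem SH=0 implies HW=0} to get $HW^*(L)_\eta=0$, and conclude by Theorem~\ref{Theorem Vanishing of HW implies Arnol'd Chord Conj}. The only difference is that you spell out the long-exact-sequence argument for the existence of the lift $\eta$ and re-derive the chord count from Theorem~\ref{Theorem HW+}, both of which the paper leaves implicit inside the theorems it cites.
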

\begin{proof}
The non-injectivity implies the existence of an $\eta \in
H^2(T^*N,L)$ with $\overline{\eta}\neq 0 \in H^2(T^*N)$. Combining
Theorem \ref{Theorem SH=0 implies HW=0} and the vanishing
$SH^*(T^*N;\Z/2)_{\overline{\eta}}=0$ (see \ref{Subsection Vanishing of the
Novikov homology of the free loopspace}) we deduce that
$HW^*(L;\Z/2)_{\eta}=0$. Thus the claim follows by Theorem \ref{Theorem
Vanishing of HW implies Arnol'd Chord Conj}.
\end{proof}
Theorem \ref{Theorem Arnold Conj application 1} applies for
example for $L$ a conormal bundle to a submanifold $K\subset N$
such that $H^2(N)\to H^2(K)$ has kernel (Example \ref{Example
conormal bdle}).

Theorem \ref{Theorem Arnold Conj application 1} also holds after attaching subcritical handles to $DT^*N$, since
Cieliebak \cite{Cieliebak} proved that $SH^*$ does not change for action reasons (so this also holds for twisted $SH^*$).
\begin{theorem}
For any ALE space (see \ref{Introduction on Arnold conjecture}) the chord
conjecture holds for any $\partial L$ and generically there are at least
$\textrm{rank}\; H^*(L)$ Reeb chords.
\end{theorem}
\begin{proof}
By \cite{Ritter2}, $SH^*(M)_{\overline{\eta}}=0$ for generic
$\overline{\eta} \in H^2(M)$. Now $L$ is a $2$-manifold with
boundary so $H^2(L)=0$, so $\overline{\eta}$ lifts to an $\eta \in
H^2(M,L)$. Now see the proof of Theorem \ref{Theorem Arnold Conj
application 1}.
\end{proof}
%
%
%
\section{Exact contact hypersurfaces}
\label{Section Applicatoin 2 Exact contact hypersurf}
%
\subsection{Exact contact hypersurfaces}
\label{Subsection Exact embeddings}
%
The definition and study of exact contact hypersurfaces first appeared in Cieliebak-Frauenfelder \cite{Cieliebak-Frauenfelder}. We briefly recall the definition.

\begin{definition}
An exact contact hypersurface is an embedding
$i:(\Sigma^{2n-1},\xi) \hookrightarrow
(\overline{M}^{2n},d\theta)$ of a closed contact manifold
$(\Sigma,\xi)$ such that for some $1$-form $\alpha \in
\Omega^1(\Sigma)$,
$\xi = \ker \alpha$ and $\alpha-i^*\theta=\textrm{exact}.$
Moreover, we will always assume that $i(\Sigma)$ separates $\overline{M}$
into a compact connected submanifold $W\subset \overline{M}$ and
an unbounded component $\overline{M}\backslash W$.
\end{definition}

\begin{remark}
The separating assumption is automatic if $H_{2n-1}(M)=0$. Indeed, if
$\overline{M}\setminus \Sigma$ is connected, then there is a loop $\gamma$ in
$\overline{M}$ cutting $\Sigma$ once transversely, so the intersection number
$[\gamma]\cdot [\Sigma]$ is non-zero, so $[\Sigma] \in
H_{2n-1}(\overline{M})\cong H_{2n-1}(M)$ is non-trivial, contradiction.

When $H^1(\Sigma)=0$, the condition $\alpha-i^*\theta=\textrm{exact}$
 is equivalent to
$d\alpha=i^*\omega$.
\end{remark}

\begin{example}
Sphere bundles $ST^*N \!\hookrightarrow \!T^*N$ and the boundaries
$ST^*L \!\hookrightarrow\! M$ of the Liouville domains in Example \ref{Example
Weinstein neighbourhood} $($assume $n\geq 2$ for connectedness$)$. The
separating condition for $\Sigma \hookrightarrow T^*N$ is
automatic if $\textrm{dim}(N)=n\geq 2$, since $H_{2n-1}(T^*N)
\cong H_{2n-1}(N) = 0$.
\end{example}

\begin{lemma}\label{Lemma Can assume hypersurface is included}
For the purposes of symplectic cohomology, we can always assume
that an exact contact hypersurface $\Sigma$ bounds a Liouville
subdomain $W$ of $M$.
\end{lemma}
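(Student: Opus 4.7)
The plan is to exploit the exactness $\alpha - i^*\theta = df$ to modify the primitive $\theta$ by a compactly supported exact form, so that after the modification $i^*\theta = \alpha$ holds on the nose and $W$ becomes a Liouville domain with $\partial W = \Sigma$. The modification will be invisible to symplectic cohomology, by the same mechanism as in Lemma~\ref{Lemma can assume Liouville subdomains are included}.

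First, I pick a tubular neighbourhood $U$ of $\Sigma$ in $\overline{M}$ and extend $f\co \Sigma\to\R$ to a smooth function $\tilde f\co \overline{M}\to\R$ which is compactly supported in $U$ (cut off away from $\Sigma$). Set $\theta' = \theta + d\tilde f$. Since $d\tilde f$ is exact, $d\theta' = d\theta = \omega$, so the symplectic form is unchanged; since $\tilde f$ has compact support, $\theta' = \theta$ outside a compact set, so the Liouville field on the cylindrical end of $\overline{M}$ and the class of admissible contact-type almost complex structures are unchanged. Hamiltonian vector fields depend only on $\omega$, so the Floer complexes, continuation maps, and direct limits defining $SH^*(\overline M)$ are literally identical for $\theta$ and $\theta'$. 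Thus we may replace $\theta$ by $\theta'$ throughout without altering $SH^*$, exactly as in Lemma~\ref{Lemma can assume Liouville subdomains are included}.

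Next, by construction $i^*\theta' = i^*\theta + df = \alpha$ is a contact form on $\Sigma$. I then check that the Liouville field $Z'$ defined by $i_{Z'}\omega = \theta'$ is transverse to $\Sigma$: for $v\in T\Sigma$ one has $\omega(Z',v)=\alpha(v)$, and the contact condition forces $\alpha$ to be non-zero on the characteristic line of $\omega|_{T\Sigma}$ (which is the span of the Reeb vector field $\mathcal{R}$ of $\alpha$), so $Z'$ has a non-vanishing normal component along $\Sigma$. The separating hypothesis then pins down the direction: $Z'=Z$ points outward at infinity, $W$ is the compact component, and $Z'$ is transverse along $\Sigma$, which together force $Z'$ to point outward from $W$ along $\partial W$. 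Therefore $(W,\theta'|_W)$ is a Liouville domain with boundary $\Sigma$ and contact form $\alpha$, and the inclusion $W\hookrightarrow\overline M$ is tautologically a Liouville subdomain in the sense of \ref{Subsection Liouville subdomains} (with $\rho=0$).

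The main obstacle is not the modification of $\theta$, which is formal, but verifying in the last step that the contact orientation of $\alpha$ on $\Sigma$ agrees with the boundary orientation of $\Sigma$ as $\partial W$, so that $Z'$ actually points outward rather than inward. This is where the separating hypothesis and the compactness of $W$ are essential: without them one could in principle have $Z'$ pointing into $W$, in which case $W$ would be a concave filling rather than a Liouville subdomain and the lemma would fail.
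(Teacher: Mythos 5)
Your approach is the same as the paper's: extend the primitive of $\alpha-i^*\theta$ to a compactly supported function on $\overline{M}$, replace $\theta$ by $\theta'=\theta+df$ without changing $SH^*$ (as in Lemma~\ref{Lemma can assume Liouville subdomains are included}), and then show the resulting Liouville field $Z'$ is transverse to $\Sigma$ and points outward. The transversality step is fine and is a valid restatement of the paper's: if $Z'$ were tangent to $\Sigma$, it would pair to zero with the Reeb field $\mathcal{R}$ under $\omega|_{T\Sigma}=d\alpha$, but $\omega(Z',\mathcal{R})=\theta'(\mathcal{R})=\alpha(\mathcal{R})=1$; this carries the same information as the paper's observation that $i_{Z'}\omega^n|_{T\Sigma}=n\,\alpha\wedge(d\alpha)^{n-1}\neq 0$.

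The gap is in the final step. You assert that the separating hypothesis, the compactness of $W$, the transversality of $Z'$ along $\Sigma$, and the outward direction of $Z'$ at infinity ``together force $Z'$ to point outward from $W$ along $\partial W$,'' but you give no mechanism, and those facts alone do not determine the sign: a smooth vector field agreeing with $\partial_r$ at infinity and transverse to $\Sigma$ can perfectly well point into the compact component. What rules this out is the Liouville identity $L_{Z'}\omega^n=n\,\omega^n$. If $Z'$ pointed strictly inward along $\Sigma=\partial W$, its forward flow $\varphi_t$ for $t>0$ would carry the compact region $W$ into its own interior, so $\mathrm{vol}(\varphi_t(W))\leq\mathrm{vol}(W)$; but $\varphi_t^*\omega^n=e^{nt}\omega^n$ gives $\mathrm{vol}(\varphi_t(W))=e^{nt}\mathrm{vol}(W)>\mathrm{vol}(W)$, a contradiction. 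This volume-expansion argument is exactly the paper's one-line justification (``since the flow of $Z$ expands volumes, $Z$ points strictly outward along $\Sigma$''), and it is precisely why compactness of $W$ matters. You correctly flag the direction of $Z'$ as the main obstacle, but then you assume the conclusion instead of supplying this argument.
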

\begin{proof}
Assume $i(\Sigma) \subset M$ by redefining $M$ to be $M\cup
[1,R_0]\times \partial M$ (Theorem \ref{Theorem Properties of restriction map}). Identify $i(\Sigma)=\Sigma$. Use a bump
function to extend $\alpha$ to a neighbourhood of $\Sigma\subset
\overline{M}$. Near $\Sigma$, $\alpha-\theta=df$ for a function
$f$ supported near $\Sigma$. Then replace $\theta$ by $\theta+df$:
$SH^*(M)$ has not changed (compare Lemma \ref{Lemma can assume
Liouville subdomains are included}). Thus we can assume $W\subset
M$ with $\alpha=\theta|_{\Sigma}$ a contact form. Define $Z$ by
$\omega(Z,\cdot)=\theta$, so $\omega(Z,\cdot)|_{\Sigma}=\alpha$.
Thus, $\omega^n(Z,\ldots)|_{\Sigma} = \alpha \wedge
(d\alpha)^{n-1} \neq 0$ pointwise (contact condition) and since
the flow of $Z$ expands volumes, $Z$ must be pointing strictly outwards along
$\Sigma$. So $W\subset M$ is Liouville.
\end{proof}
%
\subsection{Stretching-of-the-neck argument}
\label{Subsection Stretching of the neck}
%
%
(Bourgeois-Oancea
\cite[Sec.5.2]{Bourgeois-Oancea}) Consider an isolated Floer
trajectory in $\overline{M}$ joining orbits lying in the collar, assuming $J$ is of contact type on the collar.
Consider what happens to this Floer trajectory as you stretch a
neighbourhood of $\partial M$ (more precisely: you insert a collar
$[R_0,1] \times
\partial M$ in between $M$ and $[1,\infty)\times \partial
M$, and you rescale $\omega,H$ on $M$ by $1/R_0$ so that the data
glues correctly). By a compactness argument, this $1$-family of
Floer trajectories will converge in the limit to a Floer cylinder
with punctures 
$$(\R\times S^1)\setminus \{ \textrm{punctures} \} \to \partial M \times
\R$$
and the map rescaled near the punctures converges to Reeb orbits,
each of which is ``capped off" by an isolated holomorphic plane
$\C\to \overline{M}$ (converging to the Reeb orbit at infinity).
The proof that the only holomorphic curves capping off the Reeb
orbits in $\overline{M}$ are planes (and not
holomorphic cylinders, for example) is a consequence of a subtle action
argument \cite[Sec.5.2, Proof of Prop.5, Step 1]{Bourgeois-Oancea}
which shows that the limit curve must have a \emph{connected}
component containing both Hamiltonian orbits (the fixed ends of
the Floer trajectory). A dimension count and
the fact that cylinders have genus $0$ then proves that the above are the
only limits.

\begin{remark}[Transversality]\label{Remark transversality for
neck stretching} In general, this argument would require knowing
transversality for symplectic field theory \cite[Remark 9,
sec.3.1]{Bourgeois-Oancea}. In our applications this is not
necessary, as explained to us by Oancea: we will always assume
that the virtual dimension $(2n-3)-|x|$ of the moduli space of
holomorphic planes $\C \to \overline{M}$ converging to a Reeb
orbit $x$ at infinity is $\geq 1$, therefore the above punctured
Floer cylinders do not have punctures. Indeed, if they had
punctures, then since the virtual dimension of the planes is $\geq
1$, the virtual dimension of the component containing the two
Hamiltonian orbits (modulo the $\R$-translation action) would be
negative. But for that component, transversality is guaranteed by
a time-dependent perturbation of the almost complex structure just
like for non-punctured Floer cylinders. Thus that component does
not exist, which is a contradiction.
\end{remark}
%
\subsection{Independence of the filling}
\label{Subsection Independence of the filling}
%
%
\begin{theorem}[Bourgeois-Oancea \cite{Bourgeois-Oancea},
Cieliebak-Frauenfelder-Oancea {\cite[Theorem 1.14]{Cieliebak-Frauenfelder-Oancea}}]\label{Theorem no holomorphic
planes}
\indent Let $W$ be a Liouville domain with
$c_1(W)|_{\pi_2(W)}=0$ such that all closed Reeb orbits $x$ in
$\partial W$ which are contractible in $W$ have $|x|\!<\!2n-3$.
Then for small $C>0$, all isolated Floer trajectories in
$\overline{W}$ connecting orbits in the collar lie in $R\geq C$.

For a disc cotangent bundle $W\!=\!DT^*L$ these conditions are
satisfied if $\dim L\!\geq\! 4$.
\end{theorem}
\begin{proof}
If no such $C$ existed, the Section \ref{Subsection Stretching of the neck}
implies the existence of an isolated punctured Floer cylinder
capped off by at least one holomorphic plane in $\overline{W}$.
But $(2n-3)-|x|\geq 1$, so Remark \ref{Remark transversality for
neck stretching} implies that the cylinder cannot have punctures,
which is a contradiction.

For $W^{2n}\!=\! DT^*L$, $c_1(W)\!=\!0$. Our grading convention is
$SH^*(T^*L;\Z/2)\cong H_{n-*}(\mathcal{L}L;\Z/2)$, so
$|x|=n-\textrm{index}(\gamma_x)$ where $\gamma_x$ is the
closed geodesic (Reeb orbit) corresponding to $x$. So
$(2n-3)-|x|= \textrm{index}(\gamma_x) + \dim L - 3$. The claim
follows since $\textrm{index}(\gamma_x)\geq 0$.
\end{proof}
\begin{definition}[Boundary symplectic cohomology]
Let $(\Sigma,\alpha)$ be a contact manifold admitting a filling\footnote{``Filling'' here means that there is an exact contact hypersurface $\Sigma \hookrightarrow W$ mapping onto $\partial W$.} by a Liouville domain $W$ with
$c_1(W)|_{\pi_2(W)}=0$ such that all closed Reeb orbits $x$ in $\Sigma$ which are contractible in $W$ have $|x|<2n-3$. For $\eta\in H^2(W)$, define
$$
BSH^*(\Sigma)_{\eta|_{\Sigma}} = SH^*_+(W)_{\eta} \quad (\textrm{constructed in Section } \ref{Section SH+}).
$$
\end{definition}

\begin{corollary}\label{Corollary Independence of filling}
\label{Corollary exact contact hyp in cot bdles}\strut \begin{enumerate}
\item $BSH^*(\Sigma)_{\eta|_{\Sigma}}$ is independent of the choice of filling $W$ in the above Definition. In
particular, it only depends on the restriction of $\eta$ to $H^2(\partial
W)$.

\item Let $i:\Sigma \hookrightarrow \overline{M}$ be an exact contact
hypersurface with $c_1(M)=0$ such that all closed Reeb orbits $x$ in
$\Sigma$ have $|x|<2n-3$ in $\overline{M}$. Then
$BSH^*(\Sigma)_{\eta|_{\Sigma}}\cong SH^*_+(W)_{\eta}$ for the
Liouville subdomain $W$ bounding $i(\Sigma)$ in $\overline{M}$
$($Lemma \ref{Lemma Can assume hypersurface is included}$)$.
\end{enumerate}
\end{corollary}
\begin{proof}
1) We can choose the same Hamiltonian and almost complex structure on
$R\!\geq\! C$ for two such fillings, so the Floer equations are
identical. Twistings are determined by $\eta$ restricted to
$R\!\geq\! C$, but this region is homotopy equivalent to $\partial
W$.

2) $c_1(M)\!=\!0$, so $c_1(W)\!=\!0$ by naturality, so gradings for $W,M$ can both
be calculated with respect to the trivialization of the canonical
bundle of $\overline{M}$ (\ref{Subsection Maslov index and
Conley-Zehnder index}). Now apply 1).
\end{proof}
%
%
\subsection{Obstructions to
exact $ST^*L \hookrightarrow T^*N$, simply connected
case}\!\!\label{Subsection Symplectic cohomology obstructions
simply connected case}
\strut\\
\emph{Convention: In sections \ref{Subsection Symplectic cohomology obstructions
simply connected case}-\ref{Subsection SH
obstructions in general}, ordinary (co)homology is computed over $\Z$ coefficients.}

\begin{theorem}\label{Theorem exact contact hypersurf result simply conn case}
Let $L,N$ be closed simply connected $n$-manifolds, $n\geq 4$. For
any exact contact hypersurface $ST^*L \hookrightarrow T^*N$, the
following hold
\begin{enumerate}
\item $H^2(N) \to H^2(L)$ is injective;

\item $\pi_2(L) \to \pi_2(N)$ has finite cokernel;

\item if $H^2(N)\neq 0$ then\footnote{\label{Footnote technical}\emph{Technical Remark:} $SH^*(T^*N)_{\eta} \cong H_{n-*}(\mathcal{L}N)_{\eta}$ is known to hold only for $\mathrm{char}\,\K=2$ (see Section \ref{Section Application Loop space homology and string topology}). This does not affect (1) and (2): those arise from the integral cohomology classes which define the parallel transport maps of the local systems, so it does not matter if we take $\K=\Z/2$ in the definition of $\Lambda$. However, (3) heavily involves $\Lambda$ in the last lines of the proof. So, strictly speaking, our argument only proves $H_*(L;\K) \cong H_*(W;\K)$ for $\mathrm{char}\,\K=2$. However, assuming the predictions explained at the start of Section \ref{Section Application Loop space homology and string topology}, the argument holds for any $\K$: the twisted $SH^*(W)$ which vanishes in the proof is in fact the one twisted not just by $\tau(\eta|_W)$ but also by $\tau(w_2(TN)|_W)$. But under the isomorphism $SH^*_+(W)_{\textrm{twisted}} \cong H^{*+1}(W)\otimes \Lambda$ the twisting becomes irrelevant since the system $c^*\underline{\Lambda}_{\tau(w_2(TN)|_W)}$ is trivial on $W$ (see \ref{Subsection Transgression}).
} 
$H_*(L) \cong H_*(W)$ (where $W$ is the filling of $ST^*L\subset T^*N$).
\end{enumerate}
\end{theorem}
\begin{remark}\label{Remark what the maps are L W N}
The maps in $(1)$, $(2)$, $(3)$ are defined by the composites
$\strut\\ \strut\qquad\pi_2(L)
\cong \pi_2(ST^*L) \to \pi_2(W) \to \pi_2(T^*N) \cong
\pi_2(N),\\
\strut\qquad H^2(N)\cong H^2(T^*N) \to H^2(W) \to H^2(ST^*L) \cong H^2(L).\\
$
Using $\textrm{dim}(L)\geq 4$, we identify $\pi_1(L),\pi_2(L),
H_1(L), H^2(L)$ with the analogous groups for $\partial W =ST^*L$
via the long exact sequence in homotopy for $S^{n-1}\to ST^*L\to
L$ and the Gysin sequence $H^*(ST^*L)\to H^{*-(n-1)}(L) \to
H^{*+1}(L) \to H^{*+1}(ST^*L)$.
\end{remark}
\begin{proof}[Proof of Theorem \ref{Theorem exact contact hypersurf result simply conn case}] Suppose by contradiction that $\eta\neq 0\in H^2(N)$ vanishes in
$H^2(L)\cong H^2(\partial W)$. By Corollary \ref{Corollary Full
loopspace homology vanishes}, $SH^*(T^*N)_{\eta}=0$. By Theorem
\ref{Theorem Vanishing criteria}, $SH^*(W)_{\eta|W}=0$. By Lemma
\ref{Lemma LES for SH+}, $SH^*_+(W)_{\eta|W}\cong
H^{*+1}(W)\otimes \Lambda$ has finite rank. By Corollary
\ref{Corollary exact contact hyp in cot bdles},
$SH^*_+(W)_{\eta|W}=BSH^*(ST^*L)_{\eta|\partial W}$. Using $\eta|_{\partial W}=0 \in H^2(\partial
W)$ and Corollary \ref{Corollary LES for SH+ of T*M},
$$BSH^*(ST^*L)_{\eta|\partial W} \cong BSH^*(ST^*L)\otimes \Lambda \cong H_{n-*}(\mathcal{L}L,L)\otimes \Lambda.$$
This has infinite rank as $H_*(\mathcal{L} L)$ does
\cite{Sullivan}, using $L$ is closed with $\pi_1(L)=1$. So
$SH^*_+(W)_{\eta|W}$ has both finite and infinite rank.
Contradiction, proving (1).

We have in fact just proved more, since the isomorphism class of the local system used to twist $SH^*$ actually depends on the transgressed classes $\tau\eta$ and $\tau (\eta|_{\partial W})$ (recall \ref{Subsection Transgression}). So we actually proved that if $\tau\eta \neq 0$ in
$H^1(\mathcal{L}N)$, then $\tau(\eta|_{\partial W}) \neq 0$ in
$H^1(\mathcal{L}\partial W)  \cong   H^1(\mathcal{L} L)$.

In general, for simply connected manifolds $X$, the transgression $\tau: H^2(X) \to H^1(\mathcal{L}X)$ is an isomorphism and one can identify the groups $H^1(\mathcal{L}X) \cong \textrm{Hom}(H_1(\mathcal{L}X),\Z) \cong \textrm{Hom}(\pi_2(X),\Z)$. So, using this general fact for $X=N$ and $X=L$, the above actually proved that
$$
\textrm{Hom}(\pi_2(N),\Z) \to \textrm{Hom}(\pi_2(L),\Z),\;\;
\tau\eta \mapsto \tau(\eta|_{\partial W})
$$
must be injective. Dualizing this statement yields (2).

If $H^2(N)\neq 0$ then pick an $\eta\neq 0\in H^2(N)$. By Corollary \ref{Corollary Full
loopspace homology vanishes},
$SH^*(T^*N)_{\eta}=0$ so $SH^*(W)_{\eta|W}=0$. By $(1)$ it
restricts to $\eta|_{\partial W}\neq 0 \in H^2(ST^*L)\cong
H^2(T^*L)$. By Corollary \ref{Corollary Full loopspace homology
vanishes}, $SH^*(T^*L)_{\eta|\partial W}=0$. Using Lemma
\ref{Lemma LES for SH+} and Corollary \ref{Corollary exact
contact hyp in cot bdles},
$$
H^{*+1}(W) \otimes \Lambda \cong
SH^*_{+}(W)_{\eta|W} \cong
BSH^*(ST^*L)_{\eta|_{\partial W}} \cong
SH^*_{+}(DT^*L)_{\eta|\partial W} \cong
H^{*+1}(L) \otimes \Lambda.
$$
Passing to homology by universal coefficients, proves (3) over $\K$ coefficients. In fact it also holds over $\Z$ coefficients by \ref{Subsection Choice of coefficients} (defining $\Lambda$ over $\Z$ instead of over
$\K$).
\end{proof}
%
%
%
\subsection{Obstructions to
exact $ST^*L \hookrightarrow T^*N$, general case}\label{Subsection
Symplectic cohomology obstructions}

The key to Theorem \ref{Theorem exact contact hypersurf result
simply conn case} was that $H_*(\mathcal{L}L)$ had infinite rank.
This holds for closed $L$ with $\pi_1(L)=1$, since Sullivan
\cite{Sullivan} showed $H_*(\mathcal{L} L;\Q)$ is infinite
dimensional via rational homotopy theory. Sullivan's proof also
works for nilpotent spaces. We do not know when the result holds
for finite $\pi_1(L)$ (we remark that $L$ need not be nilpotent: $\R P^2$ has
$\pi_1=\Z/2$ but does not act nilpotently on $\pi_2=\Z$). When
$\pi_1(L)$ has infinitely many conjugacy classes (for example
infinite abelian $\pi_1(L)$) the result holds because the
connected components of $\mathcal{L}L$ are indexed by these
conjugacy classes, so $\textrm{rank}\, H_0(\mathcal{L}L)=\infty$.

\noindent
\emph{Convention: In sections \ref{Subsection Symplectic cohomology obstructions
simply connected case}-\ref{Subsection SH
obstructions in general}, ordinary (co)homology is computed over $\Z$ coefficients.}\\
\emph{Notation: Denote $\mathcal{L}_0 L\subset \mathcal{L}L$ is the subspace of contractible loops in $L$.}

\begin{theorem}\label{Theorem exact contact hypersurf result}
Let $L,N$ be closed $n$-manifolds, $n\geq 4$, with $N$ of finite
type $($\ref{Subsection Vanishing of the Novikov homology of the
free loopspace}$)$. Let $ST^*L \hookrightarrow T^*N$ be an exact
contact hypersurface. 

\noindent If $\mathrm{rank}\, H_*(\mathcal{L}_0
L)=\infty$ then:
\begin{enumerate}
\item $\pi_2(L) \to \pi_2(N)$ has finite cokernel.

\item if $N$ is simply connected, $H^2(N) \to H^2(L)$ is
injective;

\item $H^2(\widetilde{N}) \to H^2(\widetilde{L})$ and
$H^2(N)\setminus\ker \pi^* \to H^2(L)$ are injective;\\
$($Here $\pi\!:\!\widetilde{N}\!\to\! N$ is the universal
cover, inducing $\pi^* \! : \! H^2(N) \! \to
\! H^2(\widetilde{N}))$

\item if $L$ has finite type and $\textrm{rank}\,(\pi_2 N)\neq 0$,
then\footnote{See the technical remark in the footnote \ref{Footnote technical} to Theorem \ref{Theorem exact contact hypersurf result simply conn case}.} $H_*(L) \cong H_*(W)$ where $W$ is the filling of $ST^*L\subset T^*N$.
\end{enumerate}
If $\mathrm{rank}\, H_*(\mathcal{L} L)=\infty$ then: $\mathrm{(1), (2), (3)}$ hold after replacing
$\pi_2(L)$, $H^2(\widetilde{L})$
 by $\pi_2(W)$, $H^2(\widetilde{W})$ respectively. Also if any one of $\mathrm{(1)-(4)}$ failed, then
 Corollary \ref{Corollary exact contact hypersurf stuff} $\mathrm{(1)-(4)}$
 all hold and $\ker \pi^*=H^2(N)$
 $\mathrm{(}$so the image of the Hurewicz homomorphism
  $\pi_2(M)\to H_2(M)$ is torsion$\mathrm{)}$.
\end{theorem}
\begin{remark}\label{Remark about STL theorem}
The distinction $(2),(3)$ is because we need $\tau\eta  \! \neq \!
0  \! \in  \! H^1(\mathcal{L}_0 N)$ in Corollary \ref{Corollary
Full loopspace homology vanishes}: $\tau  \! : \! H^2(N) \!
\to \!  H^1(\mathcal{L}_0 N)$ is an isomorphism if $\pi_1(N) \! =
\! 1$, but otherwise has kernel $\ker \pi^*$. We identify $H^2(\widetilde{N})\cong
\mathrm{Hom}(\pi_2(N),\Z)$ with $\mathrm{im}\,(\tau) \subset
H^1(\mathcal{L}_0 N)$ (\ref{Subsection Twisting by $2$-forms
from the universal cover}). The $2^{\textrm{nd}}$ part of $(3)$ and
$(2)$ can be proved as in Theorem \ref{Theorem exact contact
hypersurf result simply conn case} using $\textrm{rank}\,
H_*(\mathcal{L}L)=\infty$. Note: $(1)$ implies $(2)$,$(3)$.
The $1^{\textrm{st}}$ part of $(3)$ is dual to $(1)$, the
$2^{\textrm{nd}}$ part follows by the diagram
$$
\xymatrix@R=10pt@C=12pt{ H^2(N) \ar@{->}[r]^-{\tau} \ar@{->}[d] &
\mathrm{Hom}(\pi_2(N),\Z)\subset H^1(\mathcal{L}_0 N) \ar@{->}[d] \\
H^2(L) \ar@{->}[r]^-{\tau} & \mathrm{Hom}(\pi_2(L),\Z)\subset
H^1(\mathcal{L}_0 L) }
$$
\end{remark}
\begin{proof}
To prove (1) we twist by $2$-forms $\widetilde{\eta}\neq 0 \in
H^2(\widetilde{N})$ on the universal cover of $N$ as explained in
\ref{Subsection Twisting by $2$-forms from the universal cover}.
By transgression, these give rise to $1$-forms on the space
$\mathcal{L}_0 N$ of contractible loops in $N$. By Theorem
\ref{Theorem twisting by widetilde eta} there is a TQFT structure
on the subgroup $SH^*_0(M)_{\widetilde{\eta}}$ of
$SH^*(M)_{\widetilde{\eta}}$ generated by the contractible orbits.

Pass to universal covers and use transgressions (\ref{Subsection
Twisting by $2$-forms from the universal cover}) to identify the
two rows:
$$
\xymatrix@R=6pt@C=12pt{%
H^2(\widetilde{N}) \ar@{->}[r] \ar@{=}[d] & H^2(\widetilde{W})
\ar@{->}[r] \ar@{=}[d] & H^2(\widetilde{L})\ar@{=}[d]\\
\textrm{Hom}(\pi_2 N, \Z) \ar@{->}[r] & \textrm{Hom}(\pi_2 W, \Z)
\ar@{->}[r] & \textrm{Hom}(\pi_2 L, \Z) }
$$
\textbf{Case 1:} $\textrm{rank}\, H_*(\mathcal{L}_0 L)=\infty$.
Suppose by contradiction that $\widetilde{\eta}$ vanishes under
this composition: $\widetilde{\eta}|_{\widetilde{L}}=0 \in
H^2(\widetilde{L})$. By \cite{Ritter},
$SH^*_0(T^*N)_{\widetilde{\eta}} \cong H_{n-*}(\mathcal{L}_0
N)_{\widetilde{\eta}}=0$ since $N$ has finite type. As before,
$SH^*_{0}(W)_{\widetilde{\eta}}=0$, so
$SH^*_{+,0}(W)_{\widetilde{\eta}}= H^{*+1}(W)\otimes \Lambda$ has
finite rank. But, by independence of the filling, this group is
isomorphic to
$$SH^*_{+,0}(DT^*L)_{\widetilde{\eta}|{\widetilde{L}}}=SH^*_{+,0}(DT^*L)_0 \cong H_{n-*}(\mathcal{L}_0 L,L)\otimes \Lambda,$$
which has infinite rank, contradiction. So the composition is
injective, so dualizing gives (1). By Remark \ref{Remark about STL
theorem} it remains to prove (4). If $\textrm{rank}\, \pi_2(N)\neq
0$, pick $\tau\widetilde{\eta}\neq 0$. By $(2)$, also
$\tau\eta|_{\widetilde{L}} \neq 0$, so since $L$ has finite type,
$SH^*_0(T^*L)_{\widetilde{\eta}|{\widetilde{L}}}=0$. Thus
$$
H^{*+1}(W) \otimes \Lambda \cong
SH^*_{+,0}(W)_{\widetilde{\eta}} \cong
BSH_0^*(ST^*L)_{\widetilde{\eta}} \cong
SH^*_{+,0}(DT^*L)_{\widetilde{\eta}} \cong
H^{*+1}(L) \otimes \Lambda.
$$
Passing to homology by universal coefficients and using
\ref{Subsection Choice of coefficients}, proves (4).

\textbf{Case 2:} $\textrm{rank}\, H_*(\mathcal{L} L)=\infty$.
Assume by contradiction that $\widetilde{\eta}|_{\widetilde{W}}=0
\in H^2(\widetilde{W})$. As before,
$SH^*_{0}(W)_{\widetilde{\eta}}=0$. Since
$\widetilde{\eta}|_{\widetilde{W}}=0$, this implies
$SH^*_{0}(W)=0$. By Theorem \ref{Theorem Vanishing criteria},
$SH^*(W)=0$ since the unit lies in $SH^*_0(W)=0$. By Lemma
\ref{Lemma LES for SH+}, $SH^*_+(W)\cong H^{*+1}(W)$ has finite
rank. But by Corollary \ref{Corollary Independence of filling} and
Corollary \ref{Corollary LES for SH+ of T*M},
$$SH^*_+(W) \cong BSH^*(ST^*L) \cong H_{n-*}(\mathcal{L}L,L),$$
which has infinite rank since $H_*(\mathcal{L}L)$ does.
Contradiction. So $H^2(\widetilde{N}) \to H^2(\widetilde{W})$ is
injective and dually $\pi_2(W)\to \pi_2(N)$ has finite cokernel.
\end{proof}
%
%
%
%
%
\subsection{Pathological $L$}\label{Subsection Pathological L}
%
Write $[S^1,L]$ for the set of free homotopy classes of maps $S^1
\to L$, equivalently: it is the set of conjugacy classes of
$\pi_1(L)$.

Call $L$ \emph{pathological} if $[S^1,L]\neq 1$ is a finite set
and $H_*(\mathcal{L}L)$ has finite rank. This is the only case
when Theorem \ref{Theorem exact contact hypersurf result} may not
apply.

\begin{corollary}\label{Corollary exact contact hypersurf stuff}
Let $L,N$ be closed $n$-manifolds, $n\geq 4$, with $N$ of finite
type and $L$ pathological. Let $ST^*L\hookrightarrow T^*N$ be an
exact contact hypersurface. Then either the results of Theorem
\ref{Theorem exact contact hypersurf result} all hold, or else the
following must hold:
\begin{enumerate}
\item $H_1(L)\to H_1(W)$ and $[S^1,L] \to [S^1,W]$ both vanish;

\item $\textrm{rank } H^{n+1}(W)=\# [S^1,L]-1 \geq 1$;

\item if $L$ is of finite type then $H^2(\widetilde{N}) \to
H^2(\widetilde{L})$, $H^2(N)\setminus\ker\pi^*\to H^2(L)$ vanish;
\item if $L$ is of finite type then the image of $\pi_2(L) \to
\pi_2(N)$ is torsion.
\end{enumerate}
In particular, if $W$ is Stein then the results of Theorem \ref{Theorem exact contact hypersurf
result} hold.
\end{corollary}

\begin{proof}
For the last claim: $H^{n+1}(W) \neq 0$ by (2), but
$H^{n+1}(W)\!=\!0$ for Stein $W^{2n}$.

The connected components $\mathcal{L}_c L$ of $\mathcal{L}L$ are indexed by
$c\in [S^1,L]$. Filter $SH^*(DT^*L)$ by $[S^1,L]$ as in \ref{Subsection
TQFT is compatible with filtrations}, and consider $i:[S^1,L]\cong
[S^1,ST^*L] \to [S^1,W]$.

Suppose Theorem \ref{Theorem exact contact hypersurf result}
fails. By Remark \ref{Remark about STL theorem} some
$\widetilde{\eta}\!\neq 0\!\in H^2(\widetilde{N})$ has
$\widetilde{\eta}|_{\widetilde{L}}\!=\! 0\!\in\!
H^2(\widetilde{L})$. By the proof of Theorem \ref{Theorem exact
contact hypersurf result} $SH^*(W)=0$ and, using Lemma
\ref{Lemma LES for SH+},
$$
\begin{array}{ll}
H_0(\mathcal{L}L,L) \cong SH^n_+(DT^*L) \cong
SH^n_+(W)\cong\displaystyle\!\!\!\!\bigoplus_{w\in [S^1,W]}
\!\!\!\!SH^n_{+,w}(W) \cong H^{n+1}(W),
\\[3mm]
H_0(\mathcal{L}L,L) \cong \!\! \displaystyle
\!\!\!\!\bigoplus_{0\neq c \in [S^1,L]}\!\!\!\!
H_0(\mathcal{L}_cL) \displaystyle \cong \!\! \!\!\bigoplus_{0\neq
c \in [S^1,L]}\!\!\!\! SH^n_{+,c}(DT^*L) \hookrightarrow \!\!\!\!
\bigoplus_{0\neq c \in [S^1,L]}\!\!\!\! SH^n_{+,i(c)}(W).
\end{array}
$$

Now $H_0(\mathcal{L}_c L) = \Z$, so the two lines imply
$\textrm{rank } H^{n+1}(W)=\# [S^1,L] -1$. Also,
$SH^n_{+,w}(W)\cong SH^n_w(W)=0$ for $0\neq w \in [S^1,W]$. So,
since the second line respects summands, $i$ must vanish (and
similarly if we filter by $H_1$, \ref{Subsection TQFT is
compatible with filtrations}), proving (1),(2).

Suppose $\widetilde{\eta} \!\in\! H^2(\widetilde{N})$ has
$\widetilde{\eta}|_{\widetilde{L}} \!\neq 0 \!\in\!
H^2(\widetilde{L})$. Then $SH^*_0(T^*N)_{\widetilde{\eta}}\!=\!0$,
$SH^*_0(W)_{\widetilde{\eta}}\!=\!0$,
$$
H^{*+1}(W) \otimes \Lambda \cong
SH^*_{+,0}(W)_{\widetilde{\eta}} \cong
BSH^*_0(ST^*L)_{\widetilde{\eta}} \cong
SH^*_{+,0}(DT^*L)_{\widetilde{\eta}} \cong
H_{n-*}(\mathcal{L}_0 L,L)_{\widetilde{\eta}}.
$$
But for finite type $L$, $H_*(\mathcal{L}_0
L)_{\widetilde{\eta}}=0$ since $\widetilde{\eta}\neq 0 \in
H^2(\widetilde{L})$. Thus, $H_{n-*}(\mathcal{L}_0
L,L)_{\widetilde{\eta}} \cong H_{n-*-1}(L)$ by Corollary
\ref{Corollary LES for SH+ of T*M}. So $H^{n+1}(W)\cong
H_{-1}(L)=0$ contradicts (2). Thus $H^2(\widetilde{N}) \to
H^2(\widetilde{L})$ vanishes, proving the $1^{\textrm{st}}$ part
of $(3)$. Taking transgressions and dualizing implies $(4)$, which
implies the $2^{\textrm{nd}}$ part of $(3)$ (Remark \ref{Remark
about STL theorem}).
\end{proof}
%
%
%
\subsection{Obstructions to
exact contact embeddings}\label{Subsection SH
obstructions in general}
%
The only input about $\overline{M}=T^*N$ that we used in the proofs of Theorem \ref{Theorem exact contact hypersurf result} and Corollary \ref{Corollary exact contact hypersurf stuff} was the vanishing theorem $SH^*(M)_{\eta}=0$ from Corollary \ref{Corollary Full loopspace homology vanishes}. Thus, the same proofs show more generally the following:
\begin{theorem}\label{Theorem Obstructions to exact hyp} Let $M^{2n}$ be a Liouville domain with $c_1(M)=0$, $n\geq 4$.
Let $L^n$ be a non-pathological closed manifold admitting an exact
contact embedding $i:ST^*L \hookrightarrow \overline{M}$. Then
\begin{enumerate}
\item If $SH^*(M)_{\eta}=0$ then $i^*\eta\neq 0 \in H^2(L)$ and
$\tau(i^*\eta)\neq 0 \in H^1(\mathcal{L}_0 L)$;

\item If $SH^*(M)_{\widetilde{\eta}}=0$ then $i^*\widetilde{\eta}\neq 0
\in H^2(\widetilde{L})$;

\item If $L$ is of finite type and $SH^*(M)_{\widetilde{\eta}}=0$,
then\footnote{See the technical remark in the footnote \ref{Footnote technical} to Theorem \ref{Theorem exact contact hypersurf result simply conn case}.} $H_*(L) \cong H_*(W)$ where $W$ is the filling of $ST^*L\subset \overline{M}$.
\end{enumerate}
For pathological $L$: if $(1)\!$ or $\!(2)$ fails, then Corollary
\ref{Corollary exact contact hypersurf stuff}(1)(2) hold and $W$ is
not Stein. \\ For finite type pathological $L$:
$SH^*(M)_{\eta}\!=\!0$ implies $\tau(i^*\eta)\!=\! 0$, $SH^*(M)_{\widetilde{\eta}}\!=\!0$ implies $i^*\widetilde{\eta}\!=\! 0$.
\end{theorem}
\begin{corollary}\label{Corollary pathological STL in general M}
In the setup of Theorem \ref{Theorem Obstructions to exact hyp},
if $SH^*(M)\!=\!0$ or $SH^*(W)\!=\!0$, then:
\begin{itemize}
\item $L$ must be pathological and $W$ is not Stein;

\item $H_1(L)\to H_1(W)$ and $[S^1,L] \to [S^1,W]$ both vanish;

\item $\textrm{rank } H^{n+1}(W)=\# [S^1,L]-1 \geq 1$.
\end{itemize}
\end{corollary}
\noindent \textbf{Example.} Corollary \ref{Corollary pathological
STL in general M} applies to subcritical Stein manifolds $M$ with
$c_1(M)=0$, since $SH^*(M)=0$ by Cieliebak \cite{Cieliebak}. This
mildly strengthens \cite[Cor. 1.18]{Cieliebak-Frauenfelder-Oancea}
%
%
%
%
\section{Displaceability of $\partial M$ and Rabinowitz Floer theory} \label{Section Displacement implies vanishing}
\subsection{Rabinowitz Floer cohomology}
The Lagrange multiplier analogue of $SH^*$ is \emph{Rabinowitz
Floer cohomology}, due to Cieliebak-Frauenfelder
\cite{Cieliebak-Frauenfelder}. It involves changing the action
${\mathbb{A}}_H$ by rescaling $H$ by a \emph{Lagrange multiplier} variable
$\lambda\in \R$:
$$\textstyle
\mathbb{L}_H: \mathcal{L}\overline{M} \times \R \to \R, \quad
\mathbb{L}_H(x,\lambda) = - \int x^*\theta + \lambda \int_0^1 H(x(t)) \,
dt.
$$

This tweak has a considerable effect: it forces \emph{all}
critical points of the new action functional $\mathbb{L}_H$ to lie in $H^{-1}(0)$.
So take $H:\overline{M}\to \R$ such that $H^{-1}(0)=\partial M$, with $H=h(R)$ near $\partial M=\{R=1\}$ with $h'(1)=1$. Then the critical points of $\mathbb{L}_H$ are the solutions $x:S^1 \to \partial M$ of
$\dot{x}=\lambda \mathcal{R}(x)$ with action
$\mathbb{L}_H=-\lambda$. So, after reparametrizing $y(t)=x(t/ |\lambda|)$,
these critical points are precisely: the Reeb orbits of period
$\lambda>0$, the constant loops in $\partial M$ ($\lambda=0$), and
the negative Reeb orbits of period $-\lambda>0$. The chain complex
generated by the critical points of $\mathbb{L}_H$ has a differential
defined by counting negative gradient flowlines of $\mathbb{L}_H$. The
resulting cohomology is called \emph{Rabinowitz Floer cohomology}
$RFH^*(M)$ (see \cite{Cieliebak-Frauenfelder} for details).
\begin{theorem}
[Cieliebak-Frauenfelder-Oancea,
\cite{Cieliebak-Frauenfelder-Oancea}]\label{Theorem CFO RFH lemma}
There is a long exact sequence
$$
\cdots \to SH_{2n-*}(M)
\to SH^{*}(M) \to RFH^{*}(M) \to SH_{2n-*-1}(M) \to \cdots
$$
where the map $SH_{2n-*}(M) \to SH^{*}(M)$ is the composite
$$
\xymatrix@C=15pt{ SH_{2n-*}(M) \ar@{->}[r]^{c_*} & H_{2n-*}(M)
\ar@{->}[rr]^-{\textrm{Poincar\'{e}}} && H^{*}(M,\partial M)
\ar@{->}[rr]^-{\textrm{inclusion}^*} && H^{*}(M)
\ar@{->}[r]^-{c^*} & SH^{*}(M).}
$$
\end{theorem}
\begin{remark}
The Lemma is written in our conventions. Comparison: conventions
in \cite{Cieliebak-Frauenfelder-Oancea} dictate $SH_*(T^*N;\Z/2) \cong
H_*(\mathcal{L}N;\Z/2)$ instead of our $SH^{n-*}(T^*N;\Z/2) \cong
H_*(\mathcal{L}N;\Z/2)$.
\end{remark}
%
%
\subsection{Vanishing theorem}
%
\begin{theorem}\label{Theorem RFH=0 iff SH=0}
$RFH^*(M)=0$ \,iff\, $SH^*(M)=0$ \,iff\, $SH_*(M)=0$.

\noindent If one among $SH_0,SH^0,RFH^0$ is $0$ $($even just on the component $\mathcal{L}_0 \overline{M}$ of contractible loops, \ref{Subsection
TQFT is compatible with filtrations}$)$, then all
$SH_*,SH^*,RFH^*$ are $0$. Here we used
$\Z$-gradings if $c_1\!=\!0$ $($or $c_1|_{\pi_2 M}\!=\!0$ if we restrict to
$\mathcal{L}_0 \overline{M})$ otherwise use $\Z/2\Z$-gradings (see \ref{Subsection Maslov index and
Conley-Zehnder index}).
\end{theorem}
\begin{proof}
By Theorem \ref{Theorem Vanishing of homology iff cohomology},
$SH^*(M) \! \!  = \! \!  0$ iff $SH_*(M) \! \!  = \! \!  0$ iff
one of them vanishes in degree $0$ (where the unit/counit lie). By
Theorem \ref{Theorem CFO RFH lemma}, if $SH^*(M) \! = \! SH_*(M) \! =
\! 0$ then $RFH^*(M) \! = \! 0$. Conversely, suppose $RFH^0(M)=0$
(even restricted to contractible loops, since the long exact
sequence of Theorem \ref{Theorem CFO RFH lemma} respects the filtrations
of \ref{Subsection TQFT is compatible with filtrations}). By Theorems
\ref{Theorem Vanishing criteria} and \ref{Theorem Vanishing of
homology iff cohomology} it suffices to show that the unit $e\in
SH^0(M)$ vanishes.

\textbf{Case 1.} Suppose we have a $\Z$-grading. Theorem \ref{Theorem
CFO RFH lemma} yields the exact sequence $SH_{2n}(M)\to SH^0(M)
\to RFH^0(M)$, whose first map is the composite
$$
SH_{2n}(M) \to H_{2n}(M) \to H^{0}(M,\partial M) \to H^{0}(M) \to
SH^{0}(M).
$$
Now $M$ is a $2n$-manifold with non-empty boundary so
$H_{2n}(M)=0$. Thus the above composite is zero, so in the above
exact sequence, $SH^0(M)\hookrightarrow RFH^0(M)$ is injective.
Thus $RFH^0(M)=0$ implies $SH^0(M)=0$ so $e=0$.

\textbf{Case 2.} Suppose we have a $\Z/2\Z$-grading. By Theorem
\ref{Theorem CFO RFH lemma}, $e$ is in the image of
$SH_{\textrm{even}}(M) \! \to \! SH^\textrm{even}(M)$ since it
maps to zero via $SH^{\textrm{even}}(M) \! \to \!
RFH^{\textrm{even}}(M) \! = \! 0$. The $H^0(M)$ summands of the
image of $SH_{\textrm{even}}(M) \to H_{\textrm{even}}(M)\to
H^{\textrm{even}}(M,\partial M) \! \to \! H^{\textrm{even}}(M)$
must vanish since they factor through $H_{2n}(M)=0$. So since $e$
is in the image of $SH_{\textrm{even}}(M)\!\to\!
SH^\textrm{even}(M)$, $e=c^*(z)$ for some $z\in H^*(M)$ with no $H^0(M)$-summand. But $c^*$ is a TQFT map by Theorem \ref{Theorem ring structure on ordinary cohomology} and $z^{n+1}=0$ for degree reasons,
so $0=c^*(z^{n+1})=c^*(z)^{n+1}=e^{n+1}=e$.
\end{proof}
%
\subsection{Displaceability of $\partial M$ implies vanishing of $SH^*(M)$}
%
Call an exact symplectic manifold $(X^{2n},d\theta)$ \emph{convex} if: it is connected without boundary; it admits an exhaustion $X=\cup_k X_k$ by compact sets $X_k \subset X_{k+1}$ for which the Liouville vector field $Z$ defined by $i_Z d\theta = \theta$ points strictly outwards along $\partial X_k$; $Z$ is complete, meaning its flow is defined for all time; and $Z\neq 0$ outside of some $X_k$. For example, $X=\overline{M}$ for a Liouville domain $M$.
\begin{theorem}\label{Theorem displaceable implies SH is zero}
If $M$ admits an
exact embedding into an exact convex symplectic manifold $X$, such that $\partial M$ is displaceable by a compactly supported Hamiltonian flow inside $X$, then: $SH^*(M)\!=\!0$,
there are no closed exact Lagrangians in $M$, and the Arnol'd chord conjecture holds for any Legendrian arising as in \ref{Subsection Lagrangians inside Liouville domains}.
\end{theorem}
\begin{proof}
We work over $\K=\Z/2$ throughout the proof.
If $\partial M\! \subset\! X$ is displaceable then $RFH^*(\partial M\!\subset\! X)\!=\!0$ by
Cieliebak-Frauenfelder \cite{Cieliebak-Frauenfelder}. But $RFH^*$ only depends on the filling $M$ of $\partial M$, so $RFH^*(M)=0$. By Theorem
\ref{Theorem RFH=0 iff SH=0}, $SH^*(M)=0$.

If $L\subset M$ was a closed exact Lagrangian then for $W\cong DT^*L$ as in Example
\ref{Example Weinstein neighbourhood} the restriction
$0=SH^*(M)\to SH^*(W)\cong H_{n-*}(\mathcal{L}L)$ implies
$H_*(\mathcal{L}L)=0$ by Theorem \ref{Theorem Vanishing criteria}, which is
absurd since $H_0(\mathcal{L}L)\neq 0$. For the last claim use Theorems \ref{Theorem SH=0 implies HW=0} and \ref{Theorem Vanishing of HW implies
Arnol'd Chord Conj}.
\end{proof}
\noindent \textbf{Example.} If $ST^*L
\hookrightarrow X$ is an exact contact hypersurface such that $ST^*L$ is displaceable, with $c_1(X)=0$ and
$n\geq 4$, then
$SH^*(W)=0$ so Corollary \ref{Corollary pathological STL in
general M} holds. So for non-pathological $L$, $ST^*L$ is never
displaceable (this mildly strengthens Theorem 1.17 of
\cite{Cieliebak-Frauenfelder-Oancea}).
%
%
%
%
\section{String topology}
\label{Section Application Loop space homology and string topology}
%
%
%
\noindent For closed
$n$-manifolds $N$, Viterbo \cite{Viterbo1} proved that
$
SH^*(T^*N;\Z/2) \cong H_{n-*}(\mathcal{L}N;\Z/2),
$
and there are now also alternative proofs by Abbondandolo-Schwarz
\cite{Abbondandolo-Schwarz} and Salamon-Weber
\cite{Salamon-Weber}. We use approach
\cite{Abbondandolo-Schwarz} since we used it in \cite{Ritter} to
prove
$
SH^*(T^*N;\Z/2)_{\eta} \cong H_{n-*}(\mathcal{L}N;\Z/2)_{\eta}.
$
Our goal is to show that these isomorphisms respect the TQFT. 

It was initially believed that the isomorphism $SH^*(T^*N) \cong H_{n-*}(\mathcal{L}N)$ should hold as written for any choice of coefficients when $N^n$ is an orientable manifold. However, recent observations by Seidel \cite{Seidel} show that in fact this is not quite correct due to an ambiguity in the choice of orientation signs for the relevant moduli spaces. The prediction, according to \cite{Seidel}, is that the isomorphism holds as written over any coefficients if the orientable manifold $N$ is spin (meaning $w_2(TN)=0$). For a general closed manifold $N$ the prediction \cite{Seidel} is:
$$
SH^*(T^*N)_{\eta} \cong H_{n-*}(\mathcal{L}N;\underline{\Z}_{ev^*(w_1(TN))} \otimes_{\Z} \underline{\Z}_{\tau_2 (w_2(TN))} \otimes_{\Z} \underline{\Lambda}_{\tau \eta}),
$$
where we now explain the local system of coefficients used to compute the homology of $\mathcal{L}N$:
\begin{enumerate}
 \item  $\tau_2: H^2(N;\Z/2) \to H^1(\mathcal{L}N;\Z/2)$ is the transgression map over $\Z/2$ (see \ref{Subsection Transgression});
\item the map $ev:\mathcal{LN} \to N$ is the evaluation $x \mapsto x(0)$;
 \item  $\underline{\Z}_{\alpha}$ for $\alpha\in H^1(\mathcal{L}N;\Z/2)$ is a local system defined analogously to the local system $\underline{\Lambda}_{\alpha}$ of \ref{Subsection Novikov bundles of coefficients}: the fibre $\Z_x$ over $x\in \mathcal{L}N$ is $\Z$, and the parallel transport map over a path $u$ in $\mathcal{L} N$ connecting $x$ to $y$ is the multiplication isomorphism
$\pm 1 \ni \alpha[u] \co  \Z_{y}
\to \Z_{x}.$

\item $w_1(TN)\in H^1(N;\Z/2)$ and $w_2(TN)\in H^2(N;\Z/2)$ are the Stiefel-Whitney classes of $N$, so in particular $\underline{\Z}_{ev^*(w_1(TN))}$ is the pull-back via $ev$ of the orientation sheaf of $N$.
\end{enumerate}

To avoid these complications, \textbf{we assume throughout this Section that the field $\K$ has characteristic $2$}, so that orientation signs are irrelevant and we only assume that $N$ is a closed manifold. 
We write $H_*(\mathcal{L}N)_{\eta}$ to keep track of twistings by
$\eta\in H^2(N)$ when present (in which case it means $H_*(\mathcal{L}N;\underline{\Lambda}_{\tau\eta})$ using coefficients in the local system defined in \ref{Subsection Novikov bundles of coefficients}).
%
%
\subsection{The Abbondandolo-Schwarz construction \cite{Abbondandolo-Schwarz}}
\label{Subsection Abb-Sch construction}
We first identify $H_*(\mathcal{L}N)_{\eta}$ with the Morse
homology $MH_*(\mathcal{L}N)_{\eta}$ of the Lagrangian action
functional
$$\textstyle \mathcal{E}(\gamma) = \int_0^1 L(\gamma,\dot{\gamma})\, dt,$$
for a suitable function $L: TN \to \R$ which is quadratic in the
fibres, for example the Lagrangian functions
$L(q,v)=\frac{1}{2}|v|^2 - U(q)$ from classical mechanics.
One actually replaces $\mathcal{L}N=C^{\infty}(S^1,N)$ with the homotopy
equivalent space $W^{1,2}(S^1,N)$, and one proves that Morse
homology for $\mathcal{E}$ on this space is well-defined for most
$L$ (the quadratic growth of $L$ ensures that $\mathcal{E}$ is bounded below and that it has finite-dimensional unstable manifolds).

The choice of $L$ determines the choice of a quadratic Hamiltonian
on $T^*N$,
$$H(q,p)=\max_{v\in T_q N}(p\cdot v - L(q,v)).$$
For example: $L=\tfrac{1}{2}|v|^2$ gives $H=\tfrac{1}{2}|p|^2$.
The convexity of $L$ implies that the maximum is achieved at
precisely one point $p$, which equals the vertical differential of $L$ at $(q,v)$. This defines a fiber-preserving
diffeomorphism, called \emph{Legendre transform}:
$$\mathfrak{L}:TN \to T^*N, \quad (q,v) \mapsto (q,dL(q,v)|_{T^{\textrm{vert}}_{(q,v)}(TN)\,\equiv\, T_q N}).$$

This relates the dynamics of $H$ on $T^*N$ with that of
$\mathcal{E}$ on $\mathcal{L}N$: $\gamma \in
\textrm{Crit}(\mathcal{E})$ is a critical point of $\mathcal{E}$
in $\mathcal{L}N$ if and only if
$x=\mathfrak{L}(\gamma,\dot{\gamma})\in \textrm{Crit}({\mathbb{A}}_H)$ is a
$1$-periodic orbit of $X_H$ in $T^*N$. One can even relate the
indices and the action values of $\mathcal{E}$ and ${\mathbb{A}}_H$.

The isomorphism $H_*(\mathcal{L}N)_{\eta} \to
SH^{n-*}(T^*N)_{\eta}$ is induced by the chain isomorphism
$$\varphi: MC_*(\mathcal{E})_{\eta} \to SC^{n-*}(H)_{\eta}, \quad \varphi (\gamma) = \!\!\!\!\sum_{v\# u \in
\mathcal{M}^+_0(\gamma,x)}\!\!\!\! \epsilon_{v\# u}\,
t^{-\tau\eta[v]+\tau\eta[\pi u]}\, x$$
where $\epsilon_{v\# u}\!\in\! \{\pm 1\}$ are orientation signs and where
$\mathcal{M}^+(\gamma,x)$ consists of pairs $v\# u$, with
$
v\!:\!(-\infty,0] \!\to\! \mathcal{L}N
$
 a $-\nabla \mathcal{E}$ flowline converging to
$\gamma\!\in\! \textrm{Crit}(\mathcal{E})$ as $s\to -\infty$, and $u\!:\! (-\infty,0]\!\times\! S^1
\!\to\! T^*N$ solves Floer's
equation and converges to $x\in \textrm{Crit}({\mathbb{A}}_H)$ as $s\to -\infty$, with
Lagrangian boundary condition $\pi \circ u(0,t) = v(0,t)$ where $\pi\!:\! T^*N \!\to\! N$ is the projection.

As usual, one actually makes a $C^2$-small time-dependent perturbation
of $L$ (and hence also of $H$), but we keep the notation simple. Section \ref{Appendix Using non-linear Hamiltonians} explains why we can use a quadratic Hamiltonian instead of taking a direct
limit over linear Hamiltonians, and in \ref{Subsection levicivita J vs contact type J} we explain why the use of non-contact type $J$ in Abbondandolo-Schwarz is also not problematic.

Finally, $MH_*(\mathcal{L}N;\mathcal{E})_{\eta} \cong H_*(\mathcal{L}N)_{\eta}$ is essentially obtained by mapping a critical point $\gamma$ of $\mathcal{E}$ to the pseudo-cycle given by the unstable manifold $W^u(\gamma;\mathcal{E})$.

\begin{remark}\label{Remark AS iso respects filtration}
In our conventions \cite[Sec.3]{Ritter}, any $w: [0,1] \to T^*N$ has $\mathcal{E}(\pi w) \geq -\mathbb{A}_H(w)$. Since $\mathcal{E}$ decreases along $v$ and $-\mathbb{A}_H$ decreases along $s\mapsto u(-s,\cdot)$, the isomorphism $\varphi$ respects action-filtrations: $\varphi_{<c}:MH_*(\mathcal{E}<c) \cong SH^{n-*}(\mathbb{A}_H>-c)$ for all $c\in \R$ (see Section \ref{Section SH+}).

For small $c>0$, this filtered isomorphism becomes $MH_*(N;L) \!\cong\! MH^{n-*}(T^*N;H)$, which is the Poincar\'e duality $H_*(N) \!\cong\! H^{n-*}(T^*N) \!\cong\! H^{n-*}(N)$. 
So via $\varphi$, the map $c^*:H^*(T^*N) \to
SH^*(T^*N)_{\eta}$ becomes the inclusion of constant loops
$
c_*:H_*(N) \to H_*(\mathcal{L}N)_{\eta}.
$
 In particular, $\varphi^{-1}(e) = c_*[N]$ since $[N]$ is the Poincar\'e dual of $1$ and $e=c^*(1)$ (Theorem \ref{Theorem unit is image of 1}).
\end{remark}
%
\subsection{TQFT structure on $H_*(\mathcal{L}N)$}
\label{Subsection Morse operations for free loop space}
%
%
Given a graph $S'$ as in
\ref{Subsection Morse-graph operations}, use the Morse function
$\mathcal{E}_i(\gamma)=\int_0^1 L_i(\gamma,\dot{\gamma})\, dt$ for edge $e_i$ for 
a generic $L=L_i$ as above. This yields
$$
\psi_{S'}: MH_*(\mathcal{L}N)_{\eta}^{\otimes p} \to
MH_*(\mathcal{L}N)_{\eta}^{\otimes q}  \qquad (p\geq 1, q\geq 0).
$$
Notice we use homological conventions (see \ref{Subsection TQFT on SH summary}), so the operation goes ``from left to right'' in Figure
\ref{Figure Graphs}. For example $\psi_{Q'}: MH_*(\mathcal{L}N)_{\eta}^{\otimes 2}
\to MH_*(\mathcal{L}N)_{\eta}$ is a product. 

\emph{Remark: $p\geq 1$ is needed since only the unstable manifolds of the $\mathcal{E}_i$ are finite-dimensional, the stable ones are infinite-dimensional. 
%
%
In particular we cannot construct a unit via Morse theory since $\mathcal{E}$ does not have a maximum (but one can construct a counit, compare \ref{Subsection Morse-graph operations}). Nevertheless there is a unit $c_*[N]$ (see Remark \ref{Remark AS iso respects filtration} and the Remarks in \ref{Subsection Chas-Sullivan loop product} and \ref{Subsection varphi is a TQFT isomorphism}).
}

Identifying $MH_*(\mathcal{L}N)_{\eta}\!\cong\! H_*(\mathcal{L}N)_{\eta}$,
these Morse operations define a TQFT on
$H_*(\mathcal{L}N)_{\eta}$.

A string topology focused description of
these can be found in Cohen-Schwarz
\cite{Cohen-Schwarz}.
%
\subsection{The Chas-Sullivan loop product}
\label{Subsection Chas-Sullivan loop product}
%

Let $ev:\mathcal{L}N \to N$ be evaluation at $0$. Let
$\sigma:\Delta^a \to \mathcal{L}N$ and $\tau: \Delta^b \to
\mathcal{L}N$ be two singular chains, thought of as
$a,b$-dimensional families of loops. When two loops from those two
families happen to have the same base-point, we can form a
``figure 8" loop. Let $E=\{(s,t) \in \Delta^a \times \Delta^b:
ev(\sigma(s))=ev(\tau(t)) \}$ parametrize those match-ups and let
$j_E:E \to \mathcal{L}N$ output those figure-8 loops. Now $E=(ev\times
ev)^{-1}(\textrm{diagonal})$ via $ev\times ev: \Delta^a \times
\Delta^b \to N\times N$. So $E$ is a manifold when the two
families of basepoints, $ev(\sigma)$ and $ev(\tau)$, intersect
transversely in $N$, and then $\textrm{dim}(E)=a+b-n$. 
The \emph{loop product} $\sigma\cdot\tau$ of \cite{Chas-Sullivan} is then defined to be the cycle $(j_E)_*[E]
\in H_{a+b-n}(\mathcal{L}N)$  (see
\cite{Abbondandolo-Schwarz2} for details).
\begin{theorem}[Abbondandolo-Schwarz \cite{Abbondandolo-Schwarz2}]\label{Theorem
Abbondandolo-Schwarz product theorem} The map
$\varphi\co MH_*(\mathcal{L}N)\to SH^{n-*}(T^*N)$ preserves the
product structure, so $\varphi \circ \psi_{Q'} = \psi_P \circ
\varphi^{\otimes 2}$. Moreover the product $\psi_{Q'}$ on $MH_*(\mathcal{L}N)$
corresponds to the loop product on $H_*(\mathcal{L}N)$.
\end{theorem}
\begin{figure}[ht]
\includegraphics[scale=0.65]{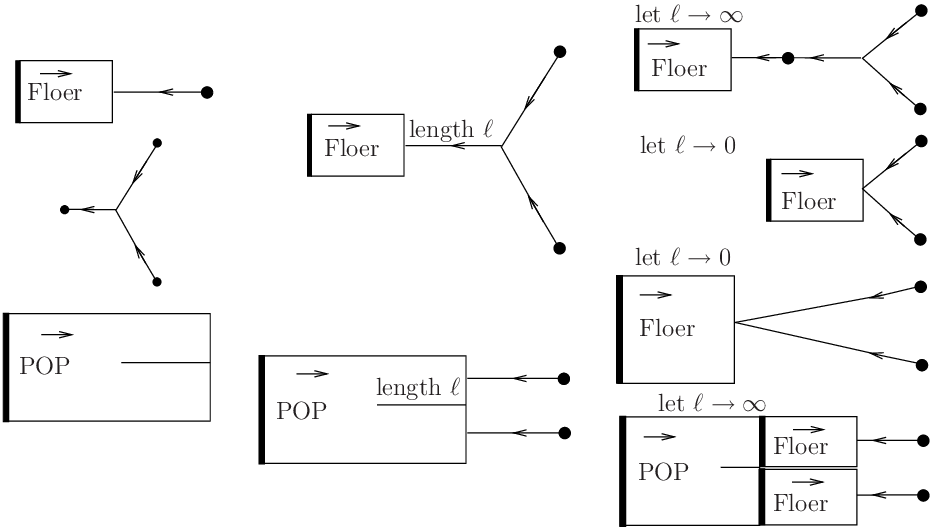}
\caption{Dark circles are critical points of $\mathcal{E}$, dark
rectangles are Hamiltonian orbits. POP are pairs of pants, Floer
are Floer trajectories. The configurations in the left column are counted by
$\varphi$, $\psi_{Q'}$, $\psi_P$.}\label{Figure AbbSchw loop
product}
\end{figure}
\noindent\emph{Sketch Proof.} Observe Figure \ref{Figure AbbSchw
loop product}. Consider the $1$-dimensional family of solutions
corresponding to the configurations in the middle column ($\ell$
is a free parameter). On the right are the limiting configurations
as $\ell \to 0$ or $\infty$. The middle two pictures in the last
column are the same. So the maps counting the top and bottom
configurations in the last column are chain homotopic. These are respectively
$\varphi\circ \psi_{Q'}$ and $\psi_P \circ \varphi^{\otimes 2}$.
\qed
\\[1mm]
\emph{Remark. Theorem \ref{Theorem Abbondandolo-Schwarz product theorem}
and Remark \ref{Remark AS iso respects filtration} prove that the unit for the loop product is $c_*[N]$.
}

\subsection{$\varphi$ is a TQFT isomorphism}
\label{Subsection varphi is a TQFT isomorphism}
%
To $S$ we
associated a graph $S'$ (\ref{Subsection Morse-graph associated to S}). Let $\overline{S'}$ be the graph obtained
from $S'$ by switching the orientations of the edges ($p,q$ get
interchanged). So for $S$ with $p+q$ punctures, $\psi_{\overline{S'}}: MH_*(\mathcal{L}N)^{\otimes q} \to MH_*(\mathcal{L}N)^{\otimes p}$.

\textbf{Example.} for $P,Q$ of \ref{Subsection minimal
set of gen surfaces} we get the $P',Q'$ of \ref{Subsection
Morse-graph associated to S} and $\overline{P'}=Q'$, $\overline{Q'}=P'$.
\begin{theorem}\label{Theorem Abb Schw is a TQFT iso}
The isomorphism $\varphi: H_*(\mathcal{L}N)_{\eta} \cong
MH_*(\mathcal{L}N)_{\eta} \to SH^{n-*}(T^*N)_{\eta}$ preserves the
$($possibly twisted$)$ TQFT structures:
$\varphi^{\otimes p} \circ \psi_{\overline{S'}} =
\psi_S \circ \varphi^{\otimes q}$ where $p\geq 1,q\geq 1.$
\end{theorem}
\begin{proof}
For general $S$ with $p\geq 1,q\geq 1$, decompose $S$ into copies of $Z,P,Q$ as in
Theorem \ref{Theorem decomposing S} (this can be done without using $C$ by the Remark in \ref{Section POP decomposition}). So $\psi_S$ is a
composition of operations $\psi_Z,\psi_P,\psi_Q$. Thus
$\psi_{\overline{S'}}$ is the corresponding composition of
operations obtained by replacing $Z,P,Q$ by
$\overline{Z'}, \overline{P'}, \overline{Q'}$. So
the general relation follows if we can prove it for $Z,P,Q$.

For $S=Z$, $\psi_Z$ and $\psi_{\overline{Z'}}$ are the identity, so
there is nothing to prove. 
%

For $S=P$, the relation $\varphi \circ \psi_{Q'} = \psi_P \circ
\varphi^{\otimes 2}$ is Theorem \ref{Theorem Abbondandolo-Schwarz
product theorem}.

For $S=Q$, we need to check $\varphi^{\otimes 2} \circ \psi_{P'} = \psi_Q \circ \varphi$. One can either prove this by an argument similar to Figure \ref{Figure AbbSchw loop product} using the techniques of Abbondandolo-Schwarz \cite{Abbondandolo-Schwarz2}, or one can use the second factorization in the proof of Theorem \ref{Theorem description of coproduct} as follows. The coproduct $Q$ factors as a gluing of $(P\sqcup Z)\#(Z\sqcup (Q\# C))$ where one can arrange the weights so that $Q\#C$ uses $H^{\delta}$ on each output (recall $H^{\delta}$ from Section \ref{Section Canonical map from ordinary cohomology}). Since Theorem \ref{Theorem Abb Schw is a TQFT iso} holds for $P,Z$, we reduce to checking that $\psi_{Q}(1) = \psi_Q(\varphi([N])) = \varphi^{\otimes 2}(\psi_{P'}[N])$ (recall $\varphi(c_*[N])=c^*(1)$ by Remark \ref{Remark AS iso respects filtration}, and that using $H^{\delta}$ instead of $H$ is equivalent to restricting to action values close to zero by Section \ref{Section SH+}).
By Theorem \ref{Theorem ring structure on ordinary cohomology}, we can identify $SH^*(H^{\delta})$ with the Morse cohomology, and $\psi_{Q}(1)=\psi_{Q'}(1)=\pm \chi(N) \mathrm{vol}_N^{\otimes 2}$ by Example \ref{Example coproduct}. 
One then checks that by restricting to $\mathcal{E}$-actions close to zero (see Remark \ref{Remark AS iso respects filtration}), the TQFT solutions for $\mathcal{L}N$ become time-independent (this would involve mimicking Section \ref{Subsection Floer trajectories converging to broken Morse trajectories}, which we will not carry out). 
We now compute $\psi_{P'}[N]$. Since $[N]$ is represented by the maximum of $L|_N$, the first edge of $P'$ will sweep out $[N]$. We need to perturb $\mathcal{E}=\int L\, dt$ on one of the two outgoing edges of $P'$ to achieve transversality. We do this by using $L'=L\circ \varphi_K^1:TN \to \R$ instead of $L$ on that edge, where $\varphi_K^1: TN \to TN$ is a time-1 Hamiltonian flow so that $\varphi_K^1(N), N$ are transverse in $TN$. Thus the only non-zero contribution to 
$\psi_{P'}[N]$ is the count of $w\in N\cap \varphi_K^1(N)$ which admit a configuration consisting of a semi-infinite $-\nabla L|_N$ flowline from $w$ to the minimum of $L|_N$ and a semi-infinite $-\nabla L'|_N$ flowline from $w$ to the minimum of $L'|_N$. So $\psi_{P'}([N])= \pm \chi(N) [\mathrm{pt}]^{\otimes 2} \in H_0(N)^{\otimes 2}$. So $\psi_{Q}(1) = \varphi^{\otimes 2}(\psi_{P'}[N])$ since $\varphi([\mathrm{pt}]) = \mathrm{vol}_N$ by Remark \ref{Remark AS iso respects filtration}.

For the twisted case, $\varphi$ was constructed in \cite{Ritter}.
For the argument above to hold in the twisted case, it suffices
that the weights are locally constant on the TQFT moduli spaces
and the $\mathcal{M}^+_0(\gamma,x)$ moduli spaces. The former is
 Theorem \ref{Theorem Novikov weights behave well}, the latter is proved similarly and was done in detail in \cite{Ritter}.
\end{proof}
\emph{Remark. Action-restricting the above result as in Remark \ref{Remark AS iso respects filtration}, one obtains that $c_*: H_*(N) \to H_*(\mathcal{L}N)_{\eta}$
is a TQFT map, with unit $[N]\mapsto c_*[N]$, using the (twisted) loop product 
on $H_*(\mathcal{L}N)_{\eta}$, and the intersection product on $H_*(N)$ (the Poincar\'e dual of the cup product).
}
\begin{corollary}
For a closed exact
Lagrangian submanifold $L\subset T^*N$, Theorem \ref{Theorem
Viterbo Functoriality} applied to Example \ref{Example Weinstein neighbourhood} yields via Theorem \ref{Theorem Abb Schw is a TQFT iso} the commutative TQFT diagram of
transfer maps
$$
\xymatrix@R=12pt@C=12pt{ H_*(\mathcal{L}L)_{\eta|L} \ar@{<-}[r]^-{}
\ar@{<-}[d]^-{c_*} &
H_*(\mathcal{L}N)_{\eta} \ar@{<-}[d]^-{c_*} \\
H_*(L)\otimes \Lambda \ar@{<-}[r]^-{} & H_*(N)\otimes \Lambda }
$$
Remark. The untwisted diagram (without TQFT) is due to Viterbo \cite{Viterbo3}. $\qed$
\end{corollary}
%
%
\subsection{The based loop space}
\label{Subsection Based loop space}
%
The wrapped analogue of $SH^*(T^*N)\cong H_{n-*}(\mathcal{L}N)$ for a fibre $T^*_{q_0}N\subset T^*N$ is
Abbondandolo-Schwarz's ring isomorphism
\cite{Abbondandolo-Schwarz2}:
$$
HW^*(T_{q_0}^*N) \cong H_{n-*}(\Omega N),
$$
using the Pontryagin product on $H_*(\Omega N)$ induced by
concatenation of based loops. As in Theorem \ref{Theorem Abb Schw
is a TQFT iso}, this respects the TQFT and the twisted
analogue holds.
%
\subsection{Vanishing of the Novikov homology of the free
loop space} \label{Subsection Vanishing of the Novikov homology of
the free loopspace}
%
\strut\\
\emph{Convention. Recall, as mentioned at the start of Section \ref{Section Application Loop space homology and string topology}, that we assume $\mathrm{char}(\mathbb{K})=2$.}

\noindent \textbf{Definition.} A space $N$ has \emph{finite type} if $\pi_m(N)$ is finitely
generated for each $m\geq 2$.\\
\strut\textbf{Examples.} 1) simply connected closed manifolds, and \\
\strut\quad\qquad\qquad\; 2) closed manifolds with trivial $\pi_1$ action
on higher homotopy groups.\\
For closed $N$ of
finite type, the
Novikov homology
$H_*(\mathcal{L}_0N;\underline{\Lambda}_{\tau\eta})\!=\!0$ for $\tau\eta\!\neq\! 0 \!\in\! H^1(\mathcal{L}_0 N)$ by \cite{Ritter} where $\mathcal{L}_0 N$ is the component of contractible loops. We can now
extend it to $\mathcal{L}N$:
\begin{corollary}\label{Corollary Full loopspace homology vanishes}
For closed manifolds $N$ of finite type, and any $\tau\eta\neq
0\in H^1(\mathcal{L}_0 N)$,
$$SH^*(T^*N)_{\eta} \cong H_{n-*}(\mathcal{L}N;\underline{\Lambda}_{\tau\eta})
=0.$$
\end{corollary}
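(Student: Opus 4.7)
\medskip
\noindent\textbf{Plan.} The isomorphism $SH^*(T^*N)_{\eta} \cong H_{n-*}(\mathcal{L}N;\underline{\Lambda}_{\tau\eta})$ is the twisted Abbondandolo--Schwarz isomorphism of Theorem \ref{Theorem Abb Schw is a TQFT iso}, so the substance of the statement is the vanishing. My plan is to reduce it, via the TQFT ring structure, to Ritter's Novikov vanishing on the contractible component $\mathcal{L}_0 N$, which is the input cited just before the corollary.

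The decisive step is to use the identification from Theorem \ref{Theorem Abb Schw is a TQFT iso} that the unit $e \in SH^0(T^*N)_{\eta}$ equals $\varphi(c_*[N])$, where $c\colon N\to\mathcal{L}N$ is the inclusion of constant loops. Since $c$ factors through the component $\mathcal{L}_0 N$, the class $c_*[N]$ lies in the summand $H_n(\mathcal{L}_0 N;\underline{\Lambda}_{\tau\eta})$ of the decomposition of $H_*(\mathcal{L}N;\underline{\Lambda}_{\tau\eta})$ by free homotopy classes. Under the hypotheses $\tau\eta \neq 0 \in H^1(\mathcal{L}_0 N)$ and $N$ of finite type, this contractible summand vanishes by the theorem of \cite{Ritter}, so $c_*[N]=0$ and hence $e=0$.

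Finally, the vanishing criterion Theorem \ref{Theorem Vanishing criteria}(1), whose proof uses only the unital ring structure $y = e\cdot y$ established in Corollary \ref{Theorem unital ring structure twisted}, turns $e=0$ into $SH^*(T^*N)_{\eta}=0$. Transporting this vanishing back across the isomorphism $\varphi$ yields $H_{n-*}(\mathcal{L}N;\underline{\Lambda}_{\tau\eta})=0$ as well.

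There is no serious obstacle: the technical inputs (the twisted Abbondandolo--Schwarz isomorphism and the Novikov-homology vanishing on $\mathcal{L}_0 N$ via rational homotopy theory) are imported from \cite{Ritter}. The essential new ingredient is the twisted TQFT ring structure built in this paper: it lets the vanishing propagate from the contractible sector, where the unit necessarily lives, to every free-homotopy sector of $\mathcal{L}N$ at once, bypassing the need to analyse individual non-contractible components $\mathcal{L}_c N$ one by one.
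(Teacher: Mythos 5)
Your proof is correct and follows exactly the same route as the paper: identify the unit of the twisted ring as $c_*[N]\in H_n(\mathcal{L}_0 N)_\eta$ via Theorem \ref{Theorem Abb Schw is a TQFT iso}, kill it using Ritter's Novikov vanishing on the contractible component, and conclude via Theorem \ref{Theorem Vanishing criteria}. The only difference is that you spell out the role of the free-homotopy decomposition a bit more explicitly than the paper's terse three-line proof does.
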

\begin{proof}
By Theorem \ref{Theorem Abb Schw is a TQFT iso},
$SH^*(T^*N)_{\eta}\cong H_{n-*}(\mathcal{L}N)_{\eta}$ is a ring
with unit $c_*[N]\in H_n(\mathcal{L}_0N)_{\eta}$. But
$H_*(\mathcal{L}_0N)_{\eta}=0$ by \cite{Ritter}. So the claim
follows by Theorem \ref{Theorem Vanishing criteria}.
\end{proof}
\noindent$\textbf{Remark.}$ $\tau\eta\!\neq\! 0\in H^1(\mathcal{L}_0 N)$ iff
$\pi^*\eta\!\neq\! 0 \in H^2(\widetilde{N})$ for the universal cover
$\pi:\widetilde{N}\!\to\! N$.
%
%
%
\section{The $c^*$ maps preserve the TQFT structure}
\label{Section c* maps preserve TQFT}
%
%
%
%
%
%
\subsection{PSS map}
\label{Subsection The c* maps are unital ring maps}
%
%
Theorem \ref{Theorem ring structure on ordinary cohomology} is the analogue of the Piunikhin-Salamon-Schwarz \cite{PSS} ring isomorphism $FH^*(M,H) \to QH^*(M)$ which holds for weakly monotone \emph{closed} symplectic manifolds $(M,\omega)$. It turns the pair of
pants product on Floer cohomology into the quantum cup product on quantum cohomology. We will briefly survey their proof.
\begin{figure}[ht]
\input{pss2.pstex_t}
\label{Figure PSS}
\end{figure}

Pick a
generic Morse function $f$ and a Hamiltonian $H$ on $M$. Pick a
homotopy $H_s$ interpolating $0$ and $H$. The PSS map $\phi:
FC^*(H) \to MC^*(f)$ on generators is $\phi(y)=\sum N_{x,y} x$ where $N_{x,y}$ is the oriented count of isolated \emph{spiked
discs} converging to $x,y$ (we omit a detail here: one actually has to work over a Novikov ring since $\omega$ is not exact \cite[Sec.4]{PSS}). Spiked discs are maps $u:\C \to M$
such that $u(e^{2\pi(s+it)})$ satisfies Floer's continuation
equation for $H_s$, converging at $s=\infty$ to a 1-orbit
$y$ for $H$ and at $s=-\infty$ to a point $u(0)\in W^u(x,f)$ in the unstable
manifold of $f$. The ``spike" is the $-\nabla f$ flow line connecting $x$ to $u(0)$. An inverse $\psi:
MC^*(f) \to FC^*(H) $ (up to chain homotopy) is defined by
counting isolated spiked discs flowing in the reverse direction:
$u(e^{-2\pi(s+it)})$ is an Floer continuation solution
for $H_{-s}$ converging at $s=-\infty$ to a $1$-orbit $y$ for $H$ and
at $s=\infty$ to a point $u(0)\in W^s(x,f)$.

\begin{figure}[ht]
\input{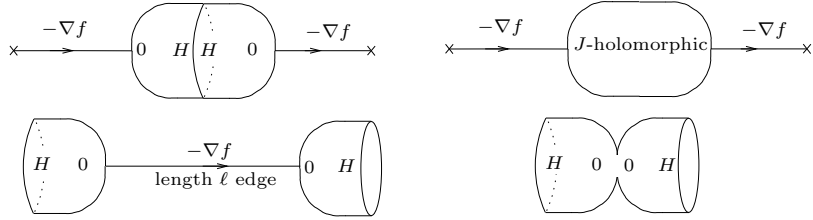}
\caption{Counting the solutions defining
$\phi\circ \psi$ and $\psi\circ \phi$.}\label{Figure PSS compositions}
\end{figure}

To show that $\phi, \psi$ are inverses up to chain homotopy,
consider $\phi\circ \psi$ and $\psi\circ \phi$.
After gluing solutions, these count configurations on the left in
Figure \ref{Figure PSS compositions}. Up to chain homotopy, the
maps don't change if we homotope respectively $H$ and $\ell$ to
zero. So we count the configurations on the right. In the
first case, for dimensional reasons the $J$-holomorphic sphere in
the figure is constant for generic $J$ (this uses the weak monotonicity of $M$). So we count isolated Morse
continuation solutions for a constant homotopy $f$, which are
constants (otherwise there is a $1$-family of
reparametrized solutions via $s\mapsto s+\textrm{constant}$). So
$\phi \circ \psi\! \simeq\! \textrm{id}$ are chain homotopic.  In
the second case, we used a gluing theorem for $J$-holomorphic
curves to obtain a cylinder (for such gluing arguments we recommend the appendix in \cite{McDuff-Salamon}). Up to chain homotopy, we can homotope
$H_s$ to an $s$-independent $H$. So we count isolated
Floer continuation solutions for a constant homotopy $H$, which
again must be constants. So $\psi \circ \phi \!\simeq\! \textrm{id}$.

\begin{figure}[ht]
\input{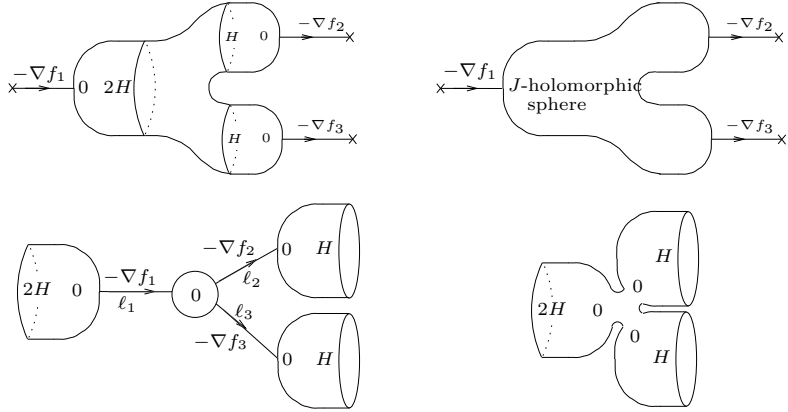}
\caption{Capping off Floer and Morse product solutions.}\label{Figure PSS ring structure}
\end{figure}

To prove that $\phi,\psi$ respect the ring structure observe Figure
\ref{Figure PSS ring structure}. We glue spiked discs to cap off,
respectively, Floer and quantum-Morse product solutions (the
quantum product replaces the vertex of the Morse-product
graph $P'$ of \ref{Subsection Morse-graph operations} with a $J$-holomorphic sphere). Up to chain homotopy, we can
homotope respectively $H$ and $\ell_1,\ell_2,\ell_3$ to zero,
reducing to the configurations on the right. In the second case, we used the
gluing theorem for $J$-holomorphic curves to get a pair of pants
Floer solution. So up to chain homotopy, we obtain
the products on $QH^*$ and $FH^*$ respectively. A
similar argument works for all (quantum) TQFT operations.
%
\subsection{Proof for Liouville domains}
\label{Subsection The c* maps are unital ring maps part 2}
%
For Liouville domains $M$, on the one hand the above construction simplifies because the $J$-holomorphic spheres must be constant by Stokes' theorem (so only the non-quantum Morse operations defined in \ref{Subsection Morse-graph operations} contribute), on the other hand the construction complicates because of the non-compactness of $\overline{M}$. Indeed, the compactness of the above moduli spaces must fail, for otherwise $\phi,\psi$ would always induce an isomorphism $SH^*(M) \cong H^*(M)$ which is false in general. Compactness in fact fails for $\phi$ since it uses a non-monotone homotopy $H_s$. The other map, 
$\psi:MC^*(f)\to SC^*(H)$, is actually well-defined by Lemma \ref{Lemma Maximum principle for Floer solns} since we can pick $H_{-s}$ to be monotone, and we ensure $-\nabla f$ is pointing inwards on the collar of $\overline{M}$ so that $MH^*(f)$ computes $H^*(M)$.
\begin{lemma}\label{Lemma Hamiltonians H0}
For $H=H^{\delta}$ as in Section \ref{Section Canonical map from ordinary cohomology},
$\psi: MH^*(f)\to SH^*(H^{\delta})$ is equal to $c^*_{H^{\delta}}:H^*(M) \cong
SH^*(H^{\delta})$ (see Theorem \ref{Theorem unit is image of 1}).
\end{lemma}
\begin{proof}
$H^{\delta}$ is $C^2$-small and Morse so the Floer solutions counted by $\psi$
are time-independent. Now $\psi$ changes by a chain homotopy if we
homotope $f,H_s$ to $H,H$. So we end up counting isolated Morse
continuation solutions for the constant homotopy $H$, which must
be constant solutions. So $\psi \simeq c^*_{H^{\delta}}$ are chain homotopic.
\end{proof}
\noindent \textbf{Goal:} To prove Theorem \ref{Theorem ring
structure on ordinary cohomology} by the methods of
\ref{Subsection The c* maps are unital ring maps} it remains to
construct a well-defined map
$\phi: SH^*(H^{\delta}) \to H^*(M)$
inverse to $\psi$. We will do this in \ref{Subsection Non-monotone
homotopies of small Hamiltonians}.
%
%
\subsection{Floer trajectories converging to broken Morse trajectories}
\label{Subsection Floer trajectories converging to broken Morse
trajectories}
%
%
The analytical machinery that Section \ref{Subsection Non-monotone homotopies of small Hamiltonians} is based upon is due to Salamon-Zehnder \cite{Salamon-Zehnder}. For the reader's convenience, we briefly review how they applied this machinery to prove $FH^*(H) \!\cong\! H^*(M)$ for
$C^2$-small $H$ on closed symplectic manifolds
$(M,\omega)$, assuming $M$ is symplectically aspherical: $\omega|_{\pi_2(M)}\!=\!0$ (this ensures in particular that $J$-holomorphic spheres are constant).

Let $S^1_T=\R/T\Z$. We will be considering $T$-periodic Floer
theory, as opposed to the $1$-periodic theory, and we will study
the limit $T\to 0$. Consider solutions $u: \R \times S^1_T \to \R^{2n}$ to Floer's
local equation $F_T (u)=0$, where $F_T: W^{1,2}(\R \times
S^1_T;\R^{2n}) \to L^2(\R \times S^1_T;\R^{2n})$ is the local Floer
operator defined by
$$\textstyle F_T(u) =\partial_s u + J\partial_t u + (S+A)u,$$
where $J$ is standard, $S=S(s)$ and $A=A(s)$ are continuous
matrix valued functions on $\R$ which are
respectively symmetric and antisymmetric, and which converge to
$S_{\pm}$ and $A_{\pm}=0$ as $s\to \pm\infty$ (the time-independence of $S$ and $A$ is the local equivalent of the assumption that the Hamiltonian $H$ is time-independent).
\\[1mm]
\noindent \textbf{Claim 1.}
\emph{The solutions $u$ of $F_T(u)=0$ are time-independent
whenever the $L^2$-norms satisfy $\| S \| + \| A \| \leq c <
1/T$ for some constant $c$.}

\noindent \emph{Sketch Proof.}\;\cite[Prop.4.2]{Salamon-Zehnder} The averaged
solution $\overline{u}(s)=(\int_0^T u(s,t)\, dt) /T$ is a solution
$F_0(\overline{u})=0$ of the local Morse operator $F_0: W^{1,2}(\R,\R^{2n})\to L^2(\R,\R^{2n})$,
$$\textstyle \textstyle F_0(\overline{u}) = \partial_s \overline{u} + (S+A)\overline{u},$$
and then one checks that $u(s,t)\equiv\overline{u}(s)$ when $S,A$
are small in the above sense. We remark that this result can also be proved using Fourier series. \qed
\\[1mm]\indent
Assume the hypothesis of Claim 1, and assume in addition that $S_{\pm}$ are non-singular (which is the
local equivalent of the non-degeneracy condition of orbits in
Floer theory).
\\[1mm]
\noindent \textbf{Claim 2.}
\emph{$F_T$ is onto iff $F_0$ is onto.}

\noindent \emph{Sketch proof.}\;\cite[Cor.4.3]{Salamon-Zehnder} This follows because they have the
same Fredholm index, and by Claim 1 they have isomorphic
kernels.\qed
\\[1mm]\indent
The local result can now be applied globally. Pick an
$\omega$-compatible almost complex structure on the closed
symplectic manifold $(M,\omega)$ such that $\omega(\cdot,J\cdot)$
is a Morse-Smale metric for a given Morse function $H:M\to \R$.
\\[1mm]
\noindent \textbf{Claim 3.}
\emph{For small $T>0$, every isolated finite energy Floer
trajectory $u:\R \times S^1_T \to M$ is time-independent and
regular $($the linearization of the Floer operator at $u$ is
onto$)$.}

\noindent \emph{Sketch proof.}\;\cite[Thm.7.3]{Salamon-Zehnder} When $u$ is a time-independent
Floer trajectory, i.e. a $-\nabla H$ flow, we reduce to the above
local case by picking an orthonormal frame: $F_0$ is onto by the
Morse-Smale condition, so $F_T$ is onto, so $u$ is regular.

To prove that $u$ must be time-independent we need a Gromov
compactness trick. By contradiction suppose there are isolated finite
energy Floer solutions $u_n:\R \times S^1_{T_n} \to M$ with
periods $T_n \to 0$, which are not time-independent, connecting
two given critical points $x,y$ of $H$ (we assume $T_n$ is so
small that there are no $T_n$-periodic Hamiltonian orbits, this
uses the fact that $H$ is Morse). The energy
\\[0.5mm]
\begin{tabular*}{\textwidth}{l@{\extracolsep{\fill}}cr@{\extracolsep{0pt}}} 
\strut & 
$
\displaystyle E(u_n)=\frac{1}{T_n}\int_{-\infty}^{\infty}\int_0^{T_n}
|\partial_s u_n|^2 \, dt\, ds = H(x)-H(y)
$
 & \strut 
\end{tabular*}
\\[0.5mm]
is bounded, therefore by Gromov compactness, a subsequence of the
$u_n$ must converge to some broken Morse trajectory $v_1\#\cdots
\#v_m$ for $H$, where $m$ is bounded by the difference of the
Morse indices of $x,y$. Notice that this uses the assumption $\omega|_{\pi_2(M)}=0$ to exclude sphere bubbling (sphere bubbles can otherwise naturally appear and in general one should expect the Gromov limits to consist of Morse trajectories intermediated by trees of $J$-holomorphic spheres -- an example of this phenomenon for the non-exact total space of $\mathcal{O}_{\mathbb{P}^1}(-1)$ is explained in \cite{Ritter4}).
Since the $u_n$ are isolated (up to $\R$-reparametrization), the index of the Fredholm problem is $1$, so also the broken Morse trajectory must have total index $1$. Therefore $m=1$ and $v=v_1$ is an isolated
Morse trajectory, i.e. a time-independent isolated Floer
trajectory.

We claim that for large $n$, $u_n(s+s_n',t)\equiv v(s)$ coincide
for some $s_n'\in \R$. This would contradict the assumption that
$u_n$ is not time-independent. So suppose by contradiction that
$u_n$ and $v$ never coincide in this way. 
If there are integers $C_n$ with $C_n T_n = T$, with $T$ so small
that $F_T$ is onto at $v$ by the first part of the proof, then
$F_T(u_n)=0$ and $u_n \to v$ contradicts the fact that $v$ is an
isolated solution of $F_T$. In general, pick integers $C_n$ with
$C_n T_n \to T$ and apply the same argument uniformly in an
interval around $T$.\qed
\\[1mm]
\noindent \textbf{Claim 4.} \emph{Given a Morse function $H:M \to
\R$ and a small enough $T>0$, all isolated Floer trajectories for $T\cdot
H$ are time-independent and regular.}

\noindent \emph{Proof.} The $T$-periodic and $1$-periodic Floer
theories are isomorphic, by sending a $T$-periodic Floer
trajectory $u(s,t)$ for $H$ to the $1$-periodic Floer trajectory
$u(sT,tT)$ for $T\cdot H$. By Claim 3, for $T\ll 1$, the
$T$-periodic Floer trajectories for $H$ are time-independent and
regular, so the same is true for the $1$-periodic ones for $T\cdot
H$.\qed
\\[1mm]\indent
We can apply the same arguments to Floer continuation solutions
$u:\R \times S^1_T \to M$ and Morse continuation solutions for
homotopies $H_s: M\to \R$ connecting two Morse functions and
picking $J_s$ so that $g_s=\omega(\cdot,J_s\cdot)$ is Morse-Smale
for $H_s$. Thus,
\\[1mm]
\noindent \textbf{Claim 5.} \emph{For small $T>0$, all
isolated Floer continuation solutions for $T\cdot H_s$ are time-independent
and regular.}\\[1mm]
\indent
We now show that the arguments of Claims 3-5 also hold for non-isolated Floer solutions. This result is stated in Salamon-Zehnder \cite[page 1343]{Salamon-Zehnder}: they mention that this generalization can be proved by a parameter-valued gluing argument. Strictly speaking, we only need this generalization in the proof of Lemma \ref{Lemma psi is a chain map}, and we mention in Remark \ref{Remark phi solutions} a work-around.
\begin{theorem}\label{Theorem parameter valued gluing}
  For a time-independent $H$ which is Morse-Smale for the metric induced by an almost complex structure $J$, and for small enough $T>0$, the finite energy Floer trajectories for $T\cdot H$ are time-independent and
regular. Similarly for Floer continuation solutions of $T\cdot H_s$.
\end{theorem}
\begin{proof}
We take a closer look at the proof of Claim 3.
Let $\mathcal{M}_T$ be the moduli space of Floer trajectories $u:\R \times S^1_T \to M$ joining two critical points $x,y$ of $H$ (for small $T$ the $1$-orbits of $T\cdot H$ are $\textrm{Crit}(H)$). The proof is by induction on the (finitely many possible values of the) Morse index difference between $x,y$. To keep the notation under control, we assume the index difference between $x,y$ is $2$, so the inductive hypothesis is the result for isolated Floer trajectories. Our proof of the inductive step from 1 to 2 easily generalizes to the general case.

Let $\mathcal{M}_0$ be the moduli space of Morse trajectories from $x$ to $y$. By the Morse-Smale assumption, this moduli space is a smooth, typically non-compact, manifold of the expected dimension $1$ (the Morse index difference minus the $\R$-reparametrization freedom), and it has a compactification $\overline{\mathcal{M}}_0$  consisting of all possibly-broken Morse trajectories from $x$ to $y$. So in particular, $\overline{\mathcal{M}}_0$ contains all Gromov limits that arise from sequences $u_{T_n}\in \mathcal{M}_{T_n}$ as $T_n \to 0$ (as emphasized in the proof of Claim 3, this uses that $J$-holomorphic spheres in $M$ are constant).

Let us abbreviate by $F_0,F_T$ the linearizations of the relevant Fredholm maps which define the moduli spaces $\mathcal{M}_0,\mathcal{M}_T$. Since Fredholm indices are locally constant and $F_0$ has index $2$, also 
$F_T$ must have index $2$ for small enough $T$ (the maps in $\mathcal{M}_T$ must get close to the maps in $\mathcal{M}_0$, otherwise thanks to the energy estimate $E=H(x)-H(y)$ a Gromov compactness argument as $T\to 0$ would produce a broken Morse trajectory which is not in $\overline{\mathcal{M}}_0$). So $\mathcal{M}_T$ has virtual dimension $1$. For small enough $T$, by Claim 2, the operators $F_0,F_T$ have isomorphic kernels. But since their Fredholm indices equal, also their cokernels have the same rank. Regularity of $F_0$ means that this cokernel vanishes, so also $F_T$ is regular. Therefore $\mathcal{M}_T$ is a smooth $1$-dimensional manifold, and it has a compactification $\overline{\mathcal{M}}_T$ by broken Floer trajectories. The boundary $\partial \overline{\mathcal{M}}_T$ consists of broken Floer trajectories, which by the inductive hypothesis must be time-independent for all small enough $T$, so $\partial \overline{\mathcal{M}}_T \subset \partial \overline{\mathcal{M}}_0$.

Observe that $\overline{\mathcal{M}}_0\subset \overline{\mathcal{M}}_T$ viewing Morse trajectories for $H$ as time-independent Floer trajectories. But both $\overline{\mathcal{M}}_0,\overline{\mathcal{M}}_T$ are compact $1$-manifolds so the inclusion $\overline{\mathcal{M}}_0\subset \overline{\mathcal{M}}_T$ identifies those connected components which contain some possibly-broken time-independent trajectory. So for these particular connected components of $\overline{\mathcal{M}}_T$ the Floer trajectories are time-independent.


Now run the final argument of Claim 3: we can assume there are integers $C_n$ with $C_nT_n = T$ with $T$ small so that $F_T$ is onto (otherwise apply the argument in a small interval around $T$ and use $C_n T_n \to T$). Suppose by contradiction that for $T_n \to 0$, there are $u_n \in \mathcal{M}_{T_n}$ lying in a connected component of $\overline{\mathcal{M}}_{T_n}$ which does not contain any possibly-broken time-independent trajectory. As $n\to \infty$, $u_n$ converges to a possibly-broken time-independent trajectory $u\in \overline{\mathcal{M}}_{0}$. Since $F_T(u_n)=F_{C_n T_n}(u_n)=0$, both $u_n,u\in \overline{\mathcal{M}}_T$. But $\overline{\mathcal{M}}_T$ is a compact $1$-manifold, so $u_n$ converging to $u$ implies that for large $n$ all the $u_n$ lie in the same connected component of $\overline{\mathcal{M}}_T$ as $u$ -- but this contradicts the choice of $u_n$ since $u$ is time-independent.
\end{proof}
%
\subsection{Non-monotone homotopies of small Hamiltonians}
\label{Subsection Non-monotone homotopies of small Hamiltonians}
We now deal with the ``Goal" of \ref{Subsection The c* maps
are unital ring maps part 2}. 
From now on, $M$ is a Liouville domain, $H=H^{\delta}$ is as in Section \ref{Subsection The maps c from ordinary
cohomology} so the $1$-orbits of $H$ are the critical points of $H$.
Pick a Morse function $f:\overline{M}\to \R$ and a (non-monotone) homotopy $H_s$ from $0$ to $H$
 of the form $f=f(R)$, $H_s=h_s(R)$ on the collar, with
$f'(R)>0$ and $h_s'(R)>0$ (unless $H_s\equiv 0$). This ensures
that $-\nabla f$ and $-\nabla H_s$ are inward-pointing on the
collar, so in particular the Morse cohomologies of $f$ and $H$ are well-defined and are isomorphic to $H^*(M)$.
\begin{definition}
A \emph{$\phi$-solution} $v\# u$ from $x\in
\textrm{Crit}(f)$ to $y(t)\equiv y\in \textrm{Crit}(H)$ consists of:
\begin{enumerate}
 \item $v:(-\infty,0] \to \overline{M}$, which is a $-\nabla f$ flowline from $x$ to $u(0)$;
 \item $u:\C \to \overline{M}$, such that $u(e^{2\pi(s+it)})$ is a Floer continuation solution for $H_s$
converging to $y(t)$ as $s\to \infty$.
\end{enumerate}
Write
$\mathcal{M}^{\phi}(x,y)$ for the moduli space of
$\phi$-solutions. We call it a \emph{$T$-periodic $\phi$-solution}
if we instead use $u(e^{2\pi(s+it)/T})$ and parametrize $t$ by $\R/T\Z$
instead of $\R/\Z$.
\end{definition}

We choose the relevant almost complex structures $J$, $J_s$ so that they induce Morse-Smale metrics for
$H$, $H_s$, and we choose $f$ generically so that the time-independent $\phi$-solutions are regular solutions of the defining Fredholm problem.

\begin{lemma}\label{Lemma phi solutions}
Fix a compact $M'\subset \overline{M}$ containing $M$. For small
enough $T>0$ all $\phi$-solutions for $T\cdot H_s$ lying in $M'$
are time-independent, regular and lie in $M$.
\end{lemma}

\begin{proof}
We dicuss the case of isolated solutions - the general case is proved by mimicking the proof of Theorem \ref{Theorem parameter valued gluing}.  

Apply the Gromov compactness trick of \ref{Subsection Floer
trajectories converging to broken Morse trajectories} Claim 3: suppose by
contradiction there are $T_n$-periodic isolated $\phi$-solutions $v_n \#
u_n \in \mathcal{M}^{\phi}(x,y)$ lying in $M'$, with $T_n\to 0$,
such that $u_n$ is not time-independent. The $v_n \# u_n$ have
bounded energy
$$
\begin{array}{lll}
E(v_n)+E(u_n)& = & (f(x)-f(u(0)))+(H(u(0))-H(y))\\
& \leq & f(x)-H(y)+\max(f|_M) + \max (H|_M)
\end{array}
$$
because $u(0)$ must lie in $M$ (since $-\nabla f$ is
inward-pointing on the collar). So by Gromov compactness we can
extract a subsequence converging to a time-independent broken $\phi$-solution. But since $v_n \#
u_n$ are isolated, the index of the Fredholm problem is $0$, so the broken $\phi$-solution also has total index $0$, so in fact it is not broken. So this time-independent $\phi$-solution $v\# u$ is isolated, and it is regular by the comments above the statement of the Lemma.

We claim that for large $n$, $u_n(s,t)\equiv u(s)$ coincide, contradicting the time-dependence of $u_n$. If $u_n$ and $u$ never coincide in this way, then a time-rescaling argument as in the proof of Claim 3 of \ref{Subsection Floer trajectories converging
to broken Morse trajectories} would contradict that $v\# u$ is an isolated $\phi$-solution. Thus
for small enough $T>0$, all $T$-periodic $\phi$-solutions lying in
$M'$ are time-independent and regular.

For time-independent $\phi$-solutions, since the ends lie in $M$,
the flowline cannot exit $M$ otherwise $-\nabla f$ or $-\nabla
H_s$ would point outward at some point on the collar. The result
follows by a rescaling trick like in Claim 4 of \ref{Subsection
Floer trajectories converging to broken Morse trajectories}.
\end{proof}

\begin{lemma}\label{Lemma no escape trick for phi solutions}
Let $\mathcal{M}$ be a connected component of
$\mathcal{M}^{\phi}(x,y)$. Then $\mathcal{M}$ consists either
entirely of solutions lying in $M$ or entirely of solutions which
escape $M'$.
\end{lemma}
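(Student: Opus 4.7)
The plan is to exhibit a clopen decomposition $\mathcal{M}^{\phi}(x,y) = A \sqcup B$, where $A$ consists of the solutions whose image lies inside $M$ and $B$ consists of the solutions whose image escapes $M'$ (i.e.\ is not contained in $M'$). Since a connected subset of a disjoint union of opens must be contained in one of the parts, the lemma will follow immediately.

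First I would show $A\cup B = \mathcal{M}^{\phi}(x,y)$. If $v\#u$ is not in $B$, then its image lies in $M'$. But Lemma \ref{Lemma phi solutions} asserts that every $\phi$-solution whose image is contained in $M'$ is automatically time-independent, regular, and contained in $M$; so $v\#u\in A$. The disjointness is obvious since $M\subset M'$, so $A$ and $B$ partition the moduli space.

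Next I would verify that both $A$ and $B$ are open in the usual $C^\infty_{\mathrm{loc}}$ topology on $\mathcal{M}^{\phi}(x,y)$. Openness of $B$ is the easy half: if $(v\#u)$ has some point $p$ lying outside the closed set $M'$, then any $C^0_{\mathrm{loc}}$-nearby $\phi$-solution $(v_n\#u_n)$ has a point near $p$ also outside $M'$. For openness of $A$, the content is that we must rule out escape at the non-compact ends of the domain ($v$ at $s=-\infty$ and $u$ at $s=+\infty$). The standard uniform exponential convergence estimates for Floer continuation solutions and for Morse flowlines near non-degenerate asymptotes $x$ and $y$ give, for any $\varepsilon>0$, a compact subdomain $K$ and a neighbourhood $\mathcal{U}$ of $(v\#u)$ in the moduli space such that every $(v_n\#u_n)\in\mathcal{U}$ is within $\varepsilon$ of $\{x,y\}$ outside $K$ and within $\varepsilon$ of $(v\#u)|_K$ on $K$. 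Since $x,y\in M$ and $(v\#u)(K)\subset M$, choosing $\varepsilon$ smaller than the distance from $M$ to $\partial M'$ forces $(v_n\#u_n)$ to have image in $M'$, and then Step 1 promotes this to image in $M$. Thus $A$ is open.

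The hard part will be the last openness claim — specifically, writing down the uniform exponential-convergence estimate near the non-degenerate asymptotes in a form that applies simultaneously to the Morse flow piece and the Floer continuation piece, and knowing that such estimates are standard for non-degenerate hyperbolic rest points of gradient-type equations. Once this is in hand, the decomposition is clopen, every connected component lies in $A$ or $B$, and the lemma is proved.
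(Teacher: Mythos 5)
Your proof is correct and takes essentially the same route as the paper: the paper's one-line proof -- ``$\phi$-solutions lying in $M'$ must lie in $M$, so they cannot be $C^0$-close to a $\phi$-solution which escapes $M'$'' -- is exactly your clopen decomposition stated tersely, with Lemma~\ref{Lemma phi solutions} supplying the dichotomy. The uniform exponential convergence near the non-degenerate asymptotes, which you correctly flag as the real content behind openness of $A$, is left implicit in the paper (it is the standard estimate of the type in Lemma~\ref{Lemma Smoothness and exp convergence}).
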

\begin{proof}
By Lemma \ref{Lemma phi solutions}, $\phi$-solutions lying in $M'$
must lie in $M$, so they cannot be $C^0$-close to a
$\phi$-solution which escapes $M'$.
\end{proof}

Similarly define $\psi$-solutions,
$\mathcal{M}^{\psi}(y,x)$ for $f$, $H_{-s}$ (see \ref{Subsection
The c* maps are unital ring maps}), with
$H_{-s}$ monotone. Recall that for these the maximum principle holds, so:

\begin{lemma}\label{Lemma psi solutions}
All $\psi$-solutions are contained in $M$. Moreover, for small
$T>0$, all $\psi$-solutions for $T\cdot H_{-s}$ are
time-independent, regular, and contained in $M$. $\qed$
\end{lemma}

\begin{definition}\label{Definition phi and psi maps}
Replace
$H_s$ by $T\cdot H_s$ so that the previous two lemmas hold. Define
$ \phi: SC^*(H) \to MC^*(f)$ and $\psi: MC^*(f) \to SC^*(H)
$
by counting only the isolated $\phi,\psi$-solutions \textbf{which
lie in $\mathbf{M}$} (so we ignore the $\phi$-solutions which exit $M'$).
\end{definition}

\begin{lemma}\label{Lemma psi is a chain map}
$\phi$ and $\psi$ are chain maps.
\end{lemma}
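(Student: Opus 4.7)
The plan is the standard one for PSS-type maps: for each of $\phi$ and $\psi$ I will analyze the boundary of the one-dimensional part of the relevant mixed Morse--Floer moduli space, and identify each codimension-one stratum with a matrix entry of $d_M\phi-\phi d_F$ (respectively $d_F\psi-\psi d_M$), where $d_M$ denotes the Morse differential on $MC^*(f)$ and $d_F$ the Floer differential on $SC^*(H)$. The signed count of boundary points of a compact one-manifold is zero, yielding the chain-map relation. No sphere or disc bubbling occurs, since $\omega=d\theta$ is exact.

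For $\psi$, the homotopy $H_{-s}$ is monotone, so Lemma \ref{Lemma Maximum principle for Floer solns} applies to the Floer continuation piece; and since $f=f(R)$, $h_{-s}=h_{-s}(R)$ have positive derivative on the collar, both $-\nabla f$ and $-\nabla H_{-s}$ point strictly inward there. Hence every $\psi$-solution lies in $M$, as recorded in Lemma \ref{Lemma psi solutions}. The compactification of $\mathcal{M}_1^\psi(y,x)$ then introduces precisely two types of broken configurations: (i) a Floer cylinder peels off at a $1$-periodic orbit $y'$ of $H$, contributing matrix entries of $\psi\circ d_F$; or (ii) the Morse tail $v$ breaks at a critical point $x'$ of $f$, contributing matrix entries of $d_M\circ\psi$. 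Standard gluing identifies these as the only codimension-one degenerations, and summing with coherent orientation signs gives $d_F\psi-\psi d_M=0$.

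For $\phi$ the homotopy from $0$ to $H$ is not monotone, so Lemma \ref{Lemma Maximum principle for Floer solns} is unavailable. Here Lemma \ref{Lemma phi solutions} is essential: after rescaling to $T\cdot H_s$ with $T$ sufficiently small, every $\phi$-solution contained in a prescribed compact $M'\supset M$ is automatically time-independent, regular, and in fact contained in $M$. I choose $M'$ large enough that any Gromov limit of a bounded-energy one-parameter family of $\phi$-solutions in $M$ still lies in $M'$; a priori energy is bounded because the asymptotes are fixed and $u(0)\in M$ (since $-\nabla f$ is inward-pointing on the collar). Then the one-dimensional part of $\mathcal{M}^\phi(x,y)\cap\{u\subset M\}$ is a compact one-manifold whose ends break in exactly the same two ways as for $\psi$, yielding $d_M\phi-\phi d_F=0$.

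The single non-routine point is the one just flagged: the definition of $\phi$ counts only those $\phi$-solutions which lie in $M$, and one must check that the boundary of this restricted moduli space consists of broken configurations whose pieces again lie in $M$, so that no contribution is lost. This self-containedness is exactly the content of Lemma \ref{Lemma phi solutions}, together with the maximum principle for the monotone Floer breakings at the Hamiltonian end and the inward-pointing property of $-\nabla f$ for Morse breakings. Once it is in hand, the remainder of the argument is the standard Morse--Floer gluing and boundary-counting that one uses for any PSS-style comparison map.
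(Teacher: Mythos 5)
Your argument is correct and follows essentially the same route as the paper: for $\psi$ a standard boundary count, and for $\phi$ the restriction to solutions lying in $M$, made legitimate by Lemma~\ref{Lemma phi solutions} and the dichotomy it forces (packaged in the paper as Lemma~\ref{Lemma no escape trick for phi solutions}, which you reconstruct in substance when you explain why the restricted moduli space has no artificial boundary). Your additional remarks on the a~priori energy bound via $u(0)\in M$ and on the inward-pointing gradients at Morse breakings are consistent with the paper's setup.
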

\begin{proof}
Consider a $1$-dimensional connected component
$\mathcal{M}\subset\mathcal{M}^{\psi}_1(y,x)$. The boundary
consists of broken $\psi$-solutions, where a Floer or a Morse
trajectory breaks off at one of the two ends. The count of these
broken solutions proves that $d_{\textrm{Floer}}\circ \psi - \psi
\circ d_{\textrm{Morse}}=0$. The same proof works for $\phi$ by
Lemma \ref{Lemma no escape trick for phi solutions}, since we can
ignore those $\mathcal{M}\subset \mathcal{M}^{\phi}_1(x,y)$ which
contain a $\phi$-solution which escapes $M'$.\end{proof}

\begin{remark}\label{Remark phi solutions} 
One can prove Lemma \ref{Lemma psi is a chain map} while only using the result of Lemma \ref{Lemma phi solutions} for isolated solutions (so we avoid appealing to Theorem \ref{Theorem parameter valued gluing}). Indeed, suppose by contradiction that there exist $T_n$-periodic $\phi$-solutions $v_n\# u_n$ of index $1$ lying in $M'$ but which exit $M$, where $T_n \to 0$. After passing to a subsequence, $v_n \# u_n$ converges as $T_n \to \infty$ either to a genuine time-independent $\phi$-solution of index $1$ or to a broken time-independent $\phi$-solution. But both those limits are contained in the interior of $M$, so also $v_n \# u_n$ lies in $M$, contradiction. 
\end{remark}

\begin{theorem}\label{Theorem psi phi are inverses}
$\psi,\phi$ are inverse to each other up to chain homotopy. In
particular, $\phi: SH^*(H^{\delta}) \to H^*(M)$ is an inverse for
$\psi=c^*: H^*(M) \to SH^*(H^{\delta})$.
\end{theorem}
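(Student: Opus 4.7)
The plan is to adapt the standard PSS chain-homotopy argument (sketched in Section \ref{Subsection The c* maps are unital ring maps} for the closed case) to the Liouville setting, combining the ignore-the-escapers convention of Definition \ref{Definition phi and psi maps} with the rescaling trick of Section \ref{Subsection Floer trajectories converging to broken Morse trajectories} to control non-compactness. I will construct a chain homotopy $K:SC^*(H^\delta)\to SC^{*-1}(H^\delta)$ with $dK+Kd=\psi\phi-\mathrm{id}$, and a symmetric one on the Morse side.

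For $\psi\phi\simeq\mathrm{id}$, consider the moduli space $\mathcal{K}(y_-,y_+)$ of tuples $(u_-,v,u_+,\ell)$, where $u_\pm:\C\to\overline{M}$ are Floer continuation half-planes for $H_s$ and $H_{-s}$ asymptotic to $y_\pm\in\mathrm{Crit}(A_H)$ at infinity, and $v:[0,\ell]\to\overline{M}$ is a $-\nabla f$ flow of variable length $\ell\in(0,\infty)$ from $u_-(0)$ to $u_+(0)$. Define $K$ by counting the isolated elements of $\mathcal{K}(y_-,y_+)$ that lie in $M$, discarding the connected components that escape $M'$ by the analogue of Lemma \ref{Lemma no escape trick for phi solutions}. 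The $1$-dimensional components then have three boundary strata: (i) Floer trajectory breakings at the $y_\pm$ ends, which assemble into $dK+Kd$; (ii) the limit $\ell\to\infty$, in which $v$ breaks at some critical point $x$ of $f$ and reconstitutes a $\phi$-solution followed by a $\psi$-solution through $x$, summing to $\psi\circ\phi$; and (iii) the limit $\ell\to 0$, in which $u_-(0)=u_+(0)$ and the pair $(u_-,u_+)$ fuses into a single Floer cylinder for the non-monotone concatenated continuation $H_s*H_{-s}$ (starting and ending at $H$, dropping to $0$ in the middle).

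To identify stratum (iii) with $\mathrm{id}$, I apply a secondary chain homotopy deforming $H_s*H_{-s}$ through $s$-dependent Hamiltonians to the constant homotopy of $H$. Non-monotonicity forbids the direct use of Lemma \ref{Lemma Maximum principle}, so I replace the Hamiltonian by $T\cdot H$ for small $T>0$ and invoke the time-independence results of Section \ref{Subsection Floer trajectories converging to broken Morse trajectories}: every continuation solution of the rescaled family is time-independent, hence a $-\nabla(TH)$ trajectory, which stays in $M$ because $-\nabla H^\delta$ points inward along the collar. At the terminal constant homotopy of $H$, the isolated time-independent solutions are exactly the constants at $\mathrm{Crit}(H)$, and a signed count yields the identity. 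The opposite direction $\phi\psi\simeq\mathrm{id}_{MC^*(f)}$ is proved by the mirror construction: glue a $\psi$-solution to a $\phi$-solution along the Floer end, vary the length of the now two-sided Morse spike, and at $\ell\to 0$ deform the resulting non-monotone cylinder to the constant homotopy of $0$. Lemma \ref{Lemma psi solutions} confines everything to $M$ automatically, so no escape bookkeeping is needed on the Morse side.

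The main obstacle is stratum (iii) and its analogue: a priori, the non-monotone $H_s*H_{-s}$ can produce escaping cylinders that spoil the boundary count, and even interior solutions need not converge to constants without further input. The twofold remedy---(a) the topological fact (Lemma \ref{Lemma no escape trick for phi solutions}) that escaping components are \emph{separated} from interior ones, so their presence cannot interfere with an interior-component count, and (b) the analytic rescaling argument that collapses the whole Floer continuation problem for $H_s*H_{-s}$ to a time-independent Morse problem inside $M$---is what forces the $\ell\to 0$ contribution to be exactly the identity, and hence makes $\phi:SH^*(H^\delta)\to H^*(M)$ a genuine chain-homotopy inverse to $\psi=c^*$.
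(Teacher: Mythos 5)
Your proposal follows the paper's overall strategy: build a chain homotopy out of a $1$-dimensional parametrized moduli space, control non-compactness by the dichotomy of Lemma~\ref{Lemma no escape trick for phi solutions} (discard the escaping components), and use the $T\cdot H$ rescaling of Claim~5 in~\ref{Subsection Floer trajectories converging to broken Morse trajectories} to force time-independence and thereby confinement to $M$. The difference is which composite you flesh out: the paper treats $\phi\circ\psi$ explicitly, with the parametrized space $\bigcup_{c\in[0,1]}\mathcal{M}^{\phi\circ\psi}(x,x';f,c\cdot H_s)$ where the Hamiltonian scale $c$ is the homotopy parameter, so that $c\to0$ directly yields Morse continuation for the constant homotopy of $f$; you treat $\psi\circ\phi$ explicitly, using the spike-length parameter $\ell$, which forces the extra secondary chain homotopy at $\ell\to0$ that you correctly identify and handle. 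That secondary homotopy is genuinely needed for $\psi\circ\phi$ (a cylinder for $H_s*H_{-s}$ must still be deformed to the constant homotopy of $H$), so your explicit account actually fills in what the paper leaves to ``Similarly.''

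Two slips in your one-line sketch of the ``mirror construction'' for $\phi\circ\psi$. First, the geometry described there is wrong for that composite: gluing a $\psi$-solution to a $\phi$-solution along the Floer end produces a Floer sphere (two half-planes glued along the Hamiltonian orbit) flanked by two \emph{outer} Morse spikes; there is no ``two-sided Morse spike'' of finite length $\ell$ in the middle, and no ``non-monotone cylinder'' at a limit $\ell\to0$. The correct mirror of your argument is the paper's $c$-parametrization: as $c\to0$ the sphere becomes $J$-holomorphic, hence constant by exactness, and what remains is Morse continuation for a constant homotopy of $f$, i.e.\ the identity, without any secondary step. Second, it is not true that ``no escape bookkeeping is needed on the Morse side.'' The glued configurations for $\phi\circ\psi$ contain $\phi$-type half-planes, which use the non-monotone homotopy $H_s$ and can escape; Lemma~\ref{Lemma psi solutions} only controls the $\psi$-type half-planes. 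One must still invoke the analogue of Lemma~\ref{Lemma no escape trick for phi solutions}, discarding components that escape $M'$, exactly as the paper does. Since these are errors only in the sentence summarizing the second composite, the main body of your argument stands; but the summary of $\phi\circ\psi$ should be corrected to use the Hamiltonian-scale parameter and to retain the escape bookkeeping.
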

\begin{proof}
Run the argument of \ref{Subsection The c* maps are unital ring
maps}, using that all $J$-holomorphic spheres are constant since the symplectic form is exact. The proof of Figure \ref{Figure PSS compositions} involves
the connected $1$-dimensional components $\mathcal{M}$ of the
moduli space $\bigcup_{c\in [0,1]}
\mathcal{M}^{\phi\circ \psi}(x,x';f,c\cdot H_s)$ of glued
solutions for $\phi \circ \psi$ for $c\cdot H_s$. The count of all
$\partial \mathcal{M}$ proves the existence of the chain homotopy
mentioned in Figure \ref{Figure PSS compositions} like in the
proof of Theorem \ref{Theorem Homotopy of surfaces}. We need to
justify why we can ignore those $\mathcal{M}$ which contain a
solution which escapes $M'$. This is proved like Lemma \ref{Lemma
no escape trick for phi solutions}, since the glued solutions for
$c\cdot H_s$ either lie in $M$ or escape $M'$, by combining Lemmas
\ref{Lemma phi solutions} and \ref{Lemma psi solutions}. \\ \indent
\emph{Technical Remark. More precisely, this last argument is as follows. The glued solutions depend on a gluing parameter $\lambda>0$ and as $\lambda\to \infty$ they converge to broken solutions, which by assumption lie in $M$. So for $\lambda\gg 0$, we claim that the glued solutions in $\mathcal{M}$ must also lie in $M$. Suppose not. Then there is a sequence of glued solutions $w_n^{\phi}\#_{\lambda_n} w_n^{\psi}\in \mathcal{M}$ lying in $M'$ which exit $M$ with $\lambda_n \to \infty$ as $n\to \infty$. By passing to a subsequence, we can assume that they arise from gluings for $c_n\cdot H_s$ with $c_n\to c_{\infty}\in [0,1]$. In the limit, this converges to a broken solution in $M'$ consisting of a $\phi$-solution and a $\psi$-solution (defined using $c_{\infty} H$) such that at least one of them does not lie in the interior of $M$. So this contradicts Lemma
\ref{Lemma phi solutions} or \ref{Lemma psi solutions}.
}\\ \indent
So $\phi\circ \psi$ is chain homotopic to the map we get at $c=0$,
which is the identity since it counts isolated Morse continuation
solutions for a constant homotopy (\ref{Subsection The c* maps are
unital ring maps}). Similarly $\psi\circ \phi \simeq \textrm{id}$.
\end{proof}
\subsection{Proof of Theorem \ref{Theorem ring structure on ordinary
cohomology}}\label{Subsection S,S' pairs}
Let $H=H^{\delta},H_s,f$ be as above. Let $S$ be a model surface
with weights $A_a,B_b \leq 1$ (see \ref{Subsection Model Riemann
Surface}). Denote by
$$\phi_a: SH^*(A_a H^{\delta}) \to MH^*(f_a) \;\textrm{ and }\;\psi_b:
MH^*(f_b) \to SH^*(B_b H^{\delta})
$$
the $\phi$-,$\psi$-maps obtained for $A_a \cdot H_s$, $B_b \cdot
H_{-s}$ and Morse functions $f_a,f_b$ obtained by generically
perturbing $f$. Let $S'$ be the graph associated to $S$
(\ref{Subsection Morse-graph associated to S}, \ref{Subsection
Morse-graph operations}). Then
$$\psi_S:\otimes_b SH^*(B_b H^{\delta})\to \otimes_a SH^*(A_a H^{\delta})
\;\textrm{ and }
\;\psi_{S'}:\otimes_b MH^*(f_b) \to \otimes_a MH^*(f_a).$$

\begin{lemma}\label{Lemma preserve TQFT structures}
$\otimes_a \phi_a \circ \psi_S \circ \otimes_b \psi_b$ is chain
homotopic to $\psi_{S'}$ (compare Figure \ref{Figure PSS ring structure}).
\end{lemma}
\begin{proof}
The proof follows like Theorem \ref{Theorem psi phi are inverses} using the
fact that glued solutions of $\otimes_a \phi_a \circ \psi_S \circ
\otimes_b \psi_b$ either lie in $M$ or escape $M'$, combining
Lemmas \ref{Lemma Maximum principle for Floer solns}, \ref{Lemma
phi solutions}, \ref{Lemma psi solutions}.
\end{proof}
Similarly, $\otimes_a \psi_a \circ \psi_{S'} \circ \otimes_b
\phi_b$ is chain homotopic to $\psi_S$, in the obvious notation.
Theorem \ref{Theorem ring structure on ordinary cohomology} then
follows upon taking direct limits and using Theorems \ref{Theorem
Operations are compatible with limit} and \ref{Theorem psi phi are
inverses}.
%
\subsection{The maps $c^*:H^*(L) \to HW^*(L)$ preserve the TQFT (Theorem \ref{Theorem c* map for HWL preserve TQFT})}\label{Subsection c* respects TQFT Lagrangian case}
Recall from Section \ref{Section Canonical map from ordinary cohomology} that in the wrapped case there is a map $c^*:H^*(L) \cong HW^*(L;H^{\delta}) \to HW^*(L)$. To prove that this map is a TQFT map one needs to show that continuation maps 
$HW^*(L;H_+) \to HW^*(L;H_-)$ are compatible with the TQFT, and that the isomorphism $H^*(L)\otimes \Lambda \to HW^*(L;H^{\delta})$ of Section \ref{Section Canonical map from ordinary cohomology} is compatible with the TQFT. The proof of the former is analogous to the proof for the $SH^*$ groups (Theorem \ref{Theorem Operations are compatible with
limit}), but the proof of the latter requires a PSS-description of the isomorphism $H^*(L)\otimes \Lambda \to HW^*(L;H^{\delta})$ (compare Section \ref{Subsection The c* maps are unital ring maps}). We will not pursue this in great detail, but we will mention how this can be done by changing the setup in Section \ref{Subsection The c* maps are unital ring maps} as follows.

We assume $\omega|_{\pi_2(M,L)}=0$ so $J$-holomorphic discs bounding $L$ are constant (this will hold by Stokes' theorem  in the Liouville setup when mimicking \ref{Subsection The c* maps are unital ring maps part 2} since $\overline{L}\subset \overline{M}$ is exact). Let $\mathbb{D}=\{ z\in \C: |z|\leq 1\}$. The spiked disc $u:\C \to M$ of \ref{Subsection The c* maps are unital ring maps} which intersected the $-\nabla f$ spike at $0\in \C$ gets replaced by a punctured disc $u: \mathbb{D}\setminus \{+1\} \to M$ which intersects the spike at $z=-1\in \partial \mathbb{D}$. We pick a holomorphic strip-like end parametrization $[0,\infty) \times [0,1]$ for $\mathbb{D}\setminus \{+1\}$ near $+1 \in \partial \mathbb{D}$, and on this strip we define the interpolation $H_s$ from $H_s =0$ for $s\leq 1$ to $H_s = H^{\delta}$ for $s\geq 2$. We require that the map $u:\mathbb{D} \setminus \{+1\} \to M$ is $J$-holomorphic away from the strip-like end; it satisfies Floer's equation for $H_s$ on the strip-like end; and it satisfies the Lagrangian boundary condition $u(\partial \mathbb{D})\subset L$. The discussion in Section \ref{Subsection The c* maps are unital ring maps} can now be carried out analogously. For example, in Figure \ref{Figure PSS compositions}, instead of a sphere which we homotope to a holomorphic sphere, we have a disc which arises from gluing two punctured discs; we then homotope the Hamiltonian to zero so the disc becomes a $J$-holomorphic disc $v$ bounding $L$, which must be constant since its energy $\int v^*\omega$ is zero by $\omega|_{\pi_2(M,L)}=0$.
%
%
%
\section{Appendix 1: TQFT structure on $SH^*(M)$}
\label{Section Floer solutions}
%
%
\subsection{Model Riemann surfaces}\label{Subsection Model Riemann
Surface}
%
See Figure \ref{Figure Riemann surface}. Let $(S,j)$ be a Riemann
surface with $p\geq 1$ negative punctures and $q\geq 0$ positive
punctures, with a fixed choice of complex structure $j$ and a
fixed choice of parametrization $(-\infty,0]\times S^1$ and
$[0,\infty)\times S^1$ respectively near the negative and positive
punctures so that $j\partial_s =
\partial_t$. We call these parametrizations the \emph{cylindrical
ends}. We assume $S$ has no boundary and that away
from the ends $S$ is compact. 

There is a
contractible set of choices of complex structures $j$ on $S$
extending the $j$ on the ends ($j\partial_s =
\partial_t$), see \cite[2.2.1]{Schwarz}. We emphasize that we keep $j$ fixed (see the Remark in \ref{Subsection Compactness of Moduli Spaces}).

We also fix a one-form $\beta$ on $S$ satisfying $d\beta\leq 0$,
such that on each cylindrical end $\beta$ is some positive
constant multiple of $dt$. These constants $A_1,\ldots, A_p,
B_1,\ldots B_q>0$ will be called \emph{weights}. The reason for
using $\beta$ is that we do not have a global form $dt$ for a
general surface $S$, unless $S$ is a cylinder. The condition
$d\beta\leq 0$ will be crucial to prove an
priori energy estimate and a maximum principle (Sections \ref{Subsection Energy Appendix} and
\ref{Subsection Maximum principle for floer solns}). 

By Stokes' theorem $\sum A_a - \sum B_b = -\int_S d\beta \geq 0$ (observe this forces $p\geq 1$). We now show conversely that given $A_a,B_b>0$, this inequality is the only obstruction to constructing $\beta$.

\begin{lemma}\label{Lemma beta exists}
 If $c=\sum A_a - \sum B_b\geq 0$ then such forms $\beta$ exist. If $c=0$, then $d\beta=0$. If $c>0$, then one can ensure that $d\beta=0$ except on an arbitrarily small disc $D$ embedded in $S$.
\end{lemma}
\begin{proof}
Suppose $A_a,B_b$ satisfy $c=\sum A_a - \sum B_b=0$. Consider the exact sequence 
%
%
$
H^1(S) \stackrel{f}{\to} H^1(\partial S) \stackrel{g}{\to} H^2(S,\partial S) \to 0
$
 for the pair $(S,\partial S)$. On de Rham forms, $f[\beta] = [\beta|_{\partial S}]$ is the pull-back and $g[\alpha] = [d\widetilde{\alpha}]$ for any extension $\widetilde{\alpha}$ of $\alpha$ to $S$. We can identify $H^2(S,\partial S) \cong \R$ by integration $[\sigma] \to \int_S \sigma$. Let $\alpha$ be the $1$-form on $\partial S$ given by the data $A_a\, dt, B_b\, dt$. Under that identification, the map $g$ becomes $g[\alpha]=-\sum A_a + \sum B_b =0$. So by exactness there is a 1-form $\beta$ on $S$ with $d\beta=0$, and $[\beta|_{\partial S}] = [\alpha]$. So by adding to $\beta$ an exact form supported near the ends we can ensure that $\beta|_{\partial S}$ equals the data $A_a\, dt, B_b\, dt$ on the ends.

Now suppose $c=\sum A_a - \sum B_b > 0$. Let $\widetilde{S}$ be the Riemann surface $S$ with an additional positive puncture at a chosen point of $S$, and fix a cylindrical end parametrization near this new puncture. The above procedure applied to $\widetilde{S}$ yields a 1-form $\widetilde{\beta}$ on $\widetilde{S}$ with $d\widetilde{\beta}=0$, such that $\widetilde{\beta}$ equals $A_a\, dt, B_b\, dt$ near the original ends and equals $c\, dt$ on the new end. Finally, extend $\widetilde{\beta}$ to a form $\beta$ on $S$ by defining $\beta = h(s)\, dt$ on the new cylindrical end $[0,\infty)\times S^1$ with $h=c$ for $s\leq 1$, $h=0$ for $s\geq 2$, and $h'\leq 0$ everywhere. So $d\beta=0$ on $S$ except in the small region near the new puncture where $h'\neq 0$, and there $d\beta = h'(s)\, ds \wedge dt \leq 0$.
\end{proof}
%
%
\subsection{Floer solutions} \label{Subsection Floer solutions S surface}
%
Let $H: \overline{M} \to \R$ be a Hamiltonian linear at infinity (\ref{Subsection Hamiltonians Linear At Infty}) and assume $H\geq 0$ (required in \ref{Subsection Energy}).
A Floer solution is a smooth map $u :S \to \overline{M}$
converging to Hamiltonian orbits as $s\!\to\! \pm \infty$ on the ends,
such that $du-X\otimes \beta$ is $(j,J)$-holomorphic:
$$
(du - X\otimes \beta)^{0,1} \equiv \tfrac{1}{2}\left\{ (du - X\otimes \beta) + J \circ (du-X\otimes \beta)\circ j\right\} = 0.
$$
On a cylindrical end with weight $c$ this is Floer's equation
$\partial_s u + J (\partial_t u - cX)\!=\!0$ for the Hamiltonian $cH$.
So $u$ converges to $1$-orbits of $A_a H,B_b H$.

Let $\mathcal{M}(x_1,\ldots,x_p; y_1,\ldots, y_q;S,\beta)$ denote
the moduli space of Floer solutions which converge to Hamiltonian
orbits $x_a$ at the negative ends and $y_b$ at the positive ends.
We abbreviate it by $\mathcal{M}(x_a; y_b;S,\beta)$ and we call
$x_a,y_b$ the \emph{asymptotics}. We write
$\mathcal{M}_k(x_a;y_b;S,\beta)$ for the $k$-dimensional part of
$\mathcal{M}(x_a; y_b;S,\beta)$.
\begin{example}[Continuation cylinders]\label{Example Continuation cylinder}
Let $S$ be the cylinder $Z\!=\!\R\times S^1$ with $p\!=\!q\!=\!1$; weights
$A_1=m', B_1=m$ with $m'\geq m$; $\beta\!=\!f(s)\,dt$ with $f'(s)\!\leq\! 0$. Then $u\in
\mathcal{M}(x;y;Z,\beta)$ solves $\partial_s u + J(\partial_t u -
f(s) X)=0$, so it is a Floer continuation solution for the
homotopy $H_s=f(s)\,H$ from $m'H$ to $mH$. The monotonicity
condition $\partial_s h_s'=f'(s)\,h'\leq 0$ of \ref{Subsection
Floer continuation solutions} is equivalent to the condition
$d\beta=f'(s)\, ds\wedge dt\leq 0$ of \ref{Subsection Model
Riemann Surface}. Thus the count of $\mathcal{M}_0(x;y;Z,\beta)$
is the continuation map $SH^*(mH) \to SH^*(m'H)$.
\end{example}
%
\subsection{Energy}\label{Subsection Energy}
%
A solution $u: S \to \overline{M}$ of $(du - X\otimes \beta)^{0,1}=0$ has energy
defined by $$E(u)=\frac{1}{2} \int_S \| du-X\otimes \beta \|^2\,\textrm{vol}_S= \int_S u^*\omega-u^*(dH)\wedge \beta.$$ In \ref{Subsection Energy Appendix} we show that, since $H\geq 0$, $\omega=d\theta$ and $d\beta\leq  0$, the energy is determined a priori from the asymptotics:
$$
E(u) \leq \int_S u^*\omega - d(u^*H\, \beta)  = \!\!\!\!\sum_{\textrm{negative ends}\;
a}\!\!\!\!\!\!\!\! {\mathbb{A}}_{A_a H}(x_a) - \!\!\!\!\sum_{\textrm{positive
ends}\; b}\!\!\!\!\!\!\!\! {\mathbb{A}}_{B_b H}(y_b).
$$
%
%
%
%
\subsection{Fredholm theory}
\label{Subsection Smoothness and Fred prop}
%
Lemma \ref{Lemma
Maximum principle for Floer solns} proves that all $u\in \mathcal{M}(x_a; y_b;S,\beta)$ lie in the
compact set $R \leq \max (R(x_a),R(y_b),R_0)$ in $\overline{M}$ for $J$ of contact type for $R\geq R_0$.
So the following Lemmas don't notice that
$\overline{M}$ is non-compact, so the proofs for closed manifolds apply.
After a small generic time-dependent
perturbation of $(H,J)$ on the ends, the following hold:

\begin{theorem}\label{Theorem Smoothness and exp convergence}\cite[2.5.7]{Schwarz}. Solutions of
$(du-X\otimes \beta)^{0,1}=0$ are smooth and those of finite energy converge exponentially fast to Hamiltonian orbits at the
ends, that is near each end $|\partial_s u|\leq ce^{-\delta |s|}$
for some constants $c,\delta>0$.
\end{theorem}
\begin{theorem}\label{Theorem index of Fredholm operator}\cite[3.1.31,
3.3.11]{Schwarz} or \cite[Prop.4]{Bourgeois-Mohnke}. Let $u$ be a
Floer solution. Let $D_u$ be the linearization at $u$ of the
operator defining $(du-X\otimes
\beta)^{0,1}=0$ (see \ref{Subsection Linearization Du}), then $D_u$ is Fredholm with 
$\textstyle \mathrm{index}\, D_u = \sum |x_a| - \sum |y_b| +2n(1-g-p),$
where $g$ is the genus of $S$ $($after filling in the punctures$)$.
See Remark \ref{Remark CZ convention} for the grading conventions. See \ref{Subsection Choice of trivializations} for the index calculation.
\end{theorem}
%
%
%
%
\subsection{Smoothness of the Moduli Spaces}\label{Subsection
Smoothness of Moduli Spaces}\label{Remark cannot make H=0}
%
For closed symplectic manifolds $M$, the condition $d\beta\leq 0$ was not necessary. So the
natural approach \cite{Schwarz} was to pick any Hamiltonians
$H_a,H_b$ on the ends and any tensor $\kappa=X\otimes \beta$ which
equals $X_{H_a},X_{H_b}$ on the ends, with $\kappa=0$ away from the
ends $($so $u$ is $J$-holomorphic there: $du^{0,1}=0)$. Using this approach for closed $M$, Schwarz \cite[4.2.17,
4.2.20]{Schwarz} proved smoothness of
$\mathcal{M}(x_a;y_b;S,\beta)$ for generic $\kappa$ close to
0 away from the ends. 
Unfortunately, for
Liouville domains $M$ we cannot use this approach, since the maximum principle and the a priori energy estimate fail without $d\beta\leq 0$. So we need to reprove the smoothness result.

Just as in the Technical Remarks in \ref{Subsection Transversality and Compactness}, we need the $1$-orbits of $A_aH,B_b H$ to be nondegenerate so that they are isolated and so that $\mathcal{M}(x_a;y_b;S,\beta)$ is the zero set of a Fredholm map -- so one typically needs to make a time-dependent perturbation $H_t$ of $H$. Just as for Floer continuation solutions in \ref{Subsection Floer continuation solutions} 
we typically need to make a $C^2$-small perturbation $J_z$ of $J$ depending on $z \in S$ and supported away from the ends of $S$ to ensure the smoothness of $\mathcal{M}(x_a;y_b;S,\beta)$. In \ref{Subsection Linearization Du} we explain this perturbation. Such perturbed $J$ are called \emph{regular} (we discuss this below). 
So the precise dependence of $H,J$ on parameters is as follows. 
\begin{enumerate}
 \item $J\in C^{\infty}(S\times
\overline{M},T\overline{M})$ is $\omega$-compatible, and $H\in C^{\infty}(S \times \overline{M}, \R)$ with $H\geq 0$;

\item $H_z=H(z,\cdot)$ does not depend on $z\in S$ except possibly on the ends of $S$;

\item for large $R$, $J_z=J(z,\cdot)$ is of contact type ($dR = J^*_z\theta$) for each $z\in S$;

\item for large $R$, $H_z=h_z(R)$ only depends on $R$, and $h_z'(R)\geq 0$; 

\item on the ends of $S$: for large $|s|$ both $H,J$ can depend on $t$ but not on $s$; for large $R$, $\partial_s h_z'(R)\leq 0$ on the ends of $S$.
\end{enumerate}
\emph{Remarks. The conditions ensure that Lemma \ref{Lemma Maximum principle for Floer solns}.(2) applies. Since we can construct $SH^*$ using Hamiltonians which for $R\gg 0$ are linear of generic slope and time-independent, we can further simplify the above by taking $h_z=h$ and $J_z = J$ to be independent of $z$ for $R \gg 0$. Indeed, for $R\gg 0$ there are no $1$-orbits of $A_aH,B_bH$ (see \ref{Subsection Liouville domains Definition}) and Lemma \ref{Lemma Maximum principle for Floer solns} ensures that Floer solutions don't enter the region $R\gg 0$, so we don't need to perturb there.}

The equation $(du-X\otimes \beta)^{0,1}=0$ corresponds to $\overline{\partial}u=0$ for the operator
$$\textstyle
\overline{\partial} = \frac{1}{2} \{ (d - X\otimes \beta)+ J
\circ (d - X\otimes \beta)\circ j \}.
$$
So the moduli
space $\mathcal{M}(x_a;y_b;S,\beta)$ is the intersection of the section $\overline{\partial}$ and the zero section of a suitable Banach bundle (for details on this bundle see \cite[2.2.4]{Schwarz}):
$$\mathcal{M}(x_a;y_b;S,\beta) = \overline{\partial}\,^{-1}(0).$$
Regularity of $J$ means that $\overline{\partial}$ is transverse
to the zero section and so $\mathcal{M}(x_a;y_b;S,\beta)$ is a
smooth manifold by the implicit function theorem. Regularity of $J$ is
equivalent to the surjectivity of the linearizations $D_u$ of
$\overline{\partial}$ at Floer solutions $u$, in particular the
index of $D_u$ will then be the dimension of the kernel of $D_u$.
This kernel is the tangent space of
$\overline{\partial}\,^{-1}(0)$.

So, when
gradings are defined (\ref{Subsection Maslov index and
Conley-Zehnder index}), by Theorem \ref{Theorem index of
Fredholm operator} we have:
$$
\textstyle \textrm{dim}\, \mathcal{M}(x_a;y_b;S,\beta) = \sum |x_a| - \sum
|y_b| +2n(1-g-p).
$$
%
%
%
\subsection{Existence of regular $J$}
\label{Appendix Genericity of J}\label{Subsection Linearization Du}
%
We claim that
for generic (in the sense of Baire) $J$,
\emph{which depend on $S$}, regularity holds: the linearization
$D_u$ of $\overline{\partial}$ is surjective at Floer solutions $u$.

Let $(S,j,\beta)$ be the data described in \ref{Subsection Model
Riemann Surface} and $J=J_z,H=H_z$ the data described in \ref{Subsection
Smoothness of Moduli Spaces} (we assume that near the ends $H=H_t$ has been perturbed time-dependently to ensure that the asymptotics are nondegenerate).

By Lemma \ref{Lemma Maximum principle for Floer solns}
all Floer solutions lie in a compact $K\subset
\overline{M}$ determined by the asymptotics. We will achieve
transversality by perturbing $J$ on $K$, keeping $J$ of contact
type outside of $K$ (so the maximum principle still holds outside
$K$). 

Let $D_u: W^{1,\mathfrak{p}}(u^*T\overline{M}) \to
L^{\mathfrak{p}}(\textrm{Hom}^{0,1}(TS,u^*T\overline{M}))$ denote the linearization of $\overline{\partial}$ at $u\in \mathcal{M}(x_a;y_b;S,\beta)$ (where $\mathfrak{p}>2$). We claim that after a
generic perturbation of $J$, all $D_u$ are surjective
(equivalently: $\overline{\partial}$ is transverse to the zero section).

Consider $\overline{\partial}$ for all choices of $J$
simultaneously, where $J$ can depend on $z\in S$. Let
$\mathcal{J}$ denote the collection of all $J$, appropriately
topologized as a Banach bundle.

\emph{Technical Remarks. The precise conditions on $J$ are listed in (1), (3), (5) of Section \ref{Subsection Smoothness of Moduli Spaces}. Call $J_0$ a fixed such $J$, and fix a large $s_{\infty}>0$. Define $\mathcal{J}$ to be the set of such $J$ which agree with $J_0$ for $|s| \geq s_{\infty}$ on the ends (this is to ensure that a generic perturbation of $J_0$ within $\mathcal{J}$ will still satisfy those conditions). What regularity we demand from the $J$'s depends on how we want to topologize $\mathcal{J}$. To topologize, one first 
identifies $\mathcal{J}$ with a suitable vector subbundle of $\mathrm{End}(TM)$-valued sections over $S$ \cite[4.2.10]{Schwarz}. To make $\mathcal{J}$ into a Banach bundle, one can use Floer's $C^{\infty}_{\epsilon}$-norm \cite[4.2.6-4.2.9 and 4.2.11]{Schwarz}, alternatively one can, as in \cite[Sec.3.4]{McDuff-Salamon}, define $\mathcal{J}=\mathcal{J}^{\ell}$ using the $C^{\ell}$-norm (for large $\ell\in \N$ so that the Sard-Smale theorem applies for a given index of the Fredholm map) and afterwards one proves that smooth regular $J\in \mathcal{J}^{\ell}$ are generic also in the (non-Banach) $C^{\infty}$-topology. The approach in \cite[Sec.(8h)]{Seidel-book} and \cite[Sec.(3b)]{Abouzaid-Seidel} allows the $J\in \mathcal{J}$ to differ from $J_0$ also on the ends: it only requires the $J$ to converge to $J_0$ faster than exponentially on the ends. In this case, the perturbed $J$ may not satisfy (5) of \ref{Subsection Smoothness of Moduli Spaces}, but that is not problematic by Lemma \ref{Lemma Maximum principle for Floer solns}.(3).
}

Linearizing at a solution $\overline{\partial}_J (u)=0$:
$$
\begin{array}{c}\displaystyle
F:T_J\mathcal{J} \oplus W^{1,\mathfrak{p}}(u^*T\overline{M}) \longrightarrow
L^{\mathfrak{p}}(\textrm{Hom}^{0,1}(TS,u^*T\overline{M})),\\ \displaystyle
F(Y,\xi) = {\textstyle \frac{1}{2}}\, Y_{z,u(z)} \circ (du - X
\otimes \beta)_{z,u(z)} \circ j_z + D_u \xi,
\end{array}
$$
where $T_J\mathcal{J}$ is the tangent space (e.g. see the approach in \cite[Sec.3.4]{McDuff-Salamon}).

\begin{lemma}
If $F$ is surjective, there is a Baire second category subset
$\mathcal{J}_{reg}(S,\beta) \subset \mathcal{J}$ of
\textrm{regular} $J$ $\mathrm{(}$those for which $D_u$ is
surjective for all $u\in \mathcal{M}(x_a;y_b;S,\beta,J))$.
\end{lemma}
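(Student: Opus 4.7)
The plan is to apply the Sard--Smale theorem to the projection from a universal moduli space. Define
$$\mathcal{M}^{\mathrm{univ}} = \{(u,J) : J\in \mathcal{J},\; u\in \mathcal{M}(x_a;y_b;S,\beta,J)\},$$
realized as the zero set of a section $\widetilde{\overline{\partial}}$ of a Banach bundle over $W^{1,\mathfrak{p}}(u^*TM)\times \mathcal{J}$, whose fibrewise linearization at $(u,J)$ is precisely the operator $F$. By hypothesis $F$ is surjective at every $(u,J) \in \mathcal{M}^{\mathrm{univ}}$, so the implicit function theorem for Banach manifolds shows that $\mathcal{M}^{\mathrm{univ}}$ is a smooth Banach manifold with tangent space $T_{(u,J)}\mathcal{M}^{\mathrm{univ}} = \ker F$.

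Next I would consider the projection $\pi\colon \mathcal{M}^{\mathrm{univ}} \to \mathcal{J}$, $(u,J) \mapsto J$, whose differential at $(u,J)$ is the restriction to $\ker F$ of the obvious projection $T_J\mathcal{J} \oplus W^{1,\mathfrak{p}}(u^*TM) \to T_J\mathcal{J}$. A direct diagram chase identifies $\ker d\pi \cong \ker D_u$; for the cokernel, surjectivity of $F$ shows that the composite
$$T_J\mathcal{J} \to L^{\mathfrak{p}}(\mathrm{Hom}^{0,1}(TS,u^*TM)) \to \mathrm{coker}\, D_u,\qquad Y \mapsto \bigl[\tfrac{1}{2}\, Y \circ (du - X\otimes \beta)\circ j\bigr],$$
is surjective with kernel $\mathrm{image}\,d\pi$, hence $\mathrm{coker}\, d\pi \cong \mathrm{coker}\, D_u$. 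Since $D_u$ is Fredholm by Lemma \ref{Lemma index of Fredholm operator}, $d\pi$ is Fredholm of the same index, so $\pi$ is a Fredholm map between Banach manifolds.

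The Sard--Smale theorem then produces a Baire second category subset $\mathcal{J}_{reg}(S,\beta) \subset \mathcal{J}$ of regular values of $\pi$. Unwinding the cokernel identification, $J \in \mathcal{J}_{reg}(S,\beta)$ precisely when $d\pi$ is surjective at every point of $\pi^{-1}(J)$, which is equivalent to $D_u$ being surjective for every $u \in \mathcal{M}(x_a;y_b;S,\beta,J)$, as required.

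The principal technicality I anticipate is that Sard--Smale requires $\mathcal{J}$ to be a \emph{separable} Banach manifold (and $\pi$ to be $\sigma$-proper), while $C^\infty$ perturbations do not form a Banach space. This is handled by the standard Floer $C^\epsilon$ construction, equipping perturbations $Y$ of $J$ with a weighted norm $\sum_k \epsilon_k \|Y\|_{C^k}$ for a rapidly decaying sequence $\epsilon_k \to 0^+$, combined with the prescribed superexponential decay on the cylindrical ends; this yields a separable Banach space while keeping the space of admissible perturbations large enough to preserve the hypothesis that $F$ is surjective, and it is compatible with the requirement that $J$ remain of contact type outside the compact region dictated by Lemma \ref{Lemma Maximum principle for Floer solns}.
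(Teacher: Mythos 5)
Your argument is exactly the paper's: both construct the universal moduli space $\mathcal{M}^{\mathrm{univ}}$, identify $\ker d\pi$ and $\mathrm{coker}\,d\pi$ with those of $D_u$, and invoke Sard--Smale for the projection $\pi\colon \mathcal{M}^{\mathrm{univ}} \to \mathcal{J}$ (the paper explicitly cites \cite[3.1.5, p.51]{McDuff-Salamon2} for this standard trick). The only addition is your final remark about the $C^\epsilon$ Floer construction to get a separable Banach manifold, which the paper implicitly absorbs into the statement that $\mathcal{J}$ is ``appropriately topologized as a Banach bundle'' with superexponential decay (\ref{Subsection Linearization Du}).
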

\begin{proof}[Sketch proof]
This is a standard trick, e.g. see \cite[3.1.5,
p.51]{McDuff-Salamon2}. One considers the universal moduli space
$\mathcal{M}$ of all Floer solutions $u\in
\mathcal{M}(x_a;y_b;S,\beta,J)$ for all $J\in \mathcal{J}$. Let
$\pi: \mathcal{M} \to \mathcal{J}$ denote the projection. Using the surjectivity of $F$, one
checks that the kernel and cokernel of $d\pi$ are isomorphic to
those of $D_u$. Thus: $d\pi$ is Fredholm since $D_u$ is Fredholm; and $d\pi$ is onto if and only if $D_u$ is onto. Thus the regular values of $\pi$ correspond to the
regular $J$. By the Sard-Smale theorem the regular values of $\pi$
in $\mathcal{J}$ are of Baire second category.
\end{proof}

\begin{lemma}\label{Lemma transversality proof}
If $du-X\otimes \beta \neq 0$ on a non-empty open subset of $S$,
then $F$ is surjective.
\end{lemma}
\begin{proof}
This is a standard argument \cite[4.2.18]{Schwarz}. By Theorem
\ref{Theorem index of Fredholm operator}, $D_u$ is Fredholm so the
image has finite codimension. Since $\textrm{Im}\, D_u \subset
\textrm{Im}\, F$, the same is true for the image of $F$. Thus it
suffices to show that the orthogonal complement to the image
$\textrm{Im}\,F$ is zero. Suppose by contradiction that there is an
$\eta \neq 0$ in the dual space,
$L^{\mathfrak{q}}(\textrm{Hom}^{0,1}(TS,u^*TM))$ for $\tfrac{1}{\mathfrak{q}}+ \tfrac{1}{\mathfrak{p}} = 1$, which is
$L^2$-perpendicular to $\textrm{Im}\, F$.

Then $\eta$ is perpendicular to $\textrm{Im}\, D_u$: $\langle
D_u \xi, \eta\rangle_{L^2}=0$ for all $\xi$. So $\eta$ is a weak
solution of $D_u^*\eta=0$, and by elliptic regularity
\cite[2.5.3]{Schwarz} $\eta$ is smooth. By the Carleman similarity
principle \cite[Cor 2.3]{Floer-Hofer-Salamon} $\eta$ can vanish at
most on a discrete set of points in $S$. Since $\eta$ is
perpendicular to $\textrm{Im}\, F$ and $\langle D_u
\xi,\eta\rangle_{L^2}=0$,
$$\textstyle
0=\, \langle F(Y,\xi),\eta\rangle_{L^2} = \frac{1}{2}\langle
Y(du-X\otimes\beta) \circ j,\eta\rangle_{L^2}  \textrm{ for all
}Y,\xi.
$$

Pick $z_0\in S$ in the open subset where $du-X\otimes \beta \neq
0$ such that $\eta_{z_0}\neq 0$. Then, by \cite[8.1]{Salamon-Zehnder}, one can pick
 a vector $Y$ at $(z_0,u(z_0))\in S\times \overline{M}$ such
that $g(Y(du-X\otimes \beta)j,\eta)\,
> 0$. Extend $Y$ locally and multiply it by a cut-off function
which depends on $z\in S$ to make $Y$ globally defined so that
$g(Y(du-X\otimes \beta)j,\eta) \geq 0$ everywhere and $>0$ near
$z_0$. This contradicts the vanishing of the above $L^2$-inner
product.

\emph{Technical Remark. In the previous Technical Remarks, we imposed that the $J\in \mathcal{J}$ all agree for $|s|\geq s_{\infty}$ on the ends, so we cannot pick $Y\neq 0$ there. So in the above argument, one needs to show that one can find a $z_0$ in the complement of the $|s|\geq s_{\infty}$ regions such that $(du-X\otimes \beta)_{z_0}\neq 0$.
 By contradiction, suppose $du-X\otimes C\, dt=0$ for $s_{\infty}-1< |s|< s_{\infty}$ (there $\beta=C\, dt$ for some $C>0$) and that no such $z_0$ existed in $|s|\geq s_{\infty}$. This equation is equivalent to $\partial_s u = 0$ and $\partial_t u - X_{C\cdot H}=0$, and it is equivalent to $v=0$ where $v(s,t)=\varphi_{C\cdot H}^{1-t}(u(s,t))$. But $v$ is a pseudo-holomorphic strip (compare Remark \ref{Remark Wrapped FH is same as Lagr FH}) so it has isolated zeros unless it is identically zero. So $v=0$ on $s_{\infty}-1< |s|< s_{\infty}$ implies $v=0$ on $|s| \geq s_{\infty}$, so $du-X\otimes \beta=0$ on $|s| \geq s_{\infty}$, contradiction. See \cite[4.2.13, 4.2.15]{Schwarz} for details on this argument.
}
\end{proof}

\begin{lemma}\label{Lemma transversality trivial solutions}
Combining the previous two Lemmas, we deduce that transversality
holds unless $du-X\otimes \beta = 0$ on all of $S$ (since being
non-zero is an open condition). By making a generic time-dependent
perturbation of $H$ at each end of $S$, we can ensure that
$du-X\otimes \beta= 0$ is impossible for all Floer solutions $u$.
\end{lemma}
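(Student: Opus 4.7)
The plan is to combine the two preceding lemmas and then eliminate the residual ``trivial'' case $du - X\otimes\beta\equiv 0$ by a generic time-dependent perturbation near the ends.

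First I would dichotomize on whether $du - X\otimes\beta$ vanishes identically on $S$. If there exists $z_0\in S$ with $(du-X\otimes\beta)_{z_0}\neq 0$, then smoothness of $u$ (Lemma \ref{Lemma Smoothness and exp convergence}) together with openness of nonvanishing gives a nonempty open set on which $du-X\otimes\beta\neq 0$; the previous lemma yields surjectivity of $F$ at $u$, and the lemma before that produces a Baire-generic $J\in\mathcal{J}_{reg}(S,\beta)$ for which $D_u$ is onto at every such $u$. Thus regularity holds away from the trivial locus.

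Next I would exclude the remaining case $du-X\otimes\beta\equiv 0$ by perturbing $H$ generically in $t$ near each puncture. The observation is that $du=X\otimes\beta$ forces $u$ to factor through a single integral curve of $X=X_H$: the image of $du_z$ lies in $\mathrm{span}(X_{u(z)})$, so $u(S)$ is contained in one orbit $\gamma$. On a cylindrical end with $\beta = c\,dt$ (with $c=h_a$ or $c=k_b$) this specializes to $\partial_s u=0$ and $\partial_t u = cX(u)$, i.e.\ $u(s,t)=y(t)$ is $s$-independent and equals the asymptote. Now I would arrange the cylindrical-end data so that in a neighborhood of each puncture the Hamiltonian is a genuinely time-dependent perturbation $H^a_t$ of $h_aH$ (only turned on near the end, as in \ref{Subsection Transversality and Compactness}, \ref{Subsection Fredholm theory}), and then use Sard--Smale on the evaluation map $(H^a_t,t)\mapsto$ ($1$-periodic orbits of $X_{H^a_t}$) to show that generically no non-degenerate $H^a_t$-orbit is also a time-independent orbit of the interior Hamiltonian $h_aH$. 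For a trivial $u$, the asymptote $y$ would have to be simultaneously an $H^a_t$-orbit (from the end equation) and, by the smooth extension into the interior where $H$ is $t$-independent, a reparametrization of an orbit of the unperturbed $X_H$; this incompatibility rules out all trivial $u$ with non-constant asymptote. Constant solutions $u\equiv p$ with $p\in\mathrm{Crit}(H)$ are absorbed into the argument by Lemma \ref{Lemma Maximum principle for Floer solns} applied with asymptotes that are not all equal to the same constant orbit (the remaining diagonal contributions are the usual identity pieces and are handled by the standard trick of not counting constant Floer cylinders).

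The main obstacle is formalizing the genericity of the $t$-dependent perturbation so that no $H^a_t$-orbit lifts to a time-independent $H$-trajectory. This is a standard transversality argument: I would set up a parametric moduli space of pairs (time-dependent perturbation, periodic orbit), check that the universal linearization is surjective via a variation supported in a small $(t,x)$ ball along the candidate orbit, and then invoke Sard--Smale. Combined with the openness--Baire--intersection tricks in \ref{Appendix Genericity of J}, this yields a generic $(H,J)$ for which every $u\in\mathcal{M}(x_a;y_b;S,\beta)$ has $du-X\otimes\beta\not\equiv 0$, whence the first case applies and $D_u$ is surjective for all $u$.
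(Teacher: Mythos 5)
Your proposal departs from the paper's argument in a way that opens a genuine gap, concentrated in the claimed ``incompatibility''.

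You assert that for a trivial solution the asymptote $y$ must be ``simultaneously an $H^a_t$-orbit \ldots and a reparametrization of an orbit of the unperturbed $X_H$'', and that this is generically impossible. But the perturbations allowed here are of the form $h_t(R)$ on the collar (otherwise the maximum principle of Lemma~\ref{Lemma Maximum principle for Floer solns} fails), so both $X_{H^a_t}=(h^a_t)'(R)\,\mathcal{R}$ and $X_H=h'(R)\,\mathcal{R}$ are pointwise multiples of the Reeb field. Consequently every $1$-periodic orbit of the perturbed Hamiltonian is automatically a reparametrization of an integral curve of $X_H$: there is no incompatibility to exploit, and the Sard--Smale scheme you sketch has nothing to make transverse. (The stronger condition you appear to use at one point --- that $y$ is literally a $1$-periodic orbit of $h_aH$ --- is not forced by the geometry; the only constraint supplied by $du=X\otimes\beta$ is that the image of $u$, hence of $y$, lies inside a single integral curve of $X_H$, i.e.\ inside a fixed level $\{R=R_0\}$ Reeb orbit image.)

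The paper's proof therefore has to proceed differently, and the split is essential. For the cap $(p,q)=(1,0)$ no genericity argument can work with a single end, and the paper uses topology instead: $u$ maps the whole disc into the orbit image $v(S^1)$, so the boundary loop (the asymptote) is nullhomotopic in $v(S^1)\cong S^1$ and hence constant, which contradicts the prior elimination of constant asymptotes. For surfaces with at least two ends the paper chooses \emph{different} time-dependent perturbations $h^{a}_t(R)$ at different ends; the condition for a $1$-periodic orbit of $h_a\,X_{H^a_t}$ to exist at level $R_0$ is $\int_0^1 h_a\,(h^a_t)'(R_0)\,dt\in\{\text{Reeb periods}\}$, and for generic distinct perturbations these constraints are met at disjoint sets of levels, so the asymptotes at the various ends cannot all lie in a common orbit image $v(S^1)$. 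Your proposal also treats constant solutions separately, but this is not needed: the generic time-dependent perturbation at each end already guarantees that no asymptote is a constant orbit (Crit$(H_t)$ moves with $t$), which is the first line of the paper's argument and is what makes the topological step for the disc decisive.
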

\begin{proof}
After a generic time-dependent perturbation of $H$ at each end of
$S$, the asymptotics of $u$ can never be constant orbits (since
$\textrm{Crit}(H_t)$ will change in time, generically). Suppose
$du-X\otimes \beta \equiv 0$. Then $du(TS)=X\otimes \beta (TS)
\subset \textrm{span}(X)$, so the image of $u$ lands inside a
Hamiltonian orbit $v$ of $H$. When $S$ is a disc ($p=1$, $q=0$),
$u(S)\subset v(S^1)$ means that $v$ is contractible within
$v(S^1)$ and hence constant, which we ruled out. In the general
case, since we choose the time-dependent perturbations to be
different on each end, we can
ensure that $v$ is not a common orbit of the perturbed
Hamiltonians even up to rescaling.
\end{proof}
\begin{remark} If the
$A_a,B_b$ of $S$ are $\Q$-independent, we can make the same
perturbation of $H$ at each end (in the proof: $v$ has periods
$A_a,B_b$ so it would be constant). 
\end{remark}
%
%
%
%
%
%
\subsection{Compactness of the Moduli Spaces}
\label{Subsection Compactness of Moduli Spaces}
%
%
We claim that the $\mathcal{M}(x_a; y_b;S,\beta)$
have compactifications whose boundaries consist of broken Floer
solutions (Figures \ref{Figure compactness for TQFT},
\ref{Figure Breaking}). Since the complex surface $(S,j)$ is
fixed, away from the ends the energy estimate forces $C^{\infty}$-local
hence uniform convergence to a solution, thus breaking of Floer
solutions can only occur at the cylindrical ends. On those ends,
the equation turns into Floer's equation so the breaking is
analytically identical to the breaking of Floer trajectories.

\emph{Remark. We emphasize that we keep the complex structure $j$ fixed on $S$. One could in fact allow $(S,j)$ to vary in certain families (Seidel \cite[Sec.(8a)]{Seidel}), and one could also allow the surface to degenerate to a node in regions where $\beta=0$ (so the gluing theorem for $J$-holomorphic curves applies), but in this paper we will avoid these additional complications.
}

Details of the construction of compactifications for
these moduli spaces can be found in Schwarz \cite[4.3.21, 4.4.1]{Schwarz}. His work for \emph{closed} symplectic manifolds $M$ applies here because:\\[0.5mm]  1) all $u\in
\mathcal{M}(x_a; y_b;S,\beta)$ lie in the compact subset
$R \!\leq\! \max (R(x_a),R(y_b),R_0)$ of $\overline{M}$, by \ref{Subsection Maximum principle for floer solns}; \\  2) they
satisfy an a priori energy estimate by \ref{Subsection Energy}; and \\   3)
no bubbling off of $J$-holomorphic spheres occurs since
$\omega\!=\!d\theta$ is exact.
\begin{figure}[ht]
\includegraphics[scale=0.6]{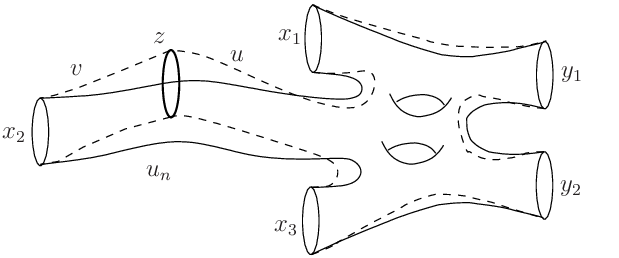}
\caption{Floer solutions $u_n\!\in\!
\mathcal{M}(x_1,x_2,x_3;y_1,y_2;S,\beta)$ converging to a broken
solution $v \# u \!\in\! \mathcal{M}(x_2,z)\! \times\!
\mathcal{M}(x_1,z,x_3;y_1,y_2;S,\beta)$.}\label{Figure compactness
for TQFT}
\end{figure}
\begin{lemma}
\label{Lemma Compactness and Gluing for TQFT} \emph{Breaking:} Suppose $u_n \in \mathcal{M}_1(x_a;y_b;S,\beta)$ has no
$C^{\infty}$-convergent subsequence. Then, passing to a
subsequence, $u_n$ converges $C^{\infty}$-locally to an isolated
Floer solution $u$ defined on the same $(S,\beta)$ but with one of
the asymptotics $w$ among $x_a,y_b$ changed to a new asymptotic $z$ at an end. On that end of $S$, there are
reparametrizations $u_n(\cdot+s_n,\cdot)\!\to\! v$ converging
$C^{\infty}$-locally to an isolated Floer trajectory for $C\cdot H$ as $s_n\!\to\!
\pm \infty$ with asymptotics $w,z$ $($where $\pm$ is the sign of that end of $S$, and $C$ is the weight for that end$)$.

\emph{Gluing theorem:} given such isolated solutions $v,u$, for large
$\lambda \!\in\! \R$ there is a smooth $1$-parameter family
$u_{\lambda} \!\in\! \mathcal{M}_1(x_a;y_b;S,\beta)$ converging to the
broken trajectory $v \# u$ or $u\# v$ in the above sense
$($respectively for breakings at negative or positive ends$)$.

\emph{Index:} the Fredholm index of
solutions is additive with respect to gluing, so the local
dimension of moduli spaces is additive (for Floer
trajectories count $\dim \widehat{\mathcal{M}}$, see \ref{Subsection Floers Equation}).
\end{lemma}
%
\subsection{Parametrized moduli spaces}
\label{Subsection Homotopy invariance}
%
Given two sets of regular data $(S,j_0,\beta_0,J_0)$ and
$(S,j_1,\beta_1,J_1)$ agreeing at the ends, let
$(S,j_{\lambda},\beta_{\lambda},J_{\lambda})$ interpolate between
them (while keeping the same data $j,\beta,J$ near the ends). This
is always possible since the choices of $j,\beta,J$ each form a
contractible set. Again one proves that for a generic choice of
the interpolation $J_{\lambda}$ the relevant operator
$\overline{\partial}$ is transverse to the zero section and thus
the moduli space
$\cup_{\lambda}\mathcal{M}(x_a;y_b;S,\beta_{\lambda},j_{\lambda},J_{\lambda})$
is smooth. The compactness results of section \ref{Subsection
Compactness of Moduli Spaces} also carry over to this setup
\cite[5.2.3]{Schwarz}. Indeed near the ends we are not varying
$j,\beta,J$ so the same breaking of solutions applies, and away
from the ends the energy estimate forces $C^{\infty}$-local hence
uniform convergence.
%
%
%
\subsection{TQFT operations}
\label{Section Operations on SH}
\label{Subsection Operations on SH(H) Definition}
%
%
Define
$$\psi_S: SC^*(B_1 H)\otimes \cdots \otimes
SC^*(B_q H) \to SC^*(A_1 H) \otimes \cdots \otimes SC^*(A_p H)$$
on the generators by counting isolated Floer solutions
$$
\psi_S(y_1 \otimes\cdots \otimes y_q) = \sum_{u\in
\mathcal{M}_0(x_1,\ldots,x_p;y_1,\ldots,y_q;S,\beta)} \epsilon_u
\; x_1\otimes \cdots \otimes  x_p,
$$
where $\epsilon_u \in \{ \pm 1 \}$ are orientation signs (Section
\ref{Appendix Coherent orientations}). Then extend $\psi_S$
linearly. When gradings are defined, degree$(\psi_S)=$
$
\sum|x_a|-\sum|y_b|=-2n(1-g-p)$ by \ref{Subsection Smoothness of
Moduli Spaces}.

\begin{theorem}\label{Theorem Homotopy of surfaces}
The $\psi_S$ are chain maps and, using the fact that we work over a field (see \ref{Subsection
Choice of coefficients}), they yield maps
$\psi_S: \otimes_{b=1}^q SH^*(B_b H)\to \otimes_{a=1}^p SH^*(A_a H)$
which are independent of the choice of the data $(\beta,j,J)$
relative to the ends.
\end{theorem}
\begin{proof}
To prove that $\psi_S$ is a chain map consider the 1-dimensional
$\mathcal{M}_1(x_a;y_b;S,\beta)$. A sequence of Floer solutions
approaching the boundary will break giving rise to an isolated
Floer trajectory at one of the ends (\ref{Subsection Compactness
of Moduli Spaces}, Figure \ref{Figure compactness for TQFT}). This
is precisely the definition of the Floer differential $\partial$
on the tensor products (\ref{Subsection Using orientation signs to
prove that TQFT maps are chain maps}). Since the oriented count of
the boundary components of a compact $1$-dimensional manifold is
zero, and after checking orientation signs in \ref{Subsection
Using orientation signs to prove that TQFT maps are chain maps},
we conclude that
$
\psi_S \circ \partial = \partial \circ \psi_S.
$
Thus $\psi_S$ descends to cohomology: $H^*(\otimes_b SC^*(B_b H)) \to H^*(\otimes_a SC^*(A_a H))$. Since we work over a field, we can apply the K\"{u}nneth theorem to obtain
$\psi_S: \otimes_b SH^*(B_b H) \to \otimes_a SH^*(A_a H).$

To prove independence of the auxiliary data $(\beta,j,J)$ consider
the $1$-dimensional part of $\bigcup_{0\leq \lambda \leq 1}
\mathcal{M}(x_a;y_b;S_{\lambda})$ as in \ref{Subsection Homotopy
invariance}, where
$S_{\lambda}=(S,\beta_{\lambda},j_{\lambda},J_{\lambda})_{0\leq
\lambda \leq 1}$ is a regular homotopy of the data $(\beta,j,J)$
relative ends (in particular, the $A_a,B_b$ are fixed). We show that $\psi_{S_0}$ and $\psi_{S_1}$ are
chain homotopic, so the claim follows. We run the usual invariance
proof of Floer homology \cite[Lemma 3.12]{Salamon}. The boundaries of the
moduli spaces are of two types: either $\lambda \to 0$ or $1$,
which respectively yield contributions to $-\psi_{S_0}$ and
$\psi_{S_1}$, or $\lambda\to \lambda_0 \in (0,1)$. In the latter
case the boundary is a broken solution consisting of a Floer
trajectory and a Floer solution in
$\mathcal{M}_{-1}$, where $\mathcal{M}_{-1}=\mathcal{M}_{-1}(x_a';y_b';S_{\lambda})$ has asymptotics $x_a',y_b'$ equal to the $x_a,y_b$ except for one asymptotics which has changed (because the Floer trajectory broke off). 

Generically
$\mathcal{M}_{-1}$ is empty since it has
virtual dimension $-1$, however at finitely many $\lambda$ there
may actually exist a solution: call these the \emph{unexpected solutions}. 

Let $K$ denote the oriented count
of the unexpected solutions. Then the broken solutions described above contribute
to $K\circ \partial$ or $\partial \circ K$ depending on whether the end where the breaking occurs is a positive or a negative end respectively (and where $\partial$ is acting on $\otimes_b SC^*(B_b H)$ or $\otimes_a SC^*(A_a H)$ respectively, see \ref{Subsection Using orientation signs to prove that TQFT maps
are chain maps}). The oriented count of the boundaries of a
compact $1$-manifold is zero, so $-\psi_{S_0}+ \psi_{S_1}+K\circ
\partial + \partial \circ K=0$. So $K$ is the required chain
homotopy.
\end{proof}
%

%
\subsection{Gluing surfaces yields compositions of
operations}\label{Subsection Gluing surfaces}
%
Two surfaces $S,S'$ as in \ref{Subsection Model Riemann
Surface} can be glued along opposite ends carrying equal weights.
Pick a gluing parameter $\lambda \gg 0$. Suppose a positive end
$[0,\infty)\!\times\! S^1$ of $S$ and a negative end
$(-\infty,0]\!\times\! S^1$ of $S'$ carry the same weight. Cut off
$(\lambda,\infty)\!\times\! S^1$ from $S$ and
$(-\infty,-\lambda)\!\times\! S^1$ from $S'$. Glue what remains of the ends
$
[0,\lambda]\times S^1 \cup [-\lambda,0]\times S^1
$
via $(\lambda,t)\!\sim\!
(-\lambda,t)$.
As the weights agree, for $\lambda \gg 0$ we get a $1$-form
$\beta_{\lambda}$ on the glued surface $S_{\lambda}$ from the
$\beta$'s on the $S,S'$ parts of $S_{\lambda}$. The result is a
$1$-family of surfaces $S_{\lambda}=S\#_{\lambda} S'$. Similarly, one can glue several pairs of ends with matching weights.

\begin{lemma}\label{Lemma Slambda trick}
Let $\mathcal{M}\subset \mathcal{M}_1(x_a;y_b;S,\beta)$ be a
connected component. Near $\partial \mathcal{M}$, $\mathcal{M}$ is
parametrized by Floer solutions $u_{\lambda}$ defined on a gluing $S_{\lambda}=$
$S \#_{\lambda} \mathrm{(}\textrm{infinite
cylinder},\beta=C\,dt\mathrm{)}$. As $\lambda\to \infty$,
$u_{\lambda}$ converges to a broken solution, broken at the gluing
end.
\end{lemma}
\begin{proof}
\begin{figure}[ht]
\includegraphics[scale=0.6]{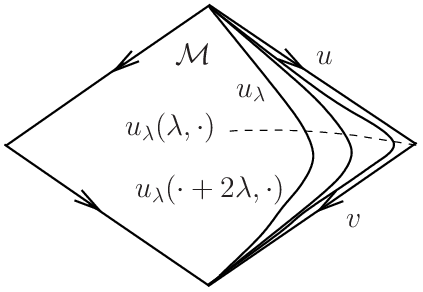}
\caption{Floer solutions $u_{\lambda}$
converging to a broken solution $u\# v$.}\label{Figure Breaking}
\end{figure}

Observe Figure \ref{Figure Breaking}. We will deal with the case
of a breaking $u\# v$ on a positive end as $\lambda\to \infty$,
the other case is analogous. By Lemma \ref{Lemma Compactness and
Gluing for TQFT}, the Floer solutions in $\mathcal{M}$ near $u\#
v$ are parametrized by a gluing parameter $\lambda\gg 0$. Indeed,
by an implicit function theorem argument, they can be described by
a gluing construction which ``glues" $u(s,t)|_{s\leq \lambda}$
with $v(s-2\lambda,t)|_{s\geq \lambda}$. In particular, the glued
solution $u_{\lambda}$ will converge $C^{\infty}$-locally to $u$,
and on the breaking end the rescaled map
$u_{\lambda}(s+2\lambda,t)$ will converge $C^{\infty}$-locally to
$v(s,t)$. The Hamiltonian orbit $v(-\infty,t)$ where the breaking
occurs is the limit of $u_{\lambda}(\lambda,t)$ (dotted line in
Figure \ref{Figure Breaking}). Let
$S_{\lambda}=S\#_{\lambda}(\R\times S^1)$ glued at the end where
the breaking occurs, extending $\beta = C\,dt$ to the cylinder.
``Extend" $u_{\lambda}$ to $S_{\lambda}$ so that it equals
$u_{\lambda}$ on the $S$-part and it equals $u_{\lambda}(s +2
\lambda,t)$ at $(s,t)\in [-\lambda,\infty)\times S^1$. This is
well-defined since we identified $(\lambda,t)\sim (-\lambda,t)$.
\end{proof}
\begin{theorem}\label{Theorem Gluing for compositions}
Let $S_{\lambda}=S\#_{\lambda}S'$ with all positive ends of $S$
glued to all negative ends of $S'$ in the given order. Then on the
chain level $\psi_{S_\lambda}=\psi_{S}\circ \psi_{S'}$ for
$\lambda\gg 0$.
\end{theorem}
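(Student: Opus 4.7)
The plan is to establish, for $\lambda \gg 0$, a sign-preserving bijection between $\mathcal{M}_0(x_a; y_b; S_\lambda, \beta_\lambda)$ (counted by $\psi_{S_\lambda}$) and the disjoint union of matched pairs
\[
\bigsqcup_{z_c} \mathcal{M}_0(x_a; z_c; S, \beta) \times \mathcal{M}_0(z_c; y_b; S', \beta')
\]
counted, with product signs, by $\psi_S \circ \psi_{S'}$, where the $z_c$ range over tuples of Hamiltonian orbits at the $q$ glued ends. First I would check that the Fredholm indices match: the formula of Lemma \ref{Lemma index of Fredholm operator} is additive under the end-gluing, since the genus increases by $q-1$ while $2q$ punctures are lost, so the virtual dimensions on both sides vanish simultaneously.

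For the gluing direction, given an isolated pair $(u, u')$ with matching intermediate asymptote $z_c$, I would iterate the gluing construction of Lemma \ref{Lemma Compactness and Gluing for TQFT} at each of the $q$ matched cylindrical ends. This produces, for $\lambda \geq \lambda_0(u, u')$, a unique Floer solution $u \#_\lambda u' \in \mathcal{M}_0(x_a; y_b; S_\lambda, \beta_\lambda)$ close to the broken configuration, by the standard implicit function theorem argument: cut off $u$ and $u'$ just short of each breaking neck and interpolate across $[-\lambda, \lambda] \times S^1$. Only finitely many isolated pairs arise for fixed $x_a, y_b$, so a uniform $\lambda_0$ suffices, yielding an injection from matched pairs into $\mathcal{M}_0(x_a; y_b; S_\lambda, \beta_\lambda)$.

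The reverse direction is the main analytic ingredient. Suppose, for contradiction, that along some sequence $\lambda_n \to \infty$ there are $u_n \in \mathcal{M}_0(x_a; y_b; S_{\lambda_n}, \beta_{\lambda_n})$ outside the image of the gluing map. The maximum principle (Lemma \ref{Lemma Maximum principle for Floer solns}) and the a priori energy estimate of Section \ref{Subsection Energy} confine the $u_n$ to a fixed compact of $\overline{M}$ with uniformly bounded energy. Applying the compactness machinery of Section \ref{Subsection Compactness of Moduli Spaces} on the $S$-side and the $S'$-side separately, together with suitable reparametrizations on each neck, a subsequence converges to a broken configuration: Floer solutions $u$ on $S$, $u'$ on $S'$, and possibly further Floer trajectories breaking off along the necks, all with matching Hamiltonian orbits at the asymptotes. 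By additivity of the Fredholm index under gluing together with regularity of $J$, every component of the limit has nonnegative virtual dimension and they sum to zero; any nonconstant Floer trajectory on a neck would contribute at least one to this sum after quotienting by the $\R$-translation action, which is impossible. Hence the limit is precisely a matched pair $(u, u')$ at some $z_c$, and uniqueness in the gluing construction then forces $u_n = u \#_{\lambda_n} u'$ for large $n$, contrary to assumption.

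Finally, the sign identity $\epsilon_{u \#_\lambda u'} = \epsilon_u \cdot \epsilon_{u'}$ is built into the coherent orientation scheme of Appendix \ref{Appendix Coherent orientations}, so the bijection is sign-preserving and yields $\psi_{S_\lambda} = \psi_S \circ \psi_{S'}$ on chain level. The main obstacle is the compactness-and-gluing step of the third paragraph, specifically excluding extraneous breaking along the necks via the Fredholm dimension count; once that bookkeeping is verified, the remainder is a standard application of Floer-theoretic gluing.
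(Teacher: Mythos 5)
Your proof is correct and follows essentially the same strategy as the paper: gluing at the matched ends to produce $u\#_\lambda u'$, a stretching/compactness argument showing every $u_\lambda$ converges to such a broken pair, a Fredholm-index count excluding extra breakings along the necks, and the coherence of orientations to fix the signs. The extra detail you supply on the Euler-characteristic/index additivity makes explicit a bookkeeping step the paper leaves implicit, but the argument is otherwise the same.
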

\begin{remark}\label{Remark Gluing for compositions}
If we do not glue all ends, the composite $\psi_{S}\circ
\psi_{S'}$ is ill-defined. The remedy is to take disjoint unions
with cylinders $Z=(\R\times S^1,\beta=C\, dt)$ (which
induce the identity map by Lemma \ref{Lemma Chain
Homotopy}(\ref{Item Lemma Chain Homotopy constant H is identity})) so that the resulting surfaces can be fully
glued.
\end{remark}
\begin{proof}
$\psi_S\circ \psi_{S'}$ counts isolated broken Floer solutions
$u\# u'$ which broke at the ends where $S,S'$ get glued. Arguing
as in Lemmas \ref{Lemma Compactness and Gluing for TQFT} and
\ref{Lemma Slambda trick}, there is a $1$-parameter family
$u_{\lambda}$ of Floer solutions on $S_{\lambda}$ parametrizing
the Floer solutions close to $u\#u'$.

Conversely, as $\lambda\to \infty$ a $1$-parameter family of
isolated Floer solutions $u_{\lambda}$ on $S_{\lambda}$ must
converge to a broken solution counted by $\psi_S\circ \psi_{S'}$.
Indeed, the $u_{\lambda}$ have a priori bounded energy depending
on the fixed asymptotics (see \ref{Subsection Energy}), so by Theorem \ref{Theorem Smoothness and
exp convergence} they must break at the stretched ends. Since the
$u_{\lambda}$ are isolated, for dimensional reasons they cannot
break at other ends and they only break once on each stretched
end.

Since the Hamiltonian is linear at infinity, the relevant chain
complexes are finitely generated and the union of the moduli
spaces defining the operations $\psi_{S_\lambda}$, $\psi_S$,
$\psi_{S'}$ is finite. So for large $\lambda$ there is a bijection
between the moduli spaces counted by $\psi_{S_\lambda}$ and those
counted by $\psi_S\circ \psi_{S'}$. We prove in \ref{Subsection
Using orientation signs to prove that TQFT maps compose correctly}
that the solutions are counted with the same orientation signs.
Thus $\psi_{S_{\lambda}}=\psi_S\circ \psi_{S'}$ for large
$\lambda$.
\end{proof}
%
\subsection{TQFT operations on $SH^*(M)$}
\label{Subsection Operations on SH(M)}
%
\begin{theorem}\label{Theorem Operations are compatible with
limit}
There is a commutative diagram,
$$
\xymatrix{ SH^*(B_1 H)\otimes \cdots \otimes SH^*(B_q H)
\ar@{->}_-{\mathrm{continuation}}^-{\varphi}[d] \ar@{->}[r]^-{\psi_S} & SH^*(A_1 H) \otimes
\cdots \otimes SH^*(A_p H)
\ar@{->}_-{\mathrm{continuation}}^-{\varphi'}[d] \\
SH^*(B_1' H')\otimes \cdots \otimes SH^*(B_q' H')
\ar@{->}[r]^-{\psi_{S'}} & SH^*(A_1' H') \otimes \cdots \otimes
SH^*(A_p' H') }
$$
where: the surfaces $S'=S$ equal but $\beta',\beta$ may differ; for $\psi_{S'}$ we use $H'$ instead of $H$; the vertical maps are tensors of monotone continuations (so $B_b m\leq B_b' m'$, $A_a m \leq A_a' m'$ where $m,m'$ are the slopes of $H,H'$ at infinity; and 
 $\sum A_a \geq \sum B_b$, $\sum A_a'\geq \sum B_b'$ by Lemma \ref{Lemma beta exists}).
\end{theorem}
\begin{proof} Suppose first $H'=H$. Then
$\varphi = \psi_{\cup_b Z_b}, \varphi'=\psi_{\cup_a Z_a}$, where $\cup_b Z_b,\cup_a Z_a$ are disjoint
unions of continuation cylinders (Example \ref{Example Continuation cylinder}). 
By Theorem \ref{Theorem Gluing for compositions}, $\varphi'\circ \psi_S = \psi_{(\cup_a Z_a)\# S}$, and $\psi_{S'}\circ \varphi = \psi_{S'\# (\cup_b Z_b)}$ (using large gluing parameters). But the gluings $(\cup_a Z_a)\# S$ and $S'\# (\cup_b Z_b)$ consist of the same topological surface, namely $S$, and although their data $(\beta,j)$ differs, the data agrees on the ends (the weights are $A_a',B_b$). Therefore by Theorem \ref{Theorem Homotopy of surfaces} the two maps are chain homotopic, and hence  $\varphi'\circ \psi_S=\psi_{S'}\circ \varphi$ on cohomology.

Now suppose $H'\neq H$. The difficulty is that a gluing argument as above would end up with completely different Hamiltonians on the glued surface $S$. So the argument is more subtle.

Observe that by rescaling $\beta,A_a,B_b$ by $m$ and $H$ by $1/m$ we can assume $H$ has slope $1$, and similarly for $\beta',A_a',B_b',H'$. So $H,H'$ now have the same slope. Consider the diagram
$$
\xymatrix@R=12pt@C=16pt{ & \otimes_b SH^*(B_b H)
\ar@{->}^{\varphi}[d] \ar@{->}[r]^-{\psi_S} & \otimes_a SH^*(A_a H) 
\ar@{->}^{\varphi'}[d] \ar@{->}[r]^-{\mathrm{cont.}} & \otimes_a SH^*(A_a'H)\\
\otimes_b SH^*(B_bH') \ar@{->}[r]_-{\mathrm{cont.}} \ar@{<-}[ur]^-{\mathrm{cont.}}
& \otimes_b SH^*(B_b' H')
\ar@{->}[r]_-{\psi_{S'}} & \otimes_a SH^*(A_a' H')  \ar@{<-}[ur]_-{\mathrm{cont.}}   }
$$
By Lemma \ref{Lemma Chain Homotopy}(\ref{Item Lemma Chain Homotopy composing conts is cont}) the two outer triangles of continuation maps commute, and the two diagonal continuations are isomorphisms by Lemma \ref{Lemma Chain Homotopy}(\ref{Item Lemma Chain Homotopy same slopes imply iso}) since the slopes agree. By the proof in the case $H=H'$, the composites of the horizontal maps are again TQFT operations $\psi_S,\psi_{S'}$ (after modifying $\beta,\beta'$). Therefore, to prove the Theorem, we may assume that $A_a=A_a',B_b=B_b'$ and that $\varphi,\varphi'$ are isomorphisms. So it suffices to prove $\psi_S = (\varphi')^{-1}\circ \psi_{S'} \circ \varphi$.

By Theorem \ref{Theorem Gluing for compositions}, $(\varphi')^{-1}\circ \psi_{S'} \circ \varphi=\psi_{S''}$ for the gluing $S''=(\cup_a Z_a) \# S' \# (\cup_b Z_b)$ where on the cylinders $Z_a,Z_b$ we use $\beta=A_a\, dt,\beta=B_b\, dt$ and we use monotone Hamiltonians $H_z,H_{-z}$ which depend on the cylinder's coordinate $z$ and which interpolate $H,H'$ and $H',H$ respectively. Note that since $H,H'$ have the same slopes, we can ensure $\partial_s H_z=0,\partial_s H_{-z}=0$ for $R\gg 0$, so by Lemma \ref{Lemma Maximum principle for Floer solns}(\ref{Item Lemma Maximum principle ok if monotone hpy}) the maximum principle holds for $S''$.

Now observe that $S''$ and $S$ involve the same data near the ends (namely weights $A_a,B_b$) and also involve the same Hamiltonian $H$ near the ends. Therefore, a parametrized moduli space argument as in \ref{Subsection Homotopy invariance}, and mimicking the proof of Theorem \ref{Theorem Homotopy of surfaces}, shows that $\psi_{S''}$ and $\psi_S$ are chain homotopic. Thus, on cohomology, $\psi_S = (\varphi')^{-1}\circ \psi_{S'} \circ \varphi$.
\end{proof}

The diagram in Thereom \ref{Theorem Operations are compatible with limit} for $H'\!=\!H$ proves that the direct limit of the maps $\otimes_b SH^*(B_b H) \to \otimes_a SH^*(A_aH)$ as $A_a\!\to \!\infty$ is defined, yielding $\otimes_b SH^*(B_b H) \to SH^*(M)^{\otimes p}$. The resulting diagram, after letting $A_a=A_a'\to \infty$, shows that the direct limit of the maps $\otimes_b SH^*(B_b H) \to SH^*(M)^{\otimes p}$ as $B_b \to \infty$ is defined. This defines the map
$$\psi_S: SH^*(M)^{\otimes q} \to SH^*(M)^{\otimes p} \quad (p\geq 1,q\geq 0).$$ 
Theorem \ref{Theorem Homotopy of surfaces} proves that $\psi_S$ only depends on $p,q$ and the genus of $S$ (for $S$ connected), and not on $\beta,j,J$. Thereom \ref{Theorem Operations are compatible with limit} proves that $\psi_S$ does not depend on the choice of $H$. Theorem \ref{Theorem Gluing for compositions} proves that the $\psi_S$ satisfy the TQFT axioms, see \ref{Subsection TQFT axioms} (the axiom $\psi_Z=\mathrm{id}$ follows by category theory since the $\psi_Z:SH^*(A_aH) \to SH^*(A_a'H)$ define $SH^*(M)=\varinjlim SH^*(A_a H)$).
%
%
\subsection{Invariance of the TQFT structure on $SH^*(M)$}
\begin{theorem}\label{Theorem Invariance of TQFT structures}
For a symplectomorphism $\varphi: \overline{M} \cong \overline{N}$
of contact type (see \ref{Section Invariance under contactomorphisms}),
$$\varphi_*^{\otimes p} \circ \psi_{S,M} = \psi_{S,N} \circ
\varphi_*^{\otimes q},$$
where $\psi_{S,M},\psi_{S,N}$ are the $\psi_S$-operations for $SH^*(M),SH^*(N)$ respectively.

So the isomorphism
$\varphi_*: SH^*(M) \to SH^*(N)$ respects the TQFT.
\end{theorem}
\begin{proof}
Recall from \ref{Section Invariance under
contactomorphisms}, that instead of working with two manifolds $M,N$, we can just work with $M$ and two choices of data $H,J$ and $\varphi^*H,\varphi^*J$. We abbreviate $H_f=\varphi^*H$, $J_f = \varphi^*J$. One needs some caution because $H_f$ is not linear for large $R$: $H_f(R)=h(e^{-f(y)}R)$ for $f:\partial M \to \R$, using collar coordinates $(R,y)\in [1,\infty)\times \partial M$. We write $SH^*_f$ instead of $SH^*$ to remind ourselves that we use Hamiltonians of that form, and that we use almost complex structures for which the contact type condition is $J_f^*d\theta = dR_f$ for $R_f=e^{-f(y)}R$. 

By \cite[Lemma 7]{Ritter2}, after fixing a homotopy $(f_s)_{s\in \R}$ from $f$ to $0$ (independent of $s$ for large $|s|$), there is a constant $K>0$ (depending only on a $C^2$-bound for the homotopy $f_s$) such that if the weights $B',B>0$ satisfy $B'\geq KB$, then one can define the continuation map $\varphi_*:SH^*(BH) \to SH_f^*(B'H_f)$. The proof in fact shows that there is a homotopy $(H_s,J_s)$ from $(B'H_f,J_f)$ to $(BH,J)$ for which the maximum principle holds for continuation solutions. Similarly, for a homotopy $f_s$ from $0$ to $f$, and $A \geq KA'$, we get $\psi_*: SH_f^*(A'H_f) \to SH^*(AH)$.

We will call \emph{special continuation cylinders} the domains and the auxiliary data used to construct these two continuation maps. 

By \cite[Theorem 8]{Ritter2}, a direct limit over these special continuations defines mutually inverse isomorphisms $SH^*(M) \to SH_f^*(M)$ and $SH_f^*(M) \to SH^*(M)$ (and recall that $SH_f^*(M)$ is identifiable with $SH^*(N)$ by pullback via the symplectomorphism $\varphi$).
 
We now want to construct a commutative diagram of the form
$$
\xymatrix@R=16pt@C=16pt{ \otimes_b SH^*(B_b H)
\ar@{->}_-{\varphi_*^{\otimes q}}[d] \ar@{->}[r]^-{\psi_S} & \otimes_a SH^*(A_a H)
\ar@{<-}^-{\psi_*^{\otimes p}}[d] \\
\otimes_b SH_f^*(B_b' H_f)
\ar@{->}[r]^-{\psi_{S_f}} & \otimes_a SH_f^*(A_a' H_f) }
$$
In the diagram, the vertical maps are special continuations which are defined provided that $A_a \geq K A_a'$ and $B_b' \geq KB_a$, and the horizontal maps are defined provided that $\sum A_a \geq \sum B_b$ and $\sum A_a' \geq \sum B_b'$ (Lemma \ref{Lemma beta exists}).
The surfaces $S,S_f$ are the same (but the $\beta$ forms may be different), moreover $\psi_{S_f}$ is constructed using $H_f,J_f$ instead of $H,J$ (note that $f$ does not depend on $z\in S$, so Lemma \ref{Lemma Maximum principle for Floer solns} applies to $R_f,H_f,J_f$ in place of $R,H,J$). 

As discussed above, in the direct limit, the vertical maps become the isomorphisms between the $SH^*(M)$'s and the $SH^*(N)$'s, and by Theorem \ref{Theorem Operations are compatible with limit} the horizontal maps give rise to the $\psi_{S,M}$ and $\psi_{S,N}$ maps. So the claim follows if we can prove that the diagram commutes. 

The composite $\psi_*^{\otimes p} \circ \psi_{S_f} \circ \varphi_*^{\otimes q}$ corresponds to gluing special continuation cylinders onto the ends of the surface $S_f$. The gluing determines auxiliary data $(S,\beta_f',H_f',J_f')$.
We now mimic the proof of the ``independence of the auxiliary data'' in Theorem \ref{Theorem Homotopy of surfaces}. So we construct a $1$-parameter family of auxiliary data $S_{\lambda}=(S, \beta_{\lambda},H_{\lambda}, J_{\lambda})$ for $0\leq \lambda \leq 1$, interpolating between $(S,\beta,H,J)$ and $(S,\beta_f',H_f',J_f')$. We choose this interpolation so that on the collar of $\overline{M}$ and away from the ends of $S$ we have: $H_{\lambda}(R)=h(R_{\lambda f})$ and $J_{\lambda}^*d\theta = dR_{\lambda f}$, where $R_{\lambda f} =  e^{-\lambda f(y)}R$. Observe that only near the ends of $S$ the function $f_s$ depends on the coordinate $z=s+it$ of $S$, namely where the special continuation cylinders got glued. So on the ends of $S$, we ensure those same formulas for $H_{\lambda},J_{\lambda}$ hold on the collar of $\overline{M}$ with $f$ replaced by $f_s$.

\emph{Technical Remark. This last observation is important because it means we do not need to prove a new maximum principle for Floer solutions for $S_{\lambda}$ (for fixed $\lambda$): away from the ends, the maximum principle holds since $f$ does not depend on $z\in S$, and on the ends the maximum principle holds because it holds for special continuation cylinders defined using $\lambda f_s$.}

The $1$-parameter family argument in the proof of Theorem \ref{Theorem Homotopy of surfaces} then defines a chain homotopy $K$ such that $-\psi_S + \psi_*^{\otimes p} \circ \psi_{S_f} \circ \varphi_*^{\otimes q} + K \circ \partial + \partial \circ K = 0$ at the chain level, and so $\psi_S = \psi_*^{\otimes p} \circ \psi_{S_f} \circ \varphi_*^{\otimes q}$ on cohomology. Thus the above diagram commutes, as required.
\end{proof}
%
%
\section{Appendix 2: Coherent orientations}
\label{Appendix Coherent orientations}
\subsection{Coherent orientations for Floer trajectories}
A \emph{coherent orientation} for the moduli spaces
$\widehat{\mathcal{M}}(x,y)$ of \ref{Subsection Floers Equation} is a
continuous map associating to $u\in \widehat{\mathcal{M}}(x,y)$ an
orientation $\sigma(u)$ of the determinant line bundle of the operator $D_u$ (the linearization of Floer's equation),
\\[0.1mm]
\begin{tabular*}{\textwidth}{l@{\extracolsep{\fill}}cr@{\extracolsep{0pt}}} 
\strut & 
$
\textrm{Det}\, D_u \! = \! \Lambda^{\textrm{max}}
\ker D_u \! = \! \Lambda^{\textrm{max}} T_u \widehat{\mathcal{M}}(x,y),
$
 & \strut 
\end{tabular*}
\\[0.4mm]
such that the orientations glue correctly: there is an associative
gluing operation $\#$ such that $\sigma(u)\# \sigma(v)=\sigma(u\#_{\lambda}
v)$ for $\lambda\gg 0$ (here $u\#_{\lambda} v$ is any family of Floer trajectories depending on a gluing parameter $\lambda\in (\lambda_0,\infty)$ converging to the broken trajectory $u\# v$ as $\lambda \to \infty$). Coherent orientations were first
constructed by Floer-Hofer
\cite{Floer-Hofer-Coherent-Orientations}. We recall the construction below. We emphasize that coherent orientations are not unique, since the construction involves choices.
%
\subsection{Fredholm operators on trivial bundles over a cylinder}
\label{Subsection Fredholm operators on trivial bundles over a
cylinder}
%
One first constructs coherent orientations for families of
Fredholm operators of the form
\\[0.5mm]
\begin{tabular*}{\textwidth}{l@{\extracolsep{\fill}}cr@{\extracolsep{0pt}}} 
\strut & 
$
L: W^{1,\mathfrak{p}}({\mathrm{triv}}) \to L^{\mathfrak{p}}({\mathrm{triv}}),\quad Lu = \partial_s u +
J_{s,t}\partial_t u + A_{s,t}u
$
 & \strut 
\end{tabular*}
\\[0.5mm]
on the trivial vector bundle ${\mathrm{triv}}=Z\times \C^n \to Z$ over the cylinder
$Z=\overline{\R} \times S^1$ (compactified appropriately by adding
two circles $S^1_{\pm}$ at infinity), such that $A_{s,t} \in
\textrm{End}({\mathrm{triv}})$ converges to self-adjoint isomorphisms $A_{\pm
\infty,t}$ and $L \to L^{\pm}=J_{\pm \infty,t}\partial_t +
A_{\pm\infty,t}$ as $s\to \pm \infty$, and $\mathfrak{p}>2$.

Once the \emph{asymptotic operators} $L^{\pm}$ at the ends are fixed, the
family $\mathcal{O}_{\mathrm{triv}}(L^-,L^+)$ of such operators is a
contractible (indeed convex) set \cite[Prop
7]{Floer-Hofer-Coherent-Orientations}. Therefore the real line
bundle
$\textrm{Det}(\mathcal{O}_{\mathrm{triv}}(L^-,L^+)) \to \mathcal{O}_{\mathrm{triv}}(L^-,L^+)$
of all determinants
\\[0.5mm]
\begin{tabular*}{\textwidth}{l@{\extracolsep{\fill}}cr@{\extracolsep{0pt}}} 
\strut & 
$
\textrm{Det}(L)=(\Lambda^{\max}\textrm{coker}\, L)^{\vee} \otimes
\Lambda^{\max}\ker L \quad\; (\textrm{over }\R)
$
 & \strut 
\end{tabular*}
\\[0.5mm]
is trivial, and an orientation $\sigma(L)$ is a choice of
trivialization.
%
\subsection{Gluing the operators and coherent orientations}
\label{Subsection Gluing the operators and coherent orientations}
%
Operators $L,K$ which become constant respectively for
$s\!\gg\! 0,s\!\ll\! 0$, with $L^+\!=\!K^-$, can be glued
$L\# K$ to yield a natural gluing isomorphism
$\textrm{Det}\, L\# K \cong \textrm{Det}\, L \times \textrm{Det}\,
K$ \cite[Prop.9]{Floer-Hofer-Coherent-Orientations}, which we now explain.

\emph{Explanation. Suppose $L,K$ are surjective (in general, one uses a stabilization trick to reduce to this situation \cite[above Prop.9]{Floer-Hofer-Coherent-Orientations}). Then it is enough to construct a natural linear isomorphism $\psi: \ker L \oplus \ker K \to \ker (L\# K)$ and to show that $L\# K$ is surjective, since then the top exterior power of $\psi$ gives the required isomorphism of the determinants (notice the cokernels vanish). The gluing $L \# K$ actually depends on a parameter $\lambda>0$: $(L \# K)(s) = L(s + 2\lambda)$ for $s\leq -\lambda$ and $(L \# K)(s) = K(s - 2\lambda)$ for $s\geq \lambda$, so for large $\lambda$ this patches correctly with $(L\# K)(s) = L^+ = K^-$ for $s\in [-\lambda,\lambda]$. 
\\ \indent For large $\lambda$, the Fredholm index of $L \# K$ is the same as the sum of the Fredholm indices of $L,K$: this is because Fredholm indices can be expressed as a difference of Conley-Zehnder indices which only depend on the asymptotic operators \cite[Prop.6]{Floer-Hofer-Coherent-Orientations} (this is due to Salamon-Zehnder \cite[Theorem 4.1]{Salamon-Zehnder} -- see also the \emph{Explanation} in \ref{Subsection Fredholm operators on bundles E to M}). So if we can construct $\psi$ so that $L\# K$ is injective on the orthogonal complement of $\mathrm{Im}\, \psi$ then, because of the Fredholm indices, the operator $L\# K$ has zero cokernel and $\psi$ is bijective. \\ \indent Finally, we explain the construction of $\psi$.
From $(u,v)\in \ker L \oplus \ker K$ one constructs the shifted sum $(u_{2\lambda}\# v_{-2\lambda})(s) = u(s+2\lambda) + v(s-2\lambda)$. Then $\psi(u,v)\in \ker L\# K$ is defined by orthogonally projecting $u_{2\lambda}\# v_{-2\lambda}$ onto $\ker L\# K$. The proof that $L\# K$ is injective on $(\mathrm{Im}\, \psi)^{\perp}$ for large $\lambda$ is an argument by contradiction that requires some analysis \cite[Prop.9]{Floer-Hofer-Coherent-Orientations}.}

Define $\sigma(L\# K)$ by gluing
two given orientations $\sigma(L)\# \sigma(K)$ via the gluing map
\\[0.5mm]
\begin{tabular*}{\textwidth}{l@{\extracolsep{\fill}}cr@{\extracolsep{0pt}}} 
\strut & 
$
\mathcal{O}_{\mathrm{triv}}(L^-,L^+) \times \mathcal{O}_{\mathrm{triv}}(L^+,K^+) \to
\mathcal{O}_{\mathrm{triv}}(L^-,K^+).
$
 & \strut 
\end{tabular*}
\\[0.5mm]
Now construct a coherent orientation as follows. Fix
an asymptotic operator $L_0$ of the type that arises at $s\!=\!-\infty$.
Choose the orientation $1^{\vee}\otimes 1\in \R^{\vee}\otimes \R$
for the isomorphism $L\!=\!\partial_s + L_0$. So we get an
orientation on $\mathcal{O}_{\mathrm{triv}}(L_0,L_0)$. Pick any orientation of
$\mathcal{O}_{\mathrm{triv}}(L_0,L^+)$ for all $L^+\!\neq\! L_0$. Coherence requires
the orientations on
$\mathcal{O}_{\mathrm{triv}}(L_0,L^+),\mathcal{O}_{\mathrm{triv}}(L^+,L_0)$ to determine that on $\mathcal{O}_{\mathrm{triv}}(L_0,L_0)$ via gluing, so we deduce an
orientation for $\mathcal{O}_{\mathrm{triv}}(L^+,L_0)$. Similarly orientations
on $\mathcal{O}_{\mathrm{triv}}(L_0,L^-)$, $\mathcal{O}_{\mathrm{triv}}(L^-,L^+)$,
$\mathcal{O}_{\mathrm{triv}}(L^+,L_0)$ determine that on
$\mathcal{O}_{\mathrm{triv}}(L_0,L_0)$, so we deduce an orientation for
$\mathcal{O}_{\mathrm{triv}}(L^-,L^+)$. The construction is well-defined since
gluing $1^{\vee}\otimes 1$ with itself yields $1^{\vee}\otimes 1$.
%
\subsection{Fredholm operators for a bundle $E\to \overline{M}$}
\label{Subsection Fredholm operators on bundles E to M}
%
Let $E\to \overline{M}$ be any symplectic vector bundle. Consider all operators $L$ on $u^*E$ for smooth maps $u:Z\to \overline{M}$ such that
$L$ is an operator of the type above in some (and hence any) symplectic trivialization of $u^*E$. The bundles $u^*E$ have the
homotopy type of $S^1$ so $u^*E$ is trivial since
$\pi_0(Sp(2n))\!=\!0$, but two choices of trivialization may be
non-homotopic as $\pi_1(Sp(2n))\!=\!\Z$. To overcome this issue, one classifies the operators into equivalence classes $[L,u]$ determined by the following data: the asymptotics $x^{\pm}(t)=u(\pm \infty,t)$, the asymptotic $L^{\pm}$, and the Fredholm index $\mathrm{Ind}(L)$ (when deciding whether two pairs belong to the same equivalence class, we compare the $L^{\pm}$ in a common symplectic trivialization of $(x^{\pm})^*E$). 
We now explain how a choice of orientation for $L$ naturally 
determines an orientation for all operators in the class $[L,u]$ \cite[Lemmas 13/15]{Floer-Hofer-Coherent-Orientations}.

\emph{Explanation. Given equivalent $(L^u,u), (L^v,v)$ (so they share the same $L^{\pm}$ over the common $x^{\pm}$ and they have the same Fredholm index), we can pick trivializations $\phi^u\co u^*E \cong Z\times \C^n$ and $\phi^v\co v^*E\cong Z\times \C^n$ which agree over $x^{-}$. In the trivializations, the $L^u,L^v$ determine the data $J_{s,t}^u,A_{s,t}^u$ and $J_{s,t}^v,A_{s,t}^v$ in $\mathrm{End}(\C^n)$, and this data determines symplectic paths $\gamma^u_{\pm\infty},\gamma^v_{\pm\infty}$ in $\mathrm{Sp}(\C^n)$ by integrating the ODE system $\gamma_{\pm}'(t)=J_{\pm\infty,t}A_{\pm\infty,t}\gamma_{\pm}(t)$, $\gamma_{\pm}(0)=\mathrm{Id}$. The \emph{Conley-Zehnder index} \cite{Salamon-Zehnder} is an integer-valued map defined on paths in $\mathrm{Sp}(\C^n)$ joining the identity to an element which does not have eigenvalue $1$. We recall three key properties: %
\\ \indent
(1) two such paths are homotopic if and only if the indices equal; %
\\ \indent
(2) under the multiplication action of $\psi\in \pi_1(\mathrm{Sp}(\C^n),\mathrm{Id})$ on such paths the index changes by subtracting twice the Maslov index of $\psi$ (using our conventions, see Remark \ref{Remark CZ convention}); %
\\ \indent
 (3) the Fredholm index of an operator $L$ as in \ref{Subsection Fredholm operators on trivial bundles over a cylinder} is the difference of the Conley-Zehnder indices of the asymptotics: $|\gamma^u_{-\infty}|-|\gamma^u_{+\infty}|$.
\\ \indent
In our situation: $\gamma^u_{-\infty}\equiv \gamma^v_{-\infty}$ and the Fredholm indices of $L^u,L^v$ equal. So the Conley-Zehnder indices of the paths $\gamma^u_{\infty},\gamma^v_{\infty}$ equal, so $\gamma^u_{\infty},\gamma^v_{\infty}$ are homotopic, so $\phi_{\infty,t}^u(\phi_{\infty,t}^v)^{-1}$ has zero Maslov index, so $\phi_{\infty,t}^u,\phi_{\infty,t}^v$ are homotopic. So by homotopying $\phi^v$, we can assume $\phi^u,\phi^v$ agree over both ends $x^{\pm}$, and since the asymptotics $L^{\pm}$ agree, also the asymptotics $L^{\pm}_{\C^n}$ in the common trivializations over $x^{\pm}$ agree. Since $\mathcal{O}_{\mathrm{triv}}(L^-_{\C^n},L^+_{\C^n})$ is contractible, a choice of orientation for $L^u$ induces via $\phi^u,\phi^v$ an orientation for $L^v$ (and a similar homotopy argument shows that the induced orientation on $L^v$ does not depend on the choices of $\phi^u,\phi^v$).
}

Now we construct coherent orientations $\sigma([L,u])$ for all classes
$[L,u]$. For each homotopy class $[S^1,M]$, pick a representative loop $x_0$ and an asymptotic operator $L_0$. Define $\sigma(L_0,x_0)=1^{\vee}\otimes 1$ (viewing $x_0$ as an $s$-independent cylinder). Consider all $(L,u)$ such that both ends of $u$ are $x_0$ and $L^{\pm}=L_0$. Then the index $\mathrm{Ind}(L)\in 2c_1(TM)(\pi_2(M))\subset \Z$ by the Riemann-Roch theorem \cite[Appendix C]{McDuff-Salamon2}. If there is such an $(L,u)$ with non-zero index, then all such classes $[L,u]$ can be related by gluings using a class $[L_{\mathrm{min}},u_{\mathrm{min}}]$ with minimal positive index, so it is enough to pick an orientation for $[L_{\mathrm{min}},u_{\mathrm{min}}]$ to determine orientations for all such $[L,u]$. 
The remaining part of the construction of coherent orientations now proceeds as in \ref{Subsection Gluing the operators and coherent orientations}. 
%
\subsection{Definition of orientation signs for $SH^*(M)$}
\label{Subsection Definition of orientation signs}
%
Apply \ref{Subsection Fredholm operators on bundles E to M} to
$T\overline{M} \to \overline{M}$ to get a coherent orientation
$\sigma$. Then $\sigma$ is defined on all linearizations $D_u$ of
Floer's equation along a Floer trajectory $u$. An isolated Floer
trajectory $u$ has a natural orientation $\partial_s u$ determined
by its flow. Define the $\epsilon_u\!\in\!\{\pm 1\}$ of
\ref{Subsection Symplectic chain complex} by
\\[0.5mm]
\begin{tabular*}{\textwidth}{l@{\extracolsep{\fill}}cr@{\extracolsep{0pt}}} 
\strut & 
$
\sigma(u)\!=\!\epsilon_u \cdot \partial_s u.
$
 & \strut 
\end{tabular*}
\\[0.5mm]
For an isolated Floer continuation solution $v$ define the
$\epsilon_v\!\in\!\{\pm 1\}$ of \ref{Subsection Continuation Maps} by
\\[1mm]
\begin{tabular*}{\textwidth}{l@{\extracolsep{\fill}}cr@{\extracolsep{0pt}}} 
\strut & 
$
\sigma(v) = \epsilon_v \cdot (1^{\vee}\otimes 1).
$
 & \strut 
\end{tabular*}
%
%
%
%
\subsection{Using orientation signs to prove $\mathbf{d\circ d=0}$}
\label{Subsection Using orientation signs to prove dd=0}
%
Let $u\# u' \in \mathcal{M}_0(x,y) \# \mathcal{M}_0(y,z)$ be a
broken Floer trajectory lying at the boundary of a connected
component $\mathcal{M}\!\subset\! \mathcal{M}_1(x,z)$. Floer's gluing
map parametrizes a neighbourhood of this boundary by a gluing
parameter $\lambda \!\gg\! 0$ as follows. It interpolates the two maps
(\emph{gluing data})
\\[0.5mm]
\begin{tabular*}{\textwidth}{l@{\extracolsep{\fill}}cr@{\extracolsep{0pt}}} 
\strut & 
$
\textstyle 
(u_{\lambda}=u(s+ \lambda, t))|_{s\leq -1} \;\textrm{ and }\;
(u'_{-\lambda}=u'(s- \lambda, t))|_{s\geq 1}
$
 & \strut 
\end{tabular*}
\\[0.5mm]
to obtain an approximate solution to Floer's equation, and via the implicit function theorem the approximate solution uniquely determines a genuine Floer solution $u\#_{\lambda} u'\in \mathcal{M}$. As $\lambda \to \infty$, the $1$-family $u\#_{\lambda} u'$ converges to the broken trajectory $u\# u'$.

In general, given $1$-dimensional vector spaces $\R e,\R f$ oriented by the vectors $e,f$, we canonically orient $\R e \times \R f$ by $(e,0)\wedge (0,f)$, or equivalently by $(e,-f)\wedge (e,f)$. So, using the notation
$\widehat{\mathcal{M}}$ of \ref{Subsection Floers Equation}, 
$\widehat{\mathcal{M}}(x,y)\!\times\!\widehat{\mathcal{M}}(y,z)$ is canonically oriented at $(u,u')$ by the pair of basis vectors
\\[0.5mm]
\begin{tabular*}{\textwidth}{l@{\extracolsep{\fill}}cr@{\extracolsep{0pt}}} 
\strut & 
$
\textstyle ((\partial_s u, -\partial_s u'), \;(\partial_s u, \partial_s u')).
$
 & \strut 
\end{tabular*}
\\[0.5mm]
This agrees with the natural orientation of the domain $(\lambda_0,\infty)\times \R$ of $(\lambda,s)$ which parametrizes the gluing data $(u_{\lambda},u'_{-\lambda})$, since differentiating the data in
$\lambda$ and $s$ yields the two vectors $(\partial_s
u_{\lambda},-\partial_s u'_{-\lambda})$, $(\partial_s
u_{\lambda},\partial_s u'_{-\lambda})$, which agrees with the orientation defined by the above pair. 

Therefore 
$(\partial_{\lambda} (u\#_{\lambda} u'), \partial_s (u\#_{\lambda}
u'))$ is the orientation of
$T\widehat{\mathcal{M}}(x,z)_{u\#_{\lambda} u'}$ induced by the gluing map. Quotienting by
the $s$-translations, $\partial_{\lambda}
(u\#_{\lambda} u')$ is the orientation on $\mathcal{M}$
($\cong$ interval) induced by the gluing. Now $u\#_{\lambda} u'$
approaches the boundary as $\lambda$ increases, so the gluing map induces the orientation which points outward along the boundary of the $1$-manifold $\overline{\mathcal{M}}$.

Therefore, if $u \# u'$, $\widetilde{u}\#\widetilde{u}'$ are the two boundaries of $\overline{\mathcal{M}}$, then the gluing map has assigned opposite orientations to them, and we abbreviate this fact by $\partial_s u \# \partial_s u'=- \partial_s \widetilde{u}\#\partial_s\widetilde{u}'$. 

Now consider the $\sigma$-orientations. The $u \# u'$, $\widetilde{u}\#\widetilde{u}'$ belong to the same component $\mathcal{M}$, so they carry the same asymptotic operators and they have the same Fredholm index, so $\sigma(u\# u')=\sigma(\widetilde{u}\#\widetilde{u}')$. By
coherence, $\sigma(u\# u') = \sigma(u)\# \sigma(u')$ and $\sigma(\widetilde{u}\#\widetilde{u}')=\sigma(\widetilde{u})\#\sigma(\widetilde{u}')$. 

By definition of the $\epsilon$-signs, $\sigma(u)\# \sigma(u') = \epsilon_u\epsilon_{u'} \cdot \partial_s u \#  \partial_s u'$ and $\sigma(\widetilde{u})\# \sigma(\widetilde{u}') = \epsilon_{\widetilde{u}}\epsilon_{\widetilde{u}'} \cdot \partial_s \widetilde{u} \#  \partial_s \widetilde{u}'$. But we showed above that $\partial_s u \# \partial_s u'=- \partial_s \widetilde{u}\#\partial_s\widetilde{u}'$. Therefore $\epsilon_u \epsilon_{u'} +
\epsilon_{\widetilde{u}}\epsilon_{\widetilde{u}'}=0$, as required in the proof of $d\circ d=0$ in \ref{Subsection Symplectic chain complex}.
%
\subsection{Using orientation signs to prove
$\mathbf{d\circ \varphi = \varphi \circ d}$}\label{Subsection Orientation signs Floer chain maps}
%
Here $\varphi$ is a continuation map (see \ref{Subsection
Continuation Maps}) and $d\circ \varphi = \varphi \circ d$ says it is a chain map. This equation arises from an oriented count of the breakings of the $1$-dimensional moduli spaces $\mathcal{M}_1^{H_s}(x,z)$ of Floer continuations. We will now prove that this equation indeed holds for the choices of signs $\epsilon_u,\epsilon_v$ defined in \ref{Subsection Definition of orientation signs}.

 Let $u\# v$ and $v'\# u'$ be broken
continuation solutions, where $v,v'$ are Floer continuation
solutions, $u,u'$ are Floer trajectories. The orientations of $u\# v,v'\# u'$ are
defined by the orientations $\partial_s u$, $\partial_s u'$
respectively. However, the gluing data is now
\\[0.5mm]
\begin{tabular*}{\textwidth}{l@{\extracolsep{\fill}}cr@{\extracolsep{0pt}}} 
\strut & 
$
(u_{2\lambda}|_{s\leq -\lambda-1},\; v|_{s\geq -\lambda+1})
\quad \textrm{ and } \quad (v'|_{s\leq \lambda-1},\;
u'_{-2\lambda}|_{s\geq \lambda+1}).
$
 & \strut 
\end{tabular*}
\\[0.5mm]
Differentiating in $\lambda$: $(2\partial_s
u_{2\lambda},0)$, $(0,-2\partial_s u'_{-2\lambda})$. So for $\lambda\!\gg\!0$ the glued orientations are
$\partial_{\lambda}(u\#_{\lambda} v)$ (outward-pointing near the
boundary) and $-\partial_{\lambda}(v'\#_{\lambda} u')$
(inward-pointing).

Let $\mathcal{M}\subset \mathcal{M}_1^{H_s}(x,z)$ be a $1$-dimensional component whose boundaries are the broken solutions $u\# v$ and $\widetilde{u}\# \widetilde{v}$. According to the gluing above, they are both oriented in the outward-direction, so they have opposite orientations, so we abusively write $\partial_s u = - \partial_s \widetilde{u}$. 

The $\sigma$-orientations are coherent and constant on $\mathcal{M}$ so $\sigma(u)\# \sigma(v)=\sigma(u\# v) = \sigma(\widetilde{u}\# \widetilde{v}) = \sigma(\widetilde{u})\#\sigma(\widetilde{v})$. By definition, $\sigma(u)\#\sigma(v) = \epsilon_u \epsilon_v \partial_s u$, $\sigma(\widetilde{u})\#\sigma(\widetilde{v})= \epsilon_{\widetilde{u}} \epsilon_{\widetilde{v}} \partial_s \widetilde{u}$. Using $\partial_s u = - \partial_s \widetilde{u}$, we get $\epsilon_u \epsilon_v =- \epsilon_{\widetilde{u}}\epsilon_{\widetilde{v}}$. So these two breakings contribute cancelling contributions to $d \circ \varphi$. 

Similarly, if the boundaries of $\mathcal{M}$ are $v\# u$ and $\widetilde{v}\# \widetilde{u}$, we get cancelling contributions to $\varphi \circ d$. 
Finally, if the boundaries of $\mathcal{M}$ are of different type, say  $u\# v$ and $\widetilde{v}\# \widetilde{u}$, then the gluing map assigns equal orientations to them, so proceeding as above we get $\epsilon_u \epsilon_v = \epsilon_{\widetilde{v}}\epsilon_{\widetilde{u}}$. So these breakings give equal contributions to $d \circ \varphi$ and $\varphi \circ d$. This completes the proof of $d \circ \varphi =\varphi \circ d$.
%
%
\subsection{Coherent orientations for Floer solutions}
%
For TQFT operations we need coherent orientations using smooth
maps $u:S \to \overline{M}$ on a punctured Riemann surface $S$
with prescribed parametrizations on the cylindrical ends
(appropriately compactified with \emph{asymptotic circles}
$S^1_a,S^1_b$ at the ends). We mimic the construction of coherent orientations for symplectic field theory due to Bourgeois-Mohnke \cite{Bourgeois-Mohnke} which builds upon Eliashberg-Givental-Hofer \cite[1.8]{Eliashberg-Givental-Hofer}.
%
\subsection{Fredholm operators over punctured surfaces}
\label{Subsection Fredholm operators over punctured surfaces}
Let $E \to S$ be a complex vector bundle with prescribed
trivializations on the asymptotic circles. Consider operators
\\[0.5mm]
\begin{tabular*}{\textwidth}{l@{\extracolsep{\fill}}cr@{\extracolsep{0pt}}} 
\strut & 
$
L\co W^{1,\mathfrak{p}}(E)\!\to\! L^{\mathfrak{p}}(\textrm{Hom}^{0,1}(TS,E)), \quad Lu\cdot Z=\nabla_Z u + J\nabla_{jZ} u + A_z(u)\cdot Z
$
 & \strut 
\end{tabular*}
\\[0.5mm]
which restrict on the
cylindrical ends to an operator of the type in \ref{Subsection
Fredholm operators on trivial bundles over a cylinder}.
Here $Z\in TS$; $A_z \in
\textrm{Hom}(E,\textrm{Hom}^{0,1}(TS,E))$ depending on $z\in S$; and $J\in \mathrm{End}(E)$ is the complex structure of the vector bundle $E$. Denote the space of such operators by
\\[0.5mm]
\begin{tabular*}{\textwidth}{l@{\extracolsep{\fill}}cr@{\extracolsep{0pt}}} 
\strut & 
$
\mathcal{O}_{E}(L_a;L_b)=\mathcal{O}_{E}(L_1,\ldots,L_p;L_1,\ldots,L_q)
$
 & \strut 
\end{tabular*}
\\[0.5mm]
where $L_a$, $L_b$ are the asymptotic operators over the asymptotic circles respectively at
the negative and positive ends. Just as in \ref{Subsection
Fredholm operators on trivial bundles over a cylinder},
$\mathcal{O}_E(L_a;L_b)$ is a contractible space so the
determinant bundle over $\mathcal{O}_E(L_a;L_b)$ is trivial and an
orientation is a choice of trivialization.
%
\subsection{Gluings and disjoint unions}
\label{Subsection Gluings and disjoint unions}
Just as in \ref{Subsection Gluing the operators and coherent
orientations} there is a gluing operation: given $E\to S$, $E'\to
S'$ with matching trivializations over the respective punctures
$y_c$, $x_c'$ where we glue, we obtain a glued bundle $E'' \to
S''$ and a gluing map
\\[0.5mm]
\begin{tabular*}{\textwidth}{l@{\extracolsep{\fill}}cr@{\extracolsep{0pt}}} 
\strut & 
$
\#: \mathcal{O}_E(L_a;L_b,L_{-c}) \times
\mathcal{O}_{E'}(L_{c},L_{a'};L_{b'}) \to
\mathcal{O}_{E''}(L_a,L_{a'};L_b,L_{b'}),
$
 & \strut 
\end{tabular*}
\\[0.5mm]
where we always use the convention that $L_{-c}$ is an
abbreviation for the reversed ordering $(\ldots,L_{2},L_{1})$ of
the operators $(L_1,L_2,\ldots)$, so inductively in $c$ we are
gluing on the $c$-end the pair of asymptotics $L_{-c}=L_c$. This
ensures the associativity of the gluing operation $\#$ defined on
the orientations $\sigma(L)$ of $\textrm{Det}\, L$ \cite[Cor
7]{Bourgeois-Mohnke}.

The disjoint union of bundles determines a natural isomorphism
\\[0.5mm]
\begin{tabular*}{\textwidth}{l@{\extracolsep{\fill}}cr@{\extracolsep{0pt}}} 
\strut & 
$
\textrm{Det}\, L\otimes \textrm{Det}\, L' \to \textrm{Det}(L
\cup L').
$
 & \strut 
\end{tabular*}
\begin{lemma}\label{Lemma Comparing natural orientations}
 The two natural isomorphisms $\mathrm{Det}\, L\otimes \mathrm{Det}\, L' \to \mathrm{Det}(L
\cup L')$ and $\mathrm{Det}\, L'\otimes \mathrm{Det}\, L \to \mathrm{Det}(L
\cup L')$ differ by the sign $(-1)^{\mathrm{Ind}\, L \cdot \mathrm{Ind}\, L'}$.
\end{lemma}
\begin{proof}
 We first illustrate this when $L,L'$ are surjective with $1$-dimensional kernels. Suppose orientations have been chosen for their determinants: $\mathrm{Det}\, L = \ker L = \R e$, $\mathrm{Det}\, L' = \ker L' = \R e'$. Here $e,e'$ are sections of two bundles $E,E'$ over some surfaces $S,S'$, and we can naturally view them as elements in $\ker (L\cup L')$ by extending them to sections of $E\cup E' \to S \cup S'$ by defining them to be zero respectively over $S',S$. Then $\ker (L \cup L') = \R e + \R e'$ and $\mathrm{Det}\, (L\cup L') = \Lambda^2(\ker L\cup L') = \R e\wedge e'$. The two isomorphisms in the claim are respectively induced by $e\otimes e' \mapsto e\wedge e'$ and $e' \otimes e \mapsto e'\wedge e = (-1)^{1\cdot 1} e\wedge e'$, so the sign is as predicted.

In general, we first stipulate more precisely what the isomorphism preceding the claim is.
Abbreviate by $k,k',c,c'$ the dimensions of the kernels and cokernels of $L,L'$.
Given an orientation $(f_c^{\vee} \wedge \cdots \wedge f_1^{\vee}) \otimes (e_1 \wedge \cdots \wedge e_k)$ for $\mathrm{Det}(L)$, and similarly (using $f',c',e',k'$) an orientation for $\mathrm{Det}(L')$, we first extend the $f,f',e,e'$ sections over $S\cup S'$ as in the example above, and then we declare $(-1)^d (f_{c'}'^{\vee} \wedge \cdots \wedge f_1'^{\vee} \wedge f_c^{\vee} \wedge \cdots \wedge f_1^{\vee}) \otimes (e_1 \wedge \cdots \wedge e_k \wedge e_1'\wedge \cdots \wedge e_{k'}')$ to be the orientation of $\mathrm{Det}(L\cup L')$, where $d = c'(c+k)$ follows Koszul sign rules (compare \cite[Sec.(11a)]{Seidel-book} for a similar discussion). 
So the difference between the two isomorphisms in the claim, is the Koszul sign arising from switching the order of the two brackets in
\\[0.5mm]
\begin{tabular*}{\textwidth}{l@{\extracolsep{\fill}}cr@{\extracolsep{0pt}}} 
\strut & 
$
(\Lambda^{\textrm{max}}\textrm{coker}^{\vee} L \otimes \Lambda^{\textrm{max}}\ker L
)\otimes (\Lambda^{\textrm{max}}\textrm{coker}^{\vee} L' \otimes
\Lambda^{\textrm{max}}\ker L').
$
 & \strut 
\end{tabular*}
\\[0.5mm]
So the sign is:
$\textstyle (-1)^{(c+k) \cdot (c'+k')} 
=(-1)^{(k - c)\cdot (k'- c')}  =   (-1)^{\textrm{ind} L \cdot \textrm{ind} L'}.\quad\qedhere
$
\end{proof}

Denote $\sigma(L)\cup \sigma(L')$ the orientation for $L\cup L'$ induced from the orientations $\sigma(L),\sigma(L')$ via the above natural isomorphism.

If a gluing between $K$ and $L \cup L'$
only involves gluing $K$ with $L$, then \emph{distributivity}
holds:
\\[0.5mm]
\begin{tabular*}{\textwidth}{l@{\extracolsep{\fill}}cr@{\extracolsep{0pt}}} 
\strut & 
$
\sigma(K)\#(\sigma(L)\cup \sigma(L')) = (\sigma(K)\#
\sigma(L)) \cup \sigma(L').
$
 & \strut 
\end{tabular*}
%
%
%
%
\subsection{Axiomatic construction of coherent orientations}
\label{Subsection Axiomatic construction of coh orientations}
If $S$ has no punctures, then $\mathcal{O}_E(\emptyset;\emptyset)$
contains a Cauchy-Riemann operator $\overline{\partial}$, which is
$\C$-linear. So $\ker \overline{\partial}$, $\textrm{coker}\,
\overline{\partial}$ are complex, so they are canonically
oriented. So we obtain a
\emph{canonical orientation} on
$\mathcal{O}_E(\emptyset;\emptyset)$.

Now consider the standard trivial bundles $\textrm{triv}\to \C$.
Denote the operators $L$ on these bundles by $D^{\pm}$ depending
on whether infinity is a positive or negative puncture for $\C$. Define an
orientation on each $\mathcal{O}_{\textrm{triv}\to
\C}(L^-;\emptyset)$ by fixing an operator $D^-$ and picking an
orientation for it. This determines an orientation for any $D^+$
by coherence: gluing an appropriate $D^-$ forces $\sigma (D^+) \#
\sigma (D^-)$ to be the canonical orientation.

Since we established orientations for any $D_i^-\in
\mathcal{O}_{\textrm{triv}\to \C}(L_i^-;\emptyset)$, we can define
\\[1mm]
\begin{tabular*}{\textwidth}{l@{\extracolsep{\fill}}cr@{\extracolsep{0pt}}} 
\strut & 
$
\sigma(D_1^-\cup \cdots \cup D_k^-)=\sigma(D_1^-)\cup \cdots \cup
\sigma(D_k^-),
$
 & \strut 
\end{tabular*}
\\[1mm]
and similarly for $D_i^+ \in \mathcal{O}_{\textrm{triv}\to
\C}(\emptyset;L_i^+)$.

\begin{definition}\label{Definition orientations}
The orientation $\sigma(K)$ for $K\in \mathcal{O}_E(L_a;L_b)$ is
defined by capping off the punctures and requiring that we obtain
the canonical orientation:
\\[1mm]
\begin{tabular*}{\textwidth}{l@{\extracolsep{\fill}}cr@{\extracolsep{0pt}}} 
\strut & 
$
\sigma(\cup D_{-a}^+) \# \sigma(K) \# \sigma(\cup D_{-b}^-) =
\textrm{canonical},
$
 & \strut 
\end{tabular*}
\\[1mm]
with the convention that $D_{-a}$ is the reverse ordering
$(\ldots,D_2,D_1)$ of the $D_a$.
\end{definition}
%
\subsection{Orientation signs arising from gluing}
\label{Subsection Orientation signs arising from gluing}
%
We now want to find out whether a glued orientation $\sigma(L)\#
\sigma(L')$ agrees with $\sigma(L\# L')$ or not.

\begin{theorem}\cite[Thm.2]{Bourgeois-Mohnke}\label{Theorem reorder ends}
If we exchange the position of two consecutive $b$-labels $k,k+1$,
the coherent orientation of $\mathcal{O}_{E}(L_a;L_b)$ changes by
$(-1)^{\textrm{ind} \, D_k \cdot \textrm{ind} \, D_{k+1}}$, where
the $D_b \in \mathcal{O}_{\textrm{triv}\to \C}(L_b;\emptyset)$ cap
off the $b$-ends. Similarly for $a$-labels using the $D_a \in
\mathcal{O}_{\textrm{triv}\to \C}(\emptyset;L_a)$. For a general
permutation of the ends, iterate this result.
\end{theorem}
\begin{proof} For simplicity, we consider the pair of pants case. Observe the Figure:
\begin{center}
 \includegraphics[scale=0.5]{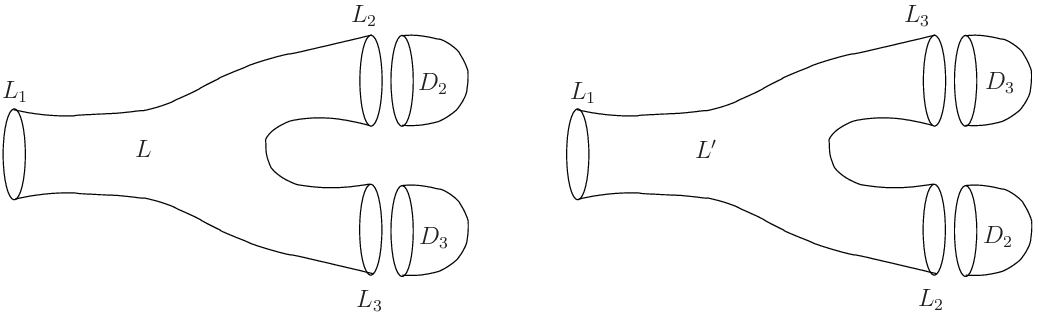}
\end{center}
We want to compare
$L\in \mathcal{O}_E(L_1;L_2,L_{3})$ and $L'\in
\mathcal{O}_E(L_1;L_3,L_2)$. We cap off the positive ends $2,3$ so that
both operators now lie in $\mathcal{O}_{E'}(L_1;\emptyset)$ so
they are equally oriented. Thus $\sigma(L\#(D_3 \cup D_2)) =
\sigma(L'\#(D_2 \cup D_3))$. So, by our axiomatic construction,
\\[0.5mm]
\begin{tabular*}{\textwidth}{l@{\extracolsep{\fill}}cr@{\extracolsep{0pt}}} 
\strut & 
$
\sigma(L)\# (\sigma(D_3)\cup \sigma(D_2)) = \sigma(L')\#
(\sigma(D_2)\cup \sigma(D_3)).
$
 & \strut 
\end{tabular*}
\\[0.5mm]
Thus $\sigma(L)$, $\sigma(L')$ are the same if and only if
$\sigma(D_3)\cup \sigma(D_2)$ and $\sigma(D_2)\cup \sigma(D_3)$
are the same. By Lemma \ref{Lemma Comparing natural orientations}, these are the same if and only if $(-1)^{\mathrm{Ind}(D_2)\cdot \mathrm{Ind}(D_3)}=+1$.
\end{proof}
\begin{theorem}[Prop.8, Prop.10,
\cite{Bourgeois-Mohnke}]\label{Theorem Gluing signs} $\sigma(L\# L')$ agrees with $\sigma(L)\#
\sigma(L')$ when all ends are glued,
so gluings of type $ \mathcal{O}_E(L_a;L_{-c}) \times
\mathcal{O}_{E'}(L_{c};L_{b'}) \to \mathcal{O}_{E''}(L_a;L_{b'})$.
For partial gluings, $\mathcal{O}_E(L_a;L_b,L_{-c}) \times
\mathcal{O}_{E'}(L_c,L_{a'};L_{b'}) \to
\mathcal{O}_{E''}(L_a,L_{a'};L_b,L_{b'})$, the orientations
$\sigma(L\# L')$ and $\sigma(L)\# \sigma(L')$ differ by
$(-1)^{\sum \textrm{ind}\, D_b \cdot \sum \textrm{ind}\, D_{a'}}$.
$($Mnemonically: we reorder $L_{b};L_{a'}$ to $L_{a'};L_b)$.
\end{theorem}
\begin{proof}[Sketch proof]
The key behind the first claim is that for complex linear
operators, gluings preserve the complex orientations. Now for
general $L,L'$, we can assume the ends labelled $a,b'$ are capped
off since they don't matter. Then attach cylinders $Z_{-c}$,
$Z'_c$ at the $c$-ends of $S,S'$ and extend $E,E'$ so that the new asymptotic operators are
complex linear. The resulting operators $L\cup Z_{-c}$, $Z'_c \cup
L'$ are now homotopic to complex linear ones, so they glue well:
$\sigma((L\cup Z_{-c}) \# (Z'_c \cup L'))=\sigma(L\cup Z_{-c}) \#
\sigma(Z'_c \cup L')$. Finally, by the associativity of $\#$, we
can move the $Z'_c$ over to the $Z_{-c}$ and ``cancel them off" in
pairs, to conclude the first claim: $\sigma(L \cup L')=\sigma(L)
\# \sigma(L')$.

For the second claim, write $\sigma_{a} = \cup_a \sigma(D_a)$.
Using distributivity and associativity of $\#$,
\\[0.5mm]
\begin{tabular*}{\textwidth}{l@{\extracolsep{\fill}}cr@{\extracolsep{0pt}}} 
\strut & 
$
(\sigma_{-a'} \cup \sigma_{-a}) \,\#\, (\sigma(L) \#
\sigma(L')) \,\#\, (\sigma_{-b'} \cup \sigma_{-b})  = 
[\sigma_{-a'} \cup (\sigma_{-a} \# \sigma(L))]  \,\#\,
 [(\sigma(L') \# \sigma_{-b'}) \cup \sigma_{-b}].
$
 & \strut 
\end{tabular*}
\\[0.5mm]
The latter is a gluing of all ends, so we can apply the first
claim provided that the ends are correctly ordered. Apply Theorem
\ref{Theorem reorder ends} iteratively to correctly reorder the
(negative) ends of the second square bracket: this gives rise to
the sign in the second claim. Except for this sign, the result of
the gluing must be the canonical complex orientation just like for
$(\sigma_{-a'} \cup \sigma_{-a}) \, \# \, (\sigma(L \# L'))\, \#\,
(\sigma_{-b'} \cup \sigma_{-b})$. Removing the caps that we added
in both cases shows $\sigma(L) \# \sigma(L')$, $\sigma(L \# L')$
differ by that sign.
\end{proof}
%
%
\subsection{Orientation signs for the TQFT}
\label{Subsection Orientation signs for the TQFT maps on SH}
%
We now show how the above axiomatic coherent orientation induces
coherent orientations for the moduli spaces
$\mathcal{M}(x_a;y_b;S,\beta)$. Pick once and for all a choice of
trivialization of $x^*T\overline{M}$ over each Hamiltonian orbit
$x$ (we discuss this further in \ref{Subsection Choice of trivializations}).
For $u\in \mathcal{M}(x_a;y_b;S,\beta,J)$, the linearization $L=D_u$
of $\overline{\partial}$ from \ref{Subsection
Smoothness of Moduli Spaces} is
an operator as in \ref{Subsection Fredholm operators over punctured surfaces} for the complex vector bundle $E=u^*T\overline{M} \to S$ (using $J$) with
asymptotic operators $(L_1^-,\ldots,L_p^-;L_1^+,\ldots,L_q^+)$ in the
chosen trivializations over the asymptotics $x_a,y_b$. Define the
orientation sign $\epsilon_u \in \{\pm 1\}$ arising for isolated $u$ in
\ref{Subsection Operations on SH(H) Definition} by
\\[1mm]
\begin{tabular*}{\textwidth}{l@{\extracolsep{\fill}}cr@{\extracolsep{0pt}}} 
\strut & 
$
\epsilon_u\cdot (1^{\vee}\otimes 1) = \sigma(u)= \sigma(\mathcal{O}_{u^*T{\overline{M}}\to
S}(L_1^-,\ldots,L_p^-;L_q^+,\ldots,L_1^+)).
$
 & \strut 
\end{tabular*}
\\[1mm]
Notice we reversed the order of the positive punctures.
Observe that the definition involves the bundle $u^*T\overline{M}$,
not just the asymptotics $L^{\pm}$. Also note that the capping off in \ref{Subsection Orientation signs arising from gluing} is an abstract construction: it does not require the asymptotics $x_a,y_b$ of $u$ to be contractible in $\overline{M}$.
\subsection{Using orientation signs to prove TQFT maps compose correctly}
\label{Subsection Using orientation signs to prove that TQFT maps
compose correctly}
In Theorem \ref{Theorem Gluing for compositions}, we claimed that
$\psi_{S}\circ \psi_{S'}=\psi_{S\#_{\lambda} S'}$ for any large enough gluing parameter
$\lambda$. Recall the proof produces a unique family of glued solutions $u \#_{\lambda} v$ which converges to the broken solution $u\# v$ as $\lambda \to \infty$. The broken solution is counted with sign $\epsilon_u \epsilon_v$ by $\psi_{S}\circ \psi_{S'}$, whereas $u \#_{\lambda} v$ is counted with sign $\epsilon_{u\#_{\lambda} v}$ by $\psi_{S\#_{\lambda} S'}$. To complete the proof we still need to show that $\epsilon_u \epsilon_v=\epsilon_{u\#_{\lambda} v}$. 
To keep the notation under control, we illustrate the proof in the case where $S=P$ and $S'=Q$ (see Figure \ref{Figure TQFT table}). So the gluing operation is
\\[0.5mm]
\begin{tabular*}{\textwidth}{l@{\extracolsep{\fill}}cr@{\extracolsep{0pt}}} 
\strut & 
$
\mathcal{O}_{u^*T{\overline{M}}\to
S}(L_1^-;L_2^+,L_1^+) \otimes \mathcal{O}_{v^*T{\overline{M}}\to
S'}(L_1'^-,L_2'^-;L_1'^+) \to \mathcal{O}_{(u\#_{\lambda} v)^*T{\overline{M}}\to
S\#_{\lambda} S'}(L_1^-;L_1'^+)
$
 & \strut 
\end{tabular*}
\\[0.5mm]
with $L_1^+ = L_1'^-$, $L_2^+=L_2'^-$.
By Theorem \ref{Theorem Gluing signs} this gluing is orientation-preserving for $\lambda\gg 0$
(the \emph{Explanation} in \ref{Subsection Gluing the operators and coherent orientations} clarifies the role of $\lambda$).
We deduce $\epsilon_u \epsilon_v = \epsilon_{u\#_{\lambda} v}$ as required.
%
%
%
%
\subsection{Using orientation signs to prove TQFT maps are chain maps}
\label{Subsection Using orientation signs to prove that TQFT maps
are chain maps}
%
If $SH^*$ is $\Z$-graded (\ref{Subsection Maslov index and
Conley-Zehnder index}), the differential $\partial$ on
$SC^*(H_1)\otimes \cdots \otimes SC^*(H_k)$ is defined by\\[1mm]$
\begin{array}{rcl}
\partial(a_1\otimes \cdots \otimes a_k)  &=&  d(a_1)
\otimes a_2 \otimes \cdots \otimes a_k + (-1)^{|a_1|} a_1 \otimes
d(a_2) \otimes a_3 \otimes \cdots \otimes a_k + \\ && +\cdots + (-1)^{|a_1|+\cdots + |a_{k-1}|} a_1 \otimes \cdots \otimes a_{k-1} \otimes d(a_k),
\end{array}
$\\[1mm]
this is the Koszul sign convention with $d$ in degree $1$. In
general, $SH^*$ is only $\Z/2$-graded, but this $\partial$ makes
sense since only the parity of $|a_1|,|a_2|,\ldots$ matter.

In Theorem \ref{Theorem Homotopy of surfaces}, we claimed $\psi_S: \otimes_b SC^*(B_b H) \to \otimes_a SC^*(A_a H)$ is a chain map: $\partial\circ \psi_S = \psi_S \circ \partial$. This equation arose from an oriented count of the broken solutions arising at the boundary of the $1$-dimensional moduli spaces of Floer solutions, but we still need to check the orientation signs. This involves two steps: (1) we need to explain how the Koszul signs in the above definition of $\partial$ arise; (2) we need to explain the minus sign in $\partial \circ \psi_S - \psi_S \circ \partial =0$.

The proof of (2) is identical to the proof in \ref{Subsection Orientation signs Floer chain maps}: replace $\mathcal{M}^{H_s}_1(x,z)$ by $\mathcal{M}_1(x_i;z_j;S,\beta)$, replace continuation solutions $v,v'$ by Floer solutions $v,v'$ (and the $s$-coordinate for $v,v'$ now refers to the $s$-coordinate on the ends of $v,v'$ where the Floer trajectories $u,u'$ broke off). The argument in \ref{Subsection Orientation signs Floer chain maps} then shows that \emph{if the $\sigma$-orientations are respected} (meaning $\sigma(u\#v)=\sigma(u)\#\sigma(v)$ and $\sigma(v'\#u')=\sigma(v')\#\sigma(u')$) then the gluing map sends the orientations $\partial_s u,\partial_s u'$ of $u\#v, v'\#u'$ to $\partial_{\lambda}(u\#_{\lambda} v), -\partial_{\lambda}(v'\#_{\lambda} u')$, which are respectively outward and inward pointing near the boundary of $\mathcal{M}$ (since they approach the boundary as $\lambda\to \infty$).

\emph{Proof of (1):} Suppose $u\#v$ is a broken Floer solution, where an isolated Floer
trajectory $u$ broke off at the first negative end of $v$. The gluing of
the linearizations $D_u\# D_v$ is
\\[0.5mm]
\begin{tabular*}{\textwidth}{l@{\extracolsep{\fill}}cr@{\extracolsep{0pt}}} 
\strut & 
$
\mathcal{O}(D_u^-;D_u^+)\times
\mathcal{O}(D_u^+,L_2^-,\ldots,L_p^-;L_q^+,\ldots,L_1^+) \to
\mathcal{O}(D_u^-,L_2^-,\ldots,L_p^-;L_q^+,\ldots,L_1^+),
$
 & \strut 
\end{tabular*}
\\[0.5mm]
so by Theorem \ref{Theorem Gluing signs},
$\sigma(u\#v)=\sigma(u)\#\sigma(v)$. Similarly, for broken Floer
solutions $v'\#u'$ when $u'$ broke off at the first positive end of $v'$, we get $\sigma(v'\#u')=\sigma(v')\#\sigma(u')$. 

Via step (2) this shows that these two breakings contribute respectively to $(d\otimes 1 \otimes \cdots \otimes 1)\circ \psi_S$ and $\psi_S \circ (d\otimes 1 \otimes \cdots \otimes 1)$ in the equation $\partial \circ \psi_S = \psi_S \circ \partial$ (here $1$ are identity maps). To prove that the breakings at the other ends also contribute correctly to  $\partial \circ \psi_S = \psi_S \circ \partial$, we reduce to the previous two cases by reordering the ends twice using Theorem \ref{Theorem reorder ends}, as follows. 

Suppose $u$ broke off at the $k$-th negative end of $v$, so $L_k^-=D_u^+$. Then Theorem \ref{Theorem reorder ends} allows us to move the $k$-th end into the first position at the cost of introducing the sign $(-1)^{[\mathrm{Ind}(D_1)+\cdots +\mathrm{Ind}(D_{k-1})]\, \mathrm{Ind}(D_k)}$ where $D_i\in \mathcal{O}_{\mathrm{triv}\to \C}(\emptyset;L^-_i)$ cap off the negative ends. After this reordering, we are in the case discussed previously where $u$ broke off at the first negative end. So gluing the cylinder with asymptotics $(D_u^{-};D_u^{+})$ onto the first negative end respects $\sigma$-orientations. By Theorem \ref{Theorem reorder ends} we can now move the first end back to its original $k$-th position, at the cost of the sign $(-1)^{[\mathrm{Ind}(D_k)-1]\, [\mathrm{Ind}(D_1)+\cdots+ \mathrm{Ind}(D_{k-1})]}$ (here we used that $u$ is an \emph{isolated} Floer trajectory, so the Fredholm indices of the caps attached to $L_k^-=D_u^+$ and to $D_u^-$ differ by $1$). This final position is the actual gluing without reorderings that we are interested in. 
Call $a_i$ the asymptotic orbits at the negative ends of the Floer solution $v$. Then by definition%
\footnote{By (\ref{Item grading of D+})-(\ref{Item grading of D-}) in the \emph{Technical remark} in \ref{Subsection Choice of trivializations}, the Conley-Zehnder grading $|L^{\pm}|\equiv \mathrm{ind}\, D^{\pm}$ modulo 2.}
 $(-1)^{|a_i|} = (-1)^{\mathrm{Ind}(D_i)}$. So the total sign caused by the reorderings is
\\[0.8mm]
\begin{tabular*}{\textwidth}{l@{\extracolsep{\fill}}cr@{\extracolsep{0pt}}} 
\strut & 
$
(-1)^{[\mathrm{Ind}(D_1)+\cdots+ \mathrm{Ind}(D_{k-1})]\, \mathrm{Ind}(D_k)}
   (-1)^{[\mathrm{Ind}(D_k)-1]\, [\mathrm{Ind}(D_1)+\cdots+ \mathrm{Ind}(D_{k-1})]} =
  (-1)^{|a_1|+\cdots + |a_{k-1}|},
$
 & \strut 
\end{tabular*}
\\[0.8mm]
which is precisely the Koszul sign for the $k$-th term in the definition of $\partial$ above. The discussion when $u'$ breaks off at a positive end of a Floer solution $v'$ in analogous. This proves (1).
%
\subsection{The choice of trivializations over the Hamiltonian orbits, the role of the canonical bundle $\mathcal{K}$, and dimension counts}
\label{Subsection Choice of trivializations}
%
Recall we chose
trivializations of $x^*T\overline{M}$ over all possible asymptotics
$x$ in \ref{Subsection Orientation signs for the TQFT maps on SH}. By construction, we actually only need to choose a homotopy class of trivializations, so we only need to choose an orientation of $x^*T\overline{M}$.

For example, suppose $c_1(M)\equiv c_1(TM,J)=0$. Then the canonical bundle $\mathcal{K}=\Lambda^{max}_{\C}T^*{\overline{M}}$ is trivial (we encountered $\mathcal{K}$ in \ref{Subsection Maslov index and Conley-Zehnder index}). A choice of trivialization of the complex line bundle $\mathcal{K}$ naturally determines a trivialization of the complex line bundles $\Lambda^{max}_{\C}(x^*T\overline{M})=x^*(\mathcal{K}^{\vee})$. So explicitly, we require that the trivialization of $x^*T\overline{M}$ induces the given trivialization of $x^*\mathcal{K}$ up to homotopy (this condition is an obstruction lying in $\pi_1(U(n))\cong \Z$). Equivalently, pick a complex volume form $\eta$, namely a non-vanishing section of $\mathcal{K}$, and require that the trivialization $x^*T\overline{M} \cong S^1 \times \C^n$ sends $\eta$ to the standard volume form of $\C^n$ (since $\pi_1(SU(n))=0$, this determines the trivialization up to homotopy).

Recall from \ref{Subsection Maslov index and Conley-Zehnder index} that the condition $c_1(M)=0$ induces a Conley-Zehnder grading on the orbits. We now explain how the grading determines the Fredholm indices by the Riemann-Roch theorem \cite[Theorem C.1.10]{McDuff-Salamon2}.

\emph{Technical Remark: the linearization $D_u$ arising in Floer theory is an elliptic first order partial differential  operator which equals a complex linear operator up to a lower order term which can be ignored when computing Fredholm indices \cite[Sec.3.1]{McDuff-Salamon}. The Riemann-Roch theorem is then applied to the complex linear part, which is a Cauchy-Riemann operator which induces a holomorphic structure on $u^*T\overline{M}$}.
\begin{enumerate}
 \item For $L\in \mathcal{O}_{u^*T\overline{M}\to Z}(L^-;L^+)$, where
$u: Z \to \overline{M}$ has asymptotics $x^{\pm}$, we can prescribe the trivialization of $u^*T\overline{M}$ in \ref{Subsection Gluing the operators and coherent orientations} to be the one which induces the given trivialization of $u^*\mathcal{K}$. The Conley-Zehnder grading described in the \emph{Explanation} in \ref{Subsection Gluing the operators and coherent orientations} and \ref{Subsection Fredholm operators on bundles E to M} agrees with the grading from \ref{Subsection Maslov index and Conley-Zehnder index}, so:
\\[0.5mm]
\begin{tabular*}{0.95\textwidth}{l@{\extracolsep{\fill}}cr@{\extracolsep{0pt}}} 
\strut & 
$
\mathbf{\mathbf{Ind}(L) = |x^-|-|x^+|}
$
 & \strut 
\end{tabular*}
\\[-4.3mm]

 \item \label{Item grading of D+} For $D^+\in \mathcal{O}_{E\to \C}(\emptyset;L^+)$, a Riemann-Roch argument \cite[Appendix C.4]{McDuff-Salamon2} shows that $\mathrm{Ind}(D^+) = n + \mu(L^+) = 2n-|L^+|$ where $n=\mathrm{rank}_{\C} E$, $\mu(L^+)$ is a Maslov index, and $|L^+|$ is the Conley-Zehnder index (see \ref{Subsection Maslov index and Conley-Zehnder index} and the Explanation in \ref{Subsection Gluing the operators and coherent orientations}). If $E=u^*T\overline{M}$ for some $u\co \C \to \overline{M}$ with asymptotic $x^+$, and $c_1(TM)|_{\pi_2(M)}=0$, then we deduce:
\\[0.5mm]
\begin{tabular*}{0.95\textwidth}{l@{\extracolsep{\fill}}cr@{\extracolsep{0pt}}} 
\strut & 
$
\mathbf{\mathbf{Ind}(D^+) = 2n - |x^+|}
$
 & \strut 
\end{tabular*}
\\[-4.3mm]

 \item \label{Item grading of D-} For $D^-\in \mathcal{O}_{E'\to \C'}(L^-;\emptyset)$, where $\C'$ is $\C$ but viewing infinity as a \emph{negative} puncture, glue an appropriate $D^+$ from (\ref{Item grading of D+}). Then
 additivity of indices and Riemann-Roch determines $\mathrm{Ind}(D^-)$ via: $\mathrm{Ind}(D^+) + \mathrm{Ind}(D^-) = \mathrm{Ind}(D^+\# D^-)= 2n + 2c_1(E\# E')[\C\# \C']$. If $E=u^*T\overline{M}$ for $u\co \C' \to \overline{M}$ with asymptotic $x^-$, and $c_1(TM)|_{\pi_2(M)}=0$, we deduce:
\\[0.5mm]
\begin{tabular*}{0.95\textwidth}{l@{\extracolsep{\fill}}cr@{\extracolsep{0pt}}} 
\strut & 
$
\mathbf{\mathbf{Ind}(D^-) = |x^-|}
$
 & \strut 
\end{tabular*}
\\[-4.3mm]

 \item For $L\in \mathcal{O}_{E\to S}(L^-_a;L^+_b)$ with $S$ of genus $g$ with no punctures, by Riemann-Roch $\mathrm{Ind}(L) = 2(1-g)n + 2c_1(E)[S]$;

\item For $L\in \mathcal{O}_{E\to S}(L^-_a;L^+_b)$ with $S$ of genus $g$ with $p$ negative and $q$ positive punctures, cap off the ends using $D^+_a,D^-_b$ of type (2),(3) respectively. Then by (4), $\mathrm{Ind}(L) + \sum \mathrm{Ind}(D^+_a) + \sum \mathrm{Ind}(D^-_b) =  2(1-g)n + 2c_1(\cup_a D^+_a \# E \# \cup_b D^-_b)[\cup_a \C \# S\# \cup_b \C']$;

\item Suppose $c_1(M)=0$. Fix a trivialization of $\mathcal{K}$. If $E=u^*T\overline{M}$ for $u: S \to \overline{M}$, for an $S$ as in (5), then pick a trivialization of $E$ inducing the given trivialization of $u^*\mathcal{K}$. Choose trivial bundles for the caps $D_a^+,D_b^-$ agreeing with the trivializations on the asymptotics $x_a,y_b$ of $u$. Then the $c_1$ term in (5) vanishes, so by (2),(3) we obtain $\mathrm{Ind}(L) + 2np - \sum |x_a| + \sum |y_b|=2(1-g)n$. We deduce the formula in Theorem \ref{Theorem index of Fredholm operator}:
\\[0.5mm]
\begin{tabular*}{0.95\textwidth}{l@{\extracolsep{\fill}}cr@{\extracolsep{0pt}}} 
\strut & 
$
\mathbf{\mathbf{Ind}(L) = \sum |x_a| - \sum |y_b| + 2n(1-g-p)}
$
 & \strut 
\end{tabular*}
\end{enumerate}
If we restrict the TQFT to genus $0$ surfaces and we restrict to contractible orbits (so we use $SH^*_0$ as in \ref{Subsection TQFT is compatible with filtrations}), the above discussion shows that we can weaken $c_1(TM)=0$ to just $c_1(TM)|_{\pi_2(M)}=0$. In this case, instead of using $\mathcal{K}$, one chooses the trivialization of $x^*T\overline{M}$ to be induced from a trivialization of $\overline{x}^*T\overline{M}$, where $\overline{x}$ is a disc in $\overline{M}$ bounding $x$. In (6) one uses such discs to cap off $u: S \to \overline{M}$ to obtain a sphere in $\overline{M}$. The choices of trivialization of $\overline{x}^*T\overline{M}$ do not affect the index, since two choices are related by the gluing of a bundle over a sphere, and by Riemann-Roch, gluing $E'\to \C P^1$ onto $E \to S$ can only change Fredholm indices 
by a value in $2c_1(TM)|_{\pi_2(M)}=0$.

In conclusion, when $c_1(TM)|_{\pi_2(M)}=0$, $SH^*_0(M)$ is a $\Z$-graded unital ring.

For cotangent bundles $\overline{M}=T^*N$ of closed \emph{oriented} manifolds $N$, there is a preferred homotopy class of trivializations for $\mathcal{K}=\Lambda^{\mathrm{max}}_{\C}TT^*N=\Lambda^{\mathrm{max}}_{\R}T^*N \otimes_{\R} \C$, since the orientation for $N$ determines a trivialization of $\Lambda^{\mathrm{max}}_{\R}T^*N$.
%
\subsection{Avoiding choices}
\label{Subsection Avoiding Choice of trivializations}
The system of coherent orientations constructed axiomatically in \ref{Subsection Axiomatic construction of coh orientations} depends on the choices of orientations for $\mathcal{O}_{\mathrm{triv}\to \C}(L^-;\emptyset)$. Different choices will give different orientation signs. To avoid making these choices, one can incorporate all choices into the symplectic chain complex, which is a constrution due to Seidel \cite[Sec.(12f)]{Seidel-book}.

For a $1$-dimensional $\R$-vector space $V$, define the
\emph{orientation space} $\mathcal{O}(V)$ to be the
$1$-dimensional $\K$-vector space generated by the two possible
orientations of $V$, subject to the relation that the two
generators sum to zero. This is functorial: any $\R$-linear map
$V\!\to\! W$ defines a natural $\K$-linear map
$\mathcal{O}(V)\!\to\! \mathcal{O}(W)$, and similarly for
multi-linear maps. 

Fix a homotopy class of trivializations for $x^*T\overline{M}$ for any Hamiltonian orbit $x$ (for example by \ref{Subsection Choice of trivializations}, when $c_1(M)=0$ a set of choices can be described in terms of a trivialization of $\mathcal{K}$). The linearization of Floer's equation $\partial_s u + J(\partial_t u - X_H)=0$ determines an asymptotic operator $L^+_x$ of the form described in \ref{Subsection Fredholm operators on trivial bundles over a cylinder} in such a trivialization of $x^*T\overline{M}$. This in turn determines, up to homotopy, an operator $D_x^{+}:W^{1,\mathfrak{p}}(\C,\C^n) \to L^{\mathfrak{p}}(\C,\C^n)$ in $\mathcal{O}_{\mathrm{triv}\to \C}(\emptyset;L_x^+)$ as in \ref{Subsection Fredholm operators over punctured surfaces} defined on the trivial bundle over $\C$ with asymptotic operator $L_x^+$.

Denote by $\mathcal{O}(x)=\mathcal{O}(V)$ the orientation space of $V=\mathrm{Det}(D^{+}_x)$. Then define
\\[1mm]
\begin{tabular*}{\textwidth}{l@{\extracolsep{\fill}}cr@{\extracolsep{0pt}}} 
\strut & 
$
SC^*(H) =\bigoplus \left\{ \mathcal{O}(x) : x \in
\mathcal{L}\overline{M},\; \dot{x}(t) = X_H(x(t)) \right\}.
$
 & \strut 
\end{tabular*}
\\[1mm]
Suppose we are given a choice of generators $X_a\in \mathcal{O}(x_a),Y_b\in \mathcal{O}(y_b)$. Let $L=D_u$ be any linearization arising from a Floer solution $u: S \to \overline{M}$ with asymptotics $x_a,y_b$. This determines asymptotic operators $L_a^-,L_b^+$ over $x_a,y_b$ in the given trivializations of $x_a^*T\overline{M},y_b^*T\overline{M}$. The caps $D_a^+\in \mathcal{O}_{\mathrm{triv}\to \C}(\emptyset;L_a^-),D_b^-\in \mathcal{O}_{\mathrm{triv}\to \C}(L_b^+;\emptyset)$ are oriented respectively by $X_a$ and by the condition $Y_b\#\mathrm{Det}(D_b^-)=\mathrm{canonical}$ (compare \ref{Subsection Axiomatic construction of coh orientations}). Call $Y_b'$ the orientation that $Y_b$ determines for $\mathrm{Det}(D_b^-)$ in this way. Then $X_a,Y_b$ uniquely determine an orientation for $L$ via the condition $\cup_{a} X_{-a} \# \mathrm{Det}(L) \# \cup_{b} Y_{-b}'=\mathrm{canonical}$ (mimicking Definition \ref{Definition orientations}). 

%
%
%
This proves that the $\mathcal{M}(x_a;y_b;S,\beta)$ are
canonically oriented \emph{relative to the ends}. 

For isolated $u\in \mathcal{M}_0(x_a;y_b;S,\beta)$ the $X_a,Y_b$ determine an orientation sign $\epsilon_{u,(X_a;Y_b)}\in \{ \pm 1 \}$ via $\mathrm{Det}(D_u)=\epsilon_{u,(X_a;Y_b)}\cdot(1^{\vee}\otimes 1)$.
So in this formalism the $\epsilon_u$ become isomorphisms
\\[0.5mm]
\begin{tabular*}{\textwidth}{l@{\extracolsep{\fill}}cr@{\extracolsep{0pt}}} 
\strut & 
$
\epsilon_u:\otimes_b \mathcal{O}(y_b) \to \otimes_a
\mathcal{O}(x_a),
$
 & \strut 
\end{tabular*}
\\[0.5mm]
extending $Y_1\otimes \cdots \otimes Y_q \mapsto \epsilon_{u,(X_a;Y_b)} X_1\otimes \cdots \otimes X_p$ multi-linearly. 
So $\epsilon_u$ no longer depends on the choice of a system of coherent orientations, since all choices are taken into account at the same time.
Then, summing over all $u\in \mathcal{M}_0(x_a;y_b;S,\beta)$, we define the TQFT operation by
\\[0.5mm]
\begin{tabular*}{\textwidth}{l@{\extracolsep{\fill}}cr@{\extracolsep{0pt}}} 
\strut & 
$
\psi_S:\otimes_b \mathcal{O}(y_b) \to \otimes_a \mathcal{O}(x_a), \;\psi_S = \sum \epsilon_u.
$
 & \strut 
\end{tabular*}
\\[0.5mm]
\indent
Similarly, we orient the moduli spaces of Floer trajectories, so for instance $u\in \mathcal{M}_0(x,y)$ defines an isomorphism
$
\epsilon_u: \mathcal{O}(y)\to \mathcal{O}(x),
$
which, if we made explicit choices of orientation generators, would be
multiplication by the sign $\epsilon_u$ of \ref{Subsection
Symplectic chain complex}. Then define the Floer differential $d: \mathcal{O}(y) \to \mathcal{O}(x)$
by the linear map $d=\sum \epsilon_u$, summing over $u \in \mathcal{M}_0(x,y)$.

For twisted symplectic cohomology, we replace the field $\K$ by
the Novikov algebra $\Lambda$, and we insert the weights
$t^{\alpha[u]}$ in front of the $\epsilon_u$ maps.
%
%
\section{Appendix 3: $SH^*(M)$ defined using a non-linear Hamiltonian}
\label{Appendix Using non-linear Hamiltonians}
\label{Subsection Using non-linear Hamiltonians}
This section is required in \ref{Subsection Abb-Sch construction}, since that uses Hamiltonians of quadratic growth.
\subsection{An alternative definition of $\mathbf{SH^*(M)}$.}
\label{Subsection An alternative definition of SH}
Let $Q:\overline{M}\to \R$ be equal to a convex function $q(R)$ for $R\geq 1$, with $q'(R)\to \infty$ as $R \to \infty$ (example: $Q=\tfrac{1}{2}R^2$). Define the action function
$$
\mathbb{A}_q(R) = -Rq'(R) + q(R).
$$

\begin{lemma}\label{Lemma non-linear hamiltonians}
There is a natural identification
$
SH^*(Q) \cong SH^*(M),
$
where $SH^*(Q)$ is defined as in \ref{Subsection Symplectic chain complex}.
\end{lemma}
\begin{proof}
By convexity of $q$, $\partial_R \mathbb{A}_q=-Rq''(R)\leq
0$, so the action $\mathbb{A}_q$ decreases on the collar. 
This convexity together with the condition $q'(R) \to \infty$ ensures\footnote{\emph{Proof: if $q'(R_a)=a$ and $q'(R_b)=b\geq a$ then $\mathbb{A}_q(R_b) = -R_b b + q(R_b) \leq -R_a b + q(R_a)$ since $\partial_R (-Rb+q(R)) = -b+q'(R)\leq 0$ for $R\in [R_a,R_b]$ since $q''\geq 0$. Now observe $-R_a b + q(R_a)\to -\infty$ as $b\to \infty$. $\qed$}} that $\mathbb{A}_q(R) \to -\infty$ as $R \to \infty$. So, using the action-restrictions from Section \ref{Section SH+}, we have a chain complex
$$
SC^*(Q;\mathbb{A}_Q\geq \mathbb{A}_q(R_m)) \equiv SC^*(H^m),
$$
where $q'(R_m)=m$ and where $H^m$ is obtained from $Q$ by changing it to be linear of slope $m$ in the region $R\geq R_m$ where $q'\geq m$. By Section \ref{Section SH+}, $SC^*(H^m) \subset SC^*(H^{m'}) \subset SC^*(Q)$ for $m\leq m'$ are inclusions of subcomplexes. Since
$SC^*(Q)$ consists of \emph{finite} linear combinations of
generators, we deduce $SH^*(Q) = \bigcup_m \, SH^*(H^m) \cong \varinjlim SH^*(H) =
SH^*(M)$. Strictly speaking, to define $SH^*(Q)$ we need to make a subtle time-dependent perturbation of $Q$ to ensure the non-degeneracy of the orbits: we do this in the Technical Remarks below.
\end{proof}
By Lemma \ref{Lemma
non-linear hamiltonians}, one could \emph{define}
$SH^*(M)=SH^*(Q)$. Call \emph{non-linear Hamiltonians} the above
 $Q$. One could drop the convexity assumption, and only assume
 $q'(R)\to \infty$ and $\mathbb{A}_q(R) \to -\infty$ as $R\to \infty$ (Lemma \ref{Lemma
non-linear hamiltonians} can still be proved by a similar argument%
\footnote{\label{Footnote Lemma nonlinear SH} Define $H^m$ as before: so $H^m$ is linear of slope $m$ at infinity, but now, due to the non-convexity of $q$, there may be some points where $H^m$ has slope $\!>\!m$. Given $m\!>\!0$, all $1$-orbits of $H^m$ have $\mathbb{A}_{H^m}\!>\!c$ for $c\!\ll\! -m$, so for a monotone homotopy $H_s$ from $Q$ to $H^m$ with $\partial_s H_s\!\leq\! 0$ (so actions decrease along continuation solutions by \ref{Subsection Floer continuation solutions}) and depending on $s$ only on $V_m=\{Q\neq H^m\}$, the continuation map sends
 $SH^*(H^m) \!\to\! SH^*(Q;\mathbb{A}_Q\!>\!c)$. Given $c<0$, for $m'\!\gg\! -c$ there are inclusions $SC^*(Q;\mathbb{A}_Q\!>\!c) \!\to\! SC^*(H^{m'})$ since the $1$-orbits of $Q$ on $V_{m'}$ will have $\mathbb{A}_{Q}\!\ll\! c$. Then observe: (1) for $m'\!\gg\! -c \!\gg\! m$, the composite $SH^*(H^m) \!\to\! SH^*(Q;\mathbb{A}_Q\!>\!c) \!\to\! SH^*(H^{m'})$ is the continuation $SH^*(H^m) \!\to\! SH^*(H^{m'})$ (using Lemma \ref{Lemma Chain Homotopy}(\ref{Item Lemma Chain Homotopy composing conts is cont})); and (2) for $-c'\!\gg\! m' \!\gg\! -c$, the composite $SC^*(Q;\mathbb{A}_Q\!>\!c) \!\to\! SC^*(H^{m'}) \!\to\! SC^*(Q;\mathbb{A}_Q\!>\!c')$ is the inclusion $SC^*(Q;\mathbb{A}_Q\!>\!c) \!\to\! SC^*(Q;\mathbb{A}_Q\!>\!c')$. The claim now follows by (1), (2), $SH^*(M)\!=\!{\displaystyle\lim_{m_j\to\infty}} SH^*(H^{m_j})$ and $SH^*(Q)\!=\!{\displaystyle\lim_{c_j\to -\infty}} SH^*(Q;\mathrm{A}_Q>c_j)$. $\qed$
}%
%
%
%
%
%
%
%
%
%
%
).
\\[1mm]
\indent \emph{Technical Remarks. As mentioned in \ref{Subsection Transversality and Compactness}, we always need the $1$-orbits to be non-degenerate. In the non-linear setup, this becomes a problem: to make the orbits non-degenerate (in particular, to remove the $S^1$-symmetry), one needs to perturb $Q:\overline{M} \to \R$
by adding a small time-dependent function $P: S^1 \times \overline{M} \to \R$, and it is not possible to make $Q+P$ depend only on $t$ and $R$ near these orbits, so the maximum principle in Lemma \ref{Lemma Maximum principle for Floer solns} may fail.
We mimic Abouzaid's approach \cite[Appendix B]{Abouzaid-IHES}: we avoid using the maximum principle, and we rather use the no escape Lemma -- the version that we need here is proved in Lemma \ref{Lemma No escape for quadratic growth}. \\ \indent
In our setup, $q$ is convex so $\partial_R \mathbb{A}_q\leq 0$. So to guarantee that Lemma \ref{Lemma No escape for quadratic growth} applies, we ensure $\mathbb{A}_q\leq 0$ for all $R\geq 1$ by requiring that $-q'(1)+q(1)\leq 0$.%
\\ \indent To make the $1$-orbits non-degenerate it suffices to make a generic $C^2$-small perturbation of $Q$ supported near the orbits.
So for Floer trajectories of $Q$ we choose $P: S^1 \times \overline{M} \to \R$ to be generic $C^2$-small and supported near the $1$-orbits of $Q$, in particular we ensure $P=0$ in certain regions where there are no $1$-orbits of $Q$: fix a sequence $R_m \to \infty$ such that $q'(R_m)$ are not Reeb periods and ensure $P=0$ for $R$ close to $R_m$. This guarantees that Lemma \ref{Lemma No escape for quadratic growth} applies to $H=Q+P$ on any $V\subset \overline{M}$ defined by $R\geq R_m$ (assuming $J$ is of contact type along $R=R_m$). So if a Floer trajectory $v: \R \times S^1 \to \overline{M}$ has asymptotics $x,y$ with $R(x)\leq R_m, R(y)\leq R_m$, then $v$ cannot enter the region $R>R_m$ since otherwise $u=v|_{v^{-1}(V)}$ would violate Lemma \ref{Lemma No escape for quadratic growth}.\\ 
\indent
Now consider Floer solutions for $Q$ on a model surface $S$ (a similar discussion holds for Floer continuations for $Q_s$, using $P: \R \times S^1 \times \overline{M} \to \R$). We choose $P: S \times \overline{M} \to \R$ to be: zero away from the ends of $S$; 
generic $C^2$-small on the ends of $S$ with $\partial_s P\leq 0$ and $P$ only depending on $t$ for $|s|\gg 0$; and $P$ is supported near the $1$-orbits of $Q$. As before, we pick a sequence $R_m \to \infty$ such that 
$A_a q'(R_m), B_b q'(R_m)$ are not Reeb periods,  where $A_a,B_b$ are the weights for $S$, and we ensure $P=0$ for $R$ near $R_m$.
So if a Floer solution $v$ has asymptotics $x_a,y_b$ with $R(x_a)\leq R_m, R(y_b)\leq R_m$, then $v$ cannot enter the region $R>R_m$ since otherwise $u=v|_{v^{-1}(V)}$ violates Lemma \ref{Lemma No escape for quadratic growth} for $H=Q+P$.\\ \indent
In conclusion, once the data $S,A_a,B_b$ is fixed, a generic perturbation $Q+P$ of $Q$ as above ensures both that the orbits are non-degenerate and that the Floer moduli spaces are contained in a compact subset of $\overline{M}$ determined by the $R$-values of the asymptotics. 
}
%
%
\subsection{Contact type $\mathbf{J}$ make transversality fail for $\mathbf{SH^*(Q)}$.}
\label{Subsection contact type J make transversality fail}
%
For Hamiltonians $H$ of linear growth we always arranged that the $1$-orbits of $H$ were in a compact set $R \leq R_0$, so if $J$ is of contact type for $R>R_0$ then no Floer trajectories enter the region $R>R_0$ and so $J$ does not need to be perturbed there to achieve transversality for the Floer moduli spaces. For non-linear Hamiltonians $Q$ as above, $1$-orbits keep appearing for larger and larger $R$, so $J$ will typically need to be perturbed everywhere on $\overline{M}$ and in all directions to achieve transversality. So we cannot impose the contact type condition $J\partial_r = \mathcal{R}$ which leaves no freedom to perturb $J$ in the plane $\textrm{span}(\partial_r,\mathcal{R})$ (recall the notation of \ref{Subsection Liouville domains Definition}). Without the contact type condition, the maximum principle in Lemma \ref{Lemma Maximum principle for Floer solns} (or the no escape Lemma \ref{Lemma no escape}) may fail, so the Floer moduli spaces may not have compactifications by broken solutions. Thus we must allow non-contact type $J$, and we need to reprove a version of the maximum principle.

We found two approaches to solve this issue which we now describe. From now on, when we say ``Floer moduli spaces'' or Floer solutions we refer generally to the moduli spaces of Floer trajectories, Floer continuation solutions, or Floer solutions.

\begin{enumerate}
                                              
\item \textbf{First approach:} we allow $J$ to be of non-contact type, provided that there is a sequence $R_m \to \infty$ such that $J$ satisfies the contact type condition along the hypersurfaces $\{R = R_m\}$. We assume that $Q=q(R)$ only depends on $R$ near $R=R_m$ (see the Technical Remarks in \ref{Subsection An alternative definition of SH}) and the slopes $q'(R_m)$ are not Reeb periods (for Floer solutions, we require this for $A_a q'(R_m), B_b q'(R_m)$, where $A_a,B_b$ are the weights). This ensures that the asymptotics for the Floer moduli spaces don't lie in $\{ R=R_m \}$.
         
\item \textbf{Second approach:} we allow a non-contact type almost complex structure $J_{\infty}$, but we require that it decays sufficiently rapidly in $R$ to a fixed $J=J_0$ which is of contact type for large $R$ (and we assume $Q=q(R)$ only depends on $R$ for $R\gg 0$ up to perturbations as in the Technical Remarks of \ref{Subsection An alternative definition of SH}). More precisely, we fix an exhausting sequence of compact sets $\overline{M}=\cup_m K_m$, with $K_m \subset \mathrm{int}(K_{m+1})$, for example $K_m=\{R \leq m \}$. Then we prove that there are reals $\delta_m>0$ such that any $\omega$-compatible almost complex structure $J_{\infty}$ satisfying $|J_{\infty}-J|<\delta_m$ on $K_m$ will satisfy a maximum principle. We also show that this leaves enough room to make generic perturbations of $J_{\infty}$ on each $K_m$ to ensure the regularity of Floer solutions.
\end{enumerate}

\emph{Remark 1. The second approach is trickier, but it is of independent interest since the assumptions can be significantly weakened as follows. Fix any time-dependent Hamiltonian $Q: S^1 \times \overline{M} \to \R$ for which in any given compact subset of $\overline{M}$ there are only finitely many $1$-orbits of $Q$ (and the orbits are non-degenerate) -- in the case of Floer solutions we assume this for $A_a Q,B_b Q$. Then we only need that $J=J_0$ is an $\omega$-compatible almost complex structure for which a no escape Lemma of the following form holds: given any compact subset $K\subset \overline{M}$, and any $\omega$-compatible almost complex structure $J'$ which equals $J$ outside of $K$, all $J'$-Floer solutions automatically lie in a compact subset of $\overline{M}$ determined by $K$ and the asymptotics (and not depending on $J'$). This generalization is useful when $\overline{M}$ does not have contact hypersurfaces at infinity (for example: negative vector bundles of rank$\,_{\C}\geq 2$, see \cite{Ritter4}).}

\begin{lemma}
 In the first approach, after a generic perturbation of $J$ which preserves the contact type condition along the hypersurfaces $\{R=R_m\}$, we can ensure compactness and transversality for Floer moduli spaces. 
\end{lemma}
\begin{proof}
By the no escape Lemma \ref{Lemma no escape}, using that $J$ is of contact type on $R=R_m$, a Floer solution $u$ with asymptotics $x_a,y_b$ must lie in $\{R\leq R_m\}$ for any $m$ satisfying  $R_m>\mathrm{max}\{ R(x_a),R(y_b) \}$. So compactness results for the Floer moduli spaces are not problematic.

For transversality, we run the proof of Lemma \ref{Lemma transversality proof}. Recall this relied on the construction of a vector $Y$ at $(z_0,u(z_0))\in S\times \overline{M}$ such that $g(Y(du-X\otimes \beta)j,\eta)>0$ at $z_0$. Since $Y$ arises from differentiating $J$, and since we impose that $J\partial_r = \mathcal{R}$ on $R=R_m$, we are forced to choose $Y(\mathrm{span}(\partial_r,\mathcal{R}))=0$ on $R=R_m$ (this is a closed condition, so we have restricted $Y$ to a Banach subbundle). So the proof of Lemma \ref{Lemma transversality proof} fails only if two conditions hold: (1) $u(z_0)\in \{ R=R_m\}$ and (2) $\mathrm{im}(du-X\otimes \beta)\subset \mathrm{span}(\partial_r,\mathcal{R})$ at $z_0$. 
Now $X=q'(R)\mathcal{R}$ near $R=R_m$, since $Q=q(R)$ there, so (2) implies $\partial_s u, \partial_t u \in \mathrm{span}(\partial_r,\mathcal{R})$ at $z_0$. Since $u$ is a Floer solution, $u$ satisfies equation (\ref{Equation Floer soln}) in Section \ref{Subsection Maximum principle for floer solns}, so $\partial_t u - X\beta_t = J(\partial_s u - X \beta_s)$. This equation implies that if $\partial_s u, \partial_t u$ both lie in $\mathrm{span}(\mathcal{R})$ at $z_0$ then in fact $\partial_t u = X \beta_t$, $\partial_s u = X\beta_s$, contradicting the assumption that $du-X\otimes \beta\neq 0$  at $z_0$. On the other hand, if $\partial_s u$ or $\partial_t u$ has a non-zero $\partial_r$-component at $z_0$, then we can pick a $z_0'$ arbitrarily close to $z_0$ such that $u(z_0')\notin \{ R= R_m\}$, so we can run the proof of Lemma \ref{Lemma transversality proof} for $z_0'$ instead of $z_0$. This proves transversality for Floer solutions (and for Floer continuation solutions).

For Floer trajectories $u: Z \to \overline{M}$ joining $x^{\pm}(t)$, the proof of transversality is slightly different since $J$ can be perturbed $t$-dependently but not $s$-dependently: the construction of the above $Y$ is done in the same way as before \cite[p.1346-1347]{Salamon-Zehnder}, but the problem is that the cut-off function that is used to extend $Y$ locally in the proof of Lemma \ref{Lemma transversality proof} must be chosen to be independent of $s$. Define the set of regular points $\mathfrak{R}(u) = \{ z_0=(s_0,t_0)\in Z: u(z_0)\neq x^{\pm}(t_0), \partial_s u(z_0) \neq 0, u(s,t_0) \neq u(z_0) \textrm{ for all }s\in \R  \}$. By \cite[Theorem 4.3]{Floer-Hofer-Salamon}, $\mathfrak{R}(u)\subset Z$ is open and dense unless $\partial_s u\equiv 0$ (if $\partial_s u\equiv 0$, then $u$ does not contribute to the Floer complex, so we ignore it). 
If $z_0\in \mathfrak{R}(u)$, one can check that there is a $t$-dependent cut-off function $\phi_t:\overline{M} \to [0,1]$ supported near $u(z_0)$ and near $t=t_0$ such that $\phi\circ u: Z \to \R$ is a cut-off function supported near $z_0$ of the type required in the proof of Lemma \ref{Lemma transversality proof}. So we can use $\phi$ to extend $Y$ locally, so the proof of Lemma \ref{Lemma transversality proof} works in our setup provided $z_0\in \mathfrak{R}(u)$ and $z_0\notin \{ R=R_m\}$ (since then $Y$ has no constraints). Finally observe that we can always choose $z_0\in \mathfrak{R}(u)$ with $u(z_0)\notin \{ R=R_m\}$: otherwise $u(\mathfrak{R}(u)) \subset \{ R=R_m\}$, but then by the density of $\mathfrak{R}(u)\subset Z$ we have $u(Z)\subset \{ R=R_m\}$, contradicting $x^{\pm}\notin \{ R=R_m \}$.
\end{proof}
\textbf{The second approach.} By \ref{Appendix Genericity of J} a generic arbitrarily small perturbation of $J$ achieves transversality for Floer solutions lying in a given compact subset $W\!\subset\! \overline{M}$. Unfortunately one cannot naively perturb $J$ inductively on larger and larger compact sets because new Floer solutions for fixed asymptotics may suddenly appear reaching into the regions where we perturbed $J$. So a more subtle argument is needed. We first introduce some notation.

A Floer moduli space is described by a finite data set $\mathcal{A}$ consisting of: a model surface $(S,j,\beta)$, weights $A_a,B_b$, and asymptotics $x_a,y_b$ ($1$-orbits of $A_a Q, B_b Q$). For example, for Floer trajectories the data $\mathcal{A}$ is the cylinder $Z$ and asymptotics $x^{\pm}$. We call such data sets $\mathcal{A}$ the \emph{auxiliary data}. 
We also allow an auxiliary data set $\mathcal{A}$ to consist of a finite collection of such data sets, and we say ``\emph{$J'$-Floer solution for $\mathcal{A}$}'' to mean an element of the Floer moduli space defined using $J'$ and one of the finitely many data sets in $\mathcal{A}$. Then define
\\[1mm]$\strut\qquad\qquad
R_{\mathcal{A}}(J')= \max \{ \max R(u): u \textrm{ is a possibly broken }J'\textrm{-Floer solution
for }{\mathcal{A}}\},
$\\[1mm]$\strut\qquad\qquad
K_{\mathcal{A}}(J') = \{R\leq R_{\mathcal{A}}(J')\}\subset \overline{M}$\\[1mm]
The assumptions on $J$ in (2) at the start of \ref{Subsection contact type J make transversality fail} ensure by the no escape Lemma that for any compact $K\subset \overline{M}$ and any $J'$ which equals $J$ outside $K$, the $R_{\mathcal{A}}(J')$ are bounded by constants which depend on $J,\mathcal{A},K$ but not on $J'$ (a similar statement holds for $J$ as in \emph{Remark 1}).

Let $\mathcal{A}_1 \subset\mathcal{A}_2 \subset \mathcal{A}_3 \subset \ldots$ be a sequence of auxiliary 
data sets. The goal is to obtain almost complex structures $J_{\infty}$ which are regular for $J_{\infty}$-Floer solutions for any data set in $\cup_m \mathcal{A}_m$ (so we ensure regularity simultaneously for any countable collection of data sets). For example, for Floer trajectories for $Q$, let $x_1,x_2,\ldots$ be a listing of the (non-degenerate) $1$-orbits of $Q$, and let $\mathcal{A}_1\subset \mathcal{A}_2\subset \ldots$ be determined by a listing of all possible choices of pairs $x^{\pm}$ taken from the $x_1,x_2,\ldots$ with each $\mathcal{A}_j$ being only a finite subset of such pairs.

\emph{Remark 2. Observe that a ``no escape Lemma'' for $J'$-Floer solutions for $\mathcal{A}$ corresponds to giving a bound for $R_{\mathcal{A}}(J')$. In the more general settings alluded to in Remark 1 above,  one may not have an $R$-coordinate, so one defines $K_{\mathcal{A}}(J')$ to be the union of all $J'$-Floer solutions for $\mathcal{A}$, and a ``no escape Lemma'' would mean giving a bound on the diameter of $K_{\mathcal{A}}(J')$. In this general setting, there may not be a priori bounds for the energy or for the Fredholm index for the Floer solution in terms of the asymptotics. In this case, part of the auxiliary data $\mathcal{A}_j$ will be a choice of bounds on the index and the energy of the solutions considered.
}

We need a brief comment about the analysis, since we will use Floer's norm \cite[p.807]{Floer-norm}.
To give the reader a gist of this norm we consider for simplicity smooth functions $[0,1] \to \R$, and we refer to \cite[p.1345]{Salamon-Zehnder} which explains how to adapt this norm to the space of $\omega$-compatible almost complex structures. For smooth functions, Floer's norm is defined as $|f| = \sum c_k |f|_{C^k}$, where $|\cdot|_{C^k}$ is the $C^k$-norm and where $c_k>0$ are constants. The subspace of $C^{\infty}$-functions of bounded Floer norm is a separable Banach space $B$. If the $c_k$ decrease sufficiently rapidly in $k$, then $B$ is%
\footnote{\label{Footnote Floer norm}
This is because $B$ contains many cut-off functions \cite[Lemma 5.1]{Floer-norm}. Let $\phi:[0,1]\to [0,1]$ be a cut-off function which equals $1$ near $0$ and $0$ near $1$, and extend $\phi$ constantly to $\R$. Then, by taking $c_k \leq 1/(k^k|\phi|_{C^k})$ for $k\geq 1$, the cut-off functions $b(x)=\phi(\|x - x_0\|^2/\delta)$ are in $B$ for any $x_0\in [0,1]$ and any $\delta>0$.
}
dense in $L^2$. This topology is stronger than the $C^{\infty}$-topology; it is dense in $C^{\infty}$ by the Stone-Weierstrass theorem; and it has a \emph{useful property}:
\begin{lemma}\label{Lemma useful property}
 A sequence $b_i\in B$ of bounded Floer norm $|b_i|\leq \alpha<\infty$ has a subsequence which converges in the $C^{\infty}$-topology (but not necessarily in the $B$-topology) to a smooth $b$ with Floer norm $|b|\leq \alpha$, so $b\in B$.
\end{lemma}
\begin{proof}
For fixed $k$, the $C^k$-norms of the $b_i$ are bounded. Since $[0,1]$ is compact, by the Arzela-Ascoli theorem a subsequence of the $b_i$ converges in $C^{k-1}$. By a diagonal argument, a subsequence of the $b_i$ converges in $C^k$ for each $k$, so it converges in $C^{\infty}$ to some $b$. So $\sum_{k=1}^{K} c_k |b|_{C^k} = {\displaystyle \lim_{i\to \infty}}\sum_{k=1}^K c_k|b_i|_{C^k}\leq \alpha$ for any finite $K$. Hence $\sum_{k=1}^{\infty} c_k |b|_{C^k}\leq \alpha$.
\end{proof}
We will write $|J'|$ for Floer's norm, and we write $|J'|_{W}$ for the Floer norm of the restriction $J'|_{\overline{W}}$ to the compactification of a bounded set $W\subset \overline{M}$, and we observe that the useful property mentioned above holds for Floer's norm provided we work on a compact subset of $\overline{M}$.
\begin{theorem}\label{Theorem Key claim}
Given a strictly increasing sequence $\varepsilon_m>0$, there exist small constants
$d_m>0$ and large constants $c_m>0$, such that: for any $\omega$-compatible almost complex structures
$J_{m}$ which satisfy\\[1mm]$\strut\quad\qquad
1) \quad J_{m}=J \;\textrm{ on }\overline{M}\setminus K_{m}$\\[1mm]$\strut\quad\qquad
2) \quad |J_{m} - J|_{K_{k}\setminus K_{k-1}}\leq d_{k}$ for all $k\in \{1,\ldots,m\}$ (defining $K_0=\emptyset$)\\[1mm]
we can guarantee that\\[1mm]$\strut\quad\qquad
3)\quad R_{\mathcal{A}_k}(J_{m})\leq R_{\mathcal{A}_k}(J) + \varepsilon_{m} + \sum_{j=1}^k c_j$,\; for $k\in \{1,\ldots,m\}$, and all $m\in \N$.\\[1mm]
We emphasize that these $J_m$ do not need to be related amongst each other.
\end{theorem}
\begin{proof}
The proof is by induction on $m$. For $m=0$ there is nothing to prove. Suppose by contradiction that the induction fails at step $m$, so no $d_m>0$ can be found. Then there is a sequence $J_i'$ such that $J_i'=J$ on $\overline{M}\setminus K_m$ satisfying 
\\[0.5mm]
\begin{tabular*}{\textwidth}{l@{\extracolsep{\fill}}cr@{\extracolsep{0pt}}}  
\strut &
$|J_i' - J|_{K_{k}\setminus K_{k-1}}\leq d_{k}$ for $k\in \{1,\ldots,m-1\}$;
& \strut 
\\
\strut &
$|J_i'-J|_{K_{m}\setminus K_{m-1}}\leq \delta_i$ with $\delta_i\to 0$ as $i\to \infty$; 
& \strut 
\\
\strut &
$R_{\mathcal{A}_k}(J_i')> R_{\mathcal{A}_k}(J) + D_k$ for some $k\in \{1,\ldots,m\}$
 & \strut 
\end{tabular*}
\\[0.5mm]
where $D_k=\varepsilon_{m} + \sum_{j=1}^{k} c_j$ (we will define $c_m$ later). By passing to a subsequence, we can assume the latter inequality holds for the same $k$ and for the same choice of auxiliary data $\mathcal{A}\in \mathcal{A}_k$ for all $i$. Thus, there is a sequence of $J_i'$-Floer solutions $u_i$ for the same auxiliary data $\mathcal{A}$ such that $u_i$ escapes $K_{\mathcal{A}_k}(J)$ by a distance of at least $D_k$.

The $J_i'$ only differ amongst each other on the compact set $K_{m}$, so the above useful property (Lemma \ref{Lemma useful property}) applies. Thus, after passing to a subsequence, the $J_i'$ converge in the $C^{\infty}$-topology to an $\omega$-compatible complex structure $J'$ with $J'=J$ on $\overline{M}\setminus K_{m-1}$ and whose Floer norms satisfy the same bounds as the $J_i'$: $|J' - J|_{K_{k}\setminus K_{k-1}}\leq d_{k}$ for $k\in\{1,\ldots,m-1\}$. So $J'$ satisfies the conditions required for $J_{m-1}$ in the claim. Therefore, by the inductive hypothesis, $R_{\mathcal{A}_k}(J')\leq R_{\mathcal{A}_k}(J) + D_k - (\varepsilon_m-\varepsilon_{m-1})$ for $k\in\{1,\ldots,m-1\}$. So $J'$-Floer solutions for $\mathcal{A}_k$ cannot escape $K_{\mathcal{A}_k}(J)$ by distance $D_k$ for $k\in\{1,\ldots,m-1\}$ since $\varepsilon_m>\varepsilon_{m-1}$.

Since the $J_i'$ differ from $J$ only on the compact set $K_m$, the $u_i$ lie in a compact subset of $\overline{M}$ independent of $i$ (this follows from the no escape Lemma \ref{Lemma No escape for quadratic growth}, or from the assumptions on $J$ in \emph{Remark 1} above). Since $J_i'\to J'$ in the $C^{\infty}$-topology and since the $u_i$ all lie in a fixed compact subset, the $u_i$ converge by Gromov compactness to a possibly broken $J'$-Floer solution $u$. But then by continuity the $J'$-Floer solution $u$ escapes $K_{\mathcal{A}_k}(J)$ by a distance of at least $D_k$ since the $u_i$ do: this contradicts the previous paragraph if $k\in \{1,\ldots, m-1\}$.

So finally suppose $k=m$, so $\mathcal{A}\in \mathcal{A}_m$. Since $J'$ differs from $J$ only on the compact set $K_{m-1}$, all $J'$-Floer solutions for $\mathcal{A}_m$ lie in some compact subset $C_m\subset \overline{M}$ depending only on $J$, $\mathcal{A}_m$, $K_{m-1}$ and not depending on $J'$ (again by the no escape Lemma or by the assumptions on $J$ in \emph{Remark 1}). Define $c_m>0$ large enough so that $\{R< R_{\mathcal{A}_m}(J)+D_m\}$ contains $C_m$. We emphasize $c_m$ depends only on $J, \mathcal{A}_m, K_{m-1}$ and not on $J'$. By construction of $c_m$, all $J'$-Floer solutions for $\mathcal{A}_m$ lie in $R< R_{\mathcal{A}_m}(J) + D_m$. But the above argument produced a $J'$-Floer solution $u$ which contradicts this inequality. So we obtained a contradiction.
\end{proof}

From now on, choose $\varepsilon_m\to \varepsilon <\infty$ in Theorem \ref{Theorem Key claim}, and note that we may assume that $d_m \to 0$ as $m\to \infty$. Also, fix small neighbourhoods $W_m \subset K_m\setminus K_{m-1}$ of $\partial K_m$.

The difficulty in constructing $J_m$ as above is that it is not clear that one can interpolate a given $J_m|_{K_m\setminus W_m}$ with $J|_{\overline{M}\setminus K_m}$ without destroying the bound $|J_m - J|_{K_m\setminus K_{m-1}}\leq d_m$.

By Footnote \ref{Footnote Floer norm}, there is a cut-off function 
\\[0.5mm]
\begin{tabular*}{\textwidth}{l@{\extracolsep{\fill}}cr@{\extracolsep{0pt}}} 
\strut & 
$
\phi_m:\overline{M}\to \R
$
 & \strut 
\end{tabular*}
\\[0.5mm]
 of bounded Floer norm which  on $K_m \setminus W_m$ equals $1$ and which equals $0$ near $\partial K_m$ and outside $K_m$. One can%
\footnote{\label{Footnote Banach algebra norm} We prove that we can ensure $|fg|\!\leq\! |f| |g|$ for functions $[0,1]\to \R$ (this generalizes to almost complex structures). It suffices to ensure $|fg|\!\leq\! C |f| |g|$ for $C>0$ independent of $f,g$ since we can then redefine $|\cdot|$ by rescaling it by $C$. By the Leibniz formula, $c_k|fg|_{C^k} \!=\! c_k \sum { \ell \choose i'} |\partial^{i'}\! f|_{C^0} |\partial^{j'}\! g|_{C^0}$ summing over $\ell \leq k$, $i'+j'=\ell$. It suffices to bound this by $C\sum_{i+j= k} c_{i} c_{j}|f|_{C^{i}} |g|_{C^{j}}$. Decomposing $|\cdot|_{C^{j}}\!=\!\sum_{j'\leq j} |\partial^{j'}\!\!\cdot|_{C^0}$ the latter sum contains $Cc_ic_j|\partial^{i'}\! f|_{C^0} |\partial^{j'}\! g|_{C^0}$ for $i= i',j'=\ell - i'\leq k-i=j$. So it suffices to ensure ${k \choose i} c_k \leq Cc_i c_{k-i}$, since ${\ell \choose i}\leq {k \choose i}$. So let $C=1/c_0$ and inductively ensure $c_k\leq Cc_i c_{k-i}/{k\choose i}$ for $1\leq i\leq k-1$. $\qed$}
 choose the constants $c_k$ which define the Floer norm so that in general $|\phi_m J'|\leq |\phi_m| |J'|$.
So if $J_m$ satisfies $|J_m-J|_{K_m\setminus K_{m-1}} \leq d_m/|\phi_m|$ (instead of $d_m$), then the interpolation $J'= (1-\phi_m)J+\phi_m J_m$ satisfies the required bound:
\\[0.5mm]
\begin{tabular*}{\textwidth}{l@{\extracolsep{\fill}}cr@{\extracolsep{0pt}}} 
\strut & 
$
|J' - J|_{W_m}=|\phi_m(J_m-J)|_{W_m}\leq |\phi_m| |J_m-J|_{W_m} \leq d_m
$
 & \strut 
\end{tabular*}
\\[0.5mm]
(the linear combination which defines $J'$ is understood to be computed in the Banach vector bundle of almost complex structures).

Using this interpolation trick, we can inductively construct $J_m$ to satisfy conditions (1) and (2) of Theorem \ref{Theorem Key claim} in such a way that $J_i=J_m$ on $K_m\setminus W_m$ for all $i\geq m$. Since $d_m \to 0$ as $m\to \infty$, these $J_m$ manifestly converge to a $J_{\infty}$ satisfying $J_{\infty}|_{K_{m-1}} = J_m|_{K_{m-1}}$. 

\begin{corollary}\label{Corollary Jinfinity limits}
Let $J_{\infty}$ be any $\omega$-compatible complex structure $J_{\infty}$ on $\overline{M}$ satisfying $$|J_{\infty}-J|_{K_k\setminus K_{k-1}}\leq d_k/|\phi_k| \textrm{ for all }k\geq 1.$$ Then a ``no escape Lemma'' holds: $R_{\mathcal{A}_k}(J_{\infty}) \leq R_{\mathcal{A}_k}(J)+\varepsilon+\sum_{j=1}^k c_j$ for all $k\geq 1$.
\end{corollary}
\begin{proof}
 Suppose by contradiction that a $J_{\infty}$-Floer solution $u$ for $\mathcal{A}_k$ escapes $K_{\mathcal{A}_k}(J)$ by at least distance $D=\varepsilon+\sum_{j=1}^k c_j$. The asymptotics of $u$ are fixed, so the image of $u$ lies in some $K_{m-1}$ for $m\gg k$. By the hypothesis, $J_{\infty}$ arises via the above interpolation construction: let $J_m=J_{\infty}$ on $K_m\setminus W_m$ and then interpolate with $J|_{\overline{M}\setminus K_m}$. So $u$ is a $J_m$-Floer solution for $\mathcal{A}_k$ since $J_m|_{K_{m-1}}=J_{\infty}|_{K_{m-1}}$. But $J_m$ satisfies the conditions in Theorem \ref{Theorem Key claim}, so $u$ cannot escape $K_{\mathcal{A}_k}(J)$ by distance $D$ since $\varepsilon>\varepsilon_m$. Contradiction.
\end{proof}

Let $B$ denote the Banach space of $\omega$-compatible almost complex structures defined on some compact subset $W\subset \overline{M}$ which have finite Floer norm. When we speak of making a \emph{generic} perturbation of $J_{\infty}$ on $W$, we mean perturbing $J_{\infty}$ on $W$ by an element of a Baire subset of $B$. We proved regularity statements for generic perturbations in Section \ref{Appendix Genericity of J} using the $C^{\ell}$-norm, from which one bootstraps to the $C^{\infty}$-topology. Those results can also be proved using the Floer norm \cite[p.1345]{Salamon-Zehnder}, however a Baire subset for the $B$-topology, although dense in $B$ and so dense in $C^{\infty}$, may not actually be $C^{\infty}$-Baire. However for genericity statements we really only care that countable intersections are still dense, and this is still true: a countable intersection of Baire subsets for the $B$-topology is Baire in $B$ and hence dense in $C^{\infty}$.

\begin{theorem}
 After a generic perturbation on each $K_m$ of the $J_{\infty}$ of Corollary \ref{Corollary Jinfinity limits}, the $J_{\infty}$ will be regular for $J_{\infty}$-Floer solutions for $\mathcal{A}_k$, for all $k\geq 1$, and it will still satisfy the no escape Lemma $R_{\mathcal{A}_k}(J_{\infty}) \leq R_{\mathcal{A}_k}(J)+\varepsilon+\sum_{j=1}^k c_j$ for all $k\geq 1$.
\end{theorem}
\begin{proof}
By the above interpolation procedure, 
$J_{\infty}$ is a limit of $J_m$ satisfying Theorem \ref{Theorem Key claim} with $J_{\infty}|_{K_{m-1}}=J_m|_{K_{m-1}}$. So a generic perturbation of $J_{\infty}$ on $K_{m}$ is precisely a generic perturbation of $J_{m+1}$ on $K_m$. By Section \ref{Appendix Genericity of J}, a generic perturbation of $J_{m+1}$ on $K_m$ ensures regularity for $J_{m+1}$-Floer solutions for $\mathcal{A}_k$ lying in $K_m$. A countable intersection of Baire subsets is Baire, so we can ensure this for all $k\geq 1$ simultaneously (with $m$ fixed). As we increase $m$, we typically need to perturb $J_{m+2},J_{m+3},\ldots$ also on $K_{m}$ so they would no longer equal $J_{m+1}|_{K_{m}}$ on $K_{m}$. However, these perturbations on $K_{m}$ again involve a countable intersection of Baire subsets, so in fact we have a Baire subset worth of choices for the value of $J_{m+1}|_{K_{m}}=J_{\infty}|_{K_m}$ whilst still satisfying Theorem \ref{Theorem Key claim}. Any $J_{\infty}$-Floer solution $u$ for $\mathcal{A}_k$ lands in some $K_m$, so $u$ is also a $J_{m+1}$-Floer solution, and so $u$ is regular since $J_{m+1}$ is regular for solutions lying in $K_m$ for any $\mathcal{A}_k$.
\end{proof}


\begin{corollary}\label{Corollary transversality via compactness
trick}
Transversality and compactness can both be achieved for
the moduli spaces of Floer solutions for
non-linear Hamiltonians, after a generic $($non-contact type$)$ perturbation $J_{\infty}$ of a given
$J$ of contact type at infinity, provided the perturbation decays to zero sufficiently fast as $R\to \infty$ (we require $|J_{\infty}-J|_{K_m\setminus K_{m-1}}\leq \delta_m$ for certain $\delta_m>0$ depending on $J,K_m$).
\end{corollary}

\subsection{TQFT structure using non-linear Hamiltonians.}
%
By the previous Section, we can define TQFT operations $\psi_S:\otimes_b SH^*(B_b Q) \to \otimes_a SH^*(A_aQ)$ for $\sum A_a \geq \sum B_b$.

By Lemma \ref{Lemma non-linear hamiltonians}, all these $SH^*(c\, Q)$ can be identified with $SH^*(M)$. So in fact, if one defined $SH^*(M)=SH^*(Q)$, one would define the TQFT operations
$$
\psi_S:SH^*(Q)^{\otimes q} \to SH^*(Q)^{\otimes p} \qquad (p\geq 1,q\geq 0),
$$
by first applying $\psi_S:SH^*(Q)^{\otimes q} \to SH^*(cQ)^{\otimes p}$, for any $c>0$ such that $q\leq cp$, and then identifying $SH^*(cQ)\cong SH^*(Q)$ via the direct limit of the continuation isomorphisms $SH^*(cH^m)\cong SH^*(H^{cm})$, using the notation $H^m$ defined in the proof of Lemma \ref{Lemma non-linear hamiltonians}.

\begin{theorem}
 The $\psi_S:SH^*(Q)^{\otimes q} \to SH^*(Q)^{\otimes p}$ are independent of $c>0$ and they define a TQFT on $SH^*(Q)$ which, via the identification $SH^*(M)=\varinjlim SH^*(H) \cong SH^*(Q)$ of Lemma \ref{Lemma non-linear hamiltonians}, agrees with the TQFT in \ref{Subsection Operations on SH(M)} constructed using linear Hamiltonians.
\end{theorem}
\begin{proof}
For $q\leq cp$, $m\leq m'$, $c\leq c'$ there is a diagram
$$
\xymatrix@C=16pt@R=12pt{
SH^*(H^{m'})^{\otimes q} \ar@{->}[r]^{\mathrm{TQFT}}_{\psi_{m',c}} & SH^*(cH^{m'})^{\otimes p} \ar@{->}[r]^{\mathrm{cont.}}_{\cong} & 
SH^*(H^{cm'})^{\otimes p}  \ar@{->}[r]^{\mathrm{cont.}} & SH^*(c'H^{m'})^{\otimes p} \ar@{->}[r]^{\mathrm{cont.}}_{\cong} & 
SH^*(H^{c'm'})^{\otimes p}  \\
SH^*(H^{m})^{\otimes q} \ar@{->}[u]^{\mathrm{cont.}}\ar@{->}[r]^{\mathrm{TQFT}}_{\psi_{m,c}} & SH^*(cH^{m})^{\otimes p} \ar@{->}[u]^{\mathrm{cont.}} \ar@{->}[r]^{\mathrm{cont.}}_{\cong} & 
SH^*(H^{cm})^{\otimes p} \ar@{->}[u]^{\mathrm{cont.}} \ar@{->}[r]^{\mathrm{cont.}} & SH^*(c'H^{m})^{\otimes p} \ar@{->}[u]^{\mathrm{cont.}} \ar@{->}[r]^{\mathrm{cont.}}_{\cong} & 
SH^*(H^{c'm})^{\otimes p} \ar@{->}[u]^{\mathrm{cont.}} 
}
$$
where $\psi_{m,c},\psi_{m',c}$ are the $\psi_S$-maps defined using the same $(S,\beta)$, but different $H^m,H^{m'}$.

The $1^{st}$ square commutes by Theorem \ref{Theorem Operations are compatible with limit}, the other squares commute by Lemma \ref{Lemma Chain Homotopy}(\ref{Item Lemma Chain Homotopy composing conts is cont}). The horizontal continuations in the $2^{nd}$ and $4^{th}$ squares are isomorphisms by Lemma \ref{Lemma Chain Homotopy}(\ref{Item Lemma Chain Homotopy same slopes imply iso}), and induce $SH^*(cQ)\cong SH^*(Q)$, $SH^*(c'Q)\cong SH^*(Q)$.

The $1$-orbits of $H^m,cH^m$ lie in $K_m=\{R\leq R_m\}$ (the complement of the region where $H^m$ has slope $m$). By the no escape Lemma \ref{Lemma no escape} the Floer solutions for $\psi_{m,c}$ lie in $K_m$. By the no escape Lemma \ref{Lemma No escape for quadratic growth}, the Floer solutions with asymptotics in $K_m$ involved in $\psi_S:SC^*(Q)^{\otimes q} \to SC^*(cQ)^{\otimes p}$ must also lie in $K_m$. By construction, $Q|_{K_m} = H^m|_{K_m}, cQ|_{K_m} = cH^m|_{K_m}$, so those two collections of Floer solutions are exactly the same.

By definition $SC^*(Q),SC^*(cQ)$ involve only finite linear combinations of $1$-orbits, and by the energy estimate \ref{Subsection Energy Appendix} there are only finitely many Floer solutions involved in $\psi(\otimes_b y_b)$ for given $1$-orbits $y_b$ of $cQ$. So, in the direct limit $m\to \infty$, the $1^{st}$ and $2^{nd}$ square yield \\[1mm]$\strut\qquad\qquad\qquad\qquad\qquad
SH^*(Q)^{\otimes q} \to SH^*(cQ)^{\otimes p} \to SH^*(Q)^{\otimes p},
$\\[1mm]
where the first map is $\psi_S = \lim\psi_{m,c}$. We now prove that this composite is independent of the choice of $c$. By Lemma \ref{Lemma Chain Homotopy}(\ref{Item Lemma Chain Homotopy composing conts is cont}) the composite of the horizontal maps of the $2^{nd}$ and $3^{rd}$ square give the continuation maps $SH^*(cH^{m'}) \to SH^*(c'H^{m'})$ and $SH^*(cH^{m}) \to SH^*(c'H^{m})$, and by Lemma \ref{Lemma Chain Homotopy}(\ref{Item Lemma Chain Homotopy does not matter which hpy}) these are equal to the continuations induced by a continuation cylinder $(Z,\beta_Z)$ with weights $c',c$ at the ends (see Example \ref{Example Continuation cylinder}). So composing the $1^{st}$ square with these continuations corresponds to attaching $(Z,\beta_Z)$ to the negative ends of $(S,\beta)$. So by Theorem \ref{Theorem Operations are compatible with limit}, the composite of the first 3 squares gives the TQFT operations $\psi_{m',c'},\psi_{m,c'}$, and these together with the $4^{th}$ square gives $SH^*(Q)^{\otimes q} \to SH^*(c'Q)^{\otimes p} \to SH^*(Q)^{\otimes p}$ in the limit. So to prove independence of $c$, it remains to show that composing the $3^{rd}$ and $4^{th}$ square induces the identity on $SH^*(Q)$ in the limit. But again by Lemma \ref{Lemma Chain Homotopy}(\ref{Item Lemma Chain Homotopy composing conts is cont}) the composite of the $3^{rd}$ and $4^{th}$ square give a square of continuation maps involving only the Hamiltonians $H^{cm'},H^{c'm'},H^{cm},H^{c'm}$, and they induce the identity on $SH^*(Q)$ in the direct limit since all these maps are in fact inclusions by construction (this is assuming $q$ is convex, but a ladder argument as in Footnote \ref{Footnote Lemma nonlinear SH} proves this also if we only assume $q'(R)\to \infty$ and $\mathbb{A}_q(R)\to -\infty$).

This very last argument also proves that a cylinder induces the identity operation on $SH^*(Q)$. Independence of the TQFT operations on the choices of $S,\beta,J$ and invariance follow from the above construction and the analogous properties for linear Hamiltonians proved in Section \ref{Section Floer solutions}. It remains to prove that gluings of surfaces $S_2\# S_1$ corresponds to compositions.

Consider the $1^{st}$ and $2^{nd}$ square above for two choices of data $(S_1,\beta_1,c_1)$ and $(S_2,\beta_2,c_2)$ with $p_1=q_2$. Composing these four squares together gives horizontal maps
\\[0.6mm]$\strut
 \xymatrix@C=12pt{SH^*\!(H^m\!)^{\otimes q_{1}} \ar@{->}[r]^-{\mathrm{\psi_{S_1}}} & SH^*\!(c_1H^m\!)^{\otimes p_{1}} \ar@{->}[r]^{\mathrm{cont.}} & SH^*\!(H^{c_1m}\!)^{\otimes p_{1}} \ar@{->}[r]^-{\psi_{S_2}} & SH^*\!(c_2H^{c_1m}\!)^{\otimes p_{2}} \ar@{->}[r]^{\mathrm{cont.}} & SH^*\!(H^{c_2c_1m}\!)^{\otimes p_{2}}
}
$\\[1mm]
Here $\psi_{S_2}$ is defined using the data $(S_2,\beta_2,H^{c_1m})$ with weights $c_2,1$ respectively at negative and positive ends. Without changing the moduli spaces, we can instead use the data $(S_2,c_1\beta_2,\tfrac{1}{c_1}H^{c_1m})$ using weights $c_2c_1,c_1$ respectively (this will ensure that $\beta_1,\beta_2$ have matching weights when we later glue). The $2^{nd}$ map involves cylinders $(Z,\beta_Z)$ with $\beta_Z=c_1\, dt$ which use a Hamiltonian $H_z$ depending on $z\in Z$ which equals $\tfrac{1}{c_1}H^{c_1m}$ and $H^m$ near the two ends (these Hamiltonians both have slope $m$ at infinity, so we can pick $H_z$ independent of $z$ for $R\gg 0$). Let $\psi_0$ denote the composite of the first 3 maps above (the no escape Lemma still holds by  Remark \ref{Remark no escape Lemma comments}(\ref{Item no escape for z dependent H}) since $\partial_s H_z=0$ for $R\gg 0$). By Theorem \ref{Theorem Gluing for compositions}, this corresponds to gluing $S_2\# (\sqcup_{p_{1}} Z) \# S_1$. Now consider a different gluing: $(\sqcup_{p_{2}} Z') \# S_2 \# S_1$ obtained by using the Hamiltonian $H^{m}$ on both $S_2,S_1$ but on the cylinder $Z'$ we use $H_z$ and $\beta_Z=c_1c_2\,dt$. These two gluings $S_2\# (\sqcup_{p_{1}} Z) \# S_1$ and $(\sqcup_{p_{2}} Z') \# S_2 \# S_1$ share the same asymptotic data (in particular, the Hamiltonians agree at the asymptotics). The same proof as in Theorem \ref{Theorem Homotopy of surfaces}, using parametrized moduli spaces, shows that the two gluings induce chain homotopic maps. By Theorem \ref{Theorem Gluing for compositions}, $\psi_{ (\sqcup Z') \# S_2 \# S_1} = \psi_{\sqcup Z'} \circ \psi_{ S_2 \# S_1}$. Thus the above horizontal map equals
\\[0.6mm]$\strut
\xymatrix{SH^*(H^m)^{\otimes q_{1}} \ar@{->}[r]^-{\mathrm{\psi_{S_2\# S_1}}} & SH^*(c_2c_1H^{m})^{\otimes p_{2}} \ar@{->}[r]^-{\mathrm{\psi_{\sqcup Z'}}} & SH^*(c_2H^{c_1m})^{\otimes p_{2}} \ar@{->}[r]^-{\mathrm{cont.}}  &  SH^*(H^{c_2c_1m})^{\otimes p_{2}}
}
$\\[1mm]
By Lemma \ref{Lemma Chain Homotopy}(\ref{Item Lemma Chain Homotopy composing conts is cont}), the composite of the last two maps is the continuation $SH^*(c_2c_1H^m)^{\otimes p_{2}} \to SH^*(H^{c_2c_1m})^{\otimes p_{2}}$. As before, this argument is compatible with the continuations which change $H^m$ to $H^{m'}$, so in the direct limit this proves that $\psi_{S_2} \circ \psi_{S_1} = \psi_{S_2\# S_1}$ on $SH^*(Q)$.
\end{proof}
\noindent\emph{Remark.} Similarly, in the wrapped case, $HW^*(L)\cong HW^*(L;Q)$ respects the TQFT.
%
\subsection{The TQFT for $\mathbf{T^*N}$ using Levi-Civita $\mathbf{J}$ instead of contact type $\mathbf{J}$}
\label{Subsection levicivita J vs contact type J} 
%
For $\overline{M}=T^*N$, Abbondandolo-Schwarz \cite{Abbondandolo-Schwarz,Abbondandolo-Schwarz2} use an $\omega$-compatible almost complex structure $J_{\mathrm{LC}}$ on $\overline{M}=T^*N$ which is induced by a Levi-Civita connection for $N$ \cite[Sec.1.5]{Abbondandolo-Schwarz}, and these are not of contact type at infinity. We now prove that their Floer cohomology $SH^*(Q;J_{\mathrm{LC}})$ agrees with our $SH^*(Q;J)$ for $Q$ with $q(R)$ of quadratic growth in $R$, and that the products agree.\\[1mm]
\noindent \textbf{Claim 1.} \emph{There is an isomorphism $SH^*(Q;J_{\mathrm{LC}})\cong SH^*(Q;J)$ respecting action-filtrations.}\\
\emph{Proof.}
The Floer complex admits a filtration by action (Section \ref{Section SH+}), and recall $\mathbb{A}_Q$ is bounded above since $\mathbb{A}_q(R)\to -\infty$ as $R\to \infty$. By Lemma \ref{Lemma non-linear hamiltonians}, it suffices to do the following: \begin{enumerate}
                                                                                                                                                                                                                                                         \item build filtered-isomorphisms $\varphi_c: SH^*(Q;J_{\mathrm{LC}};\mathbb{A}_Q\geq c) \cong SH^*(Q;J;\mathbb{A}_Q\geq c)$;

\item show that the $\varphi_c$ are compatible with decreasing the action bound $c\in \R$.
                                                                                                                                                                                                                                                        \end{enumerate} 

 Our $SH^*(Q;J;\mathbb{A}_Q\geq c)$ are invariant under deforming $J$ on a compact subset, provided $J$ is of contact type at infinity. So we can pick $J$ to equal $J_{\mathrm{LC}}$ on any large compact $K\subset \overline{M}$. Abbondandolo-Schwarz \cite[Theorem 1.14]{Abbondandolo-Schwarz} proved $L^{\infty}$-estimates which imply that $J_{\mathrm{LC}}$-Floer trajectories whose asymptotics satisfy $\mathbb{A}_Q\geq c$ do not have enough energy to escape a compact subset of $\overline{M}$ determined by $c$, and we pick $K$ so that $K$ contains this subset. Thus, the same statement holds for $J$-Floer trajectories since $J|_K=J_{\mathrm{LC}}|_K$. So $\varphi_c$ in (1) is an identification for this $J$. Then (2) follows because the inclusions $SC^*(Q;J;\mathbb{A}_Q\geq c) \to SC^*(Q;J;\mathbb{A}_Q\geq c')$, for $c\geq c'$, are invariant on cohomology under deforming $J$ on compact subsets. $\qed$\\[1mm]
\textbf{Claim 2.} \emph{The isomorphism $SH^*(Q;J_{\mathrm{LC}})\cong SH^*(Q;J)$ respects the product structures.}\\
\emph{Proof.} This follows by the same argument, since Abbondandolo-Schwarz proved $L^{\infty}$-estimates also for $J_{\mathrm{LC}}$-Floer solutions defined on a suitable pair-of-pants surface \cite[Prop.6.2]{Abbondandolo-Schwarz2}. $\qed$\\[1mm]
\textbf{Claim 3.} \emph{Theorem \ref{Theorem Abbondandolo-Schwarz product theorem} holds for $SH^*(Q;J)$.}\\
\emph{Proof.} By Abbondandolo-Schwarz \cite[Thm.3.1]{Abbondandolo-Schwarz} $SH^*(Q;J_{\mathrm{LC}})\!\cong\! H_{n-*}(\mathcal{L}N)$ respects the action filtrations (Remark \ref{Remark AS iso respects filtration}). By Claim 1,  $SH^*(Q;J_{\mathrm{LC}})\!\cong\! SH^*(Q;J)$ respects the filtration. $\qed$
%
%
\section{Appendix 4: Energy, maximum principle, and no escape Lemma}
\label{Appendix Maximum principle and no escape lemma}
%
%
\subsection{Energy of Floer solutions}\label{Subsection Energy Appendix}
%
Define the energy of a map $u:S \to \overline{M}$ by
$$\textstyle
E(u)=\frac{1}{2} \int_S \| du-X\otimes \beta \|^2\,
\textrm{vol}_S.
$$
\emph{Explanation. Let $Y\in \textrm{Hom}(TS,u^*T\overline{M})$. In a local holomorphic coordinate $s+it$ for $(S,j)$: $\mathrm{vol}_S = ds \wedge dt$; $Y=Y_s ds + Y_t dt$ where $Y_s=Y(\partial_s)$, $Y_t=Y(\partial_t)$; and $\|Y\|^2= |Y_s|_J^2 + |Y_t|_J^2$ where $|\cdot|^2_J=g_J(\cdot,\cdot) = \omega(\cdot,J\cdot)$ on $T\overline{M}$. Fact: $\|Y\|^2\,\mathrm{vol}_S$ is independent of the choice of $s+it$.}

\textbf{Example.} For a cylinder $S=Z$ with $\beta=dt$,
$E(u)=\frac{1}{2}\int (|
\partial_s u |^2 + | \partial_t u - X |^2)\, ds\wedge dt$, so for a Floer trajectory $u$ we get the usual energy $E(u)=\int | \partial_s u |^2\, ds\wedge dt$.

Recall $Y^{0,1}=\tfrac{1}{2}(Y + J\circ Y \circ j)$, so $Y^{0,1}=0$ implies $Y\circ j = J\circ Y$, so locally $Y_t = JY_s$ since $j\partial_s = \partial_t$, therefore $\tfrac{1}{2}\|Y\|^2\mathrm{vol}_S= \omega(Y_s,Y_t)\, ds\wedge dt$.

For $Y=du-X\otimes \beta$, decompose $\beta=\beta_s\, ds + \beta_t\, dt$,
then $(du - X\otimes \beta)^{0,1}=0$ implies
$$
\begin{array}{lll}
\frac{1}{2}\| du - X\otimes \beta \|^2\mathrm{vol}_S & = & \omega(\partial_s u-X
\beta_s, \partial_t u - X\beta_t)\, ds\wedge dt \\ & = & [\,\omega(\partial_s u,
\partial_t u) +dH(\partial_t u)\beta_s - dH(\partial_s u)\beta_t\,]\, ds\wedge dt 
\\ & = & u^*\omega-u^*(dH)\wedge \beta.
\end{array}
$$

We can rewrite $-u^*(dH)\wedge \beta = -d(u^*(H)\,\beta)+u^*(H)\, d\beta$. Assuming
$u$ is a Floer solution for weights $A_a,B_b$, with $H \geq 0$, $d\beta\leq 0$, by Stokes'
theorem we obtain the energy estimate:
$$
\begin{array}{lll}
E(u) & = & \int_S u^*\omega-u^*(dH)\wedge \beta \\
& \leq & \int_S u^*\omega - d(u^*(H)\, \beta) \qquad (\textrm{since } u^*(H)\,d\beta\leq 0)\\
& = & \int_S d(u^*\theta - u^*(H)\, \beta)  \,\qquad (\textrm{since } \omega=d\theta)\\
& = & \displaystyle \!\!\!\!\sum_{\textrm{negative ends}\;
a}\!\!\!\! {\mathbb{A}}_{A_a H}(x_a) - \!\!\!\!\sum_{\textrm{positive
ends}\; b}\!\!\!\! {\mathbb{A}}_{B_b H}(y_b).
\end{array}
$$
%
%
\subsection{Energy of wrapped solutions}\label{Subsection Energy Appendix Wrapped}
Recall from \ref{Subsection Wrapped TQFT structure} that wrapped solutions $u:S \to \overline{M}$ solve $(du-X\otimes \beta)^{0,1}=0$ with $u(\partial S) \subset \overline{L}$, where $\beta|_{\partial S}=0$, $d\beta\leq 0$ and $\theta|_{\overline{L}}=df$. The same proof as in \ref{Subsection Energy Appendix} shows that wrapped solutions with asymptotic chords $x_a,y_b$ satisfy the a priori estimate $E(u)\leq \sum \mathbb{A}_{A_a H}(x_a) - \sum \mathbb{A}_{B_b H}(y_b)$ (recall $\mathbb{A}_H$, defined in \ref{Subsection Action functional in wrapped case}, involves $f$).
\subsection{Maximum principle for Floer solutions}
\label{Subsection Maximum principle for floer solns}
%
By the discussion in \ref{Subsection Energy Appendix}, the equation $Y^{0,1}=0$ for $Y=du-X\otimes \beta$ in a local holomorphic coordinate $s+it$ for $S$ corresponds to:
\begin{equation}\label{Equation Floer soln}
\left\{\begin{array}{ll}
\partial_t u = X \beta_t + J\partial_s u - JX \beta_s \\
\partial_s u = X \beta_s - J \partial_t u + JX \beta_t
\end{array}\right.
\end{equation}

\begin{lemma}\label{Lemma Maximum principle for Floer solns}

For $R\geq R_0$ assume: $J$ is of contact type and $H=h(R)$ only depends on $R$ with $h'\geq 0$. Then for any
local Floer solution $u$ landing in $R\geq R_0$, the coordinate
$R\circ u$ cannot have local maxima unless it is constant. So global Floer solutions with asymptotics $x_a,y_b$ must
lie in the region $R\leq \max \{ R(x_a), R(y_b),R_0\}$. The result also holds in the following situations:
\begin{enumerate}
\item if $H$ is time-dependent near the ends $($where $\beta$ is a
multiple of $dt)$;
\item\label{Item Lemma Maximum principle ok if monotone hpy} if $h=h_z(R)$ depends on
$z=s+it$ near the ends $($where $\beta$ is a
multiple of $dt)$, provided we assume the monotonicity condition: $\partial_s
h_z'\leq 0$ for $R\geq R_0$;
\item if $J=J_z$ depends on $z\in S$, assuming the contact condition $dR=J_z^*\theta$ for $R\geq R_0$.
\end{enumerate}
\end{lemma}
\begin{proof}
Let $\rho(s,t)$ denote the $R$-coordinate of $u(s,t)$. Using the contact condition $\theta =
-dR \circ J$,
$$
\begin{array}{rcl}
\partial_s \rho & = & dR(\partial_s u) = dR(X \beta_s - J \partial_t u + JX
\beta_t)
\\ & = & \theta(\partial_t u) - \theta(X)\beta_t
\\[1mm] \partial_t \rho & = & dR(\partial_t u) = dR(X \beta_t + J\partial_s u - JX \beta_s)
\\ & = & -\theta(\partial_s u) + \theta(X)\beta_s
\\[1mm] d^c \rho& = & d\rho\circ j =  (\partial_t \rho)\, ds - (\partial_s \rho) \, dt
\\ & = & -\theta(\partial_s u)\,ds -\theta(\partial_t u)\,dt + \theta(X)\beta_s
ds+ \theta(X)\beta_t dt
\\ & = & -u^*\theta + \theta(X)\beta
\\[1mm] -dd^c \rho & = & u^*\omega - d(\theta(X)\beta)
\\ & = & \frac{1}{2}\| du - X\otimes \beta \|^2\, ds\wedge dt + u^*(dH)\wedge
\beta- d(\theta(X)\beta).
\end{array}
$$

By \ref{Subsection Liouville domains Definition}, $X = h'(R) \mathcal{R}$ where $\mathcal{R}$ is the
Reeb vector field, so $\theta(X)= R h'(R)$. Thus,
$$
u^*(dH)\wedge \beta- d(\theta(X)\beta) = h'(\rho)d\rho \wedge \beta -
d(\rho h'(\rho)\beta) =  - \rho h''(\rho) d\rho\wedge \beta -\rho h'(\rho)d\beta.
$$

Thus $\Delta \rho \, ds\wedge dt = -dd^c \rho \geq - \rho h''(\rho)
d\rho\wedge \beta - \rho h'(\rho)d\beta$, so
$$\Delta \rho + (\textrm{first order terms in }\rho) \geq -
\frac{\rho h'(\rho)d\beta}{ds\wedge dt}
$$
Since $d\beta\leq 0$, $h' \geq 0$, $\rho\geq 0$, the right side is
$\geq 0$. The claim follows by the maximum principle \cite[Sec.6.4]{Evans}
for the elliptic operator $L=\Delta + R h''(R) \beta_t \partial_s
- R h''(R) \beta_s
\partial_t$ since $Lu\geq 0$. 

(1) If $H=h_t$ on an end where $\beta=C\,dt$ $(C>0)$, the extra term
$-\rho(\partial_t h_t') dt \wedge \beta = 0$. 

(2) If $H=h_z$ the extra
term $-\rho (\partial_s h_z') C \geq 0$ provided $\partial_s
h_z'\leq 0$. 

(3) A $z$-dependence for $J=J_z$ does not affect the proof.
\end{proof}
%
%
\subsection{Maximum principle for the Lagrangian setting}
\label{Subsection Maximum principle for the Lagrangian setting}
Recall from \ref{Subsection Wrapped TQFT structure} that wrapped solutions $u:S \to \overline{M}$ solve $(du-X\otimes \beta)^{0,1}=0$ with $u(\partial S) \subset \overline{L}$, where $\beta|_{\partial S}=0$ and $d\beta\leq 0$.

\begin{lemma}\label{Lemma Maximum principle for wrapped solns}
Under the assumptions of Lemma \ref{Lemma Maximum principle for Floer solns}, also 
any local wrapped solution $u$ landing in $R\geq R_0\geq 1$ cannot admit a local maximum of 
$R\circ u$ unless $R\circ u$ is constant, so global wrapped solutions with asymptotics $x_a,y_b$ lie in the region $R\leq \max \{ R(x_a),R(y_b),R_0\}$. 
\end{lemma}
\begin{proof} The proof is identical to Lemma \ref{Lemma Maximum principle for
Floer solns}, except maxima might occur on $\partial S$. But then
by Hopf's Lemma \cite[Sec.6.4.2]{Evans} the derivative $\partial_t
\rho$ of $\rho=R\circ u$ in the outward normal direction
$\partial_t$ on $\partial S$ would be strictly positive at those maxima, which is false:\\[1mm]$\strut\qquad\qquad\qquad
\partial_t \rho = dR(\partial_t u) =
dR(X \beta_t + J\partial_s u - JX \beta_s) = -\theta(\partial_s
u)=0
$\\[1mm]
using $dR(X)=0$, $dR\circ J = - \theta$, the pull-back $\beta|_{\partial
S}=0$ so $\beta_s=0$ on $\partial S$, and finally the pull-back
$\theta|_{\overline{L}\setminus L}=0$ on the collar and $\partial_s u \in
T\overline{L}$ on $\partial S$ (since $\partial_s = -j\partial_t$
is tangent to $\partial S$).
\end{proof}
%
\subsection{No escape lemma}
\label{Subsection No escape lemma}
%
In Corollary \ref{Corollary No escape} we need the following Lemma. This result is analogous to Abouzaid-Seidel \cite[Lemma
7.2]{Abouzaid-Seidel}, adapted to our setup.

Let $(V,d\theta)$ be a non-compact symplectic manifold with boundary $\partial
V$ of \emph{negative contact type}: the Liouville field $Z$, defined by
$d\theta(Z,\cdot)=\theta$, points strictly inwards.

Near $\partial V$
define the coordinate $R=R_0 e^r$ parametrizing the time $r$ flow of
$Z$ starting from $\partial V=\{R=R_0\}$ (compare \ref{Subsection Liouville
domains Definition}).

Assume $J$ is of contact type along $\partial
V$, meaning $J^*\theta = dR$ holds at points of $\partial V$. Suppose $H:V\to [0,\infty)$ only depends on $R$ near $\partial V$, say $H=h(R)$. Define $X$ by $d\theta(\cdot,X)=dH$ and define $\mathcal{R}=JZ$. Then by the contact condition one easily obtains that $X=h'(R)\mathcal{R}$ near $\partial V$.

Suppose $h(R)=mR$ is linear near
$\partial V$ (see Lemma \ref{Lemma No escape for quadratic growth} for the general case). 
Let
$S$ be a compact Riemann surface with boundary and let $\beta$ be a $1$-form
on $S$ with $d\beta\leq 0$.

\begin{lemma}\label{Lemma no escape}
Any solution $u: S \to V$ of $(du-X\otimes \beta)^{0,1}=0$, with
$u(\partial S) \subset \partial V$, must map entirely into
$\partial V$ and must solve $du=X\otimes \beta$. 
\end{lemma}
\begin{proof} We use a trick we learnt from Mohammed
Abouzaid: we show $E(u) \leq 0$, so $E(u)=0$, so
$du=X\otimes \beta$, so $du$ lands in the span of $X=h'\mathcal{R}
\subset T\partial V$, thus $u(S)\subset
\partial V$ as required. 
$$
\begin{array}{llll}
E(u) & = & \int_S u^*d\theta - u^*(dH)\wedge \beta & \textrm{(Section \ref{Subsection Energy Appendix} using $\omega=d\theta$)}\\
& \leq & \int_{S} d(u^*\theta)-d(u^*H\beta) & \textrm{(}d\beta\leq 0\textrm{ and } H\geq 0\textrm{)}\\
& = & \int_{\partial S} u^*\theta-(u^*H)\beta & \textrm{(Stokes' Theorem)}\\
& = & \int_{\partial S} u^*\theta-\theta(X)\beta & \textrm{(on } u(\partial S)\subset \partial V: H=mR=\theta(h'(R)\mathcal{R})=\theta(X) \textrm{)}\\
& = & \int_{\partial S} \theta(du-X \otimes \beta) & \\
& = & \int_{\partial S} -\theta J(du-X \otimes \beta)j &
\textrm{(since }(du-X\otimes \beta)^{0,1}=0 \textrm{)}\\
& = & \int_{\partial S} -dR(du-X\otimes\beta)j &
\textrm{($J$ is of contact type along }u(\partial S)\subset \partial V\textrm{)}\\
& = & \int_{\partial S} -dR(du)j & \textrm{(}dR\textrm{ vanishes
on }X=h'(R)\mathcal{R}\textrm{ on }u(\partial S)\subset \partial V\textrm{)}
\end{array}
$$
Let $\hat{n} =$ outward normal direction along $\partial S\subset
S$. Then $(\hat{n},j\hat{n})$ is an oriented frame, so $\partial
S$ is oriented by $j\hat{n}$. Now $dR(du)j(j\hat{n})=d(R\circ
u)\cdot (-\hat{n})\geq 0$ since in the inward direction
$-\hat{n}$, $R\circ u$ can only increase since $\partial V$
minimizes $R$. So $E(u)\leq 0$.
%
\end{proof}
\begin{remarks}\label{Remark no escape Lemma comments}\strut 
\begin{enumerate}
 \item One can allow $u$ to have negative/positive punctures, provided that the asymptotics $x_a,y_b$ satisfy $\sum {\mathbb{A}}_{A_a H}(x_a) - \sum {\mathbb{A}}_{B_b H}(y_b)\leq 0$ $($this is the new contribution in the above proof to the integral $\int_{\partial S} u^*\theta -(u^*H)\beta)$.
 
\item \label{Item no escape for z dependent H} If $H=H_z$ depends on $z\in S$ 
on a subset of $S$ parametrized by a holomorphic coordinate $z=s+it$ for a subset of values of $(s,t)\in \R \times S^1$ where $\beta=c\, dt$ for some $c>0$, then on that subset we require that: $\partial_s H\leq 0$ and near $\partial V$ we require $H=m_sR$ for slopes $m_s>0$ depending on $s$. So in the above proof, $u^*(dH)\wedge \beta=d(u^*H)\wedge \beta - c\,\partial_s (u^*H)\, ds\wedge dt$, and this last term has the correct sign needed to prove $E(u)\leq 0$. Also $H=m_s R= \theta(X_{H_z})$ on $\partial V$ and hence on $u(\partial S)\subset \partial V$.

In the setting of Lemma \ref{Lemma No escape for quadratic growth}, if $H=H_z$ on a subset as above, then we require $\partial_s H\leq 0$ and near $\partial V$ we require $H=q_z(R)$ with $\mathbb{A}_{q_z}(R_0)\leq 0$ and $\partial_s \mathbb{A}_{q_z}(R_0)\geq 0$. This ensures that $-d(\mathbb{A}_{q_z}(R_0)\beta)=-c\partial_s \mathbb{A}_{q_z}(R_0)\, ds\wedge dt \leq 0$ on that subset, so in the last line of the proof of Lemma \ref{Lemma No escape for quadratic growth} we still obtain $-\int_S d(\mathbb{A}_{q_z}(R_0)\beta) \leq 0$.

\item If $J$ is of contact type on all of $V$ then, by the proof of Lemma \ref{Lemma Maximum principle for Floer solns}, $\Delta R\, ds \wedge dt = -dd^c R = u^*d\theta - d(\theta(X)\beta)$. So the second half of the above proof is Green's formula \cite[Appendix C.2]{Evans} for $S$: $\int_{\partial S} u^*\theta - \theta(X)\beta = \int_S \Delta R \, ds \wedge dt = \int_{\partial S} \frac{\partial R}{\partial \hat{n}} dS \leq 0$.
\end{enumerate}
\end{remarks}
%
\subsection{No escape lemma for non-linear $H$}
\label{Subsection No escape lemma for nonlinear H}
%
We need the following Lemma in the Technical Remarks in \ref{Subsection An alternative definition of SH}. For $q: \R \to \R$, define the action function $\mathbb{A}_q(R) = -Rq'(R) + q(R)$.
\begin{lemma}\label{Lemma No escape for quadratic growth}
In Lemma \ref{Lemma no escape}, suppose $H: V \to [0,\infty)$ has the form $H=q(R)$ near $\partial V=\{ R = R_0 \}$ instead of $H=mR$. If $\mathbb{A}_q(R_0)\leq 0$ then the conclusion of the Lemma holds.
\end{lemma}
\begin{proof}
 It suffices to show $\int_{\partial S} u^*\theta - (u^*H) \beta \leq \int_{\partial S} u^*\theta - \theta(X)\beta$ since this was the only equality in the proof of Lemma \ref{Lemma no escape} that used the assumption $H=mR$ near $\partial V$.

So we need $\int_{\partial S} (\theta(X) - u^*H)\beta \leq 0$. On $u(\partial S) \subset \partial V$, $\theta(X) = \theta(q'(R)\mathcal{R}) = R_0 q'(R_0)$ and $u^*H=q(R_0)$. So, using Stokes' theorem, $d\beta \leq 0$ and $\mathbb{A}_q(R_0)\leq 0$:
$$ \textstyle
\int_{\partial S} (\theta(X) - u^*H)\beta = \int_{\partial S} (R_0 q'(R_0) - q(R_0))\beta = 
-\mathbb{A}_q(R_0) \int_{\partial S} \beta = -\mathbb{A}_q(R_0) \int_S d\beta \leq 0. \qedhere
$$
\end{proof}
%
%
\subsection{No escape lemma for the Lagrangian setting}
\label{Subsection No escape lemma for the Lagrangian setting}
%
To study the Viterbo restriction in the wrapped setup in \ref{Subsection No escape for wrapped solutions}, we need the analogue of Lemma \ref{Lemma no escape} when the map $u: S \to V$ has Lagrangian boundary conditions, which is due to Abouzaid-Seidel \cite[Lemma 7.2]{Abouzaid-Seidel}.

We use the same notation as in \ref{Subsection No escape lemma}: $(V,d\theta)$, $R$, $\partial V=\{R=R_0\}$, $H$, $J$, $S$, $\beta$. The novelty is that the Riemann surface $S$ has corners, and the corners divide the boundary of $S$ into pieces which are closed intervals or circles. These pieces are labeled by the two letters $b,l$ (because later our maps $u: S \to V$ will be required to land in the boundary $\partial V$ on the $b$-pieces and in a Lagrangian $L$ on the $l$-pieces), and two pieces intersecting at a corner must carry different labels. Abbreviate the decomposition by $\partial S = \partial_b S \cup \partial_l S$. We require $\partial_b S \neq \emptyset$.

Let $L\subset V$ be an exact Lagrangian, say $\theta|_L=df$, such that $\theta|_L$ vanishes to infinite order on the boundary $\partial L = L \cap \partial V$. In addition to $d\beta \leq 0$, we also require that the pull-back $\beta|_{\partial_l S}=0$. Finally, we need to strengthen the exactness assumption on $L$: \boxed{\emph{$f|_{\partial L} = 0$}}

\begin{lemma}\label{Lemma no escape with Lag bdry condns}
Any solution $u: S \to V$ of $(du-X\otimes \beta)^{0,1}=0$, with
$u(\partial_b S) \subset \partial V$ and $u(\partial_l S) \subset L$,  must map entirely into
$\partial V$ and must solve $du=X\otimes \beta$.
\end{lemma}
\begin{proof} We run the proof of Lemma \ref{Lemma no escape}. Thus $E(u) \leq \int_{\partial S} u^*\theta - (u^*H)\beta$. 
The new contributions arise along the pieces $\gamma \subset \partial_l S$. Since $(u^*H)\beta|_{\gamma}=0$, only the $u^*\theta|_{L}=u^*df$ integrand contributes. By Stokes' theorem, $\int_{\gamma} u^*\theta=0$ for circles $\gamma$. For intervals $\gamma$, $\int_{\gamma} u^*\theta=f(z)-f(z')$ for corners $z,z' \in \partial_l S \cap \partial_b S$ so this also vanishes by the assumption $f|_{\partial L} = 0$.
\end{proof}
%
%


\begin{thebibliography}{10}

\bibitem{Abbondandolo-Schwarz}
A. Abbondandolo, M. Schwarz, \emph{On the Floer homology of
cotangent bundles}, Comm. Pure Appl. Math. 59, 254--316, 2006.

\bibitem{Abbondandolo-Schwarz2} A. Abbondandolo, M. Schwarz,
\emph{Floer homology of cotangent bundles and the loop product}, Geom. Topol. 14, no. 3, 1569--1722, 2010.

\bibitem{Abouzaid-IHES}
M. Abouzaid, \emph{A geometric criterion for generating the Fukaya category}, Publ. Math. Inst. Hautes \'{E}tudes Sci. No. 112, 191--240, 2010.

\bibitem{Abouzaid-Seidel} M. Abouzaid, P. Seidel, \emph{An open
string analogue of Viterbo functoriality}, Geom. Topol. 14, no. 2, 627--718, 2010.

\bibitem{Arnold}
V. I. Arnol'd, \emph{First steps in symplectic topology}, Russ. Math. Surv. 41, 1--21, 1986.

\bibitem{Atiyah}
M. Atiyah, \emph{Topological quantum field theories}, 
Publ. Math. Inst. Hautes \'{E}tudes Sci. No.
68, 175--186, 1988.

\bibitem{Betz-Cohen}
M. Betz, R. L. Cohen, \emph{Graph moduli spaces and
cohomology operations}, Turkish J. Math. 18, 23--41, 1994.

\bibitem{BEE}
F. Bourgeois, T. Ekholm, Y. Eliashberg, \emph{Effect of Legendrian Surgery}, arXiv:0911.0026, 2009.

\bibitem{Bourgeois-Mohnke} F. Bourgeois, K. Mohnke, \emph{Coherent
orientations in symplectic field theory}, Math. Z. 248, no. 1,
123--146, 2004.

\bibitem{Bourgeois-Oancea} F. Bourgeois, A. Oancea, \emph{An exact sequence for contact- and
symplectic homology}, Invent. Math. 175, no. 3, 611--680, 2009.

\bibitem{Chas-Sullivan}
M. Chas, D. Sullivan, \emph{String topology}, preprint
arXiv:math/9911159, 1999.

\bibitem{Cieliebak}
 K. Cieliebak, \emph{Handle attaching in
symplectic homology and the chord conjecture}, J. Eur. Math. Soc.
(JEMS) 4, 115--142, 2002.

\bibitem{Cieliebak-Frauenfelder}
 K. Cieliebak, U. Frauenfelder,
\emph{A Floer homology for exact contact embeddings}, Pacific J.
Math. 239, no. 2, 251--316, 2009.

\bibitem{Cieliebak-Frauenfelder-Oancea} \label{Cieliebak-Frauenfelder-Oancea}
K. Cieliebak, U. Frauenfelder, A. Oancea, \emph{Rabinowitz Floer
homology and symplectic homology}, Ann. Sci. \'{E}c. Norm. Sup\'{e}r. (4) 43, no. 6, 957--1015, 2010.

\bibitem{Cohen-Schwarz}
R. L. Cohen, M. Schwarz, \emph{A Morse theoretic description of
string topology}, in New perspectives and challenges in symplectic field theory, CRM Proc. Lecture Notes, 49, AMS, 147--172, 2009.

\bibitem{Eliashberg-Givental-Hofer}
Y. Eliashberg, A. Givental, H. Hofer, \emph{Introduction to
symplectic field theory}, GAFA 2000 (Tel Aviv, 1999).  Geom.
Funct. Anal. 2000,  Special Volume, Part II, 560--673, 2000.

\bibitem{Evans}
L. C. Evans, \emph{Partial Differential Equations}, AMS, Graduate
Studies in Math. v. 19, 1998.

\bibitem{Floer}
A. Floer, \emph{Morse Theory for Lagrangian intersections}, J.
Diff. Geom. 28, 513--547, 1988.

\bibitem{Floer-norm}
A. Floer, \emph{The Unregularized Gradient Flow of the Symplectic Action}, Comm. Pure Appl. Math. 41, no. 6, 775--813, 1988.

\bibitem{Floer-Hofer-Coherent-Orientations}
A. Floer, H. Hofer, \emph{Coherent orientations for periodic orbit
problems in symplectic geometry}, Math. Z. 212, no. 1, 13--38, 1993.

\bibitem{Floer-Hofer-Salamon}
A. Floer, H. Hofer, D. Salamon, \emph{Transversality in elliptic
Morse theory for the symplectic action}, Duke Math. J. 80,
251--292, 1995.

\bibitem{Fukaya}
K. Fukaya, \emph{Morse homotopy and its quantization}, Geometric
topology (Athens, GA, 1993), AMS/IP Stud. Adv. Math., vol. 2, AMS,
Providence, RI, 409--440, 1997.

\bibitem{Fukaya-Seidel-Smith2}
K. Fukaya, P. Seidel, I. Smith, \emph{The symplectic geometry of
cotangent bundles from a categorical viewpoint}, in Homological
Mirror Symmetry: New Developments and Perspectives, Lecture Notes
in Phys. 757, Springer, 1--26, 2009.

\bibitem{Guillemin-Sternberg}
V. Guillemin, S. Sternberg, \emph{Supersymmetry and
Equivariant De Rham Theory}, Springer, 1999.

\bibitem{McLean} M. McLean,
\emph{Lefschetz fibrations and symplectic homology},
Geom. Topol. 13, no. 4, 1877--1944, 2009.

\bibitem{McDuff-Salamon-IntroToSymplTop}
D. McDuff, D. Salamon, \emph{Introduction to Symplectic Topology}, 2nd edition, Oxford University Press, 1998.

\bibitem{McDuff-Salamon}
D. McDuff, D. Salamon, \emph{$J$-holomorphic curves and quantum
cohomology}, University Lecture Series, 6, AMS, Providence, RI, 1994.

\bibitem{McDuff-Salamon2}
D. McDuff, D. Salamon, \emph{$J$-holomorphic curves and symplectic
topology}, AMS, Colloquium Publications, vol. 52, 2004.

\bibitem{Mohnke}
K. Mohnke, \emph{Holomorphic disks and the Chord Conjecture}, Ann.
of Math. 154, 219--222, 2001.

\bibitem{PSS}
S. Piunikhin, D. Salamon, M. Schwarz, \emph{Symplectic
Floer-Donaldson theory and quantum cohomology}, in
Contact and symplectic geometry%
, Publ. Newton Inst. 8, Cambridge University Press, 171--200, 
1996.

\bibitem{Ritter}
A. F. Ritter, \emph{Novikov-symplectic cohomology and exact
Lagrangian embeddings}, Geom. Topol. 13, no. 2, 943--978, 2009.

\bibitem{Ritter2}
A. F. Ritter, \emph{Deformations of symplectic cohomology and
exact Lagrangians in ALE spaces}, Geom. Funct. Anal. 20, no. 3, 779--816, 2010.

\bibitem{Ritter4}
A. F. Ritter, \emph{Floer theory for negative line bundles via Gromov-Witten invariants}, arXiv:1106.3975, 2011.

\bibitem{Robbin-Salamon}
J. Robbin, D. Salamon, \emph{The spectral flow and the Maslov index}, Bull. London Math. Soc. 27, 1--33, 1995.

\bibitem{Salamon}
D. Salamon, \emph{Lectures on Floer homology}, in Symplectic geometry
and topology (Park City, UT, 1997), IAS/Park City
Math. Ser., 7, AMS, 143--229, 1999.

\bibitem{Salamon-Weber}
D. Salamon, J. Weber, \emph{Floer homology and the heat flow},
Geom. Funct. Anal. 16, no. 5, 1050--1138, 2006.

\bibitem{Salamon-Zehnder}
D. Salamon, E. Zehnder, \emph{Morse theory for periodic solutions
of Hamiltonian systems and the Maslov index}, Comm. Pure Appl.
Math. 45, no. 10, 1303--1360, 1992.

\bibitem{Schwarz}\label{Schwarz} M. Schwarz, \emph{Cohomology Operations from
$S^1$-Cobordisms in Floer Homology}, Ph.D., ETH Z\"{u}rich, 1995.
Available at: http:$/ /$www.mathematik.uni-leipzig.de$/
\!\!\sim\!$ schwarz$/$

\bibitem{Seidel}
P. Seidel, \emph{A biased view of symplectic cohomology}, Current
Developments in Mathematics, 2006, 211--253,
Int. Press, Somerville, MA, 2008.

\bibitem{Seidel-book}
P. Seidel, \emph{Fukaya categories and Picard-Lefschetz theory},
Z\"{u}rich Lectures in Advanced Mathematics, European Mathematical Society
(EMS), Z\"{u}rich, 2008.

\bibitem{SeidelKragh}
P. Seidel, \emph{A remark on the symplectic cohomology of cotangent bundles, after Kragh}, Unpublished note, 2010.

\bibitem{Sullivan}
D. Sullivan, \emph{Differential Forms and the Topology of
Manifolds}, Manifolds -- Tokyo 1973 (Proc. Internat. Conf., Tokyo,
1973), Univ. Tokyo Press, Tokyo, 37--49, 1975.

\bibitem{Viterbo1} C. Viterbo, \emph{Functors
and computations in Floer homology with
applications. I.}, Geom. Funct. Anal. 9, no. 5, 985--1033, 1999.

\bibitem{Viterbo3} C. Viterbo, \emph{Exact Lagrange submanifolds,
 periodic
orbits and the cohomology of free loop spaces},  J. Diff. Geom.
47, no. 3, 420--468, 1997.

\end{thebibliography}
\end{document}